\def\dhash{{d^\#}}
\def\ddhash{{dd^\#}}
\def\w{\wedge }
\def\hpq0{h^{p,q}_{\leq 0}}
\def\Hpq0{\H_{\leq 0}^{p,q}}
\def\dbar{\bar\partial}
\def\R{{\mathbb R}}
\def\C{{\mathbb C}}
\def\A{{\mathcal A}}
\def\K{{\mathcal K}}
\def\F{{\mathcal F}}
\def\H{{\mathcal H}}
\def\E{{\mathcal E}}
\def\L{{\mathcal L}}
\def\Q{{\mathcal Q}}
\def\L{{\mathcal L}}
\def\be{\begin{equation}}
\def\ee{\end{equation}}
\newtheorem{thm}{Theorem}[section]
\newtheorem{lma}[thm]{Lemma}
\newtheorem{cor}[thm]{Corollary}
\newtheorem{prop}[thm]{Proposition}
\theoremstyle{definition}
\newtheorem{df}[thm]{Definition}
\newtheorem{remark}[thm]{Remark}
\numberwithin{equation}{section}
\begin{document}

\title[]
{ Superforms, supercurrents and convex geometry. }

\author[]{ Bo Berndtsson}

\bigskip

\maketitle

\begin{abstract} We develop the calculus of superforms as a tool for convex geometry. The formalism is applied  to valuations on convex bodies,  the Alexandrov-Fenchel inequalities and Monge- Amp\`ere equations on the boundary of convex bodies.
\end{abstract}

\tableofcontents

\section{Introduction}

The calculus of superforms and supercurrents was introduced by Lagerberg in \cite{Lagerberg}, after a brief suggestion in \cite{Berndtsson1}. The main motivation in \cite{Lagerberg} was applications to tropical geometry and Lagerberg proved in particular that tropical varieties in $\R^n$ can be described as supercurrents of a certain type. After pioneering work in \cite{Chambert-Loir}, this point of view has  also been used  to develop a potential theory on Berkovich spaces; see  \cite{Gubleretal} for recent work and further references. In another direction, in \cite{Berndtsson2} we applied the same formalism to (extrinsic) Riemannian geometry, and used it, e.g. to give volume estimates for minimal surfaces  and a proof of Weyl's tube formula. In this paper we shall explore yet another setting  and apply the formalism to convex geometry. This has already been done in \cite{Larsson}, which gives an account of Alexandrov's second proof of the Alexandrov-Fenchel theorem in terms of superforms, and part of our work here builds on, and develops,  this. Our main objective is not to prove new results -- although there are some, and we will list them at the end of this introduction -- but to provide a convenient framework in which many known results can be formulated in a seemingly natural way.  

The basic idea of superforms and -currents  is to use a formalism inspired by K\"ahler geometry for problems in real geometry. We will give a very brief outline of the theory in the next section and refer to \cite{Lagerberg} and \cite{Berndtsson2} for a more precise account.(It should be said, however, that the setting in \cite{Lagerberg} is somewhat different, but it is nevertheless close enough to give an almost isomorphic theory (see \cite{Rubinstein})). A crucial concept in K\"ahler geometry is positivity of forms and currents and we  dedicate section \ref{positivity} to developing the analogous theory in the real setting. This has already been done in \cite{Lagerberg}, and been much further developed in \cite{Gubleretal} in the setting of toric varieties, but we shall go through this from scratch, highlighting   some concepts that are fundamental for the rest of the paper. The main difference between the complex and the real case is that in the complex case, the concept of weak positivity is central and serves as a hypothesis for a number of basic results. This is a bit more complicated  in the real setting (see \cite{Gubleretal}) since forms can be weakly positive and weakly negative at the same time; we call such forms weakly null. In \cite{Lagerberg} and \cite{Gubleretal} the authors avoid this difficulty by working with a stronger notion of positivity, but for our applications (especially to valuations) we need to consider weak positivity.  Nevertheless, the problem can be overcome, by restricting to a subclass of weakly positive forms, `orthogonal' to the weakly null forms. We call such forms `strong', since they are precisely the forms that can be written as the difference between two strongly positive forms. Fortunately, and somewhat miraculously, the forms that appear in our applications to valuations satisfy this orthogonality condition (see Theorem \ref{theoremstrongform}). 

Once this is done we discuss, developing further \cite{Lagerberg}, the analog of the theory of Bedford and Taylor of wedge products of closed positive currents. The conclusion is the same as in the complex case; a weakly positive closed current $\Omega$ can be multiplied by $\ddhash\phi$ for any convex function $\phi$, {\it provided that} $\Omega$ is `strong'. ( The condition that $\phi$ be locally bounded from Bedford-Taylor theory is automatically satisfied when $\phi$ is convex.) In the next section we use this to define the  Monge- Amp\`ere measure of a convex function, and indicate why our definition gives the same measure as Alexandrov's. This section ends by a fundamental estimate for the  Monge- Amp\`ere mass (Theorem \ref{MAestimate})  of functions of linear growth. After that we introduce the notion of mixed volumes in the next section.

In Section \ref{sectionhomform} we introduce the notion of strongly homogeneous forms and currents. We show that a current of bidegree $(1,1)$ is strongly homogeneous if and only if it can be written as $\ddhash\phi$ where $\phi$ is homogeneous of order 1. The corresponding notion in bidegree $(p,p)$ will play a crucial role in the paper, and is further discussed in the following four sections. It turns out that, if we identify $\R^n$ with the affine hyperplane $H_0=\{(x_0,x)\in \R^{n+1}; x_0=1\}$, then the  `strong' and closed $(p,p)$-currents  on $\R^n=H_0$ are precisely the restrictions of strongly homogeneous currents on $\R^{n+1}_+:=\{(x_0,x)\in \R^{n+1}; x_0>0\}$. This fact, which generalizes the fact that any convex function on $\R^n$ can be extended to a 1-homogeneous convex function on $\R^{n+1}_+$, is one of the reasons for the importance of  strongly homogeneous currents.

In section \ref{Asdiffoperator} we start our discussion of the Alexandrov-Fenchel theorem by introducing Alexandrov's differential operator on the sphere using our formalism (this follows and extends the work of Larsson, \cite{Larsson}), and in the following section we use it to give Alexandrov's proof of the Alexandrov-Fenchel theorem.   Here the superformalism allows a minor simplification of Alexandrov's proof. 

After that, we give another proof of the Alexandrov-Fenchel theorem, based on an idea of Gromov, \cite{Gromov},  to prove the Khovanskii-Teissier theorem from algebraic geometry. Gromov also sketches how to get the Alexandrov-Fenchel 
theorem from the Khovanskii-Teissier theorem, by using the theory of toric varieties. Our main point here is to avoid the last part and substitute for toric varieties a compactification of $\R^n$ as a real manifold, by adding a sphere at infinity. While the proof in the previous section uses Alexandrov's differential operator on the sphere, this alternative proof is based on a study of the Dirichlet problem for the same operator on a half-sphere.

 In this connection one should also mention the work of Wang, \cite{Wang}, that also eliminates the use of toric compactifications by working with complete K\"ahler metrics on the complex torus instead. 

The work in section \ref{positivity}, was carried out partly with a view towards applications to translation invariant valuations on the space of convex bodies, $\K$. A valuation on $\K$ is function defined on $\K$ such that
$$
\theta(K\cup L)+\theta(K\cap L)=\theta(K)+\theta(L),
$$
if $K,L\in \K$, {\it provided that  their union is also convex}. Our basic observation here is that the space  $\K$, modulo translation, is in 1-1 correspondence with the class
of positive and strongly homogeneous $(1,1)$-currents,  via the map
$$
K\to \ddhash h_K:=\omega_K,
$$
where $h_K$ is the support function of $K$. Therefore, any translation invariant valuation (or, for that matter, any translation invariant function on $\K$ whatsoever) must be a function  of $\omega_K$.

In section \ref{valuations} we give a representation formula for  two classes of  valuations in terms of supercurrents. We first show that there is a one-to-one correspondence between `smooth' valuations ( introduced by Alesker in   
 \cite{2Alesker}) and  smooth strongly homogeneous forms, $\Omega$, on $\R^n$, via the formula (Theorem \ref{somevaluations})
 $$
 \theta(K)=\F(\Omega):=\int e^{\omega_K}\w\Omega.
 $$
 Here $e^{\omega_K}=\sum_0^n \omega_K^k/k!$ is defined by power series expansion.
 The map $K\to e^{\omega_K}$ can be viewed as a current-valued character on $\K$ equipped with Minkowski addition (it satisfies $e^{\omega_{K+L}}=e^{\omega_K}\w e^{\omega_L}$) and we argue that $\F$ is analogous to Fourier transformation.

  This part should be new, but we stress that it is based on Alesker's work, as amplified by Knoerr, \cite{Knoerr} and van Handel (unpublished). It is also closely related to a description of  smooth valuations as integrals of a differential form over the `normal cycle' of a convex body (cf. \cite{Alesker-Fu}), and we give a translation between the two approaches in Section \ref{normalcycle} -- as far as smooth valuations are concerned they are equivalent. In our view, one advantage with the superformalism is that it shows a close analogy between the theory of valuations and intersection products in K\"ahler geometry; another that it adapts in a natural way to non-smooth valuations. After this, we characterize monotonically increasing valuations; they correspond precisely to  weakly positive strongly homogeneous currents. For smooth valuations, related  results in terms of the normal cycle  have been obtained previously by Bernig and Fu, \cite{Bernig-Fu2}, and the case of general valuations that are homogeneous of degree 1 and $(n-1)$ respectively is due to McMullen, \cite{2McMullen}, and Firey, \cite{Firey}. 

Finally, in section \ref{Minkowski} we discuss Minkowski's surface area measure, and relate it to the Monge- Amp\`ere equation on the boundary of convex bodies. The case of the standard sphere is Minkowski's original setting.

Since many of the results in this paper are not new, and in fact often classical, we end the introduction by pointing out what we think {\it is} new. On the technical side, this is first the general set-up in terms of superforms, then the use and characterization of `strong' forms, Theorem \ref{theoremstrongform}. The introduction of `strongly homogeneous forms' and their basic properties are also new. On the more conceptual side, the rewrite (\ref{incarnation1}) of Alexandrov's differential operator as the current-valued operator $\A$,  has some claims to originality. This also leads to a minor simplification of the central step in Alexandrov's proof of the Alexandrov-Fenchel theorem, see the proof of Theorem \ref{Alexandrov}. The study of the Dirichlet problem on the half-sphere for Alexandrov's differential operator is also new as far as we know, and Section \ref{Gromov} is new, apart from the obvious inspiration from \cite{Gromov} (explained in Remark\ref{comparison}). Finally, we consider the representation formulas for valuations, Theorems \ref{somevaluations} and \ref{monchar} new; the latter is, to our knowledge, the first description of general translation invariant and monotone valuations of all orders. 

I would like to thank Rolf Andreasson, Jakob Hultgren, Mattias Jonsson, Simon Larsson and Xu Wang for many discussions on the topic of this paper. Special thanks go to Thomas Wannerer, Ramon van Handel and Semyon Alesker for generous comments on a preliminary version.

%%%%%%%%%%%%%%%%%%%%%%%%%%%%
\newpage
\section{Brief recap of superforms and some conventions.}\label{recap}
A superform on $\R^n$ is a differential form on $\C^n=\{x+i\xi; x,\xi\in \R^n\}$,
$$
\alpha=\sum_{I K} \alpha_{I K}(x) dx_I\w d\xi_K,
$$
whose coefficients do not depend on $\xi$. We use standard multiindex notation and if $|I|=p$ and $|K|=q$ for all non-zero  terms in the sum we say that $\alpha$ is of bidegree $(p,q)$. Notice that this bigrading differs from the standard one for differential forms on $\C^n$, where one would use instead the basis $dz_I\w d\bar z_K$, $z=x+i\xi$. The exterior derivative, $d$ operates on superforms. 

We define the complex structure, $J$,  on differential forms on $\C^n$, by
$$
J(d\xi_j)=dx_j, \quad J(dx_j)=-d\xi_j.
$$
This convention (unfortunately) differs from the one in \cite{Berndtsson2} by a  sign, but is here chosen in the standard way so that $J(dx_j+id\xi_j)=i(dx_j+id\xi_j)$. To avoid unpleasant minuses  we shall sometimes use the notation $\alpha^\#=-J(\alpha)$ when $\alpha=\sum \alpha_j dx_j$ is a $(1,0)$-form, so that
$$
\alpha^\#=\sum \alpha_j d\xi_j.
$$

We say that a form of bidegree $(p,p)$,
$$
\Omega=\sum \Omega_{I K} dx_I\w d\xi_K
$$
is {\it symmetric} if $J(\Omega)=\Omega$. Since
$$
J(\Omega)= \sum \Omega_{I K} (-1)^p d\xi_I\w dx_K=\sum \Omega_{I K } dx_K\w d\xi_I,
$$
this means that $\Omega_{I K}=\Omega_{K I}$ if we sum only over increasing multiindices, so that the representation is unique.

If $\psi$ is a function we define 
$$
\dhash\psi= \sum\psi_j d\xi_j= J^{-1} d\psi,
$$
and for general superforms
$$
\dhash\alpha =\sum (\dhash\alpha_{I K})\w dx_I\w d\xi_K= J^{-1}d J\alpha.
$$
This coincides with the operator $d^c$ from K\"ahler geometry, but we write $\dhash$ to emphasize that it acts on superforms. 

If $\alpha= fdx_1\w...dx_n\w d\xi_1\w ... d\xi_n=: f dx\w d\xi$ is a superform of bidegree $(n,n)$ (with, say, compact support) we define its superintegral as
$$
\int \alpha=\int_{\R^n} fdx\int d\xi =\int_{\R^n} fdx (-1)^{n(n-1)/2},
$$
i.e. we {\it define} the integral of $d\xi$ to be $(-1)^{n(n-1)/2}$. This is sometimes called the Berezin integral, except for the possible minus-sign that we have inserted in order to make the integral of 
$$
fdx_1\w d\xi_1\w ...dx_n\w d\xi_n
$$
positive if $f\geq 0$ (in accordance with the conventions for integrals over $\C^n$).

Instead of using the Berezin integral, we could have considered our superforms as living on $\R^n\times T^n$, where $T^n=\R^n_\xi/ \mathbb Z^n$, and used integration over the torus. The advantage in using Berezin integration instead, is that we may take restrictions of superforms to linear subspaces and integrate over them. This would be problematic with the other approach, since the quotient of a subspace with a lattice is in general not a subtorus. Another possibility is to consider superforms as forms on $\R^n$, whose coefficients are of the type
$$
A_I:=\sum_K a_{I K}(x) d\xi_K,
$$
and view $d\xi_i$ as extra, anticommuting, variables, in addition to the commuting variables $x_i$. (This is the motivation for the terminology `super-', in analogy with  analysis on supermanifolds.)  Berezin integration would then define a pairing, and norms, on the anticommuting part of the variables. The advantage with our formulation, as compared to this second possibility, is that it allows us to use the formalism of Kähler manifolds, which turns out to be very useful.

 If $M$ is an (oriented) submanifold of $\R^n$ of dimension $m$ we can integrate superforms
$$
\alpha=\sum_{|I|=m} \alpha_I dx_I\w d\xi 
$$
of bidegree $(m,n)$ in the same way
\be\label{boundaryintegral}
\int_M \alpha= (-1)^{n(n-1)/2}\int_M \sum_{|I|=m} \alpha_I dx_I.
\ee
It is clear that  Stokes' formula holds, e.g. 
\be\label{Stokes}
\int_{\partial D}\alpha=\int_D d\alpha,
\ee
if $D$ is a bounded domain in $\R^n$ and $\alpha$ is of bidegree $(n-1,n)$. Similarily, if $\alpha$ is an $(n-1,n)$-form of compact support, the integral of $d\alpha=0$. Hence
$$
\int du\w v=(-1)^{\deg(u)+1}\int u\w dv
$$
if either $u$ or $v$ has compact support (and $\deg(u)$ is the total degree of $u$). Since
$$
\int \alpha=\int J(\alpha)
$$
and $\dhash= J^{-1}dJ$, the same  holds for  $\dhash$. This leads to the useful formula
$$
\int (\ddhash u)\w v=\int u\w\ddhash v.
$$
Notice, however, that there is no counterpart to Stokes' formula (\ref{Stokes}) for $\dhash$; in fact 
$$
\int_{\partial D} \alpha
$$
is not even defined if $\alpha$ is of bidegree $(n,n-1)$. 

So far we have tacitly assumed that we have fixed a coordinate system on $\C^n$, $z_j=x_j+i\xi_j$. If we change coordinates to 
$$
w= Az, \quad w=y+i\eta
$$
where $A\in GL(n,\R)$ ( i.e. $A$ is a  matrix with {\it real} entries), most of what we have said does not change, with the exception that 
$$
\int d\xi \neq \int d\eta
$$
in general. We therefore endow  $\R^n$ with a scalar product, which induces a scalar product on forms, and consider only orthonormal changes of coordinates.  We define the volume form
$$
dV= dx\w d\xi (-1)^{n(n-1)/2}= dx_1\w d\xi_1\w ...dx_n\w d\xi_n
$$
for any such system of coordinates. Notice that the introduction of a scalar product also allows us to define Berezin integration on the complexification of linear subspaces of $\R^n$.

Finally, a supercurrent of bidegree $(p,q)$ is an element in the dual of the space of smooth superforms of bidegree $(n-p,n-q)$ with compact support, endowed with the usual topology from the theory of distributions. There are two main examples (and many more will appear later on in this paper): A superform $\gamma$ with, say, continuous coefficients defines a supercurrent by
\be\label{formascurrent}
\gamma.\alpha=\int\gamma\w\alpha.
\ee
Here the integral is the superintegral and components of $\gamma\w\alpha$ that are not of bidegree $(n,n)$ give zero contribution to the integral. The second example is integration over an $m$-dimensional submanifold of $\R^n$, by (\ref{boundaryintegral}). Since this acts on superforms of bidegree $(m,n)$, this supercurrent has bidegree $(n-m,0)$ or, as is sometimes said, of bidimension $(m,n)$. 

In analogy with (\ref{formascurrent}) any supercurrent, $T$ of bidegree $(p,q)$,  can be written
\be\label{current}
T=\sum T_{I K} dx_I\w d\xi_K.
\ee
Here $T_{I K}$ acts on smooth forms of full bidegree, so they are distributions, or generalized functions. We will almost only deal with currents with measure coefficients. Therefore, throughout the paper, when we speak of  currents or supercurrents, we mean currents with measure coefficients (i.e. currents of order zero).  Strictly speaking, a measure is an element in the dual of the space of functions, so it should be considered as a supercurrent of  bidegree $(n,n)$. Here, however, we identify measures with (super)currents of bidegree $(0,0)$ by dividing with the volume form $dV$ and think of measures as generalized functions. Hence, if $T$ in (\ref{current}) has measure coefficients, the $T_{I K}$ are considered to be measures, even though it would be more correct to say that $T_{I K} dV$ is a measure. 

Notice also that a supercurrent, $S$, (of order zero!) of bidegree $(n,n)$ acts on superforms of bidegree $(0,0)$, i.e. functions on $\R^n$. $S$ is therefore a measure on $\R^n$; not on $\C^n$. 

One final convention: This paper is about superforms and supercurrents but to alleviate the language a little we will mostly omit the `super' in the sequel. Thus we  will often talk about `forms' and `currents' and `integrals', hoping that it will be clear from the context when a `super' should be inserted (which is almost all of the time).

%%%%%%%%%%%%%%%%%%%%%%

\newpage

\section{Positivity of superforms}\label{positivity}

We first look at symmetric superforms of bidegree $(p,p)$ at a point, i.e. forms
$$
\Omega=\sum \Omega_{I J} dx_I\w d\xi_J (-1)^{p(p-1)/2}
$$
where the coefficients $\Omega_{I J}$ are real constants, symmetric in the indices $I$ and $J$. We also sum only over increasing multiindices in order to have a unique representation. As in the complex case (see e.g. \cite{Harvey-Knapp}), we will (as in \cite{Lagerberg}) consider three types of positivity: (Cf. also \cite{Lelong}, where positivity of forms and currents was first introduced, and \cite{Demailly}, but notice that the terminology there is slightly different.)

First, we define an {\it elementary form} of bidegree $(p,p)$ to be a form that can be written 
$$
E=\alpha_1\w \alpha_1^\#\w ...\alpha_p\w \alpha_p^\#
$$
for $\alpha_j$ of bidegree $(1,0)$.
The  {\it strongly positive} forms  are the forms that can be written as a finite sum 
$$
\Omega=\sum c_s E_s,
$$
where $c_s>0$ and $E_s$ are  elementary forms. This is a closed cone. The fact that it is closed is not evident from first principles; a proof was recently given by Xia, \cite{Mingchen}. (The proof in \cite{Mingchen} is for strongly positive form in the complex setting, but carries over to our setting.)

A weaker notion is just plain  positivity. $\Omega$ is called {\it positive} if the matrix $(\Omega_{I J})$ is positive definite. This is equivalent to saying that the quadratic form 
$$
Q(\alpha,\alpha):= \Omega\w \alpha\w \alpha^\# (-1)^{(n-p)(n-p-1)/2},
$$
defined on the space of forms $\alpha$ of bidegree $(p,0)$, is positive definite.

Finally, $\Omega$ is said to be {\it weakly positive} if 
$$
\Omega\w E\geq 0,
$$
for all elementary forms $E$,  or equivalently $\Omega\w \sigma \geq 0$ for all strongly positive forms $\sigma$. One easily sees that elementary forms, and hence all strongly positive forms are positive. Likewise, a positive form wedged with a positive form of complementary bidegree is a positive form of full degree. Hence a positive form wedged with a strongly positive form is also positive, so positive forms are weakly positive. 

If $\omega$ is of bidegree $(1,1)$,
$$
\omega=\sum a_{j k}dx_j\w d\xi_k
$$
with $(a_{j k})$ symmetric, we can, by   the spectral theorem write  $\omega=\sum\lambda_j dy_j\w d\xi_j$ after an orthonormal change of coordinates.  Hence $\omega$ is strongly positive if and only if all the eigenvalues $\lambda_j\geq 0$, i. e. if $(a_{j k})$ is positively semidefinite. It is easily seen that this is equivalent to $\omega$ being weakly positive, so for forms of bidegree $(1,1)$ all notions of positivity coincide.
 
Thus we have three closed cones in $\Lambda^{p,p}(R^n_x\oplus \R^n_\xi)$, $SP_p\subseteq P_p\subseteq WP_p$, strongly positive, positive and weakly positive respectively. In bidegree $(1,1)$ and $(n-1,n-1)$ all the cones are equal; otherwise all inclusions are strict.  By definition the cone of strongly positive $(p,p)$-forms is dual to the cone of weakly positive $(n-p,n-p)$-forms  under the non degenerate pairing coming from the wedge product. The cone of positive $(p,p)$-forms  is dual to the cone of positive forms of bidegree $(n-p,n-p)$. Note also that if $\Omega$ is positive in any of these senses and $\omega$ is a positive $(1,1)$-form, then $\omega\w \Omega$ is again positive in the same sense (use the spectral theorem again).

So far the discussion is completely parallel to the complex case. There are, however, important differences, due to the fact that we have a different notion of bigrading; (number of $dx_i$, number of $d\xi_j$) versus (number of $dx_i+i d\xi_i$, number of $dx_j-i d\xi_j)$ in the complex setting. To distinguish the two bigradings we will sometimes write bidegree $(p,q)_{\R}$ for the real case and bidegree $(p,q)_{\C}$ for the complex case.

In the complex case the counterparts of our forms $\alpha\w \alpha^\#$, where $\alpha$ is $(1,0)_\R$, are forms $i a\w\bar a$, where $a$ is $(1,0)_\C$. An elementary form in the complex setting is a $p$-fold wedge product of such forms
$$
ia_1\w \bar a_1\w ...ia_p\w\bar a_p,
$$
and a form is strongly positive in the complex setting if it is a positive combination of elementary forms. 

Note first that if $\alpha$ is of bidegree $(1,0)_\R$, then $a:=\alpha -iJ(\alpha)$ satisfies
$ J(a)=ia$, so $a$ is $(1,0)_\C$. Since 
$$
(i/2) a\wedge\bar a=-\alpha\w J(\alpha)=\alpha\w\alpha^\#,
$$
we see that an elementary form in the real sense is elementary in the complex sense as well.

On the other hand a complex $(1,0)_\C$ form $a$ is uniquely determined by its real part, $\gamma$,
$$
a=\gamma-iJ(\gamma), \,\,\text{and}\,\, ia\w\bar a=-2\gamma\w J(\gamma)
$$
($J$ is the complex structure).  Looking at the real bigrading we can write $\gamma=\alpha+\beta$ where $\alpha$ is $(1,0)_\R$ and $\beta$ is $(0,1)_\R$. Therefore 
$$
\gamma\w J(\gamma)=\alpha\w J(\beta)+\beta\w J(\alpha)+ \alpha\w J(\alpha)+ \beta\w J(\beta),
$$
so $ia\w\bar a$ is of real bidegree $(1,0)$ if and only if 
$$
\alpha\w J(\beta)+\beta\w J(\alpha)=0.
$$
Since the two terms here are of different real bidegree, each of them must vanish, which means that $\alpha$ and $J(\beta)$ are parallel. Equivalently, if $i(a\w\bar a)$ is elementary in the real sense, after multiplication with a complex number, $a=\gamma-iJ(\gamma)$, where $\gamma $ is $(1,0)_\R$. Thus the set of elementary forms in the real sense is strictly included in the set of complex elementary forms. 

A fundamental fact in the complex case is that any real symmetric $(p,p)$-form
$$
\Omega=\sum \Omega_{I J} dz_I\wedge d\bar z_J (-1)^{p(p-1)/2}, \Omega_{I J}=\Omega_{J I}
$$
can be written as a finite real combination of complex elementary forms, i. e. as a difference of strongly positive forms (see \cite{Harvey-Knapp} and \cite{Demailly}). The analog of this fails in the real setting, as first discovered in  \cite{Gubleretal}; a general symmetric $(p,p)_\R$ form can not be written as (a limit of) a finite combination of real elementary forms. To see this we introduce the linear map on superforms
\be\label{T}
Tu:= \sum dx_i\w \delta_{\xi_i}(u),
\ee
where $\delta_{\xi_i}$ is contraction with the vector field $\partial / \partial \xi_i$. This operator will reappear and be more motivated later. For the moment we just notice that it has the following properties:

1. It is independent of choice of linear coordinates.

2. It is a derivation; $T(u\wedge v)=T(u)\w v+u\w T(v)$.

3. If $\alpha$ is $(1,0)_\R$, then $T(\alpha)=0$ and $T(\alpha^\#)=\alpha$.

By properties 3 and 2, $T(\alpha\wedge\alpha^\#)=0$, which, using  property 2 again,  means that any elementary form, and hence any linear combination of elementary forms, are annihilated by $T$. On the other hand, there are symmetric $(p,p)_\R$-forms that are not annihilated by $T$, like 
$$
dx_1\w dx_2\w d\xi_3\w d\xi_4+ dx_3\w dx_4\w d\xi_1\w d\xi_2
$$
in the complexification of $\R^4$, and such forms can not be written as combinations of elementary forms. For lack of better terminology we say that a (necessarily symmetric) $(p,p)_\R$ form is {\it strong} if it is a difference between two strongly positive forms, and we denote the space of all strong forms of bidegree $(p,p)$, $S_p$ - or sometimes just $S$ when the degree is understood or variable. 

Thus $S\subseteq Ker(T)$ and  $Ker(T)$ is a proper subspace of the space of all symmetric forms. 
The next theorem is   Theorem \ref{theoremstrongform} in Section \ref{charstrongform}. We postpone the proof till  there since it uses tools that will be developed later.
\begin{thm}\label{strongforms1}
A  $(p,p)$-form , $\Omega$, that satisfies $T(\Omega)=0$ is strong. In other words, $S=Ker(T)$.
\end{thm}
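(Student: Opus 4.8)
The plan is to prove the pointwise linear-algebra statement $S=Ker(T)$ for symmetric $(p,p)$-forms at one point; for a closed form with $T(\Omega)=0$ it then follows that $\Omega$ is strong, since being a difference of two strongly positive forms is a fibrewise condition. As the inclusion $S\subseteq Ker(T)$ has already been noted, the task is the reverse inclusion: a symmetric $(p,p)$-form killed by $T$ is a difference of two strongly positive forms.

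I would first recast this via Hahn--Banach as a statement about weakly null forms. A linear functional on the space $\Lambda^{p,p}_{\mathrm{sym}}$ of symmetric $(p,p)$-forms has the form $\Omega'\mapsto\int\Omega'\w\Psi$ for a unique symmetric $(n-p,n-p)$-form $\Psi$, by non-degeneracy of the wedge pairing; it vanishes on the linear span $S$ of the strongly positive $(p,p)$-forms exactly when $\Psi\w E=0$ for every elementary form $E$, i.e. exactly when $\Psi$ is simultaneously weakly positive and weakly negative --- a weakly null form. So $\Omega\in S$ if and only if $\int\Omega\w\Psi=0$ for every weakly null $(n-p,n-p)$-form $\Psi$, and the theorem becomes the assertion that $Ker(T)$ is orthogonal to every weakly null form. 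Because $T$ is a derivation and $\Omega\w w=0$ for degree reasons when $\Omega$ has bidegree $(p,p)$ and $w$ bidegree $(n-p-1,n-p+1)$, one gets $\int(T\Omega)\w w=-\int\Omega\w Tw$, so $T$ is anti-self-adjoint for the wedge pairing; combined with the fact that $T$ kills every elementary form, this shows that any $\Psi$ in the range of $T\colon\Lambda^{n-p-1,n-p+1}\to\Lambda^{n-p,n-p}$ is weakly null and orthogonal to $Ker(T)$. Hence it is enough to prove the converse: \emph{every weakly null $(n-p,n-p)$-form lies in the range of $T$.}

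To do that I would pass to the standard dictionary identifying a symmetric $(q,q)$-form, $q=n-p$, with an element of $\mathrm{Sym}^2(\Lambda^q\R^n)$ via $dx_I\w d\xi_J\leftrightarrow e_I\odot e_J$. Under it an elementary form becomes, up to sign, the square $v\odot v$ of a decomposable $q$-vector $v$, so $S_q$ becomes the linear span of these squares; this span is the ``Cartan'' component of $\mathrm{Sym}^2(\Lambda^q\R^n)$, equivalently the orthogonal complement of the span of the degree-two Pl\"ucker relations of the Grassmannian $\mathrm{Gr}(q,n)$, and correspondingly the weakly null forms are precisely that degree-two Pl\"ucker component. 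Meanwhile $T$ becomes the $GL(n)$-equivariant contraction-and-wedge operator $\sum_i(e^i\w\cdot)\otimes\iota_{e_i}\colon\mathrm{Sym}^2(\Lambda^q\R^n)\to\Lambda^{q+1}\R^n\otimes\Lambda^{q-1}\R^n$, and a short coefficient computation shows that $T(v\odot v)=0$ is exactly the system of Pl\"ucker relations for $v$ (which re-proves $S_q\subseteq Ker(T)$).

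What is left is to match up the $GL(n)$-subrepresentations, and this I expect to be the main obstacle. The map $T$ annihilates the Cartan component, and by Schur's lemma it is zero or injective on each of the other irreducible components of $\mathrm{Sym}^2(\Lambda^q\R^n)$; so everything reduces to showing that $T$ does \emph{not} vanish on any of those --- equivalently that $\mathrm{rank}(T)$ equals the number of independent Pl\"ucker quadrics of $\mathrm{Gr}(q,n)$ --- for then $Ker(T)$ coincides with the Cartan component, which is $S_q$. When $q=2$ this is classical: $T$ is, up to isomorphism, the Bianchi map, $Ker(T)$ is the space of algebraic curvature tensors, $\mathrm{Sym}^2(\Lambda^2\R^n)=\mathbf S_{(2,2)}\R^n\oplus\Lambda^4\R^n$ as a $GL(n)$-module, and because the form $dx_1\w dx_2\w d\xi_3\w d\xi_4+dx_3\w dx_4\w d\xi_1\w d\xi_2$ displayed earlier is not killed by $T$, the map $T$ is injective on $\Lambda^4\R^n$, so $Ker(T)=\mathbf S_{(2,2)}\R^n=S_2$; the cases $q=1$ and $q=n-1$ are trivial because there $\mathrm{Sym}^2(\Lambda^q\R^n)$ has only the Cartan component and $T\equiv0$, which is also why the three positivity cones agree in those bidegrees. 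For general $q$ one needs, for each non-Cartan component, a form on which $T$ does not vanish (or the equivalent count of Pl\"ucker quadrics); this is the substantive step, and it is why the proof is best deferred to the tools of section~\ref{valuations}.
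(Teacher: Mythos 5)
Your reduction is correct and elegant, but the proposal stops at the hard part, as you yourself note, and the missing step is not where the paper's tools actually lie.

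The first half of what you do is sound. Anti-self-adjointness of $T$ under the wedge pairing holds: for $\Omega$ of bidegree $(p,p)$ and $w$ of bidegree $(n-p-1,n-p+1)$ one has $\Omega\w w=0$ because the $\xi$-degree exceeds $n$, so the derivation property gives $T(\Omega)\w w=-\Omega\w T(w)$. Combined with $T|_{S}=0$ and the nondegeneracy of the pairing on symmetric forms (Hodge $*$ preserves the symmetric class), this shows $\mathrm{Range}(T)\subseteq W_0$ and reduces $S=\Ker T$ to the surjectivity statement $W_0=\mathrm{Range}(T)$, equivalently to the injectivity of $T$ on the non-Cartan part of $\mathrm{Sym}^2(\Lambda^q\R^n)$. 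Your identification of $S_q$ with the Cartan component and of $T$ with the wedge-contraction map, with $T(v\odot v)=0$ recovering the Pl\"ucker relations, is likewise the right dictionary.

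The gap is the step you flag as ``the main obstacle.'' To conclude $\Ker T=\text{Cartan}$ by Schur's lemma you must (i) know that $\mathrm{Sym}^2(\Lambda^q\R^n)$ is multiplicity-free as a $GL(n)$-module, so that ``zero or injective on each irreducible'' suffices, and (ii) exhibit, for \emph{every} non-Cartan constituent, a vector not annihilated by $T$ (or equivalently match $\mathrm{rank}\,T$ to the dimension of the Pl\"ucker ideal in degree two). You carry out (ii) only for $q=2$ (the Bianchi case) and the trivial cases $q=1,n-1$; for general $q$ and $n$ nothing is produced. This is not a minor omission: it is precisely the content of the theorem, and it is the single step that requires an actual argument. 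Moreover your expectation that ``the tools of section \ref{valuations}'' supply the missing representation-theoretic input is incorrect: what the paper does there is something else entirely. It uses the map $\Phi$ from the normal-cycle dictionary to conjugate $T$ acting on $(p,p)$-superforms into the Lefschetz operator $L=\beta\w\cdot$ acting on forms of total degree $n$ on $\C^n_{x+iy}$; since $\Ker L=\Ker\Lambda$ in middle degree, the statement becomes that primitive $n$-forms are spanned by decomposable ones (Theorem~\ref{theoremprimitiveform}), and that is proved by a direct induction on dimension via contraction with a covector $a$ and restriction to $a^{\perp_\C}$. No plethysm, no Schur, no Pl\"ucker-quadric dimension count. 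So even if you were to complete your route it would be a genuinely different proof; as it stands, what you have written down is a correct reformulation of the problem, not a proof of it.
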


The next lemma will play an important role later when we discuss valuations on convex bodies.
\begin{lma}\label{nondeg} Let $\Omega$ be a strong $(p,p)_\R$-form and assume that
$$
\Omega\w\Omega'=0
$$
for all strong (or, equivalently,  elementary) forms $\Omega'$ of bidegree $(n-p,n-p)_\R$. Then $\Omega=0$.
\end{lma}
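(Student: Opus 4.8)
The plan is to use the duality between strong (equivalently, strongly positive) forms of bidegree $(p,p)$ and weakly positive forms of bidegree $(n-p,n-p)$. First I would recall that by definition the pairing $(\Omega,\Omega')\mapsto\Omega\w\Omega'$ (followed by the superintegral, or just reading off the $(n,n)$-coefficient) is non-degenerate on the full space of symmetric forms, and that the cone $SP_{n-p}$ of strongly positive $(n-p,n-p)$-forms is dual to the cone $WP_p$ of weakly positive $(p,p)$-forms. The hypothesis says $\Omega$ annihilates every elementary form of complementary bidegree, hence annihilates every strongly positive form of that bidegree, since those are (limits of) positive combinations of elementary forms.

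Next I would exploit that $\Omega$ itself is strong, so $\Omega=\Omega_+-\Omega_-$ with $\Omega_\pm$ strongly positive $(p,p)$-forms. The key point is that for a strongly positive $(p,p)$-form $\sigma$ and a strongly positive $(n-p,n-p)$-form $\tau$, one has $\sigma\w\tau\geq 0$ (a positive combination of products of elementary forms, each of which is a positive multiple of a volume form), with strict positivity available when $\sigma\neq 0$ by choosing $\tau$ suitably — concretely, if $\sigma=\sum c_sE_s$ with some $c_s>0$ and $E_s=\alpha_1\w\alpha_1^\#\w\cdots\w\alpha_p\w\alpha_p^\#$, one completes each $\{\alpha_1,\dots,\alpha_p\}$ to a basis and takes $E_s'$ to be the wedge of the $\alpha_j^\#\w\alpha_j$ over the complementary indices, so that $E_s\w E_s'$ is a positive multiple of $dV$ while $E_t\w E_s'=0$ for the "wrong" terms. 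Applying the hypothesis with $\Omega'=E_s'$ forces the coefficient $c_s$ (more precisely the relevant diagonal entry of the matrix of $\Omega_+$ in the adapted basis) to be compensated exactly by the corresponding entry of $\Omega_-$. Iterating over enough elementary test forms $\Omega'$ pins down $\Omega_+=\Omega_-$, hence $\Omega=0$.

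Alternatively, and perhaps more cleanly, I would argue directly on matrix entries: after noting $\Omega\in S_p\subseteq\mathrm{Ker}(T)$ and writing $\Omega=\sum\Omega_{IJ}\,dx_I\w d\xi_J$ with $\Omega_{IJ}=\Omega_{JI}$, I would test against $\Omega'=dx_{I^c}\w d\xi_{J^c}$ (up to sign and the elementary-form caveat). Since strong forms are in particular weakly positive differences, and since every monomial $dx_{I^c}\w d\xi_{J^c}$ with $I^c=J^c$ is (a sign times) an elementary form, the hypothesis gives $\Omega_{II}=0$ for all $I$; a positive form with vanishing diagonal is zero, but here $\Omega$ is only a difference of such forms, so I additionally use off-diagonal elementary test forms built from sums $\alpha_j+\beta_j$ of basis one-forms to recover the off-diagonal entries, exactly as in the standard proof that positivity of the diagonal on all coordinate systems forces vanishing.

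The main obstacle I anticipate is the elementary-form subtlety: not every coordinate monomial $dx_{I^c}\w d\xi_{J^c}$ is literally an elementary form when $I^c\neq J^c$, so I cannot test against monomials directly and must instead build enough genuine elementary forms $\alpha_1\w\alpha_1^\#\w\cdots\w\alpha_{n-p}\w\alpha_{n-p}^\#$ — by taking $\alpha_j$ to be suitable linear combinations of the $dx_i$ — whose wedge products with $\Omega$ isolate individual entries (or symmetric pairs of entries) of the matrix $(\Omega_{IJ})$. This is the same polarization trick used to show $SP_p$ spans $S_p$ and that the pairing $SP_p\times WP_{n-p}$ is non-degenerate; once one has a spanning family of elementary forms for $S_{n-p}$, the conclusion is immediate because the hypothesis says $\Omega$ lies in the annihilator of a spanning set of $S_{n-p}$ under a pairing that is non-degenerate on $S_p\times S_{n-p}$ (which in turn follows from Theorem \ref{strongforms1} identifying $S=\mathrm{Ker}(T)$ together with the standard fact that $\mathrm{Ker}(T)$ and its complementary-degree counterpart pair non-degenerately). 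Hence $\Omega=0$.
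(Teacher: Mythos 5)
Your proposal correctly identifies the structural content of the lemma — that it amounts to non-degeneracy of the wedge pairing between $S_p$ and $S_{n-p}$ — and correctly notes that elementary forms span $S_{n-p}$, so the hypothesis is equivalent to $\Omega\wedge\sigma=0$ for all $\sigma\in S_{n-p}$. But the argument never actually proves the non-degeneracy: at the end you appeal to ``the standard fact that $\mathrm{Ker}(T)$ and its complementary-degree counterpart pair non-degenerately,'' and that fact \emph{is} the lemma (together with Theorem \ref{strongforms1}), so the closing step is circular. The earlier sketches do not escape this. In Approach~1, the decomposition $\Omega=\Omega_+-\Omega_-$ is highly non-unique and the ``iterating over enough elementary test forms $\Omega'$ pins down $\Omega_+=\Omega_-$'' step is not a well-defined procedure; you cannot expect diagonal tests in fixed bases to force a particular decomposition to collapse. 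In Approach~2, testing against elementary $E$'s and polarizing gives you exactly $\Omega\wedge\sigma=0$ for all $\sigma\in S_{n-p}$ — but \emph{not} for all symmetric $\sigma$, since the polarization derivatives of elementary forms stay inside $S_{n-p}$, and $S_{n-p}$ is a proper subspace of the symmetric $(n-p,n-p)$-forms (e.g.\ $dx_1\w dx_2\w d\xi_3\w d\xi_4+dx_3\w dx_4\w d\xi_1\w d\xi_2$ lies outside it). So you are back to the same non-degeneracy question you started with, and ``the standard proof that positivity of the diagonal on all coordinate systems forces vanishing'' is not available because $\Omega$ is not assumed positive, only strong.

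The missing ingredient is the Hodge star: the paper observes that if $E=\alpha_1\w\alpha_1^\#\w\cdots\w\alpha_p\w\alpha_p^\#$ is elementary, then after passing to an orthonormal basis $e_1,\ldots,e_n$ for $(1,0)$-forms with $[e_1,\ldots,e_p]=[\alpha_1,\ldots,\alpha_p]$, one has $*E=\pm\, e_{p+1}\w\cdots\w e_n\w e_{p+1}^\#\w\cdots\w e_n^\#$, which is again elementary. Hence $*\Omega$ is strong whenever $\Omega$ is strong, and one may take $\Omega'=*\Omega$ in the hypothesis to get $0=\Omega\w *\Omega=\|\Omega\|^2\,dV$, so $\Omega=0$. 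This is exactly the non-degeneracy statement your argument needs but does not supply; without it (or an equivalent), the proposal does not close.
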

\begin{proof}

We will use the Hodge *-operator on $\R^n_x\oplus \R^n_\xi$, defined so that
$$
A\w *B=(A,B) dV,
$$
for $(p,q)$-forms $A$ and $B$. Here $(A,B)$ is the Euclidean scalar product and $dV$ the volume element. We first claim that if $E$ is an elementary $(p,p)$-form,
$$
E=\alpha_1\w \alpha_1^\#\w ...\alpha_p\w \alpha_p^\#
$$
then $*E$ is also elementary, of bidegree $(n-p,n-p)$. To see this, use first that up to a sign $E$ equals
 $$
 \alpha_1\w ...\alpha_p\w\alpha_1^\#...\alpha_p^\#.
 $$
 We can replace $\alpha_j$ by an orthonormal basis $e_j$ for the space spanned by $\alpha_1, ...\alpha_p$; this changes $E$ only by multiplication with some non-zero number. Now complete the $e_j$:s to an orthonormal basis for all $(1,0)$-forms, $e_1, ...e_n$. Then 
 $$
 *E= \pm e_{p+1}\wedge ... e_n\w e^\#_{p+1}\wedge ... e^\#_n,
 $$
 hence elementary. Therefore $*\Omega$ is a strong form if $\Omega$ is strong. From the hypothesis we get
 $$
 0=\Omega\wedge *\Omega=\|\Omega\|^2 dV.
 $$
Hence $\Omega=0$.
\end{proof}
The next lemma gives a criterion for the vanishing of a strong form which is more useful in practice.
\begin{lma}\label{Criterion}
Any elementary $(p,p)$-form $E$ can be written
$$
E=Q^p/p!,
$$
where 
$$
Q=\sum q_{j k}dx_j\w d\xi_k,
$$
and the matrix $(q_{j k})$ is positively semidefinite.  Hence, a strong $(p,p)$-form $\Omega$ must vanish if
$$
\Omega\w Q^{n-p}=0
$$
for any $Q$ of this type. Conversely, any power $Q^p$ of  a semipositive form $Q$ is strongly positive.
\end{lma}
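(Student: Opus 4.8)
The plan is to prove the three assertions in turn, handling the first and third by reduction to diagonal form and deducing the second from Lemma \ref{nondeg}.

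For the first assertion, start from $E=\alpha_1\w\alpha_1^\#\w\dots\w\alpha_p\w\alpha_p^\#$. If the $\alpha_j$ are linearly dependent then $E=0$ and $Q=0$ works; otherwise, exactly as in the proof of Lemma \ref{nondeg}, replace the $\alpha_j$ by an orthonormal basis $e_1,\dots,e_p$ of their span. Since the transition matrix occurs twice — once through the $\alpha_j$ and once through the $\alpha_j^\#$, using that $\alpha\mapsto\alpha^\#$ commutes with real linear changes of coordinates because $J$ is intrinsic — this multiplies $E$ only by a positive constant $c$ (the square of a determinant; one must check that the reordering sign cancels). Completing the $e_j$ to an orthonormal coordinate system and putting $Q_0=\sum_{j\le p}dx_j\w d\xi_j$, the identities $(dx_j\w d\xi_j)^2=0$ together with the commutativity of the $2$-forms $dx_j\w d\xi_j$ give $Q_0^p=p!\,dx_1\w d\xi_1\w\dots\w dx_p\w d\xi_p$, so $E=c\,Q_0^p/p!=(c^{1/p}Q_0)^p/p!$. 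Then $Q:=c^{1/p}Q_0$ has, in these coordinates, the positively semidefinite coefficient matrix $c^{1/p}\,\mathrm{diag}(1,\dots,1,0,\dots,0)$, and positive semidefiniteness is unaffected by the orthonormal change back to the original coordinates.

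The third assertion is the dual computation: by the spectral theorem an orthonormal change of coordinates puts $Q$ in the form $\sum_j\lambda_j\,dx_j\w d\xi_j$ with every $\lambda_j\ge0$; expanding $Q^p$ and discarding the terms with a repeated index (which vanish since $(dx_j\w d\xi_j)^2=0$) yields $Q^p=p!\sum_{i_1<\dots<i_p}\lambda_{i_1}\cdots\lambda_{i_p}\,dx_{i_1}\w d\xi_{i_1}\w\dots\w dx_{i_p}\w d\xi_{i_p}$, a non-negative combination of elementary forms, hence strongly positive; and strong positivity is preserved under linear changes of coordinates since such changes send $(1,0)$-forms to $(1,0)$-forms and commute with $\#$.

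Finally, the middle statement is immediate from the first: if $\Omega$ is a strong $(p,p)$-form with $\Omega\w Q^{n-p}=0$ for every positively semidefinite $(1,1)$-form $Q$, then applying the first assertion with $p$ replaced by $n-p$ shows $\Omega\w E=0$ for every elementary $(n-p,n-p)$-form $E$, so Lemma \ref{nondeg} forces $\Omega=0$. I do not expect any serious obstacle here; the only delicate point is the bookkeeping of signs and of the positive constant $c$ in the first assertion, which is essentially already carried out in the proof of Lemma \ref{nondeg}.
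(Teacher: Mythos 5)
Your proof is correct and takes essentially the same route as the paper's: reduce the elementary form to $e_1\w e_1^\#\w\dots\w e_p\w e_p^\#$ via an orthonormal basis of the span, identify it with $Q^p/p!$ for a diagonal $Q$, appeal to Lemma \ref{nondeg} for the middle assertion, and use the spectral theorem for the converse. You are slightly more careful than the paper in noting that the scalar $c$ picked up in the reduction is a square of a determinant (hence positive) and must be absorbed into $Q$ via $c^{1/p}$, but that is the only substantive difference.
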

\begin{proof}
As in the proof of lemma \ref{nondeg} we may assume that 
$$
E=e_1\w e_1^\#\w ...e_p\w e_p^\#,
$$
where $e_1, ...e_p$ are the first $p$ elements in some orthonormal basis for the space of $(1,0)$-forms. Putting 
$$
Q=\sum_1^p e_j\w e_j^\#
$$ 
we get the first part of the lemma. The last part follows from the spectral theorem: Any positively semidefinite $(1,1)$-form  can be written 
$$
\sum \lambda_j dy_j\w d\eta_j,
$$
where $\lambda_j\geq 0$,
after an orthonormal change of coordinates. 
\end{proof}

The fact that not all forms can be written as linear combinations of elementary forms, i.e.  as differences of strongly positive forms, has the inconvenient consequence that some non-zero forms are both weakly positive and weakly negative.  Recall that $WP$ denotes the cone of weakly positive forms, and $SP$ the cone of strongly positive forms. (We look at forms of a fixed bidegree but suppress the bidegree in the notation when the degree is understood.)  We put
$$
W_0:= WP\cap(-WP).
$$ 
Thus $W_0$ is the linear space of forms that are both weakly positive and weakly negative; we will call them {\it weakly null}. We claim that $W_0$ is the orthogonal complement of $S$ both with respect to the scalar product on $(p,p)$- forms and with respect to the pairing given by the wedge product. Indeed,
$$
W_0=\{\Omega; \Omega\w \Omega'=0, \Omega'\in SP_{n-p}\}=\{\Omega; \Omega\w \Omega'=0, \Omega'\in S_{n-p}\}=\{\Omega; (\Omega, \Omega')=0, \Omega'\in S_p\},
$$
since the Hodge * of a strong form is strong. 

We now move on to discuss positivity of smooth (or continuous) forms with variable coefficients and of currents. A form with continuous coefficients is defined to be positive in any of the senses discussed if it is positive at any fixed point. Similarily, it belongs to the spaces $S$ and $W_0$ if it does at any point. The corresponding notions for currents is defined by duality. We will denote the action of a current on a smooth form by integrals, which is reasonable since in the end we will only work with currents with measure coefficients.

Thus, a symmetric current, $\Omega$,  of bidegree $(p,p)$ is strongly positive if 
$$
\int \Omega\w\Omega'\geq 0
$$ 
for all weakly positive $\Omega'$ (of bidegree $(n-p,n-p)$) with compact support, and $\Omega$ is (weakly) positive if the same thing holds for $\Omega'$ (strongly) positive. In case $\Omega$ is actually a continuous form, this coincides we the notions for forms, by duality of the respective cones. In the same way we define the spaces $S$ and $W_0$ for currents. Just like for smooth forms, all these positivity notions are preserved if we wedge $\Omega$ with a smooth positive $(1,1)$-form, as can be seen by duality, and the spaces $S$ and $W_0$ are also preserved. 

Note that if a current is positive in any of these senses, its translates are positive in the same sense. This means that convolutions with a smooth positive function , $\Omega\star \chi$ (defined coefficient-wise) is also positive. As a consequence, we can regularize a positive current, and approximate it by smooth positive forms, and also approximate a current in $S$ by smooth forms in $S$. This means that for many statements about positive currents, it is enough to prove them for smooth positive forms. As a consequence we get from Lemma \ref{Criterion}
\begin{prop} \label{Current-Criterion} Let $\Omega$ be a current in $S$ of bidegree $(p,p)$ and assume that
\be\label{hypoth}
\int \Omega\w \chi Q^{n-p}=0
\ee
for all smooth  forms $Q=\sum q_{i j}dx_i\w d\xi_j$ with  symmetric coefficients such that $(q_{i j})$ is positively semidefinite, and $\chi$ a smooth function  with compact support. Then $\Omega=0$. The same conclusion follows if (\ref{hypoth}) is only supposed to hold when   $Q=\ddhash\psi$, where   $\psi$ is smooth.
\end{prop}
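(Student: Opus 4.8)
The plan is to reduce the whole statement to the pointwise criterion of Lemma \ref{Criterion} by regularizing $\Omega$. The first thing to notice is that it already suffices to use forms $Q$ with \emph{constant} positively semidefinite coefficients: for such a $Q$, letting $\chi$ run over all compactly supported test functions, (\ref{hypoth}) says exactly that the $(n,n)$-current $\Omega\w Q^{n-p}$ vanishes identically.

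Next I would regularize. Let $\rho_\epsilon\geq 0$ be a standard mollifier and set $\Omega_\epsilon=\Omega\star\rho_\epsilon$, the coefficient-wise convolution. By the remarks preceding the proposition, $\Omega_\epsilon$ is a smooth form that still lies in $S$, and $\Omega_\epsilon\to\Omega$ as currents. Since $Q$ has constant coefficients, wedging with $Q^{n-p}$ commutes with convolution, so $\Omega_\epsilon\w Q^{n-p}=(\Omega\w Q^{n-p})\star\rho_\epsilon=0$. But $\Omega_\epsilon\w Q^{n-p}$ is now a \emph{smooth} $(n,n)$-form vanishing as a current, hence it vanishes at every point: $\Omega_\epsilon(x)\w Q^{n-p}=0$ for all $x$ and all constant positively semidefinite $Q$. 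Fixing $x$, the form $\Omega_\epsilon(x)$ is strong (because $\Omega_\epsilon\in S$), so Lemma \ref{Criterion} forces $\Omega_\epsilon(x)=0$. Thus $\Omega_\epsilon\equiv 0$ for every $\epsilon$, and letting $\epsilon\to 0$ gives $\Omega=0$; this proves the first assertion.

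For the variant in which (\ref{hypoth}) is only assumed for $Q=\ddhash\psi$, it is enough to recover the constant-coefficient instances used above. Given a constant positively semidefinite symmetric matrix $(q_{jk})$, take the quadratic polynomial $\psi(x)=\frac{1}{2}\sum_{j,k}q_{jk}x_jx_k$; then $\psi$ is smooth, convex, and $\ddhash\psi=\sum q_{jk}dx_j\w d\xi_k=Q$, since $\psi$ has constant Hessian $(q_{jk})$. (The point is that $\psi$ itself need not have compact support --- only $\chi$ does.) Plugging this $\psi$ into the hypothesis gives $\int\Omega\w\chi Q^{n-p}=0$ for all $\chi$, which is precisely the input for the argument above, so the conclusion follows as before.

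The argument is largely bookkeeping, and the step that really has to be checked is that the regularization is simultaneously compatible with membership in $S$ --- which is exactly the remark that convolution with a positive function preserves the positivity classes --- and with the vanishing hypothesis, which works only because we feed in $Q$'s with constant coefficients so that $\w Q^{n-p}$ passes through the convolution. I do not expect any obstacle beyond that.
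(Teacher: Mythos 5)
Your proposal is correct and follows essentially the same route as the paper: regularize $\Omega$ by convolution, observe that the vanishing hypothesis survives regularization (the paper phrases this as translation invariance of the hypothesis; you phrase it as commuting the convolution past a constant-coefficient $Q^{n-p}$ — same content), apply Lemma \ref{Criterion} pointwise to the smooth $\Omega_\epsilon$, and for the $\ddhash\psi$ variant take $\psi$ to be a quadratic with prescribed Hessian. No gap; your version just makes explicit the reduction to constant-coefficient $Q$ that the paper leaves tacit.
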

\begin{proof} The first part of the proposition follows from Lemma \ref{Criterion} by regularization since the hypothesis is invariant under translation. In the same way we see that it is enough to prove the last part under the assumption that $\Omega$ is smooth. But then \ref{hypoth} implies that $\Omega\w (\ddhash\psi)^{n-p}=0$ pointwise and we can just apply Lemma 
\ref{Criterion} again (to $\psi=\sum q_{i j}x_ix_j$).
\end{proof}
For emphasis we state the following stronger version of Proposition \ref{hypoth} separately. 

\begin{prop}\label{positive}
Let $\Omega$ be a current  of bidegree $(p,p)$ in an open set $U$ and assume that
\be\label{2hypoth}
\int \Omega\w \chi Q^{n-p}\geq 0
\ee
for all  $(1,1)$-forms $Q= \ddhash\psi$ where $\chi\geq 0$ is smooth of compact support in $U$ and $\psi$ is smooth and convex. Then $\Omega$ is weakly positive. Conversely, if $\Omega$ is weakly positive (\ref{2hypoth}) holds.
\end{prop}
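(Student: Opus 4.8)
The plan is to prove the two implications separately. The converse direction is immediate from Lemma \ref{Criterion}: if $\Omega$ is weakly positive and $\psi$ is smooth and convex, then $Q=\ddhash\psi$ has positively semidefinite, symmetric coefficient matrix, so $Q^{n-p}$ is strongly positive by the last part of that lemma; multiplying by $\chi\geq 0$ keeps it strongly positive and compactly supported, and hence $\int\Omega\w\chi Q^{n-p}\geq 0$ by the very definition of a weakly positive current. So the real content is the forward implication.

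For that I would first reduce to the case that $\Omega$ is a smooth form. The key observation is that hypothesis (\ref{2hypoth}) is invariant under translation of $\Omega$: for a small vector $v$ the translate $\tau_v\Omega$ satisfies it with $\chi$ and $\psi$ replaced by $\tau_{-v}\chi$ (still $\geq 0$, smooth and, if $\supp\chi$ lies well inside $U$, compactly supported in $U$) and $\tau_{-v}\psi$ (still smooth and convex), using that $\ddhash$ commutes with translations. Writing the regularization $\Omega\star\chi_\epsilon=\int\chi_\epsilon(v)\,\tau_v\Omega\,dv$ as a positive average of translates, we conclude that the smooth form $\Omega\star\chi_\epsilon$ again satisfies (\ref{2hypoth}) on $U_\epsilon:=\{x\in U:d(x,\partial U)>\epsilon\}$. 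Since weak positivity is a pointwise notion that is stable under limits of currents, it suffices to show that each $\Omega\star\chi_\epsilon$ is weakly positive on $U_\epsilon$; so we may assume $\Omega$ smooth.

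Now let $\Omega$ be smooth and let $E$ be an arbitrary elementary $(n-p,n-p)$-form. By Lemma \ref{Criterion} we may write $E=Q_0^{n-p}/(n-p)!$ with $Q_0=\sum q_{jk}\,dx_j\w d\xi_k$, where $(q_{jk})$ is constant, symmetric and positively semidefinite. The quadratic function $\psi(x)=\frac12\sum q_{jk}x_jx_k$ is smooth and convex, and $\ddhash\psi=Q_0$. Plugging this $\psi$ and an arbitrary $\chi\geq 0$ in $C_c^\infty(U)$ into (\ref{2hypoth}), and using that $Q_0$ is a constant form, gives $\int_U\chi\,g\,dV\geq 0$, where $\Omega\w Q_0^{n-p}=g\,dV$ with $g$ continuous. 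Since $\chi\geq 0$ is arbitrary, $g\geq 0$ everywhere, i.e. $\Omega(x)\w E\geq 0$ at every point $x$. As $E$ was an arbitrary elementary form, $\Omega$ is weakly positive, and together with the reduction this completes the proof.

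The only point requiring care is the first step, namely that the hypothesis survives regularization even though $\Omega$ is not yet known to be positive; this is exactly where one uses that (\ref{2hypoth}) is a translation-invariant condition on $\Omega$. After that the argument is a direct application of Lemma \ref{Criterion}, which says precisely that the elementary $(n-p,n-p)$-forms are, up to positive scalars, the $(n-p)$-th powers of $\ddhash$ of convex quadratics.
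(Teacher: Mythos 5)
Your proof is correct and follows the same route as the paper's: reduce to smooth $\Omega$ by regularization using translation-invariance of the hypothesis, then apply Lemma~\ref{Criterion} pointwise (taking $\psi$ a convex quadratic to realize an arbitrary elementary $(n-p,n-p)$-form as $(\ddhash\psi)^{n-p}$ up to a positive factor), with the converse read off from the last part of the same lemma. You have merely spelled out the steps that the paper compresses into references to Proposition~\ref{Current-Criterion} and Lemma~\ref{Criterion}.
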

\begin{proof}
Again, it is enough to prove that $\Omega$ is weakly positive assuming that  $\Omega$ is smooth. Then the hypothesis says that $\Omega\w (\ddhash\psi)^{n-p}\geq 0$ for all convex $\psi$.
The conclusion then follows from the first part of Lemma \ref{Criterion}. The second part follows from the second part of the same lemma.
\end{proof}

It will later be important to know that currents that are positive in various senses have measure coefficients and we will end this section with a result to that effect. If 
$$
\Omega=\sum \Omega_{I J} dx_I\w d\xi_J (-1)^{p(p-1)/2}
$$
we call
$$
\tau_\Omega:= \sum \Omega_{I I}
$$
the trace of $\Omega$. Notice that 
$$
\ddhash |x|^2/2=\sum dx_i\w d\xi_j=:\beta\,\, \text{and}\,\, \beta^q/q!=\sum_{|I|=q} dx_I\w d\xi_I (-1)^{q(q-1)/2}, 
$$
from which it follows that
$$
\tau_\Omega=\Omega\w\beta^{n-p}/(n-p)!.
$$
Hence the trace of $\Omega$ does not depend on the choice of orthonormal coordinates. 
It also follows, since $\beta^{n-p}$ is strongly positive,  that the trace is nonnegative if $\Omega$ is weakly positive. Notice also that, for any choice of orthonormal coordinates and  for any multiindex, that we can take to be $I=(1,2...p)$,
$$
\Omega_{I I} dV=\Omega\w e_{p+1}\w e_{p+1}^\#\w ...e_n\w e_n^\#\geq 0
$$
if $\Omega$ is weakly positive and  therefore $\Omega_{J J}\leq \tau_\Omega$ for any $J$.

The trace, however,  may be zero; indeed this happens if $\Omega\in W_0$. Thus there is no way to estimate the whole current in terms of the trace (like there is in the complex setting) if $\Omega$ is only weakly positive.
\begin{lma}\label{traceestimate} Let
$$
\Omega=\sum \Omega_{I J} dx_I\w d\xi_J (-1)^{p(p-1)/2}
$$
be a symmetric $(p,p)$-form which is either positive or weakly positive and strong. Then,  there is a uniform constant depending only on $p$ and $n$ such that for all indices $I, J$
$$
|\Omega_{I J}|\leq C\tau_\Omega.
$$
\end{lma}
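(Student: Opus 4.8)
The plan is to reduce to the case of a single diagonal-type estimate via the Cauchy--Schwarz-type inequality for positive forms, exploiting the pairing between strongly positive and weakly positive forms. First I would treat the genuinely positive case: if $\Omega$ is positive then the matrix $(\Omega_{IJ})$ is positive definite, and for a positive definite symmetric matrix one has the elementary bound $|\Omega_{IJ}|\leq \sqrt{\Omega_{II}\,\Omega_{JJ}}$. Combining this with the already-established inequality $\Omega_{JJ}\leq \tau_\Omega$ (valid for weakly positive $\Omega$, hence a fortiori for positive $\Omega$) gives $|\Omega_{IJ}|\leq \tau_\Omega$ directly, with $C=1$. So the positive case is essentially immediate from what precedes.

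The real content is the weakly positive and strong case. Here the obstacle is that the matrix $(\Omega_{IJ})$ need not be positive semidefinite, so the matrix Cauchy--Schwarz argument is unavailable. My approach would be to test $\Omega$ against suitable strongly positive $(n-p,n-p)$-forms built from an orthonormal basis $e_1,\dots,e_n$ of $(1,0)$-forms: the quantities $\Omega\w e_{K}\w e_K^\#$ (wedge over the complementary index set, with appropriate sign normalization) recover the diagonal entries $\pm\Omega_{JJ}$ in rotated coordinates, and these are nonnegative by weak positivity. By varying the orthonormal basis — e.g. replacing $e_i,e_j$ by $(e_i\pm e_j)/\sqrt2$ — one extracts the off-diagonal entries $\Omega_{IJ}$ as differences of such nonnegative diagonal quantities in rotated frames, each of which is bounded by the corresponding rotated trace. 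Since the trace $\tau_\Omega=\Omega\w\beta^{n-p}/(n-p)!$ is orthonormal-invariant, all these rotated traces equal $\tau_\Omega$, and one obtains $|\Omega_{IJ}|\leq C\tau_\Omega$ with $C$ depending only on how many basis rotations and sign choices are needed, i.e. only on $n$ and $p$.

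Where does the hypothesis ``strong'' enter? It is needed precisely because for a general weakly positive form the off-diagonal entries are genuinely not controlled by the trace (as the paragraph before the lemma warns: forms in $W_0$ have zero trace but can be nonzero). Being strong means $\Omega\in S=\mathrm{Ker}(T)$ is orthogonal to $W_0$, so one can argue that the rotated-frame diagonal combinations above in fact pin down $\Omega$ completely: the strong part of a form is recovered by pairing against strongly positive forms, and the test forms $e_K\w e_K^\#$ span a set whose strong combinations exhaust $S_{n-p}$ by the duality between $SP$ and the strong forms. I expect the main obstacle to be making this last bookkeeping clean — namely verifying that finitely many explicit orthonormal-frame wedge tests, each giving a nonnegative number $\leq \tau_\Omega$, suffice to bound every entry $\Omega_{IJ}$, so that the constant $C$ is genuinely combinatorial in $(n,p)$. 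One clean way to organize this: fix the standard frame and, for each pair of multiindices, exhibit $\Omega_{IJ}$ as an explicit finite $\mathbb{Z}$-linear combination of numbers of the form $\Omega\w\sigma$ with $\sigma$ strongly positive of unit norm, then invoke $0\leq \Omega\w\sigma\leq \|\sigma\|\,\tau_\Omega\cdot(\text{const})$ using that the trace dominates each ``diagonal slot.''
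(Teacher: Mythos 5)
Your proposal is essentially the same approach as the paper's. For the positive case both use the elementary matrix Cauchy--Schwarz bound together with $\Omega_{JJ}\leq\tau_\Omega$. For the weakly positive and strong case, the key idea you identify---that for $\Omega\in S$ every coefficient can be recovered from finitely many pairings $(\Omega,E)$ with $E$ elementary, each of which lies in $[0,\tau_\Omega]$---is exactly what the paper uses. The difference is only organizational: where you propose to produce the needed linear combinations by explicit orthonormal-frame polarization (and flag the resulting bookkeeping as the main obstacle), the paper sidesteps this by fixing a basis $E_1,\dots,E_N$ of $S$ consisting of elementary forms, writing $\Omega=\sum c_iE_i$, noting $0\leq\sum c_i(E_i,E_j)\leq\tau_\Omega$, and invoking invertibility of the Gram matrix $(E_i,E_j)$ to bound the $c_i$ and hence every $\Omega_{IJ}$. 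This abstract step closes precisely the gap you anticipated, so nothing is missing in spirit---the paper just avoids the combinatorics.
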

\begin{proof}
When $\Omega$ is positive this is a direct consequence of Cauchy's inequality, 
$$
|\Omega_{I J}|\leq (\Omega_{I I}+\Omega_{J J})/2\leq \tau_\Omega/2.
$$
Now assume that $\Omega$ is weakly positive and strong. Since $S$ is a subspace of the space of all symmetric $(p,p)$-forms it is spanned by a finite number, $N$,  of elementary forms, $E_1, ...E_N$, with $N$ depending only on $n$ and $p$. Discarding some of them we may assume the $E_j$ form a basis, which we fix. As in the proof of Lemma \ref{nondeg}, we may assume that each $E_j$ has the form
$$
E_j= e_1\w...e_p\w e_1^\#\w ... e_p^\#
$$
where the $e_k$ are elements in an orthonormal basis for the space of $(1,0)$-forms (which basis depends on $j$). Writing $\Omega$ in this basis, we see that, with $I=(1,...p)$,
$$
(\Omega, E_j)=\Omega_{I I}\leq \tau_\Omega,
$$
 for all $j$ since the trace is independent of the choice of orthonormal basis. Write $\Omega=\sum c_i E_i$. Then
$$
0\leq \sum c_i (E_i, E_j)\leq \tau_\Omega
$$ 
for all $j$. Since the matrix $(E_i,E_j)$ is invertible it follows that $(\sum c_j^2)^{1/2}\leq C\tau_\Omega$ (where $C$ does not depend on $\Omega$). Since
$$
|\Omega_{I J}|\leq \sum |c_i||(E_i)_{I J}|,
$$
the lemma follows. 
\end{proof}
The next proposition is the main consequence of the lemma.
\begin{prop}\label{estimate}
Let
$$
\Omega=\sum \Omega_{I J} dx_I\w d\xi_J (-1)^{p(p-1)/2}
$$
be a symmetric $(p,p)$-current which is either positive or weakly positive and strong. Then all the coefficients of $\Omega$ are measures, absolutely continuous with respect to the trace measure $\tau_\Omega$, with a Radon-Nikodym derivative bounded by a constant independent of $\Omega$. 
\end{prop}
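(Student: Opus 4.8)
The plan is to bootstrap the pointwise estimate of Lemma~\ref{traceestimate} from smooth forms to arbitrary currents by regularization, so that the Proposition becomes essentially a corollary of that lemma (where the real work, in particular the ``weakly positive and strong'' case, has already been done). First, though, I would record that the trace $\tau_\Omega$ is a priori a nonnegative Radon measure, not merely a distribution: since $\beta^{n-p}$ is strongly positive, $\chi\beta^{n-p}$ is a positive form for every $\chi\geq 0$, so $\int\Omega\w\chi\beta^{n-p}\geq 0$ whether $\Omega$ is positive or weakly positive, and as $\tau_\Omega=\Omega\w\beta^{n-p}/(n-p)!$ this exhibits $\tau_\Omega$ as a positive distribution, hence a positive Radon measure.

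Now let $\chi_\epsilon$ be a standard family of nonnegative smooth mollifiers and put $\Omega_\epsilon=\Omega\star\chi_\epsilon$, the convolution taken coefficientwise. As recalled in the discussion preceding Proposition~\ref{Current-Criterion}, $\Omega_\epsilon$ is a smooth form which is again positive, resp.\ weakly positive and strong; moreover $\Omega_\epsilon\to\Omega$ as currents, and, the trace being linear in the coefficients, $\tau_{\Omega_\epsilon}=\tau_\Omega\star\chi_\epsilon$. Applying Lemma~\ref{traceestimate} to the smooth form $\Omega_\epsilon$ at each point gives $|(\Omega_\epsilon)_{I J}(x)|\le C\,\tau_{\Omega_\epsilon}(x)$ for all $x$, with a constant $C=C(n,p)$ independent of $\Omega$ and of $\epsilon$.

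Then, for any test function $\phi$ of compact support,
$$
\bigl|\langle\Omega_{I J},\phi\rangle\bigr|=\lim_{\epsilon\to 0}\bigl|\langle(\Omega_\epsilon)_{I J},\phi\rangle\bigr|\le\lim_{\epsilon\to 0}C\int|\phi|\,d(\tau_\Omega\star\chi_\epsilon)=C\int|\phi|\,d\tau_\Omega,
$$
the last step holding because $\tau_\Omega\star\chi_\epsilon\to\tau_\Omega$ weakly. Thus the distribution $\Omega_{I J}$ satisfies $|\langle\Omega_{I J},\phi\rangle|\le C\int|\phi|\,d\tau_\Omega$ on $C_c(\R^n)$; by the Riesz representation theorem it is a Radon measure $\mu_{I J}$, and the displayed bound forces $|\mu_{I J}|\le C\,\tau_\Omega$ as measures. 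Hence $\mu_{I J}\ll\tau_\Omega$ with Radon--Nikodym derivative bounded by $C$, which is the assertion.

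The only step calling for real care is this limiting argument, and what makes it work is exactly what was prepared above: the constant in Lemma~\ref{traceestimate} depends only on $n$ and $p$, so it survives regularization unchanged; the class ``weakly positive and strong'' is stable under translation, hence under convolution (stability of the cone $SP$ and of the subspace $S$ of strong forms); and $\tau_\Omega$ is a Radon measure a priori, so that the weak convergence $\tau_\Omega\star\chi_\epsilon\to\tau_\Omega$ is meaningful. The remainder is routine measure theory.
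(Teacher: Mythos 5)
Your proof is correct and follows the same route as the paper: regularize $\Omega$ by convolution so that the pointwise bound of Lemma~\ref{traceestimate} applies, note the constant is independent of $\epsilon$, and pass to the limit using that $\tau_\Omega$ is a positive Radon measure. Your write-up is simply a more explicit unpacking of the paper's ``by regularization, the same thing holds if $\Omega$ is just a current'' step.
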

\begin{proof}
Assume first that $\Omega$ is smooth. Then, if $\chi$ is a smooth function with compact support we have by the previous lemma that
$$
|\int \chi \Omega_{I J}|\leq C\int|\chi| \tau_\Omega.
$$
By regularization, the same thing holds if $\Omega$ is just a current, positive or weakly positive and strong. Since $\tau_\Omega \geq 0$, it is a positive measure, so the claim follows.
\end{proof}

\newpage
\section{The theory of Bedford and Taylor.}\label{Bedford-Taylor}

In \cite{Bedford-Taylor1} and \cite{Bedford-Taylor2} Bedford and Taylor defined and studied currents of the form
$$
dd^c\phi_1\w ...dd^c\phi_p,
$$
where $\phi_i$ are locally bounded plurisubharmonic functions. (It is a priori not evident that  such currents are well defined since currents with measure coefficients, like $dd^c\phi$ where $\phi$ is plurisubharmonic, cannot in general be multiplied.)  Following \cite{Lagerberg} we will mimic their arguments in the setting of supercurrents defined by convex functions. Our aim is to prove that currents like
$$
\ddhash\psi_1\w ...\ddhash\psi_p\w \Omega
$$
where $\psi_i$ are convex and $\Omega$ is positive (in various senses) are well defined and  continuous under uniform convergence of the $\psi_i$ and weak convergence of $\Omega$. This is essentially included in Lagerberg's work but we repeat the arguments here, partly for completeness and partly since we need the results in a more general setting: Lagerberg worked with positive currents but for us it will be important to allow currents that are weakly positive and strong as well. Apart from this complication, the real setting is quite a bit easier than the complex setting since (finite valued) convex functions are automatically continuous. We start with a basic lemma.
\begin{lma}\label{basic} Let $\Omega$ be a closed current of bidegree $(p,p)$ which is either positive or weakly positive and strong in an open set $U$ in $\R^n$. Let $K$ be a compact subset of $U$ and let $\psi$ be smooth and convex in $U$. Put
$$
\Omega_\psi:=\ddhash\psi\w \Omega.
$$
Then $\Omega_\psi$ is also positive or weakly positive and strong, respectively, and 
$$
\int_K \tau_{\Omega_\psi}\leq C\sup_U|\psi| \int_U\tau_\Omega,
$$
where $C$ does not depend on $\Omega$ or $\psi$.
\end{lma}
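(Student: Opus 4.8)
The plan is to reduce the estimate to a local computation via a partition of unity and the trace formula $\tau_{\Omega_\psi}=\Omega_\psi\w\beta^{n-p-1}/(n-p-1)!$, and then to integrate by parts twice to move both $\ddhash$-derivatives off of $\psi$.

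First I would record the easy part: that $\Omega_\psi=\ddhash\psi\w\Omega$ is again positive, or weakly positive and strong, respectively. Since $\psi$ is convex, $\ddhash\psi$ is a positive (in fact strongly positive, being a $(1,1)$-form) smooth form, and by the remarks in Section \ref{positivity} wedging $\Omega$ with a smooth positive $(1,1)$-form preserves all three positivity notions and preserves $S$; since $\ddhash\psi$ is smooth, no regularization subtlety arises. Also $\Omega_\psi$ is closed because $\Omega$ is closed and $\ddhash\psi$ is closed ($d\ddhash\psi=\ddhash d\psi=0$). This part is routine.

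For the trace estimate, fix a cutoff function $\chi$ that is $\geq 0$, smooth, compactly supported in $U$, and identically $1$ on $K$; I can also arrange $0\le\chi\le1$. Then, using $\tau_{\Omega_\psi}=\Omega_\psi\w\beta^{n-p-1}/(n-p-1)!$ and writing $\gamma:=\beta^{n-p-1}/(n-p-1)!$ (a closed strongly positive form with constant coefficients), we have
\[
\int_K \tau_{\Omega_\psi}\;\le\;\int \chi\,\Omega_\psi\w\gamma\;=\;\int \chi\,(\ddhash\psi)\w\Omega\w\gamma .
\]
Now integrate by parts twice with respect to $\ddhash$, using the formula $\int(\ddhash u)\w v=\int u\w\ddhash v$ valid when one factor has compact support (here $\chi$ does, and $\Omega,\gamma$ are closed), to get $\int \chi\,(\ddhash\psi)\w\Omega\w\gamma=\int \psi\,(\ddhash\chi)\w\Omega\w\gamma$. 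The form $\ddhash\chi$ is a smooth $(1,1)$-form supported in $\supp\chi\subset U$; it is \emph{not} positive, but it is a fixed form once $\chi$ is chosen, and it can be written as a difference $\ddhash\chi=Q_+-Q_-$ of two positive $(1,1)$-forms with $\|Q_\pm\|\le C_\chi$ pointwise (e.g.\ via the spectral decomposition at each point, or more crudely by $\pm\ddhash\chi\le C_\chi\beta$). Hence
\[
\Bigl|\int \psi\,(\ddhash\chi)\w\Omega\w\gamma\Bigr|\;\le\;\sup_U|\psi|\;\Bigl(\int (Q_++Q_-)\w\Omega\w\gamma\Bigr),
\]
and since $Q_\pm\le C_\chi\beta$ and $\Omega\w\gamma$ is weakly positive (being a nonnegative multiple of the trace, or rather $\Omega$ wedged with a strongly positive form), the inner integral is bounded by $2C_\chi\int\beta\w\Omega\w\gamma = 2C_\chi(n-p)\int\tau_\Omega$ — up to the combinatorial constant from $\beta\w\gamma$ versus $\beta^{n-p}/(n-p)!$. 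This gives $\int_K\tau_{\Omega_\psi}\le C\sup_U|\psi|\int_U\tau_\Omega$ with $C$ depending only on the chosen $\chi$ (hence on $K,U,n,p$) and not on $\Omega$ or $\psi$, as required; to make $C$ depend only on $n,p$ one fixes $\chi$ once and for all for the pair $(K,U)$ and absorbs its constants.

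The main obstacle is the bookkeeping around $\ddhash\chi$: one must be careful that after integrating by parts the remaining form is a \emph{fixed} smooth form (so its operator norm is a genuine constant), that the weak positivity of $\Omega\w\gamma$ — rather than of $\Omega$ alone — is what licenses the inequality $\int(C_\chi\beta\pm\ddhash\chi)\w\Omega\w\gamma\ge0$, and that for the case ``$\Omega$ weakly positive and strong'' one still has $\Omega\w\gamma$ weakly positive (true, since wedging a weakly positive current with the strongly positive form $\gamma$ yields a weakly positive current, by duality). A minor point is justifying the integration by parts at the level of currents rather than smooth forms; this is handled by first regularizing $\Omega$ (translates and convolution preserve all the positivity classes, as noted in Section \ref{positivity}), proving the estimate for smooth $\Omega$, and passing to the limit — the right-hand side $\int_U\tau_\Omega$ is lower semicontinuous / exactly preserved under such regularization on a slightly larger set, so the bound survives.
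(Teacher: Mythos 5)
Your proof is correct and follows essentially the same route as the paper: positivity preservation by wedging with $\ddhash\psi$, the trace formula $\tau_{\Omega_\psi}=\ddhash\psi\w\Omega\w\beta^{n-p-1}/(n-p-1)!$, a cutoff $\chi$ on $K$, and a double integration by parts to arrive at $\int\psi\,\ddhash\chi\w\Omega\w\beta^{n-p-1}/(n-p-1)!$. The only cosmetic difference is the final bound: the paper invokes Proposition \ref{estimate} to estimate all coefficients of the current by its trace, whereas you split $\ddhash\chi$ into positive $(1,1)$-forms dominated by $C_\chi\beta$ and use the positivity of $\Omega\w\beta^{n-p-1}$ in top codegree, so that $Q_\pm\w\Omega\w\beta^{n-p-1}$ are positive measures controlled by $\beta\w\Omega\w\beta^{n-p-1}$; both variants encode the same control by the trace.
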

\begin{proof} That $\Omega_\psi$ has the same positivity properties as $\Omega$  follows immediately from the definitions when $\Omega$ is smooth and in general by regularization (or duality). Recall that we defined 
$$
\beta=\sum dx_i\w d\xi_i,
$$
and we have 
$$
\tau_\Omega=\Omega\w \beta^{n-p}/(n-p)!\,\, \text{and}\,\, \tau_{\Omega_\psi}=\ddhash\psi\w\Omega\w\beta^{n-p-1}/(n-p-1)!.
$$
Let $\chi$ be a smooth cutoff function which is equal to 1 on $K$ and compactly supported in $U$. Then
$$
\int_K \tau_{\Omega_\psi}\leq \int_U\chi \ddhash\psi\w\Omega\w\beta^{n-p-1}/(n-p-1)!.
$$
By Stokes' theorem this equals
$$
\int_U\psi \ddhash\chi\w\Omega\w\beta^{n-p-1}/(n-p-1)!\leq C\sup_U|\psi| \int_U\tau_\Omega,
$$
since all coefficients of $\Omega$ can be estimated by its trace by Proposition \ref{estimate}.

\end{proof}

We are now ready to give the main result of this section.  We aim to define the map
$$
(\psi_1, ...\psi_k, \Omega) \to \ddhash\psi_1\w...\ddhash\psi_k\w \Omega
$$
for all convex functions and as large a class of $\Omega$ as possible. The map is clearly defined when $\psi_j$ are smooth and we will define its extension to general $\psi_j$ by continuity.

\begin{thm}\label{crucial} The map
$$
(\psi_1, ...\psi_k, \Omega) \to \ddhash\psi_1\w...\ddhash\psi_k\w \Omega =:M(\psi_1, ...\psi_k,\Omega)
$$
defined for $\psi_j$ smooth,
has an extension to all convex functions and $(p,p)$ currents $\Omega$ that are closed, weakly positive and strong, that is continuous in the following sense: If $\psi_j^i$ converge to $\psi_j$ uniformly and $\Omega^i$ converges to $\Omega$ weak* as $i\to\infty$, then $M(\psi_1^i, ...\psi_k^i,\Omega^i)$ converges weak* to $M(\psi_1, ...\psi_k,\Omega)$. 
$M(\psi_1, ...\psi_k,\Omega)$ is again closed, weakly positive and strong. 
\end{thm}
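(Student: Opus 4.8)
The plan is to induct on $k$, carrying as induction hypothesis that $M(\psi_1,\dots,\psi_k,\Omega)$ is defined for all convex $\psi_j$ and all closed, weakly positive and strong $(p,p)$-currents $\Omega$; that it takes values in closed, weakly positive and strong currents; that it is symmetric in the $\psi_j$ and agrees with the smooth-case wedge product whenever all the data are smooth; that it obeys a local mass bound
$$
\int_K \tau_{M(\psi_1,\dots,\psi_k,\Omega)}\le C\prod_{j=1}^k\sup_{U'}|\psi_j|\,\int_{U'}\tau_\Omega
$$
for compact $K$ and open $U'$ with $K\subset U'$ and $\overline{U'}$ a compact subset of $U$, with $C$ independent of the $\psi_j$ and $\Omega$; and that it is continuous in the stated sense. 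The case $k=0$ is trivial.

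Before the inductive step I would record the elementary fact that a closed symmetric $(p,p)$-current $T$ is automatically $\dhash$-closed and $\ddhash$-closed: since $JT=T$ one has $\dhash T=J^{-1}dJT=J^{-1}(dT)=0$, and then $\ddhash T=d\dhash T=0$; consequently $\ddhash(fT)=\ddhash f\w T$ for every smooth $f$. Now assume the statement for $k-1$ and let $\psi_1,\dots,\psi_k$ be convex and $\Omega$ closed, weakly positive and strong. Put $T:=M(\psi_2,\dots,\psi_k,\Omega)$; by the induction hypothesis and Proposition \ref{estimate}, $T$ is closed, weakly positive and strong with measure coefficients of locally finite mass. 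Since $\psi_1$ is convex, hence continuous and locally bounded, the current $\psi_1 T$ is well defined with measure coefficients, and I would \emph{define}
$$
M(\psi_1,\dots,\psi_k,\Omega):=\ddhash(\psi_1 T),
$$
which is $d$-closed, being $\ddhash$ of a current, and which reduces to $\ddhash\psi_1\w\cdots\w\ddhash\psi_k\w\Omega$ when everything is smooth by the formula just displayed.

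To recover the remaining properties I would regularize $\psi_1$ by convolution with a radial mollifier, obtaining smooth convex $\psi_1^\varepsilon\to\psi_1$ locally uniformly. Then $\ddhash\psi_1^\varepsilon=\sum(\psi_1^\varepsilon)_{jk}\,dx_j\w d\xi_k$ is a smooth positive $(1,1)$-form, so the discussion of Section \ref{positivity} together with Lemma \ref{basic} shows that $\ddhash(\psi_1^\varepsilon T)=\ddhash\psi_1^\varepsilon\w T$ is closed, weakly positive and strong, with $\int_K\tau\le C\sup_{U'}|\psi_1^\varepsilon|\int_{U'}\tau_T$. As $\varepsilon\to0$, $\psi_1^\varepsilon T\to\psi_1 T$ weak$^*$ — for a compactly supported test form $\alpha$, $|\int(\psi_1^\varepsilon-\psi_1)\,T\w\alpha|$ is controlled by $\sup_{\supp\alpha}|\psi_1^\varepsilon-\psi_1|$ times the mass of $T$ on $\supp\alpha$ — hence $\ddhash(\psi_1^\varepsilon T)\to M(\psi_1,\dots,\psi_k,\Omega)$ weak$^*$. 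Since ``closed'', ``weakly positive'' and ``strong'' are each defined by conditions tested against fixed smooth forms, they are stable under weak$^*$ limits, so $M(\psi_1,\dots,\psi_k,\Omega)$ inherits all of them and the mass bound; combined with the bound from the induction hypothesis for $T$, this yields the full product estimate.

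For the continuity assertion, suppose $\psi_j^i\to\psi_j$ uniformly and $\Omega^i\to\Omega$ weak$^*$; then the $\sup|\psi_j^i|$ and the local masses of the $\Omega^i$ are bounded, so by the mass estimate the intermediate currents have locally uniformly bounded mass, and consequently each weak$^*$ convergence in sight also holds when tested against compactly supported \emph{continuous} forms (approximate such a form uniformly by smooth ones and use the uniform mass bound). By the induction hypothesis $T^i:=M(\psi_2^i,\dots,\psi_k^i,\Omega^i)\to T$; then for a smooth test form $\alpha$ I would write
$$
\langle\ddhash(\psi_1^i T^i),\alpha\rangle=\int(\psi_1^i-\psi_1)\,T^i\w\ddhash\alpha+\int\psi_1\,T^i\w\ddhash\alpha,
$$
the first term tending to $0$ by uniform convergence and the mass bound, the second to $\int\psi_1\,T\w\ddhash\alpha=\langle M(\psi_1,\dots,\psi_k,\Omega),\alpha\rangle$ since $\psi_1\ddhash\alpha$ is continuous with compact support. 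Symmetry in the $\psi_j$ follows by approximating all of them simultaneously by smooth convex functions and passing to the limit. I expect the genuine difficulty to be the bookkeeping that keeps this recursive definition consistent — symmetric in the $\psi_j$ and extending the smooth wedge product — which is exactly what forces the simultaneous-regularization argument and the passage from weak$^*$ convergence to convergence against continuous test forms, the latter relying essentially on the uniform mass bounds furnished by Lemma \ref{basic}; everything else is the standard Bedford--Taylor reasoning, lightened here by the automatic continuity of finite convex functions.
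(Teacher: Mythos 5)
Your proposal is correct and follows essentially the same route as the paper: induct down to the single-factor case, use Lemma \ref{basic} and Proposition \ref{estimate} to get locally uniform mass bounds on the intermediate currents, then pass $\ddhash$ onto the test form via Stokes and exploit uniform convergence of the $\psi_j$. The only cosmetic difference is that you name the candidate limit $\ddhash(\psi_1 T)$ upfront and check convergence to it, whereas the paper compares two tails $I_i$ and $I_{i,k}$ and shows all subsequential limits agree --- both arguments hinge on exactly the same estimate.
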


\begin{proof}
By induction it is enough to prove the theorem when $k=1$. We need to prove that if $\psi^i$  and $\Omega^i$ converge in the sense described, and $\psi^i$ are smooth, then 
$$
\ddhash\psi^i\w\Omega^i
$$
has a weak* limit. Note that the fact that $\Omega^i$ converges implies that the trace measures of $\Omega^i$ are uniformly bounded 
on each compact. Lemma \ref{basic} implies that this holds also for the trace measures of $\ddhash\psi^i\w\Omega^i$. Since these currents are  weakly positive and strong it follows by Proposition \ref{estimate} that all the coefficients are measures with uniformly bounded mass on compacts, so there is at least a subsequence that converges. We need to prove that all convergent subsequences have the same  limit. 

For this it is enough to prove that the limits of 
$$
I_i(\chi):= \int\chi\w\ddhash\psi^i\w\Omega^i
$$
exist when $\chi$ is a smooth and compactly supported form of bidegree $(n-p-1,n-p-1)$. We have
$$
I_i=\int\psi^i\ddhash\chi\w\Omega^i.
$$
Take a large $k$ and put
$$
I_{i , k}=\int\psi^k\ddhash\chi\w\Omega^i.
$$
Then 
$$
|I_{i,k}-I_i|\leq C\sup|\psi^i-\psi^k|,
$$
the supremum taken over the support of $\chi$. Thus $|I_{i,k}-I_i|<\epsilon$ if $i$ and $k$ are large enough. On the other hand the limit of $I_{i,k}$ as $i$ tends to infinity exists by the weak convergence of $\Omega^i$. Therefore the limits of any two subsequences of $I_i$ can differ at most by $2\epsilon$ for any $\epsilon>0$. Hence the limit of $I^i$ exists, which concludes the proof.

\end{proof}
\begin{remark}
Note that the same result, with `weakly positive and strong' replaced by `positive' also holds, with the same proof. 
\end{remark}

We list a few consequences of Theorem  \ref{crucial}   that will be useful. The first is that if $\psi$ is convex and $\Omega$ satisfies the hypothesis of the proposition, then
$$
d\psi\w \dhash\psi\w \Omega
$$
is well defined and has the same continuity properties as $\ddhash\psi\w \Omega$. In the proof of  this we may of course assume that $\psi>0$. Then $\psi^2$ is again convex with
$$
\ddhash\psi^2/2=\psi\ddhash\psi +d\psi\w \dhash\psi.
$$
Since our claim holds for $\ddhash\psi^2$ and $\psi\ddhash\psi$ it must hold for $d\psi\w \dhash\psi$ as well. 

The next thing we want to record is that our closed positive currents can also be wedged with the supercurrent of integration on  hypersurfaces. The simplest case is that of hyperplanes.
\begin{prop}\label{hyper}
Let $V$ be an affine hyperplane
$$
V=\{x\in\R^n; x_1=a\},
$$
and $V$ its current of integration. 
Then the map 
$$
\Omega\to \Omega\w [V]\w\dhash x_1,
$$
defined when $\Omega$ is a smooth, strong form of bidegree 
$(p,p)$, $p<n$, can be extended  to all weakly positive and strong
$(p,p)$-currents, so that it is continuous in the weak*-topology and in $a$.
\end{prop}
\begin{proof}
This follows immediately from Theorem \ref{crucial}, since
$$
[V]\w\dhash x_1=\ddhash \max(x_1, a).
$$
\end{proof}

We next turn to hypersurfaces that are level surfaces of convex functions. 
 More precisely, let $\mu$ be a {\it gauge} function. By this we mean a convex  function on $\R^n$, which is homogeneous of order 1 and strictly positive outside the origin. Then
$$
B_\mu:=\{x; \mu(x)\leq 1\}
$$
is a convex body, containing the origin in its interior. Conversely, any such convex body can be represented this way by a unique gauge. We define the supercurrent of integration on the boundary of $B_\mu$, $S_\mu$, by
$$
\int [S_\mu]_s \w \chi= \int_{S_\mu} \dhash\mu\w \chi
$$
for any smooth form $\chi$ of degree $2n-2$. This is well defined if $\mu$ (and hence $S_\mu$) is smooth. We will show that we can extend the definition by continuity to all gauge functions
and that the map 
$$
(\mu,\Omega) \to [S_\mu]_s\w \Omega
$$
is jointly continuous in $\mu$ and  $\Omega$ as in Theorem \ref{crucial}, provided we only look at currents $\Omega$ that are {\it homogeneous of order zero} (it does not hold in general). By this we mean that 
$$
F_t^*(\Omega) =\Omega,
$$
where $F_t(x,\xi) = (tx,\xi)$. If $\Omega$ is of bidegree $(p,p)$ this means that its coefficients are homogeneous of order $-p$. Homogeneous forms and currents will play an important role later and will be discussed in more detail in section \ref{sectionhomform}. For the moment we just note that if a function $\psi$ is homogeneous of order 1, then its second order derivatives are homogeneous of order $-1$, so currents like  $\ddhash\psi$ and their wedge products are homogeneous of order zero.

To define $[S_\mu]_s$ for general $\mu$ we  put $\mu^+=\max(1,\mu)$, and 
$$
T_\mu:= \ddhash\mu^+=\ddhash\max(0,\mu-1).
$$
We claim that, if $\mu$ is smooth, 
$$
T_\mu=[S_\mu]_s + \chi_{\{\mu>1\}}\ddhash\mu.
$$
For this we use Stokes' formula again: If $\chi$ is smooth with compact support
$$
\int \chi\w T_\mu=\int \max(0,\mu-1)\ddhash\chi=\int_{\mu>1} (\mu-1)\ddhash\chi=\int_{\mu>1}\chi\w\ddhash\mu +\int_{S_\mu} \dhash\mu\w\chi
$$
(the sign of the last term comes from integrating over the exterior of $B_\mu$). We therefore define, in general, 
$$
[S_\mu]_s=\chi_{\mu\leq 1}T_\mu.
$$
\begin{prop}\label{sumup}
The map
$$
(\mu, \Omega)\to [S_\mu]_s\w\Omega,
$$
defined for $\mu$ a smooth gauge function and $\Omega$ a smooth $(p,p)$ form, has an extension to all convex gauge functions and all $(p,p)$-currents $\Omega$ that are homogeneous of order zero,  closed, weakly positive and strong.   The map  is continuous in the following sense:  If $\mu^i \to \mu$ uniformly and $\Omega^i\to \Omega$ weak*, then 
$$
[S_{\mu^i}]_s\w\Omega^i \to [S_\mu]_s\w\Omega
$$ 
in the weak* topology.
\end{prop}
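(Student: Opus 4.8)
The plan is to reduce the statement to Theorem \ref{crucial} by exploiting the identity that defines $[S_\mu]_s$. Recall that $[S_\mu]_s = \chi_{\mu\le 1}T_\mu$, where $T_\mu = \ddhash\mu^+$ with $\mu^+ = \max(1,\mu)$, and that $\mu^+$ is convex (a max of convex functions). Hence $T_\mu\w\Omega = \ddhash\mu^+\w\Omega = M(\mu^+,\Omega)$, which by Theorem \ref{crucial} is already defined for all convex $\mu^+$ and all closed, weakly positive and strong $(p,p)$-currents $\Omega$, and is jointly continuous: if $\mu^i\to\mu$ uniformly then $(\mu^i)^+\to\mu^+$ uniformly, and $\Omega^i\to\Omega$ weak* gives $T_{\mu^i}\w\Omega^i\to T_\mu\w\Omega$ weak*. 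So the only genuinely new content is the passage from $T_\mu\w\Omega$ to $\chi_{\mu\le 1}\,T_\mu\w\Omega$, i.e. multiplication by the characteristic function of the closed convex body $B_\mu$.

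The key step is therefore to show that multiplication by $\chi_{B_\mu}$ behaves well both as a definition and under the limit. First I would check that $\chi_{B_\mu}T_\mu\w\Omega$ is a well-defined current with measure coefficients: by Proposition \ref{estimate} the coefficients of $T_\mu\w\Omega$ are measures absolutely continuous with respect to the trace measure $\tau_{T_\mu\w\Omega}$, so they can legitimately be restricted to the Borel set $B_\mu$; when $\mu$ is smooth this restriction agrees with the earlier hands-on definition via the Stokes computation displayed before the proposition. For the continuity, the main point — and the place where homogeneity of order zero enters — is that the limit measures do not charge the boundary sphere $\partial B_\mu$ in a pathological way, and that $\chi_{B_{\mu^i}}\to\chi_{B_\mu}$ in a sense compatible with weak* convergence of the measures $T_{\mu^i}\w\Omega^i$. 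Uniform convergence $\mu^i\to\mu$ forces $B_{\mu^i}\to B_\mu$ in the Hausdorff sense, so $\chi_{B_{\mu^i}}\to\chi_{B_\mu}$ pointwise off $\partial B_\mu$; combined with $T_{\mu^i}\w\Omega^i\to T_\mu\w\Omega$ weak*, the product converges provided the limit current $T_\mu\w\Omega$ puts no mass on $\partial B_\mu$.

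That no-mass-on-the-boundary statement is the heart of the argument, and I expect it to be the main obstacle. Here is where homogeneity of order zero of $\Omega$ is used. Away from the origin, on the region $\{\mu>1\}$ near $\partial B_\mu$ one has $\mu^+=\mu$ and $T_\mu = \ddhash\mu\cdot\chi_{\{\mu\ge1\}}$ plus a boundary term; but $\ddhash\mu\w\Omega$ is a closed, weakly positive and strong current that is homogeneous of order zero in $x$ (since $\mu$ is homogeneous of order $1$ its second derivatives are homogeneous of order $-1$, and $\Omega$ is homogeneous of order zero by hypothesis, so the product is again homogeneous of order zero). A current homogeneous of order zero whose coefficients are measures cannot have a nonzero singular part concentrated on the single dilation-orbit sphere $\partial B_\mu$ (such a measure, pushed forward under the scalings $F_t$, would have to be invariant, which is incompatible with being supported on a sphere of fixed radius unless it vanishes); this kills the possible concentration. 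I would make this precise by testing against $\ddhash$ of the homogeneous-degree-one function $\mu$ itself and using the scaling invariance $F_t^*(T_\mu\w\Omega)$ under $x\mapsto tx$ near the sphere. Once boundary mass is excluded, the joint continuity follows from a standard $\epsilon/2$ argument exactly as in the proof of Theorem \ref{crucial}: approximate $\mu^i$ by a fixed smooth $\mu^k$ inside the integrals, use Lemma \ref{basic} to bound the error uniformly by $\sup|\mu^i-\mu^k|$, and let $i,k\to\infty$. Finally, that $[S_\mu]_s\w\Omega$ is again closed, weakly positive and strong is inherited from $T_\mu\w\Omega$ together with the fact that restriction to $B_\mu$ preserves weak positivity and strongness, and closedness follows because, by the Stokes identity, $[S_\mu]_s\w\Omega = T_\mu\w\Omega - \chi_{\{\mu>1\}}\ddhash\mu\w\Omega$ and both terms on the right are closed (the first by Theorem \ref{crucial}, the second because it equals $\ddhash\mu\w\Omega$ on the open set $\{\mu>1\}$, extended by zero, with no boundary contribution by the no-mass statement).
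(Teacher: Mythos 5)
Your high-level strategy is the same as the paper's: reduce to cutting off $T_\mu\w\Omega=\ddhash\mu^+\w\Omega$ (handled by Theorem \ref{crucial}) by $\chi_{B_\mu}$, and use homogeneity of order zero to control what happens near $\partial B_\mu$. You also correctly note that a $(p+1,p+1)$-current homogeneous of order zero whose trace measure has density homogeneous of order $-(p+1)$ cannot charge a single sphere, which indeed gives $\tau_{T_\mu\w\Omega}(\partial B_\mu)=0$.

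However, the step you lean on to finish is where the gap sits. You state that ``the product converges provided the limit current $T_\mu\w\Omega$ puts no mass on $\partial B_\mu$'' and then invoke ``a standard $\epsilon/2$ argument exactly as in the proof of Theorem \ref{crucial}.'' Neither is adequate as stated. Knowing only that the \emph{limit} measure does not charge $\partial B_\mu$ is not, by itself, enough to conclude that $\chi_{B_{\mu^i}}\rho^i\to\chi_{B_\mu}\rho$ when both the measures $\rho^i=T_{\mu^i}\w\Omega^i$ and the bodies $B_{\mu^i}$ vary with $i$: what you actually need is uniform (in $i$) smallness of the mass of $\rho^i$ in a thin shell around $\partial B_\mu$. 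And the $\epsilon/2$ mechanism from Theorem \ref{crucial} does not transfer here, because that argument hinged on integrating by parts to move $\ddhash$ onto a smooth, compactly supported test form and then approximating a convex function $\psi^i$ by a fixed smooth $\psi^k$; with the discontinuous factor $\chi_{B_{\mu^i}}$ in place there is no analogous integration by parts, and approximating $\mu^i$ by $\mu^k$ inside the indicator leads you straight back to needing a shell-mass estimate.

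The paper supplies exactly this missing quantitative ingredient. It first replaces $\chi_{\mu\le 1}$ by a continuous $\chi_\epsilon(\mu)$ (so that the ``continuous factor times weak*-convergent measures'' principle applies directly, giving convergence of $S^\epsilon_{\mu^i}\w\Omega^i$ for each fixed $\epsilon$), and then controls the error term
$$
\int_{1<\mu^i<1+\epsilon}\chi_\epsilon(\mu^i)\,\ddhash\mu^i\w\Omega^i\w\Omega'
$$
by the uniform estimate: Lemma \ref{basic} bounds the mass of $\ddhash\mu^i\w\Omega^i$ on a fixed bounded annulus uniformly in $i$, and homogeneity of order zero then forces the mass of the shell $\{1<\mu^i<1+\epsilon\}$ to be $O(\epsilon)$ uniformly in $i$. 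That uniform $O(\epsilon)$ bound is the heart of the proof and is absent from your write-up. Your route could be salvaged without smoothing the cutoff by running a Portmanteau argument on the trace measures (use $\limsup_i\tau_{\Omega^i}\bigl(\overline{F_\delta}\bigr)\le\tau_\Omega\bigl(\overline{F_\delta}\bigr)$ for the closed $\delta$-neighborhood $F_\delta$ of $\partial B_\mu$, together with the domination of all coefficients by the trace from Proposition \ref{estimate}), but that is a different finishing move than the one you cite, and it needs to be spelled out.
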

\begin{proof}
We have 
$$
[S_\mu]_s=\chi_{\mu\leq1} T_\mu.
$$
Therefore continuity  almost follows from the continuity properties of $T_\mu\w\Omega$ in Theorem \ref{crucial}, except for the problem that $\chi_{\mu\leq1}$ is not continuous.
To overcome this  we approximate $\chi_{\mu\leq 1}$ by $\chi_\epsilon(\mu)$ where $\chi_\epsilon(t)$ is a nonnegative smooth function that equals 1 when $t\leq 1$ and is supported where $t<1+\epsilon$. 
We will use the general fact that if currents $R^i$ with measure coefficients converge weak* to $R$ and continuous functions $\phi^i$ converge uniformly to $\phi$, then 
$\phi^iR^i$ converge weak* to $\phi R$.
Put 
$$
S^\epsilon_\mu=\chi_\epsilon(\mu) T_\mu.
$$
Since $\chi_\epsilon(\mu)$ is a continuous function and continuous in $\mu$ it follows from Theorem \ref{crucial}  and the remark above that $S^\epsilon_\mu\w\Omega$ is jointly continuous in $\mu$ and  $\Omega$.

Therefore it suffices to show that 
$$
\int (S^\epsilon_{\mu^i} -[S_{\mu^i}]_s)\w\Omega^i\w\Omega'=\int_{1<\mu^i<1+\epsilon} \chi_\epsilon(\mu^i) \ddhash\mu^i\w\Omega^i\w\Omega'
$$
is uniformly small when $\Omega^i$ tend weak* to $\Omega$, $\mu^i$ tend uniformly to $\mu$ and $\Omega'$ is continuous with compact support. But, by Lemma \ref{basic} the mass of the currents
$$
\ddhash\mu^i\w\Omega^i
$$
over any bounded domainin $\R^n\setminus\{0\}$ is uniformly bounded. By homogeneity this gives that the mass over a domain $\{ 1<\mu^i< 1+\epsilon\}$ is bounded by a constant times $\epsilon$, which concludes the proof.
\end{proof}
Although it is not quite indispensable, this proposition will be useful in computations later, since we may assume that e.g. $\mu$ (and hence $S_\mu$) is smooth. 

\newpage

\section{The Monge- Amp\`ere operator}

In particular, it follows from Theorem \ref{crucial} that  if $\psi$ is convex,
$$
(\ddhash\psi)^n/n!
$$
is a well defined measure and is continuous in $\psi$.  When $\psi$ is smooth
$$
(\ddhash\psi)^n/n!=\det(\psi_{j k}) dV
$$
and one therefore defines it to be the Monge- Amp\`ere measure, $MA(\psi)$,  of $\psi$ in general. 

The classical definition of Alexandrov (\cite{1Alexandrov})  of the Monge- Amp\`ere measure starts by defining the Monge- Amp\`ere measure of an open set $U$:
$$
MA(\psi)(U)=\lambda(\nabla\psi(U)),
$$
where $\nabla\psi$ is the (multivalued) subgradient of $\psi$ and $\lambda$ is Lebesgue measure. The two definitions coincide when $\psi$ is smooth ( see e.g. the next section) and are continuous in $\psi$ so they agree in general.

One remark is in order here (and will be important when we discuss Monge- Amp\`ere measures on the boundary of a convex body). Alexandrov's definition obviously depends on a choice of Lebesgue measure on $\R^n$; it is only determined up to a multiplicative constant. In our definition this normalization is hidden in a choice of coordinates such that
$$
\int d\xi_1\w...d\xi_n(-1)^{n(n-1)/2} =1,
$$
or rather in the choice of a scalar product on $\R^n$ such that this holds for any orthonormal coordinates. This is a difference compared to the complex case; the complex Monge- Amp\`ere measure is uniquely determined by the complex structure. (I thank Mattias Jonsson for this remark.)

We shall now give a useful estimate for Monge- Amp\`ere masses. Say that a convex function in $\R^n$ is of linear growth if 
$$
\psi(x)\leq A|x|+B
$$
for some constants $A$ and $B$. For such functions we can define their indicator functions
$$
\psi^\circ(x)=\lim_{t\to\infty} \frac{\psi(tx)}{t}.
$$
Such limits exist since 
$$
t\to \ \frac{\psi(tx)-\psi(0)}{t}
$$
is increasing and bounded from above. The next theorem is also essentially contained in \cite{Lagerberg}.
\begin{thm}\label{MAestimate} Let $\psi_1, ...\psi_n$ be convex functions of linear growth in $\R^n$. Then
$$
V(\psi_1, ...\psi_n):=\int \ddhash\psi_1\w ...\ddhash\psi_n/n!<\infty.
$$
If $\phi_1, ...\phi_n$ is another $n$-tuple of convex functions of linear growth with $\psi_i^\circ\leq \phi_i^\circ$ for $i=1, ...n$, then
$$
V(\psi_1, ...\psi_n)\leq V(\phi_1, ...\phi_n).
$$
In particular, $V(\psi_1, ...\psi_n)$ depends only on $\psi_i^\circ$.
\end{thm}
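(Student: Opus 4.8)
The plan is to prove, in order, (i) finiteness of $V$, (ii) the monotonicity inequality, and then to deduce (iii) that $V$ depends only on the indicators. Two preliminary remarks are used throughout. First, by Theorem \ref{crucial} every wedge $\ddhash\psi_{i_1}\w\cdots\w\ddhash\psi_{i_k}$ of such operators applied to convex functions is a closed, weakly positive and strong current; in full degree it is a positive measure, so $V(\psi_1,\dots,\psi_n)\ge 0$. Second, since $\ddhash$ is linear and the products are continuous under uniform convergence (Theorem \ref{crucial}), $V$ is symmetric and is additive and positively homogeneous — hence multilinear — separately in each of its $n$ convex arguments. For finiteness, set $\Psi=\psi_1+\cdots+\psi_n$, which is convex of linear growth and therefore globally Lipschitz, say with constant $A$, so $\nabla\Psi(\Rn)$ lies in the ball of radius $A$. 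Using the identification of $MA(\Psi)=(\ddhash\Psi)^n/n!$ with Alexandrov's measure recalled just after Theorem \ref{crucial}, on balls $B_R$ and letting $R\to\infty$ with both sides increasing, $\int_{\Rn}MA(\Psi)=\lambda(\nabla\Psi(\Rn))\le\lambda(\{|y|\le A\})<\infty$. Since $0\le\ddhash\psi_i\le\ddhash\Psi$ as $(1,1)$-currents and wedging with a positive current preserves such an inequality, $\ddhash\psi_1\w\cdots\w\ddhash\psi_n\le(\ddhash\Psi)^{\w n}$, whence $V(\psi_1,\dots,\psi_n)\le\int MA(\Psi)<\infty$.

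For monotonicity it suffices, by symmetry of $V$ and a telescoping argument, to treat one slot at a time: assume $\psi_1^\circ\le\phi_1^\circ$, write $T:=\ddhash\psi_2\w\cdots\w\ddhash\psi_n$ (a closed, weakly positive, strong current), and prove $\int\ddhash\psi_1\w T\le\int\ddhash\phi_1\w T$. It is enough to do this with $\phi_1$ replaced by $\phi_1^\varepsilon:=\phi_1+\varepsilon|x|$: its indicator is $(\phi_1^\varepsilon)^\circ=\phi_1^\circ+\varepsilon|x|$, which is strictly larger than $\psi_1^\circ$ off the origin, and by multilinearity together with the finiteness already proved, $V(\phi_1^\varepsilon,\psi_2,\dots,\psi_n)=V(\phi_1,\psi_2,\dots,\psi_n)+\varepsilon\,V(|x|,\psi_2,\dots,\psi_n)\to V(\phi_1,\psi_2,\dots,\psi_n)$ as $\varepsilon\to 0$.

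So fix $\varepsilon>0$ and set $\psi_1^{(s)}:=\max(\psi_1,\phi_1^\varepsilon-s)$ for $s>0$, again convex of linear growth. Because a convex function of linear growth is asymptotic to its indicator up to $o(|x|)$, the strict inequality $\psi_1^\circ<(\phi_1^\varepsilon)^\circ$ off the origin forces $\phi_1^\varepsilon-\psi_1\to+\infty$, hence $\psi_1^{(s)}=\phi_1^\varepsilon-s$ outside some ball $B_{R_0}$. Therefore $\ddhash\psi_1^{(s)}$ and $\ddhash\phi_1^\varepsilon$ agree on $\Rn\setminus\overline{B_{R_0}}$, so the positive measures $\ddhash\psi_1^{(s)}\w T$ and $\ddhash\phi_1^\varepsilon\w T$ agree there; and applying Stokes' formula (after regularizing $\psi_1^{(s)}$) to the identity $\ddhash u\w T=d(\dhash u\w T)$, which holds since $T$ is closed, on a ball $B_R\supset\overline{B_{R_0}}$ whose boundary lies in the region where $\psi_1^{(s)}-\phi_1^\varepsilon$ is constant, one gets $\int_{B_R}(\ddhash\psi_1^{(s)}-\ddhash\phi_1^\varepsilon)\w T=0$. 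Combined with the agreement outside $\overline{B_{R_0}}$ this shows the two measures have the same total mass, i.e. $V(\psi_1^{(s)},\psi_2,\dots,\psi_n)=V(\phi_1^\varepsilon,\psi_2,\dots,\psi_n)$ for every $s$.

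Now let $s\to\infty$: on every compact set $\phi_1^\varepsilon-s\to-\infty$, so $\psi_1^{(s)}\to\psi_1$ locally uniformly, and Theorem \ref{crucial} gives $\ddhash\psi_1^{(s)}\w T\to\ddhash\psi_1\w T$ weak$^*$. By lower semicontinuity of total mass under weak$^*$ convergence of positive measures, $n!\,V(\psi_1,\psi_2,\dots,\psi_n)\le\liminf_s n!\,V(\psi_1^{(s)},\psi_2,\dots,\psi_n)=n!\,V(\phi_1^\varepsilon,\psi_2,\dots,\psi_n)$; letting $\varepsilon\to0$ completes the one-slot estimate. Telescoping over the $n$ slots gives the full monotonicity inequality, and applying it with the two $n$-tuples interchanged whenever $\psi_i^\circ=\phi_i^\circ$ shows that $V(\psi_1,\dots,\psi_n)$ depends only on the indicators. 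The step I expect to be the main obstacle is exactly this mass bookkeeping on the noncompact space $\Rn$: the continuity furnished by Theorem \ref{crucial} is only weak$^*$ and does not by itself prevent Monge--Amp\`ere mass from escaping to infinity, so the argument genuinely relies on the observation that $\ddhash\psi_1^{(s)}\w T$ and $\ddhash\phi_1^\varepsilon\w T$ differ by a current supported in a fixed ball (hence carry equal total mass), together with lower semicontinuity of mass; the supporting facts — that a linear-growth convex function differs from its indicator by $o(|x|)$, and the justification of the Stokes computation after regularization — are routine.
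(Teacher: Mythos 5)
Your proof is correct, but it takes a genuinely different route from the paper's in both halves, and the comparison is worth spelling out.

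For finiteness, the paper never invokes the Alexandrov/subgradient characterization of $MA$. Instead it picks, for a given $R$, truncations $\phi_i=\max(\psi_i,|x|-A)$ that agree with $\psi_i$ on $B_R$ and equal $|x|-A$ far out, then uses Stokes to rewrite $\int_{|x|<R}\ddhash\psi_1\w\cdots\w\ddhash\psi_n$ as a boundary integral and finally as $\int_{|x|<R'}(\ddhash|x|)^n$, which is finite and $R'$-independent. Your argument is shorter: set $\Psi=\sum\psi_i$, observe linear growth forces $\Psi$ globally Lipschitz with some constant $A$ so $\lambda(\nabla\Psi(\Rn))\le\lambda(B_A)$, and then dominate $\ddhash\psi_1\w\cdots\w\ddhash\psi_n\le(\ddhash\Psi)^{\wedge n}$ factor by factor. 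This is clean, and the domination step (replace one factor at a time, using positivity of mixed products from Theorem \ref{crucial}) is valid. The one thing you lean on that the paper's proof avoids is the identification $MA(\Psi)(U)=\lambda(\nabla\Psi(U))$; the paper does state this equivalence just before the theorem, and its smooth case does not depend on Theorem \ref{MAestimate}, so there is no circularity — but it is an extra input.

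For monotonicity, the core mechanism is the same (truncate with a $\max$, exploit that the truncated function agrees with $\phi$ near and beyond a sphere, apply Stokes to equate masses inside). The paper does all $n$ slots simultaneously with $\Phi_i=\max(\psi_i,\phi_i-A)$ for one large $A$, fixes $R$, gets $\int_{|x|<R}\ddhash\psi_1\w\cdots\le\int\ddhash\phi_1\w\cdots$, and lets $R\to\infty$ by monotone exhaustion. You go one slot at a time, freeze $\psi_2,\dots,\psi_n$, truncate only the first slot with $\psi_1^{(s)}$, prove exact equality of total masses for each $s$, and then invoke weak$^*$ lower semicontinuity of total variation of positive measures as $s\to\infty$; your telescoping over the $n$ slots is justified since the one-slot lemma makes no hypothesis on the remaining entries. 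Both are correct; the paper's version avoids even citing lower semicontinuity (one can also avoid it in your scheme by noting $\psi_1^{(s)}=\psi_1$ on an increasing exhaustion of $\Rn$ and using monotone convergence on $\int_{B_R}$). One small point in your write-up that should be tightened: the justification of the Stokes step for non-smooth $\psi_1^{(s)}$ and non-smooth $T$ is easiest via the cutoff identity $\int\chi\,\ddhash g\wedge T=\int g\,\ddhash\chi\wedge T$ applied to the compactly supported difference $g=\psi_1^{(s)}-(\phi_1^\epsilon-s)$ with $\chi\equiv1$ on $\supp g$ (as in the proof of Lemma \ref{basic}), rather than by regularizing $\psi_1^{(s)}$ alone, since the regularization does not commute cleanly with the nonsmooth current $T$ and total mass is only lower semicontinuous under weak$^*$ limits.
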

\begin{proof} To prove the first part we may of course assume that $\psi_i(x)\leq |x|/2$ for all $i$. It suffices to show that there is a constant $C$, such that for any   $R>0$,
$$
\int_{|x|<R} \ddhash\psi_1\w ...\ddhash\psi_n \leq C.
$$
Choose $A$ so large that $\psi_i(x)\geq |x|-A$, 
when $|x|\leq R$, and put 
$$
\phi_i(x)=\max(\psi_i, |x|-A).
$$
Then $\phi_i$ are convex functions and $\phi_i(x)=\psi_i(x)$ when $|x|\leq R$ and $\phi_i(x)=|x|-A$ when $|x|\geq R'/2$ for $R'$ sufficiently large. We get
$$
\int_{|x|<R} \ddhash\psi_1\w ...\ddhash\psi_n\leq \int_{|x|<R'} \ddhash\phi_1\w ...\ddhash\phi_n=\int_{|x|=R'} \dhash|x|\w (\ddhash |x|)^{n-1}=
$$
$$
\int_{|x|<R'} (\ddhash |x|)^{n}.
$$
Since $|x|$ is homogeneous of order 1, $(\ddhash |x|)^{n}=0$ outside the origin. This is a general fact that will be discussed further in the next section, but is of course easy to verify directly. The last integral is therefore independent of $R'$, which proves the claim. (The reader who so prefers can also replace $|x|$ here by $(1+|x|^2)^{1/2}$ and verify that one gets convergent integrals in this case.)

The proof of the second part is similar. Replacing $\phi_i$ by $\phi_i+\epsilon|x|$ (and letting $\epsilon$ tend to zero in the end) we may assume that $\phi_j^\circ\geq \psi_j^\circ+\epsilon |x|$. Clearly, we may also assume that $\psi_i(0)=\phi_i(0)=0$ for all $i$. Fix $R>0$. Then
$$
\phi_i(tx)/t \uparrow \phi^\circ(x).
$$
Since $\phi_i^\circ$ is continuous it follows from Dini's lemma that the convergence is uniform on $|x|=1$. Hence, if $|x|>R'/2$, ($R'$ sufficiently large)
$$
\phi_i(x)>\phi_i^\circ(x)-\epsilon|x|/2.
$$
We also have that $t\to \psi(tx)/t$ increases so
$$
\psi_i(x)\leq\psi_i^\circ(x)\leq\phi_i^\circ(x)-\epsilon|x|
$$
for all $x$. Hence
$$
\psi_i(x)<\phi_i(x)-\epsilon/2|x|
$$
if $|x|>R'/2$. Put
$$
\Phi_i(x)=\max(\psi_i(x), \phi_i(x)-A),
$$
where $A$ is so large that $\Phi_i(x)=\psi_i(x)$ for $|x|<R$. If $R'$ is large enough we also have that $\Phi_i(x)=\phi_i(x)-A$ when $|x|>R'/2$. Then
$$
\int_{|x|<R} \ddhash\psi_1\w ...\ddhash\psi_n\leq \int_{|x|<R'} \ddhash\Phi_1\w ...\ddhash\Phi_n=\int_{|x|=R'}\dhash\phi_1\w ...\ddhash\phi_n=
$$
$$
\int_{|x|<R'} \ddhash\phi_1\w ...\ddhash\phi_n\leq \int_{\R^n} \ddhash\phi_1\w ...\ddhash\phi_n.
$$
Since $R$  is arbitrary, this completes the proof.
\end{proof}

%%%%%%%%%%%%%%%%%%%%%

\newpage
\section{Volumes and mixed volumes of convex bodies.}\label{volumes}

If $K$ is a convex body its support function is defined as
$$
h_K(x)=\sup_{y\in K} x\cdot y.
$$
It is immediate that support functions are convex and homogeneous of order 1. Conversely, if $\psi$ is convex and homogeneous of order 1 we can consider its Legendre transform
$$
\psi^*(y)=\sup_x x\cdot y-\psi(x).
$$
Clearly $\psi^*\geq 0$. If $\psi^*(y)>0$ for a certain $y$ we see that actually $\psi^*(y)=\infty$, since $ x\cdot y-\psi(x)$ is homogeneous of order 1 in $x$. Let $K=\{\psi^*=0\}$. It is a closed set, since $\psi^*$ is lower semicontinuous, and bounded since $\psi$ is of linear growth, so $K$ is compact. By the involutivity of the Legendre transform
$$
\psi(x)=\sup_y x\cdot y-\psi^*(y)=\sup_{y\in K} x\cdot y
$$
is the support function of $K$. As we have seen $K$ is also uniquely determined by $\psi$ which leads to the important fact that  there is  a one-to-one correspondence between convex bodies and 1-homogeneous convex functions.

The next proposition says that the map  $K\to \omega_K:=\ddhash h_K$ gives a similar one-to-one correspondence between the space of convex bodies modulo translation and the class of  $(1,1)$-currents of the form $\ddhash\psi$ where $\psi$ is convex and 1-homogeneous. In the next section we shall see which positive $(1,1)$-currents  arise in this way (Proposition \ref{deltaomega}).
\begin{prop}\label{modtranslation} If $K$ and $L$ are two convex bodies, $\omega_K=\omega_L$ if and only if there is an $a\in\R^n$ such that $L=K+a$.
\end{prop}
\begin{proof}
It is clear from the definition that
$$
h_{K+a}=h_K+x\cdot a.
$$
Taking $\ddhash$ this implies that $\omega_{K+a}=\omega_K$. Conversely, if 
$\omega_K=\omega_L$, then $\ddhash (h_L-h_K)=0$. Thus
$$
h_L=h_K +c +x\cdot a,
$$
for some $c\in \R$ and $a\in \R^n$. By 1-homogeneity, $c=0$, so $h_L=h_K+x\cdot a= h_{K+a}$. Since the support function determines the convex body, the proposition follows.
\end{proof}
This means that any translation invariant functional on the space of convex bodies must be of the form $K\to F(\omega_K)$. 
The next proposition from \cite{Lagerberg}  makes this explicit for the (Lebesgue) volume. (The proof here is different from the one in \cite{Lagerberg}.)
\begin{thm}\label{volumeformula} Let $\psi$ be a convex function in $\R^n$ of linear growth, and $\psi^\circ$ its indicator function. Then $\psi^\circ$ is the support function $h_K$ for a uniquely determined convex body $K$. We have
$$
V(\psi, ...\psi)=\int (\ddhash\psi)^n/n!=\int MA(\psi)=|K|,
$$
the Lebesgue measure of $K$.
\end{thm}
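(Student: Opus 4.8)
The first assertion is essentially contained in the discussion preceding the theorem: since $t\mapsto(\psi(tx)-\psi(0))/t$ is increasing in $t$ and, by linear growth, bounded above, the limit $\psi^\circ$ exists and is finite, convex, and homogeneous of order $1$; hence, by the one-to-one correspondence between $1$-homogeneous convex functions and convex bodies established just above, $\psi^\circ=h_K$ for a unique convex body $K$. Note also that $h_K$ itself is convex of linear growth and, being $1$-homogeneous, satisfies $(h_K)^\circ=h_K=\psi^\circ$.

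For the chain of equalities the first two are matters of definition: $V(\psi,\dots,\psi)=\int(\ddhash\psi)^n/n!$ by the definition of mixed volumes in Theorem \ref{MAestimate}, and $(\ddhash\psi)^n/n!=MA(\psi)$ by the definition of the Monge--Amp\`ere measure given after Theorem \ref{crucial}. So only $\int MA(\psi)=|K|$ requires an argument.

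The plan is to reduce to the case $\psi=h_K$ and then invoke Alexandrov's description of $MA$. By Theorem \ref{MAestimate}, $V(\psi,\dots,\psi)$ depends only on $\psi^\circ$, and since $(h_K)^\circ=\psi^\circ$ this gives $\int MA(\psi)=\int MA(h_K)$. Using the identification of the Monge--Amp\`ere measure $(\ddhash h_K)^n/n!$ with Alexandrov's measure $U\mapsto\lambda(\nabla h_K(U))$ noted in the excerpt, we get $\int MA(h_K)=\lambda\bigl(\partial h_K(\R^n)\bigr)$, where $\partial h_K$ denotes the (multivalued) subgradient and $\lambda$ the chosen Lebesgue measure. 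It then remains to identify the range of the subgradient of a support function. If $y\in\partial h_K(x)$ then $h_K(z)\ge h_K(x)+y\cdot(z-x)$ for all $z$; taking $z=0$ and $z=2x$ and using $h_K(0)=0$, $h_K(2x)=2h_K(x)$, this forces $h_K(x)=y\cdot x$, whence $h_K(z)\ge y\cdot z$ for all $z$, i.e. $y\in K$. Conversely $\partial h_K(0)=\{y:\ h_K(z)\ge y\cdot z\ \forall z\}=K$. Hence $\partial h_K(\R^n)=K$ and $\int MA(h_K)=\lambda(K)=|K|$, which completes the proof.

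The step needing the most care is the bookkeeping around the (already asserted) agreement of the two notions of Monge--Amp\`ere measure and the reduction via Theorem \ref{MAestimate}; the subgradient computation itself is elementary. An alternative, more self-contained route avoids Alexandrov's definition: for $\psi$ smooth and strictly convex of linear growth, $\nabla\psi$ is a diffeomorphism of $\R^n$ onto an open convex set whose support function is $\psi^\circ=h_K$, so the change-of-variables formula gives $\int\det(\psi_{jk})\,dV=\lambda(\nabla\psi(\R^n))=|K|$, and the general case follows by the continuity in Theorem \ref{crucial} together with the mass bounds of Lemma \ref{basic} and Theorem \ref{MAestimate}; the route above simply replaces this approximation argument by a direct appeal to the classical theory.
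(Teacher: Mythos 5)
Your proof is correct, but it takes a genuinely different route from the paper's. The paper first disposes of the degenerate case where $K$ has empty interior (so $(\ddhash h_K)^n=0$ and $|K|=0$), then invokes Theorem \ref{MAestimate} to reduce to \emph{one} conveniently chosen $\psi$ with $\psi^\circ=h_K$, namely $\psi(x)=\log\int_K e^{x\cdot y}\,d\lambda(y)$. For this explicit smooth, strictly convex $\psi$ the author shows directly that the gradient map $g=\partial\psi$ is a diffeomorphism from $\R^n$ onto the interior of $K$ (the image is the barycenter of a probability measure with full support on $K$; properness and strict convexity give bijectivity), and then a single change of variables $y=g(x)$ yields $\int\det(\psi_{jk})\,d\lambda=|K|$. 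Your argument instead uses Theorem \ref{MAestimate} to reduce to $\psi=h_K$ itself, and then appeals to the equivalence of the supercurrent Monge--Amp\`ere measure with Alexandrov's $U\mapsto\lambda(\nabla\psi(U))$, finishing with the (correct) elementary computation $\partial h_K(\R^n)=K$. Both are valid; the paper's route is more self-contained, since it never needs Alexandrov's description of $MA$ and in fact the paper only \emph{asserts} that equivalence, pointing to the very change-of-variables step in this proof as an illustration of the smooth case. Your route is shorter but leans on a background theorem that the paper treats somewhat informally.

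One small caution about your closing sketch of the ``alternative, more self-contained route'': for a \emph{general} smooth strictly convex $\psi$ of linear growth with $\psi^\circ=h_K$, it is not immediate that $\nabla\psi$ is onto the interior of $K$ -- one has to argue that $\psi(x)-y\cdot x$ is proper for each $y$ in $\mathrm{int}\,K$, which requires a little more than knowing the radial limits $\psi(tx)/t\to h_K(x)$. This is precisely why the paper works with the specific $\psi=\log\int_Ke^{x\cdot y}\,d\lambda$: for that choice properness is a one-line estimate. Your main argument (via Alexandrov's measure and $\partial h_K(\R^n)=K$) does not have this gap, so it is fine as written; just be aware the sketched alternative needs the properness argument filled in.
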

\begin{proof} We first note that if $K$ has empty interior, then $K$ is included in a hyperplane, that we can take to be $\{x_1=0\}$, and so has measure zero.  Then $h_K(x)$ depends only on  $x_2, ... x_n$, so $(\ddhash h_K)^n=0$. So the proposition holds in this case and we assume from now on that the interior of $K$ is non-empty. By Theorem \ref{MAestimate} it is enough to prove the proposition for {\it some} $\psi$ with $\psi^\circ=h_K$. 

A convenient choice (cf. \cite{Gromov}) is
$$
\psi(x):=\log \int_K e^{x\cdot y} d\lambda(y).
$$
Then $\psi$ is convex and $\psi^\circ=h_K$, since $L^t$-norms tend to the $L^\infty$-norm when $t\to\infty$. Put
$$
g(x):=\partial\psi(x),
$$
the gradient map of $\psi$. We claim that $g$ is a diffeomorphism from $\R^n$ to the interior of $K$. To start with
$$
\partial\psi(x)=\frac{\int_K y e^{x\cdot y} d\lambda(y)}{\int_K e^{x\cdot y} d\lambda(y)}
$$
is the barycenter of a probability measure with full support on $K$, so it must lie in the interior of $K$. 

To prove that $g$ is bijective we take a point in the interior of $K$, that we may assume is the origin. We need to prove that there is exactly one $x$ in $\R^n$ with $g(x)=0$. Since $\psi$ is smooth and strictly convex, this amounts to saying that $\psi$ has a unique minimum. But, if $0\in K^\circ$, 
$$
\psi(x)\geq \log\int_{|x|<\epsilon}e^{x\cdot y} d\lambda(y),
$$
so $\psi$ is proper. Hence $\psi$ has a minimum, which is unique by strict convexity. The claim of the theorem now follows from a change of variables $y=g(x)$;
$$
 V(\psi, ...\psi)=\int_{\R^n} \det(\psi_{j k}) d\lambda= \int_{y\in K} d\lambda=|K|.
 $$

\end{proof}

As a corollary we get a theorem from \cite{Lagerberg}.
\begin{thm}\label{Dirac}
Let $K$ be a convex body and $h_K$ its support function. Then
$$
MA(h_K)=|K|\delta_0,
$$
where $\delta_0$ is the unit Dirac mass at the origin.
\end{thm}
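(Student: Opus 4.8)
The plan is to deduce the statement directly from Theorem \ref{volumeformula} together with the homogeneity of $h_K$. First I would recall that $h_K$ is convex and homogeneous of order $1$, so by the general fact mentioned in the proof of Theorem \ref{MAestimate} (namely that $(\ddhash\mu)^n=0$ away from the origin whenever $\mu$ is $1$-homogeneous), the measure $MA(h_K) = (\ddhash h_K)^n/n!$ is supported at $\{0\}$. Since it is a positive measure supported at a single point, it must be a multiple $c\,\delta_0$ of the Dirac mass.

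It then remains to identify the constant $c$. For this I would simply integrate: $c = \int MA(h_K)$. But $h_K$ is itself a convex function of linear growth whose indicator function $h_K^\circ$ equals $h_K$ (a $1$-homogeneous function is its own indicator), so Theorem \ref{volumeformula} applies with $\psi = h_K$ and gives
$$
\int MA(h_K) = V(h_K,\dots,h_K) = |K|.
$$
Hence $c = |K|$ and $MA(h_K) = |K|\,\delta_0$, as claimed.

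The only point that needs a word of care is the justification that $MA(h_K)$ really is carried by the origin; everything else is bookkeeping. One can argue this either by the homogeneity remark already invoked above — applied now with $\mu = h_K$, using that $MA(h_K)$ has measure coefficients by Proposition \ref{estimate} and is invariant under the scaling $F_t$ — or, even more concretely, via Alexandrov's description $MA(\psi)(U)=\lambda(\nabla\psi(U))$: the subgradient $\nabla h_K(x)$ for $x\neq 0$ is a face of $K$, which has zero Lebesgue measure, so no mass sits outside $0$. I expect this supporting step to be the only mild obstacle, and it is essentially already supplied in the text; the identification of the total mass is immediate from Theorem \ref{volumeformula}.
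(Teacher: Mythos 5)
Your proof is correct and follows essentially the same route as the paper: use positivity and the vanishing of $(\ddhash h_K)^n$ away from the origin (from $1$-homogeneity) to conclude $MA(h_K)=c\,\delta_0$, and then identify $c=|K|$ by applying Theorem \ref{volumeformula} to $\psi=h_K$. Your alternative justification of the support claim via Alexandrov's subgradient description is a nice supplementary remark but does not change the structure of the argument.
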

\begin{proof} We know that $MA(h_K)$ is a positive measure. Since $h_K$ is 1-homogeneous the Monge- Amp\`ere measure must vanish outside the origin. This is evident when $h_K$ is smooth (since  $MA(\psi)=\det(\psi_{j k}) d\lambda$ for smooth functions), and again we refer to the next section for the general case. Hence
$$
MA(h_K)=c\delta_0
$$
for some $c$ and $c=|K|$ by Theorem   \ref{volumeformula}.
\end{proof}

Next we consider an $n$-tuple of convex bodies, $K_1, ...K_n$ and their weighted Minkowski sum
$$
K_t:= t_1K_1+ ...t_nK_n= \{ \sum t_ix_i; x_i\in K_i\}.
$$
It is immediate that 
$$
h_{K_t}=\sum t_i h_{K_i},
$$
so
$$
|K_t|=\int(\sum t_i\ddhash h_{K_i})^n/n!
$$
is a polynomial in $t$. The coefficient of $t_1...t_n$ here is 
$$
\int \ddhash h_{K_1}\w ...\ddhash h_{K_n}=V(h_{K_1}, ...h_{K_n}) n!.
$$
The {\it mixed volume } of $K_1, ...K_n$ is defined as 
$$
V(K_1, ...K_n):=V(h_{K_1}, ...h_{K_n})=\int \ddhash h_{K_1}\w ...\ddhash h_{K_n}/n!.
$$
Hence $V(K, ...K)=|K|$ and $V(K_1, ...K_n)$ is symmetric under permutations. Moreover it is Minkowski linear in each variable so can be seen as a polarization of the volume function. By Theorem \ref{MAestimate}
$$
V(K_1, ...K_n)=V( \psi_1, ...\psi_n)
$$
where $\psi_i$ are any convex functions of linear growth with indicators equal to $h_{K_i}$.

Because of the $n$-linearity of $V$ it is natural to extend its definition to functions, $f_i=\psi_i-\phi_i$,  that can be written as differences of convex functions of linear growth
$$
V(f_1, ...f_n)=\int \ddhash f_1\w ...\ddhash f_n/n!.
$$
Notice that such functions also have indicator functions $f_i^\circ=\psi_i^\circ-\phi_i^\circ$ and that $V(f_1, ...f_n)$ only depends on their indicators.

%%%%%%%%%%%%%%%%%%%%%%%%%
\newpage

\section{(Strongly) homogeneous forms.}\label{sectionhomform}

In the previous section, currents of the form
$$
\omega=\ddhash\psi
$$
where $\psi$ is convex and 1-homogeneous played an important role. In this section we shall first  give a characterization of  such currents, and then introduce the corresponding  notion for currents of higher bidegree, $(p,p)$.

 Let
$$
E:=\sum x_j\partial/\partial x_j
$$
be the Euler vector field on $\R^n$. Its flow, regarded as a vector field on $\C^n=\R^n_x\oplus \R^n_\xi$, is 
$$
F_t(x,\xi)=(e^tx, \xi).
$$ 
We also put
$$
E^\#:= \sum x_j\partial/\partial \xi_j,
$$
whose flow is 
$$
G_t(x, \xi)=(x, \xi+tx).
$$
We denote by $\delta$ contraction with the field $E$ and $\delta^\#$ contraction with $E^\#$. By Euler's formulas, $\delta(df)= E(f)=qf$ if $f$ is homogeneous of order $q$.

\begin{lma}\label{symmlemma} Let $\omega$ be a current of bidegree $(1,1)$, satisfying $d\omega=\delta^\#\omega=0$. Then $\omega$ is symmetric, so $\delta\omega=0$ too.
\end{lma}
\begin{proof}
Write $\omega=\sum \omega_{i j} dx_i\wedge d\xi_j$. The hypotheses imply that, first
$$
0=\delta^\# \sum \omega_{i j} dx_i\wedge d\xi_j=-\sum \omega_{i j} x_j dx_i,
$$
and then,
$$
0=d\delta^\# \sum \omega_{i j} dx_i\wedge d\xi_j=\sum \omega_{i j} dx_i\wedge dx_j
-\sum x_j(\partial\omega_{i j}/\partial x_k) dx_k\wedge dx_i.
$$
The last term on the right hand side vanishes since  since $d\omega=0$.  Hence 
$$
\sum \omega_{i j} dx_i\wedge dx_j=0,
$$
so $\omega$ is symmetric.
\end{proof}

\begin{prop}\label{deltaomega} 
A  $(1,1)$-current $\omega$ can be written $\omega=\ddhash\psi$ where $\psi$ is 1-homogeneous if and only if $d\omega=\delta^\#\omega=0$.
\end{prop}
\begin{proof}
If $\psi$ is 1-homogeneous, the partial derivatives of $\psi$,  $\psi_k=\partial\psi/\partial x_k$,  are homogeneous of order zero. Hence
$$
\delta^\#\omega=-\sum x_k\psi_{j k}dx_j=\sum E(\psi_j)dx_j=0
$$
by Euler's formulas.

For the converse we use that $\omega$ is symmetric by the lemma. Since $\omega$ is closed and symmetric, we get that for any $j$
$$
\sum_k \omega_{j k} dx_k
$$
is closed, so there are functions $\psi_j$ that solve 
$$
\partial\psi_j/\partial x_k =\omega_{j k}.
$$
Since $\omega$ is symmetric, $\alpha:=\sum_j\psi_j dx_j$ is also closed, so we can solve $d\psi=\alpha$. 
Therefore, we can always write $\omega=\ddhash\psi$ for some  function $\psi$. Then $\delta^\#\omega=0$ means 
$$
\sum_k x_k\psi_{j k}=0
$$
for all $j$. This means that all $\psi_j$ are homogeneous of order zero. Thus
$$
\phi(x):=\sum x_k\psi_{k }
$$
is 1-homogeneous and 
$$
\phi_j=\psi_j +\sum x_k\psi_{j k}=\psi_j.
$$
Hence $\dhash\phi=\dhash\psi$, so $\ddhash\phi=\ddhash\psi=\omega$.
\end{proof}
\noindent Since $\psi$ is 1-homogeneous it follows that the coefficients of $\omega$, $\psi_{j k}$ are homogeneous of order -1 under scaling in $x$. In other words
$$
F_t^*(\omega)=\omega,
$$
so, as a current, $\omega$ is homogeneous of order zero. This also follows from the fact that, since $\delta\omega=0$, the Lie derivative of $\omega$ along $E$ vanishes. However, not every closed symmetric current that is homogeneous of order zero can be written as $\ddhash$ of a 1-homogeneous function, as the example
$$
x_1^{-1} dx_1\w d\xi_1 =\ddhash x_1\log|x_1|
$$
shows. Hence, the condition $\delta^\#\omega=0$ is strictly stronger. In the  next definition we generalize this condition to forms of higher bidegree.

\begin{df} Let $\Omega$ be a  closed current with measure coefficients of bidegree $(p,p)$ on $\R^n\setminus \{0\}$. We say that $\Omega$ is strongly homogeneous if $\delta^\#\Omega=0$ and  write $\Omega\in SH_p$.
\end{df}

As we have seen, currents of the form $\omega=\ddhash\psi$ where $\psi$ is (convex and) 1-homogeneous are strongly homogeneous. Since contraction with a vector field is an antiderivation, it follows that any wedge product
$$
\Omega=\ddhash\psi_1\w ...\ddhash\psi_p
$$
is also strongly homogeneous. 

Strongly homogeneous currents turn out to have two important properties. The first is that they are automatically `strong', so the theory of Bedford and Taylor applies. The second property is that, just like in the case of bidegree $(1,1)$,  they are symmetric, so $\delta^\#\Omega=0$ implies that $\delta\Omega=0$ too, which in turn implies that $\Omega$ is invariant under scaling in the $x$-variable. 

To see this we will use the following important observation. Recall that the operator $T$ is defined by
$$
T=\sum dx_j\w \delta_{\xi_j},
$$
see (\ref{T}).
\begin{prop}\label{fundobservation} For any supercurrent $\Omega$,
$$
T\Omega=L_{E^\#}\Omega,
$$
the Lie derivative of $\Omega$ along the vector field $E^\#= \sum x_j\partial/\partial \xi_j$.
\end{prop}
\begin{proof} Since both sides are derivations and annihilate any form (or current) in $x$, $\sum_I
a_I(x) dx_I$, it is enough to check the claim for $\Omega= d\xi_j$. But
$T d\xi_j=dx_j$, and 
$$
\L_{E^\#}(d\xi_j)= \frac{d}{dt}|_{t=0}G_t^*(d\xi_j)= \frac{d}{dt}|_{t=0}d(\xi_j+tdx_j)=dx_j,
$$
which completes the proof.
\end{proof}

It follows  that a  strongly homogeneous current
 is invariant  under the flow of $E^\#$, 
$$
G_t(x, \xi)=(x, \xi+tx)
$$
since by Cartan's magic formula
$$
\L_{E^\#}\Omega=(d\delta^\#+\delta^\#d)\Omega=0.
$$
Thus, strongly homogeneous currents are annihilated by $T$. By Theorem \ref{theoremstrongform} this implies that they are strong. 

Using the invariance under the flow of $E^\#$ we can also generalize Lemma \ref{symmlemma} and prove that any strongly homogeneous current is symmetric, cf. \cite{Faifman-Wannerer}.
( I thank Thomas Wannerer for insisting that symmetry should be automatic and directing me to this reference. The proof we give here is different.)
\begin{prop}\label{symmetric} Let $\Omega$ be a superform of bidegree $(p,p)$ satisfying $d\Omega=0$ and $\delta^\#\Omega=0$. Then $\Omega$ is symmetric. 
\end{prop}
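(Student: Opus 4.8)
The plan is to show that the coefficients $\Omega_{I K}$ in $\Omega=\sum_{I K}\Omega_{I K}\,dx_I\w d\xi_K$ satisfy $\Omega_{I K}=\Omega_{K I}$ (summing over increasing multiindices), which by the formula for $J$ in Section \ref{recap} is exactly the symmetry of $\Omega$; note that $J$ is an involution on $(p,p)$-forms, so this amounts to killing the $J$-anti-invariant part. The quick way is via the identity $T=\sum dx_i\w\delta_{\xi_i}=d\delta^\#+\delta^\# d$ recorded in Section \ref{charts} (Cartan's formula for $\L_{E^\#}$): applying it to $\Omega$ and using both hypotheses gives
$$
T\Omega=d(\delta^\#\Omega)+\delta^\#(d\Omega)=0,
$$
so $\Omega\in\Ker T$. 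It then remains to invoke the fact, implicit in the discussion of $\Ker T$ in Section \ref{positivity}, that the kernel of $T$ on $(p,p)$-forms consists of symmetric forms — equivalently, that $T$ is injective on the $J$-anti-invariant subspace of $\Lambda^{p,p}(\R^n_x\oplus\R^n_\xi)$, a finite-dimensional linear algebra statement that one checks on a basis (for $p=1$ one sees directly $T(\sum a_{jk}dx_j\w d\xi_k)=\sum_{j<k}(a_{jk}-a_{kj})dx_j\w dx_k$).

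If one prefers not to appeal to that pointwise fact, I would argue directly. Written out, $d\Omega=0$ says that for each fixed $\xi$-index $K$ the $x$-form $\sum_I\Omega_{I K}dx_I$ is closed, i.e. $\partial_c\Omega_{I,K}=\sum_l\partial_{i_l}\Omega_{I(i_l\to c),K}$ for all $c$ (the subscript $I(i_l\to c)$ meaning $I$ with its $l$-th entry changed to $c$, coefficients extended antisymmetrically), while $\delta^\#\Omega=0$ says $\sum_j x_j\,\Omega_{I,(jM)}=0$ for all $I$ and all $(p-1)$-indices $M$. Differentiating this last relation in $x_c$, substituting the closedness identity into the term $\sum_j x_j\partial_c\Omega_{I,(jM)}$, and then using $\delta^\#\Omega=0$ once more to discard the terms $\sum_j x_j\Omega_{I(i_l\to c),(jM)}=0$, all the $x$-derivatives cancel telescopically and one is left with the purely algebraic exchange identity $\Omega_{I,(cM)}=\sum_{l}\Omega_{I(i_l\to c),(i_l M)}$. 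For $p=1$ this reads $\Omega_{(i),(c)}=\Omega_{(c),(i)}$ and we are done; more invariantly, for $p=1$, $d\Omega=0$ forces $\Omega=d\gamma$ with $\gamma$ of bidegree $(0,1)$, and then $\delta^\#\Omega=0$ together with $d\delta^\#\gamma+\delta^\# d\gamma=T\gamma$ forces $\gamma=\dhash\Phi$ for a function $\Phi$, so $\Omega=\ddhash\Phi$ is visibly symmetric.

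The step I expect to be the real obstacle is passing from that exchange identity to full symmetry when $p\ge 2$: a single application only moves one index at a time between the $dx$- and $d\xi$-slots and does not by itself give $\Omega_{I K}=\Omega_{K I}$, so one must iterate it together with the higher-order relations obtained by differentiating $\delta^\#\Omega=0$ more than once and track signs carefully — precisely the bookkeeping that the $T\Omega=0$ argument avoids by reducing everything to the one linear-algebra fact above. I would therefore present the $T\Omega=0$ proof as the main argument and relegate the direct computation to a remark.
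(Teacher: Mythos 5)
Your first step coincides with the paper's: apply Cartan's magic formula to get
$$
T\Omega=\L_{E^\#}\Omega=(d\delta^\#+\delta^\#\,d)\Omega=0.
$$
But from there the two arguments diverge, and yours has a genuine gap.

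You propose to finish by invoking the pointwise fact that a constant-coefficient $(p,p)$-form in $\Ker T$ must be symmetric, dismissing it as ``a finite-dimensional linear algebra statement that one checks on a basis,'' and you verify it only for $p=1$. For $p\ge 2$ this is not a routine check. In the paper that fact is exactly the content of Theorem \ref{strongforms1} (together with the elementary observation that strong forms are symmetric), and Theorem \ref{strongforms1} is deliberately stated in Section \ref{positivity} without proof and only established later, in Section \ref{charstrongform}, via the Berezin-type transform $\Phi$ and the Lefschetz-style primitivity Theorem \ref{theoremprimitiveform}. So the ``easy pointwise fact'' you lean on is actually a deep structural result — and, worse, since Proposition \ref{symmetric} makes no use of the closedness except to derive $T\Omega=0$, what you would need to check is essentially equivalent to Proposition \ref{symmetric} itself for constant-coefficient forms. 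Your backup plan, the index-chasing computation, you yourself concede does not close the gap for $p\ge 2$, as a single application of the exchange identity only transposes one index between the $dx$- and $d\xi$-slots.

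The paper sidesteps all of this. From $\L_{E^\#}\Omega=0$ it concludes that $\Omega$ is invariant under the flow $G_t(x,\xi)=(x,\xi+tx)$, in particular $G_1^*\Omega=\Omega$, i.e.
$$
\sum\Omega_{IJ}\,dx_I\w d\xi_J=\sum\Omega_{IJ}\,dx_I\w d(\xi+x)_J.
$$
Now change variables $(y,\eta)=(x+\xi,\xi)$, so that $dx=dy-d\eta$, $d\xi=d\eta$, and $d(\xi+x)=dy$. The identity becomes
$$
\sum\Omega_{IJ}\,d(y-\eta)_I\w d\eta_J=\sum\Omega_{IJ}\,d(y-\eta)_I\w dy_J,
$$
and picking out the components of bidegree $(p,p)$ in $(y,\eta)$ forces $k=p$ (pure $dy_I$) on the left and $k=0$ (pure $(-d\eta)_I$) on the right, yielding
$$
\sum\Omega_{IJ}\,dy_I\w d\eta_J=\sum\Omega_{IJ}\,dy_J\w d\eta_I,
$$
i.e. $\Omega_{IJ}=\Omega_{JI}$. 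This is a self-contained two-line computation that needs no structural description of $\Ker T$ at all. I would encourage you to adopt this change-of-variables trick: it turns the flow invariance, rather than the pointwise kernel condition, into the engine of the proof, and it neatly avoids both the forward reference to Theorem \ref{strongforms1} and the combinatorial bookkeeping that defeated your direct computation.
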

\begin{proof} As we have seen 
$$
\L_{E^\#}\Omega= (d\delta^\#+\delta^\# d)\Omega=0,
$$
so $\Omega$ is invariant under the flow $(x,\xi)\to (x,\xi +t x)$. Hence
$$
\Omega=\sum\Omega_{I J} dx_I\w d\xi_J=\sum\Omega_{I J} dx_I\w d(\xi+x)_J.
$$
Change variables $(y,\eta)=( x+\xi, \xi)$, $(x, \xi)=(y-\eta, \eta)$ . Then we have, in the new coordinates, 
$$
\sum\Omega_{I J} d(y-\eta)_I\w d\eta_J=\sum\Omega_{I J} d(y-\eta)_I\w d y_J.
$$
Identifying terms of bidegree $(p,p)$ in $(y,\eta)$ we get
$$
\sum\Omega_{I J} dy_I\w d\eta_J=\sum\Omega_{I J} d(-\eta)_I\w d y_J=\sum\Omega_{I J} dy_J\w d\eta_I,
$$
so $\Omega$  is symmetric. 
\end{proof}
From Proposition \ref{symmetric} we get that a strongly homogeneous current also satisfies $\delta\Omega=0$. Applying Cartan's formula again, we see that a strongly homogeneous current satisfies $L_E\Omega=0$, so it is invariant under the flow of $E$, $F_t(x,\xi)=(e^tx, \xi)$ too. Thus it is invariant under scaling in the $x$-variable, or equivalently, its coefficients are homogeneous of order $-p$, if $\Omega$ is of bidegree $(p,p)$.

Summing up the consequences of Propositions \ref{fundobservation} and \ref{symmetric} we then have.
\begin{prop} A strongly homogeneous current is strong and homogeneous of order zero, i.e. invariant under scaling in the $x$-variable.
\end{prop}

%%%%%%%%%%%%%%%%%%%%%%%%%%%%

\newpage

\section{A pairing}

In this section we introduce a natural and useful pairing between strongly homogeneous currents and 1-homogeneous functions. Recall that we defined a gauge function as a convex 1-homogeneous function $\mu$ that is strictly positive outside the origin. Its associated unit sphere, $S_\mu$, is the set where $\mu=1$. We also defined the supercurrent of $S_\mu$ by
$$
[S_\mu]_s.\chi=\int_{S_\mu} \dhash\mu\w\chi
$$
if $\chi$ is a smooth form of bidegree $(n-1,n-1)$. We then showed that the wedge product
$$
[S_\mu]_s\w\Omega
$$ 
is well defined. By Proposition \ref{sumup} it is  continuous in $\Omega$ in the weak*-topology on the space of  $\Omega$:s  that are homogenuos of order zero,  weakly positive and `strong',  and also continuous under uniform convergence in $\mu$.  Hence, by the remarks just made, this applies in particular to forms that are strongly homogeneous.

\begin{df} Let $\mu$ be a gauge function and $f$ a continuous 1-homogeneous function, and let $\Omega$ be a  current in $SH_{n-1}$. Then 
$$
\langle f,\Omega\rangle_\mu=\int_{S_\mu} f\dhash\mu\w \Omega=[S_\mu]_s. f\Omega.
$$
\end{df}

\begin{prop}\label{independent} The pairing $\langle f,\Omega\rangle_\mu$ does not depend on $\mu$.
\end{prop}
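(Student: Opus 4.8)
The plan is to show that if $\mu_0$ and $\mu_1$ are two gauge functions, then $\langle f,\Omega\rangle_{\mu_0}=\langle f,\Omega\rangle_{\mu_1}$. By the continuity of the pairing in both $f$ (uniform convergence) and $\mu$ (uniform convergence), established via Proposition \ref{sumup}, it suffices to treat the case where $\mu_0,\mu_1$ are smooth gauge functions, $f$ is a smooth $1$-homogeneous function, and $\Omega$ is a smooth strongly homogeneous current; the general case then follows by regularization (convolution preserves the kernel of $T$, hence the class $SH$, and preserves closedness). In the smooth setting the pairing is an honest integral over the compact hypersurface $S_\mu$, so everything in sight is a genuine superform computation.

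The key idea is to compare the two integrals using the region $D$ between the two spheres $S_{\mu_0}$ and $S_{\mu_1}$. Concretely, interpolate: the family $\mu_t=(1-t)\mu_0+t\mu_1$ is a smooth gauge function for each $t\in[0,1]$, and the spheres $S_{\mu_t}$ sweep out a region. The cleaner route, however, is Stokes' theorem. First note that since $f$ is $1$-homogeneous, $df$ is $0$-homogeneous, so $E(f)=f$, i.e. $\delta(df)=f$; equivalently $f\,dS$-type contractions relate to $d$ of lower forms. I would write
$$
\langle f,\Omega\rangle_\mu=[S_\mu]_s.\,(f\Omega)=\int_{S_\mu} f\,\dhash\mu\w\Omega,
$$
and recall from Section \ref{Bedford-Taylor} that $[S_\mu]_s=\chi_{\mu\le 1}\,\ddhash\mu^+-\chi_{\{\mu>1\}}\ddhash\mu$ — more precisely $T_\mu=[S_\mu]_s+\chi_{\{\mu>1\}}\ddhash\mu$ with $T_\mu=\ddhash\max(0,\mu-1)$. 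Apply this with $\mu=\mu_0$ and $\mu=\mu_1$ and subtract. The difference $T_{\mu_0}-T_{\mu_1}=\ddhash\big(\max(0,\mu_0-1)-\max(0,\mu_1-1)\big)$; the function $g:=\max(0,\mu_0-1)-\max(0,\mu_1-1)$ has compact support (both gauges are $\ge c|x|$, so $\mu_i-1>0$ for large $|x|$ with matching leading behavior — actually $g$ is bounded and $g\equiv$ difference of two $1$-homogeneous functions minus constants outside a ball; more carefully, $g$ has linear growth and $\ddhash g$ has compact support since $\mu_0,\mu_1$ agree in the relevant homogeneous sense only up to the sphere). Then $\int \ddhash g\w f\Omega=\int g\,\ddhash(f\Omega)$ by the self-adjointness of $\ddhash$ under the superintegral (no boundary term, $f\Omega$ being of the right degree with the cutoff absorbed), and $\ddhash(f\Omega)=\ddhash f\w\Omega$ because $\ddhash\Omega=0$ and $d\Omega=0$ — but $\ddhash f$ need not vanish, so this term must be handled by the homogeneity constraint $\delta\Omega=0$.

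The main obstacle, as this indicates, is the second term: the contributions $\int_{\{\mu_i>1\}}\chi\,f\,\ddhash\mu_i\w\Omega$ and the term $\int g\,\ddhash f\w\Omega$. Here is where strong homogeneity is essential. The correct mechanism is: since $\Omega\in SH_{n-1}$ we have $\delta\Omega=0$ and $d\Omega=0$, hence (Cartan) $\L_E\Omega=0$, so $F_t^*\Omega=\Omega$; pushing the sphere $S_{\mu_0}$ to $S_{\mu_1}$ along a $1$-homogeneous isotopy (e.g. the radial map $x\mapsto (\mu_0(x)/\mu_1(x))\,x$ on $\{\mu_0=1\}$, which is a diffeomorphism onto $\{\mu_1=1\}$ commuting suitably with the cone structure) and using that both $f$ and the relevant forms are homogeneous, the integral over $S_{\mu_0}$ equals the integral over $S_{\mu_1}$ of the pullback, which by homogeneity is the same integrand. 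Thus I would realize $S_{\mu_0}$ and $S_{\mu_1}$ as two sections of the cone $\R^n\setminus\{0\}\to S^{n-1}$ and observe that $f\,\dhash\mu\w\Omega$, restricted to such a section, has a pullback to $S^{n-1}$ that is independent of which section (gauge) we chose — precisely because $\delta\Omega=0$ kills the radial component, $\delta(\dhash\mu)$ relates to $\mu$ itself, and $f$ is $1$-homogeneous. This last verification — that the $(n-1,n-1)$-form $f\,\dhash\mu\w\Omega$ pulls back to a fixed form on the link $S^{n-1}$ regardless of the gauge, using $\delta\Omega=\delta^\#\Omega=0$ — is the crux and the only place where any real computation occurs; everything else is formal manipulation with Stokes and regularization.
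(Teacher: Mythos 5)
You correctly identify the ingredients the proof must use: $1$-homogeneity of $f$, the conditions $\delta\Omega=\delta^\#\Omega=0$ (so $\L_E\Omega=0$), regularization to reduce to the smooth case, and the idea that some Stokes argument or radial pushforward between the two level sets should close the argument. But neither of your two attempts actually closes.

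Your first attempt (subtracting the representations $T_\mu=[S_\mu]_s+\chi_{\{\mu>1\}}\ddhash\mu$ and trying to Stokes away $\int\ddhash g\wedge f\Omega$) rests on the assertion that $g:=\max(0,\mu_0-1)-\max(0,\mu_1-1)$, or at least $\ddhash g$, is compactly supported. That is false: for $|x|$ large both $\mu_i-1$ are positive, so $g=\mu_0-\mu_1$ there, a nonzero $1$-homogeneous function, and $\ddhash g=\ddhash\mu_0-\ddhash\mu_1$ has coefficients homogeneous of order $-1$, not compactly supported. So the boundary terms you want to discard do not vanish, and the subsequent ``handled by the homogeneity constraint'' is left as a promissory note. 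Your second attempt (the radial isotopy / pushforward picture) is a sound idea in principle --- it is essentially Proposition \ref{pushforward} in the paper --- but you yourself flag the pullback identity as ``the crux and the only place where any real computation occurs'' and then do not carry it out. Note also that the radial map $x\mapsto x/\mu_1(x)$ does not simply carry $f\,\dhash\mu_0\wedge\Omega$ to $f\,\dhash\mu_1\wedge\Omega$: the $1$-homogeneity of $f$ produces a factor $1/\mu_1$ (which is exactly the weight $\mu'$ appearing in Proposition \ref{pushforward}), and balancing this against the transformation of $\dhash\mu\wedge\Omega$ is precisely the nontrivial step; moreover the paper actually deduces Proposition \ref{pushforward} from Proposition \ref{independent}, not the reverse, so you would need an independent verification.

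The paper's route is much shorter and worth internalizing: by Lemma \ref{fundamentallemma} (which only needs $\delta^\#\Omega=0$ and $f$, $\mu$ $1$-homogeneous), $f\,\dhash\mu\wedge\Omega=\mu\,\dhash f\wedge\Omega$; restricting to $S_\mu$ where $\mu\equiv 1$ gives $\langle f,\Omega\rangle_\mu=\int_{S_\mu}\dhash f\wedge\Omega$. Since $\ddhash f\wedge\Omega$ is strongly homogeneous of full bidegree $(n,n)$, it vanishes outside the origin (apply $\delta$), so $\dhash f\wedge\Omega$ is closed away from $0$. Stokes on the annular region between $S_\mu$ and $S_{\mu'}$ (arranging $S_{\mu'}\subset\{\mu<1\}$, then a third gauge in general) finishes it. The single algebraic identity $f\,\dhash\mu\wedge\Omega=\mu\,\dhash f\wedge\Omega$ is what you were missing; once you have it, there is no pullback computation to do at all.
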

The proof uses a simple but fundamental lemma that we will have use for in other contexts as well.
\begin{lma}\label{fundamentallemma} Let $\Omega$  be a current of bidegree $(n-1,n-1)$ satisfying $\delta^\#\Omega=0$, and let $f$ and $g$ be 1-homogeneous and smooth. Then
\be\label{fundeq}
f\dhash g\w\Omega=g\dhash f\w\Omega.
\ee
\end{lma}
\begin{proof} Applying $\delta^\#$ to both sides we get
$$
\delta^\#(f\dhash g\w\Omega)=fg\Omega= \delta^\#(g\dhash f\w\Omega).
$$
Since both sides in (\ref{fundeq}) are of full degree in $d\xi$ this implies that they are equal.
\end{proof}
In the same way one sees that a strongly homogeneous current, $\Omega$, of bidegree  $(n,n)$, like $\ddhash\psi_1\w ...\ddhash\psi_n$ where $\psi_i$ are 1-homogeneous,  vanishes outside the origin, since $\delta\Omega=0$ -- a fact that we have used repeatedly. Given the lemma we can now prove Proposition \ref{independent}.

\begin{proof}{(\it Of Proposition \ref{independent})} Let $\mu$ and $\mu'$ be two gauges, that we may assume smooth by the continuity properties just mentioned. Notice that then their unit spheres are smooth, by Lemma \ref{smoothgauge} in the appendix. Assume first that $S_{\mu'}\subset\{\mu<1\}$. We want to prove that
$$ 
\langle f,\Omega\rangle_\mu=\int_{S_\mu} f\dhash\mu\w \Omega= \int_{S_{\mu'}} f\dhash\mu'\w \Omega=\langle f,\Omega\rangle_{\mu'}
$$
for all continuous 1-homogeneous functions $f$ and we may of course assume that $f$ is smooth. Note first that
$$
\ddhash f\w \Omega=0
$$
outside the origin since it is strongly homogeneous of full degree. Therefore
$$
\langle f,\Omega\rangle_\mu= \int_{S_\mu} f\dhash\mu\w \Omega
= \int_{S_\mu} \mu\dhash f\w \Omega=\int_{S_\mu} \dhash f\w \Omega=\int_{S_\mu'} \mu'\dhash f\w \Omega=\langle f,\Omega\rangle_{\mu'}
$$
where we have used in turn Proposition \ref{fundeq}, that $\mu=1$ on $S_\mu$ and Stokes' theorem. If $S_{\mu'}$ is not a subset of the set where $\mu<1$ we can take a third gauge $\mu''$ such that both $\mu$ and $\mu'$ are larger than 1 on $S_{\mu''}$. Then the pairings defined by $\mu$ and $\mu'$ are both equal to the pairing defined by $\mu''$, hence equal.
 
\end{proof}
\begin{prop}\label{altdef} Let  $\Omega=\ddhash\psi_1\w ...\ddhash\psi_{n-1}$ and $f=\psi_n$, all $\psi_i$ being convex and 1-homogeneous. Then  the pairing becomes
$$
\langle \psi_n,\ddhash\psi_1\w ...\ddhash\psi_{n-1}\rangle_\mu=V(\psi_1, ...\psi_n)n!.
$$
\end{prop}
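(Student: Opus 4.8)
The plan is to reduce the pairing $\langle \psi_n, \ddhash\psi_1\w\dots\w\ddhash\psi_{n-1}\rangle_\mu$ to a volume integral over all of $\R^n$ by an integration-by-parts argument, using the special gauge $\mu = |x|$ (permissible by Proposition \ref{independent}, which says the pairing is $\mu$-independent) and exploiting the strong homogeneity of $\Omega = \ddhash\psi_1\w\dots\w\ddhash\psi_{n-1}$. First I would note that all $\psi_i$ may be assumed smooth (away from the origin): by Theorem \ref{crucial} both sides are continuous under uniform convergence of the $\psi_i$, so we may regularize. Then $\Omega \in SH_{n-1}$, so $\delta\Omega = 0$ and $\ddhash\psi_n\w\Omega$ is a strongly homogeneous $(n,n)$-current, hence vanishes outside the origin — exactly as observed right after Lemma \ref{fundamentallemma}.

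Next I would carry out the computation. Writing $S_\mu$ as the boundary of the convex body $\{\mu\le 1\}$ and recalling $[S_\mu]_s.\chi = \int_{S_\mu}\dhash\mu\w\chi$, Lemma \ref{fundamentallemma} (applied with $f = \psi_n$, $g = \mu$, both $1$-homogeneous) gives
$$
\langle \psi_n,\Omega\rangle_\mu = \int_{S_\mu} \psi_n\,\dhash\mu\w\Omega = \int_{S_\mu} \mu\,\dhash\psi_n\w\Omega = \int_{S_\mu} \dhash\psi_n\w\Omega,
$$
using $\mu = 1$ on $S_\mu$. Now $\dhash\psi_n\w\Omega$ is an $(n-1,n)$-form; since $\ddhash\psi_n\w\Omega = 0$ on $\R^n\setminus\{0\}$, this form is $d$-closed away from the origin, so by Stokes' theorem the integral over $S_\mu$ equals the integral over the boundary of any larger convex body, and in the limit
$$
\int_{S_\mu}\dhash\psi_n\w\Omega = \int_{\{\mu<R\}}\ddhash\psi_n\w\Omega
$$
for every $R$ (the integrand being supported at the origin makes this independent of $R$ and equal to the integral over all of $\R^n$). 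Thus
$$
\langle \psi_n,\Omega\rangle_\mu = \int_{\R^n}\ddhash\psi_1\w\dots\w\ddhash\psi_n = V(\psi_1,\dots,\psi_n)\,n!,
$$
by the very definition of $V$.

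The main obstacle is bookkeeping at the singularity at the origin: the forms $\dhash\psi_n$, $\dhash\mu$ and the coefficients of $\Omega$ are only homogeneous of order $0$ or $-1$ and need not be smooth at $0$, so the Stokes' theorem step must be justified carefully — e.g. by excising a small ball $\{\mu<\epsilon\}$, applying Stokes on the annulus $\{\epsilon<\mu<1\}$ where everything is smooth (after regularization), and checking that the boundary term on $\{\mu=\epsilon\}$ behaves well. By homogeneity the mass of $\ddhash\psi_n\w\Omega$ over $\{\epsilon<\mu<1\}$ is controlled uniformly (as in the proof of Proposition \ref{sumup} via Lemma \ref{basic}), and since $\ddhash\psi_n\w\Omega$ is a positive measure supported at $\{0\}$ after passing to the limit, the inner boundary contribution is accounted for exactly by $MA$-type mass at the origin — which is precisely what produces the equality with $\int_{\R^n}\ddhash\psi_1\w\dots\w\ddhash\psi_n$. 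Alternatively, one can avoid the excision entirely by replacing $|x|$ with $(1+|x|^2)^{1/2}$ throughout, as suggested in the proof of Theorem \ref{MAestimate}, at the cost of a slightly less clean gauge; either route finishes the proof.
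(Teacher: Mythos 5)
Your proposal is correct and follows essentially the same route as the paper: apply Lemma \ref{fundamentallemma} to exchange $\psi_n\dhash\mu$ for $\mu\dhash\psi_n=\dhash\psi_n$ on $S_\mu$, then apply Stokes and use that the strongly homogeneous $(n,n)$-current $\ddhash\psi_1\w\dots\w\ddhash\psi_n$ vanishes outside the origin. The extra discussion of excising a ball around the origin is reasonable caution but not a different argument; the paper relies on the Bedford--Taylor machinery of Theorem \ref{crucial} and Proposition \ref{sumup} to make the same integration by parts legitimate.
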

\begin{proof} We have
$$
\langle \psi_n,\ddhash\psi_1\w ...\ddhash\psi_{n-1}\rangle_\mu=\int_{\mu(x)=1} \psi_n \ddhash\psi_1\w ...\ddhash\psi_{n-1}\dhash\mu=
\int_{\mu(x)=1}  \ddhash\psi_1\w ...\ddhash\psi_{n-1}\w\dhash\psi_n=
$$
$$
\int  \ddhash\psi_1\w ...\ddhash\psi_{n},
$$
where we have used Lemma \ref{fundamentallemma}.
\end{proof}

We remark that Proposition \ref{independent} can be localized at no extra cost: If $\K$ is an open cone in $\R^n$ with vertex at the origin we have
$$
\int_{S_{\mu}\cap \K} f\Omega=\int_{S_{\mu'}\cap \K} f\Omega.
$$
This is simply because the characteristic function of the cone is homogeneous of order zero. Admittedly, it is not continuous, but this poses no problems since it can be approximated by an increasing  sequence of  smooth functions, homogeneous of order zero. Yet another rewrite of Proposition \ref{independent} is the following
\begin{prop}\label{pushforward}
Under the same assumptions  as in Proposition \ref{independent} define measures
$$
d\nu=\dhash\mu\w\Omega, \quad d\nu'=\dhash\mu'\w\Omega
$$
on $S_\mu$ and $S_{\mu'}$ respectively. Let $\pi$ be the radial projection from $S_{\mu'}$ to $S_\mu$. Then
$$
\pi_*(d\nu')=\mu' d\nu.
$$
\end{prop}
\begin{proof}
Note first that 
$$
\pi(x)=x/\mu(x)
$$
so the proposition  says that if $f$ is any continuous function on $S_{\mu}$, then
\be\label{blabla}
\int_{S_\mu} f \mu' d\nu=\int_{S_{\mu'}} f(x/\mu(x))d\nu'.
\ee
We know from Proposition \ref{independent} that if $g$ is 1-homogeneous, then
$$
\int_{S_\mu} g d\nu=\int_{S_{\mu'}} gd\nu'.
$$
Taking $g=f(x/\mu(x))\mu'$ we get (\ref{blabla}).
\end{proof}
Our next objective is to prove that the pairing, that we can now denote as $\langle f,\Omega\rangle$, is non degenerate. We deduce one part of that from a more general proposition.
\begin{prop}\label{firstpart} Let $\Omega$ be a closed positive strongly homogeneous  current of bidegree $(n-1,n-1)$. Assume that
$$
\langle f,\Omega\rangle=0
$$
for all smooth $f$.
Then $\Omega=0$ in $\R^n\setminus\{0\}$.
\end{prop}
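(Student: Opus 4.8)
The plan is to fix a point $x_0\in\R^n\setminus\{0\}$ and show that $\Omega$ vanishes in a neighbourhood of $x_0$; since a strongly homogeneous current is determined by its values near any ``sphere'' $S_\mu$, it suffices to work with a conveniently chosen gauge $\mu$ passing through $x_0$. The idea is to transfer the hypothesis $\langle f,\Omega\rangle=0$ into a statement about the measure $d\nu=\dhash\mu\w\Omega$ on $S_\mu$, and then to invoke the nondegeneracy results for strong forms proved in Section \ref{positivity}. Concretely, first I would use Proposition \ref{altdef}, or rather the computation underlying it, to rewrite
$$
\langle f,\Omega\rangle=\int_{S_\mu} f\,\dhash\mu\w\Omega
$$
and observe that, because $\Omega$ is closed and strongly homogeneous, Lemma \ref{fundamentallemma} lets me move $\dhash$ freely between $f$, $\mu$ and the factors of $\Omega$. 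This should reduce the vanishing of all pairings $\langle f,\Omega\rangle$ to the vanishing of $\int \ddhash f\w\Omega'$-type expressions for a large family of test objects built from $f$.

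Next I would exploit that $\Omega$ is positive (so in particular weakly positive) and strong — the latter because strongly homogeneous currents lie in $\Ker T$, hence in $S$ by Theorem \ref{strongforms1}. This puts us in the situation of Proposition \ref{Current-Criterion}/Proposition \ref{positive}: a strong $(n-1,n-1)$-current is zero as soon as $\Omega\w Q=0$ for all $(1,1)$-forms $Q=\ddhash\psi$ with $\psi$ smooth (and convex suffices if we only want weak positivity to be contradicted). The key step is therefore to feed the hypothesis the right test functions $f$: taking $f=\psi$ to be a smooth $1$-homogeneous convex function and differentiating/using $\ddhash f\w\Omega=0$ outside the origin (true because it is strongly homogeneous of full degree, as noted just before Proposition \ref{altdef}), I expect the pairing $\langle f,\Omega\rangle=0$ to force $d\nu=0$, i.e. $\dhash\mu\w\Omega=0$ on $S_\mu$. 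Since $\dhash\mu$ is a nonvanishing $(0,1)$-form transverse to $S_\mu$ and $\Omega$ is tangential in an appropriate sense (it is annihilated by $\delta^\#$), this last equality should propagate to $\Omega=0$ near $x_0$; a clean way to see this is to localize Proposition \ref{independent} to an open cone $\K$ around $x_0$, as remarked after Proposition \ref{altdef}, so that the only information we use about $f$ is its restriction to $S_\mu\cap\K$, which ranges over \emph{all} smooth functions there.

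The main obstacle, I think, is the last implication: deducing $\Omega=0$ from $\langle f,\Omega\rangle=0$ for all $f$, rather than merely $\tau_\Omega=0$ on $S_\mu$. The pairing only sees the contraction of $\Omega$ against $\dhash\mu$ and a function, so a priori it controls only ``$n-1$ of the $2(n-1)$ directions'' of the current. To recover the full current one must use that $\Omega$ is \emph{symmetric} (equivalently $\delta\Omega=0$ as well, which holds here), so that its $dx$-components and $d\xi$-components are rigidly tied together, together with positivity, which via Proposition \ref{estimate} bounds every coefficient by the trace — and the trace itself can be written as a pairing of the above type by choosing $f$ adapted to the orthonormal frame (as in the proof of Lemma \ref{traceestimate}). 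So the argument should be: test with the $f$'s that compute $\tau_\Omega$ in every orthonormal coordinate system to conclude $\tau_\Omega=0$ on (a cone through) $S_\mu$, and then appeal to Proposition \ref{estimate} to get $\Omega=0$ there. Running this for every $x_0$ gives $\Omega=0$ on $\R^n\setminus\{0\}$, which is the assertion. I would expect the write-up to be short once the reduction to the trace is made precise; the delicate point is simply verifying that the trace is indeed captured by admissible test functions $f$.
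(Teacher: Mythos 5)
Your opening reduction is correct and matches the paper: $\langle f,\Omega\rangle=0$ for all smooth $f$ says precisely that the measure $d\nu=\dhash\mu\w\Omega$ on $S_\mu$ vanishes, for any (smooth) gauge $\mu$, by Proposition \ref{independent}. You even gesture at the right mechanism in passing (``$\Omega$ is annihilated by $\delta^\#$\dots this last equality should propagate to $\Omega=0$''), which is exactly what the paper does. But then you talk yourself out of it and take a detour through trace estimates that, as stated, does not work.

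The trace detour has a genuine gap. The hypothesis, once translated, is a \emph{single} identity: $\dhash\mu\w\Omega=0$ as a measure on $S_\mu$. Varying the smooth test function $f$ does not produce new identities; it is only the coefficient of that one measure against $f$. So you cannot ``test with the $f$'s that compute $\tau_\Omega$ in every orthonormal coordinate system'' — the pairing does not give you the pairing of $\Omega$ against arbitrary elementary forms, only against $f\dhash\mu$. Also note that $\tau_\Omega$ is a measure on $\R^n$, not on $S_\mu$, so ``$\tau_\Omega=0$ on $S_\mu$'' is not a well-formed intermediate statement without further interpretation. Finally, a small terminological slip: symmetry of $\Omega$ is \emph{not} equivalent to $\delta\Omega=0$; the latter is part of strong homogeneity.

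The paper's resolution is far shorter and purely algebraic, and positivity plays no role in it beyond making the pairing well defined. From $\dhash\mu\w\Omega=0$ on $\{\mu=1\}$ you also have $d\mu\w\dhash\mu\w\Omega=0$ there. Apply $\delta$: since $\delta(d\mu)=E(\mu)=\mu$ (Euler), $\delta(\dhash\mu)=0$ (no $dx$ factors), and $\delta\Omega=0$, you get $\mu\,\dhash\mu\w\Omega=0$, hence $\dhash\mu\w\Omega=0$ on $\{\mu=1\}$. Now apply $\delta^\#$: since $\delta^\#(\dhash\mu)=\sum x_j\mu_j=\mu$ and $\delta^\#\Omega=0$, you get $\mu\,\Omega=0$, hence $\Omega=0$ on $\{\mu=1\}$. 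Homogeneity of order zero then gives $\Omega=0$ on $\R^n\setminus\{0\}$. This is exactly the ``propagation'' you suspected; there is no need for Proposition \ref{estimate}, Proposition \ref{Current-Criterion}, or any coefficient-by-coefficient bookkeeping.
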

\begin{proof}
By Proposition \ref{independent} it is enough to prove that $\Omega=0$ if $[S_\mu]\w\Omega =0$ for {\it some} $\mu$, that we may take to be smooth.  Then the hypothesis means that
$$
d\mu\w\dhash\mu\w\Omega=0
$$
when $\mu=1$. 
Applying $\delta$ and then $\delta^\#$ we get, since $\delta(d\mu)=\mu$ and $\delta^\#\dhash\mu=\mu$ that $\Omega=0$ when $\mu=1$. Since the coefficients of $\Omega$ are homogeneous of order $-p$, $\Omega$ vanishes outside the origin. 

\end{proof}
This proves one part of nondegeneracy. 
We now turn to the remaining part.
\begin{thm}\label{secondpart}
Let $\mu$ be a gauge and let $d\nu$ be a measure on $S_\mu$. Then there is a unique strongly homogeneous (hence closed) current in $\R^n\setminus\{0\}$, $\Omega$, such that $\Omega\w\dhash\mu=d\nu$, i.e.
$$
\int_{S_\mu} fd\nu=\int_{S_\mu} f\Omega\w\dhash\mu,
$$
for any continuous $f$. If $d\nu$ is positive, $\Omega$ is positive. Moreover, the trivial extension of  $\Omega$, $\tilde\Omega$,  to all of  $\R^n$ is closed if and only if $d\nu$ has barycenter zero. 
\end{thm}
Conversely, we already know that if $\Omega$ is weakly positive and strongly homogeneous of bidegree $(n-1,n-1)$ then its trace on $S_\mu$, is a well defined positive measure measure. Hence there is a one-to-one correspondence between positive measures on, say, the sphere, and weakly positive currents in $SH_{n-1}$. We also remark that any homogeneous current of bidegree $(p,p)$ ($p<n$) has a trivial extension across the origin. When $p<n-1$, the singularity at the origin is too weak to give a contribution to $d\tilde\Omega$, but when $p=n-1$ we may get Dirac masses at the origin as the theorem shows. 
\begin{proof} By Proposition \ref{pushforward} it is enough to prove this for $\mu(x)=|x|$. We define $\Omega$ directly and then show that it has the desired properties. Let
$$
\eta=\sum \eta_{i k} dx_i\w d\xi_k
$$
be a smooth form with compact support in $\R^n$. We define an $(n-1,n-1)$ current $\Omega$ by
$$
\Omega.\eta=\sum_{i k}\int_{|x|=1} x_ix_k d\nu(x)\int_0^\infty \eta_{i k}(tx)dt.
$$
Clearly, $\Omega$ is positive if $d\nu$ is positive. (Recall that all positivity notions coincide in bidegree $(n-1,n-1)$.)
Then $\Omega$ is a symmetric current with measure coefficients. 
In order to prove that $\delta\Omega=0$ outside the origin we also define an $(n,n-1)$ current $\rho$ outside the origin by
$$
\rho.\tau=\sum_k\int_{|x|=1} x_k d\nu(x)\int_0^\infty \tau_k(tx) dt/t,
$$
if $\tau=\sum \tau_kd\xi_k$ is a smooth $(0,1)$ form compactly supported in $\R^n\setminus\{0\}$. If $\eta=0$ near the origin we have, since $\delta\eta=\sum x_i\eta_{i k}d\xi_k$, that
$$
\rho.\delta\eta=\sum_{i k}\int_{|x|=1} x_ix_k d\nu(x)\int_0^\infty \eta_{i k}(tx)dt=\Omega.\eta.
$$
Hence $\delta\rho=\Omega$ so $\delta\Omega=0$. 

Let now $\eta=d\tau$, $\tau$ $(0,1)$ with compact support in $\R^n$. Then
$$
\Omega. d\tau=\sum_{i k}\int_{|x|=1} x_ix_k d\nu(x)\int_0^\infty \frac{\partial\tau_k}{\partial x_i}(tx)dt =\sum_k\int_{|x|=1} x_k d\nu(x)\int_0^\infty (d/dt)\tau_k(tx) dt=
$$
$$
=-\sum \tau_k(0)\int_{|x|=1} x_k d\nu(x).
$$
This means that
$$
d\Omega.\tau=\sum \tau_k(0)\int_{|x|=1} x_k d\nu(x)
$$
so $d\Omega$ is zero outside the origin and equal to zero across the origin  if and only if the barycenter of $d\nu$ is zero. It remains only to prove that the trace of $\Omega$ on the unit sphere is $d\nu$. This is equivalent to saying that for any smooth function $f$ supported outside the origin (with $\mu(x)=|x|$) 
\be\label{f}
\int_{|x|<1} df\w\Omega\w\dhash\mu=\int_{|x|=1} f d\nu-\int_{|x|<1}f\Omega\wedge \ddhash\mu.
\ee
But the LHS is 
$$
\sum_{i k}\int_{|x|=1} x_ix_k d\nu(x)\int_0^\infty f_i(tx)\mu_k(tx)dt =\int_{|x|=1}d\nu \int_0^1 (d/dt)f(tx) (d/dt)\mu(tx)dt.
$$
The formula (\ref{f}) therefore follows from integration by parts if we use that $(d/dt)\mu(tx)=\mu=1$ when $|x|=1$.

\end{proof}
\begin{remark} Notice that since any measure on the sphere can be written as a difference between two positive measures, the currents $\Omega$ that we get from the theorem are differences between two weakly positive currents. 
\end{remark}

We conclude this section with a few results  related to non-degeneracy that will be important in Section \ref{valuations}. 
\begin{prop}\label{symmetry}
Let $\Omega$ be a strongly homogeneous and weakly positive current of bidegree $(p,p)$. Put 
$$
\langle \psi_{n-p},\Omega\w\ddhash\psi_1\w...\ddhash\psi_{n-p-1}\rangle=V_\Omega(\psi_1,...\psi_{n-p}).
$$
Then $V_\Omega$ is symmetric in the $\psi_i$. 
\end{prop}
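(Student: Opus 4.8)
The plan is to show that, after replacing the gauge by $\mu(x)=|x|$ and applying Stokes' theorem, $V_\Omega(\psi_1,\dots,\psi_{n-p})$ is simply the total mass of the top--degree current $\Omega\w\ddhash\psi_1\w\cdots\w\ddhash\psi_{n-p}$, which is manifestly symmetric in the $\psi_i$ because the $(1,1)$--forms $\ddhash\psi_i$ commute with one another and with $\Omega$. This mirrors the computation already carried out in the proof of Proposition \ref{altdef} for the special case $\Omega=\ddhash\psi_1\w\cdots\w\ddhash\psi_{n-1}$. By Proposition \ref{independent} the pairing does not depend on the gauge, so I take $\mu(x)=|x|$; and since $\langle f,\Theta\rangle$ is just integration of $f$ against the fixed measure $\dhash\mu\w\Theta$ on $S_\mu$, it is continuous in $f$ under uniform convergence, while the wedge products $\Omega\w\ddhash\psi_1\w\cdots$ are continuous in the $\psi_i$ by Theorem \ref{crucial}. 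Hence it suffices to treat smooth $1$--homogeneous $\psi_1,\dots,\psi_{n-p}$.

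Now set $\Omega':=\Omega\w\ddhash\psi_1\w\cdots\w\ddhash\psi_{n-p-1}$. This is a closed current of bidegree $(n-1,n-1)$ with measure coefficients ($\Omega$ has measure coefficients by Proposition \ref{estimate}, and we are wedging it with smooth forms), and it is strongly homogeneous: $\delta$ and $\delta^\#$ are antiderivations, they annihilate $\Omega$ by hypothesis, and they annihilate each $\ddhash\psi_i$ since $\psi_i$ is $1$--homogeneous (the Euler computation in the proof of Proposition \ref{deltaomega}); thus $\Omega'\in SH_{n-1}$. Applying Lemma \ref{fundamentallemma} with $f=\psi_{n-p}$, $g=\mu$ and the current $\Omega'$ (which satisfies $\delta^\#\Omega'=0$), and using that $\mu=1$ on $S_\mu$,
$$
V_\Omega(\psi_1,\dots,\psi_{n-p})=\int_{S_\mu}\psi_{n-p}\,\dhash\mu\w\Omega'=\int_{S_\mu}\mu\,\dhash\psi_{n-p}\w\Omega'=\int_{S_\mu}\dhash\psi_{n-p}\w\Omega'.
$$
Applying Stokes' theorem on $\{\mu\le1\}$, using $d\Omega'=0$ and $d\dhash\psi_{n-p}=\ddhash\psi_{n-p}$, this equals
$$
\int_{\{\mu\le1\}}\ddhash\psi_{n-p}\w\Omega'=\int_{\{\mu\le1\}}\Omega\w\ddhash\psi_1\w\cdots\w\ddhash\psi_{n-p}.
$$
The integrand here is a top--degree strongly homogeneous current, so (as used repeatedly in the paper, e.g.\ right after Lemma \ref{fundamentallemma}) it vanishes on $\R^n\setminus\{0\}$ and its trivial extension is a multiple of $\delta_0\,dV$; hence the last integral equals the full mass $\int\Omega\w\ddhash\psi_1\w\cdots\w\ddhash\psi_{n-p}$. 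Since the smooth $(1,1)$--forms $\ddhash\psi_i$ commute with each other and with $\Omega$, this number is invariant under all permutations of $\psi_1,\dots,\psi_{n-p}$, which gives the symmetry of $V_\Omega$ for smooth $\psi_i$, and hence, by the continuity noted above, in general.

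The step that needs the most care is the last one: the passage through Stokes' theorem together with the identification of $\int_{\{\mu\le1\}}\Omega\w\ddhash\psi_1\w\cdots\w\ddhash\psi_{n-p}$ with the total mass of that current. Both $\Omega'$ and $\dhash\psi_{n-p}$ are singular at the origin, so, as elsewhere in the paper, one should excise a small ball $\{|x|<\epsilon\}$, apply Stokes on the annulus $\{\epsilon\le|x|\le1\}$, and use the homogeneity of the integrand together with the vanishing of $\Omega\w\ddhash\psi_1\w\cdots\w\ddhash\psi_{n-p}$ on $\R^n\setminus\{0\}$ to see that the interior integral over the annulus contributes nothing while the inner boundary integral $\int_{|x|=\epsilon}\dhash\psi_{n-p}\w\Omega'$ converges to (and already equals, by the same annulus argument applied between two radii) the constant value $\int_{S_\mu}\dhash\psi_{n-p}\w\Omega'$, i.e.\ the mass carried at the origin. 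Once this bookkeeping at the origin is done, the symmetry is immediate from the symmetric expression displayed above.
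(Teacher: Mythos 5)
Your proof is correct, but it takes a genuinely different route than the paper's. After the common first step (Lemma~\ref{fundamentallemma} converts $\psi_{n-p}\dhash\mu$ into $\dhash\psi_{n-p}$), the paper integrates by parts on the closed manifold $S_\mu$ itself: it writes the integrand as a $d$-exact $(n-1,n)$-form and uses $\int_{S_\mu}d\alpha=0$ to swap $\dhash\psi_{n-p}\w\ddhash\psi_1$ into $\dhash\psi_1\w\ddhash\psi_{n-p}$, then undoes the first step. Because $S_\mu$ has no boundary and does not meet the origin, no singularity issues arise. You instead push the integral from $S_\mu$ into the ball $\{\mu\le 1\}$ and identify $V_\Omega$ with the total mass of the top-degree current $\Omega\w\ddhash\psi_1\w\cdots\w\ddhash\psi_{n-p}$, a Dirac mass at $0$. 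This gives a cleaner conceptual picture (symmetry is then tautological from the commutativity of the wedge factors, and you recover Proposition~\ref{altdef} as a special case), but it buys that at the cost of having to justify Stokes across the singular point: you must know the Bedford--Taylor product is a finite measure near $0$ and that the boundary integral really equals the Dirac mass, which you correctly address with the annulus argument and Theorem~\ref{crucial}. Both proofs are sound; the paper's is shorter precisely because it never leaves the boundaryless hypersurface, whereas yours trades that economy for the intrinsic identification of $V_\Omega$ with the mass at the origin.
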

\begin{proof} The assumption on weak positivity implies that $\Omega\w\ddhash\psi_1\w...\ddhash\psi_{n-p-1}$ is a well defined current. It is clear that $V_{\Omega}$ is symmetric in $\psi_i$ for $i<n-p$. What we need to prove is that $\psi_{n-p}$ and e.g. $\psi_1$ can be interchanged. This follows from Stokes' formula
$$
\langle \psi_{n-p},\Omega\w\ddhash\psi_1\w...\ddhash\psi_{n-p-1}\rangle=
$$
$$
=\int_{\mu(x)=1}\psi_{n-p}\Omega\w\ddhash\psi_1\w...\ddhash\psi_{n-p-1}\w\dhash\mu=
\int_{\mu(x)=1}\Omega\w\ddhash\psi_1\w...\ddhash\psi_{n-p-1}\w\dhash\psi_{n-p}=
$$
$$
=\int_{\mu(x)=1}\Omega\w\dhash\psi_1\w...\ddhash\psi_{n-p-1}\w\ddhash\psi_{n-p}.
$$
\end{proof}

\begin{prop}\label{mainpoint}
Let $\Omega$ be a current of bidegree $(p,p)$ that can be written as a difference between two weakly positive strongly homogeneous currents. Assume that 
$$
\langle \psi,\Omega\w(\ddhash\psi)^{n-p-1}\rangle=0
$$
 for all  $\psi$  smooth, convex and 1-homogeneous. Then $\Omega=0$
\end{prop}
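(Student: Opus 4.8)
The plan is to polarise the hypothesis, then strip the factors $\ddhash\psi_i$ off the pairing one at a time, and finally run a pointwise argument at each point of $\R^n\setminus\{0\}$.

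\emph{Polarisation.} Put $V_\Omega(\psi_1,\dots,\psi_{n-p}):=\langle\psi_{n-p},\Omega\w\ddhash\psi_1\w\dots\w\ddhash\psi_{n-p-1}\rangle$. By Theorem \ref{crucial} this is well defined and $\R$-multilinear for all $\psi_i$ in the space $\Delta$ of differences of $1$-homogeneous convex functions, and by Proposition \ref{symmetry} it is symmetric. The hypothesis says its diagonal $D(\psi):=V_\Omega(\psi,\dots,\psi)$ vanishes on the convex cone of $1$-homogeneous convex functions. Given $\psi=\psi_+-\psi_-\in\Delta$, the map $(s,t)\mapsto D(s\psi_++t\psi_-)$ is a polynomial vanishing for $s,t\ge0$, hence identically, and evaluating at $(s,t)=(1,-1)$ gives $D(\psi)=0$; so $D\equiv0$ on $\Delta$ and the polarisation identity forces $V_\Omega\equiv0$ on $\Delta^{\,n-p}$.

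\emph{Stripping the first factor.} Fix $\psi_1,\dots,\psi_{n-p-1}\in\Delta$ and set $\Theta:=\Omega\w\ddhash\psi_1\w\dots\w\ddhash\psi_{n-p-1}$; splitting $\Omega$ and the $\psi_i$ into positive and negative parts and applying Theorem \ref{crucial}, $\Theta$ is a difference of closed, weakly positive, strongly homogeneous $(n-1,n-1)$-currents, in particular closed, strongly homogeneous and with measure coefficients. By the previous step $\langle f,\Theta\rangle=V_\Omega(\psi_1,\dots,\psi_{n-p-1},f)=0$ for all $f\in\Delta$; since $\Delta$ is dense in $C(S_\mu)$ and $f\mapsto\langle f,\Theta\rangle=\int_{S_\mu}f\,\dhash\mu\w\Theta$ is continuous for the fixed signed measure $\dhash\mu\w\Theta$, this holds for every continuous $f$. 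The proof of Proposition \ref{firstpart} uses only closedness, strong homogeneity and measure coefficients — not positivity — so it applies to $\Theta$ and yields $\Theta=0$ in $\R^n\setminus\{0\}$; alternatively, write $\Theta$ as a difference of two closed weakly positive strongly homogeneous currents with the same trace on $S_\mu$ and invoke the uniqueness in Theorem \ref{secondpart}. Thus $\Omega\w\ddhash\psi_1\w\dots\w\ddhash\psi_{n-p-1}=0$ in $\R^n\setminus\{0\}$ for all $\psi_i\in\Delta$.

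\emph{Peeling and the pointwise lemma.} Write $\Theta_j:=\Omega\w\ddhash\psi_1\w\dots\w\ddhash\psi_j$, of bidegree $(p+j,p+j)$; the previous step gives $\Theta_{n-p-1}=0$ for all choices of the $\psi_i$. The factors are then removed one at a time by the following lemma: if $\Theta$ is a difference of closed weakly positive strongly homogeneous $(m,m)$-currents with $m\le n-2$ and $\Theta\w\ddhash\phi=0$ in $\R^n\setminus\{0\}$ for every $1$-homogeneous convex $\phi$, then $\Theta=0$. Granting this, if $\Theta_j\equiv0$ then for fixed $\psi_1,\dots,\psi_{j-1}$ the current $\Theta_{j-1}$ has bidegree $(p+j-1,p+j-1)$ with $p+j-1\le n-2$ and satisfies $\Theta_{j-1}\w\ddhash\phi=\Theta_j=0$ for all such $\phi$, so $\Theta_{j-1}=0$; iterating down to $j=0$ gives $\Omega=0$. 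To prove the lemma, argue at a point $x_0\ne0$ (rigorously, with the density of $\Theta$ relative to its trace measure, which exists by Proposition \ref{estimate}), and set $e=x_0/|x_0|$. For positive definite symmetric $B$ with $Be=e$ the gauge $\phi_B(x)=\langle Bx,x\rangle^{1/2}$ is convex, $1$-homogeneous and smooth off the origin, and a direct computation shows that $\ddhash\phi_B(x_0)$, restricted to $e^{\perp}$, equals $|x_0|^{-1}B|_{e^{\perp}}$, an arbitrary positive definite $(1,1)$-form there. Since $\Theta\w\ddhash\phi_B=0$ and $\ddhash\phi_B$ is smooth near $x_0$, letting $B$ vary and passing to limits gives $\Theta(x_0)\w q=0$ for every weakly positive $q\in\Lambda^{1,1}(e^{\perp})$. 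On the other hand $\delta\Theta=0$ together with the symmetry of strongly homogeneous currents forces $\Theta(x_0)$ to carry no $e$-index, i.e.\ $\Theta(x_0)\in\Lambda^{m,m}(e^{\perp})$, and $T\Theta=0$ (Section \ref{sectionhomform}) then gives $T_{e^{\perp}}\Theta(x_0)=0$, so by Theorem \ref{strongforms1} $\Theta(x_0)$ is a strong form in the $(n-1)$-dimensional space $e^{\perp}$. As $m+1\le n-1$, Lemma \ref{Criterion} applied inside $e^{\perp}$ now yields $\Theta(x_0)=0$, hence $\Theta=0$.

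\emph{Where the difficulty lies.} The first two steps are formal consequences of the earlier theory; the heart of the matter is the lemma in the third step, namely upgrading the vanishing of $\Theta\w\ddhash\phi$ for all $1$-homogeneous convex $\phi$ to $\Theta=0$. The obstruction is that the forms $\ddhash\phi$ with $\phi$ $1$-homogeneous are exactly the positive $(1,1)$-currents killed by $\delta$, hence degenerate in the radial direction, so Lemma \ref{Criterion} cannot be applied in $\R^n$ directly; one must exploit that $\Theta$ is itself radially degenerate (by $\delta\Theta=0$) and strong (by $T\Theta=0$ and Theorem \ref{strongforms1}) and transfer the criterion into the hyperplane $e^{\perp}$. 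The technical points are checking that the ellipsoidal gauges $\phi_B$ realise every positive definite form on $e^{\perp}$ and keeping track of the index conditions produced by $\delta\Theta=0$ and $T\Theta=0$.
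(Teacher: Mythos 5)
Your proof is correct in substance, but it takes a genuinely different and longer route than the paper. After polarising (your first two steps essentially recover what the paper's first line does), the paper applies Proposition~\ref{Current-Criterion} once: it observes $\Omega\w(\ddhash\psi)^{n-p-1}=0$ for all $1$-homogeneous convex $\psi$, restricts the equation to the affine hyperplane $H_1=\{x_1=1\}$ (where such $\psi$ restrict to arbitrary convex functions and $\Omega|_{H_1}$ remains in $S$), invokes Proposition~\ref{Current-Criterion} in $\R^{n-1}$ to get $\Omega\w dx_1\w d\xi_1=0$ on $H_1$, and then recovers $\Omega=0$ with $\delta$, $\delta^\#$ and homogeneity. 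You instead fully polarise to get $\Omega\w\ddhash\psi_1\w\cdots\w\ddhash\psi_{n-p-1}=0$ for all choices, and strip the factors one at a time via a new ``peeling'' lemma, proved by a pointwise Radon--Nikodym argument that restricts to $e^\perp$ at each point $x_0$ and applies Lemma~\ref{Criterion} there. That lemma is sound, and the pointwise computation (that $\ddhash\phi_B(x_0)$ realises all positive definite $(1,1)$-forms on $e^\perp$, that $\delta\Theta=\delta^\#\Theta=0$ force $\Theta(x_0)\in\Lambda^{m,m}(e^\perp)$, and that $T\Theta=0$ plus Theorem~\ref{strongforms1} make $\Theta(x_0)$ strong in $e^\perp$) is exactly right. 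Two remarks, though. First, the peeling is actually unnecessary: the very same pointwise argument, applied directly to $\Omega\w(\ddhash\phi_B)^{n-p-1}=0$, gives $\Omega(x_0)\w Q^{n-1-p}=0$ for all positive definite $Q$ on $e^\perp$, which is precisely the hypothesis of Lemma~\ref{Criterion} in dimension $n-1$, so $\Omega(x_0)=0$ in one stroke -- this is a pointwise translation of the paper's hyperplane restriction. Second, the measure-theoretic step (passing from the current identity $\Theta\w\ddhash\phi_B=0$ to the pointwise statement $\Theta(x_0)\w\ddhash\phi_B(x_0)=0$) is glossed over: you need to choose a single dominating measure for the coefficients of $\Theta=\Theta_+-\Theta_-$, e.g.\ $\tau_{\Theta_+}+\tau_{\Theta_-}$, and a countable dense family of gauges with a common $\tau$-null exceptional set, then pass to all $B$ by continuity of the densities. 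This is fixable but should be said, since a.e.\ statements for an uncountable family of test forms is exactly the kind of thing that goes wrong if one is careless. Overall: a valid, more elaborate proof; the paper's single hyperplane restriction is shorter and avoids the pointwise bookkeeping.
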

\begin{proof}
By polarization and the previous proposition it follows that
$$
\langle \phi,\Omega\w(\ddhash\psi)^{n-p-1}\rangle=0
$$
for any smooth convex and 1-homogeneous functions $\phi$ and $\psi$. Since any smooth function can be written as a difference of two convex functions, this actually holds even if $\phi$ is not convex. Then it follows from Proposition \ref{firstpart} that $\Omega\w(\ddhash\psi)^{n-p-1}=0$ for any 1-homogeneous convex $\psi$ . Restrict this to a hyperplane, that we take as $\{x_1=1\}$.
 Then we get  that  $\Omega\w (\ddhash\psi)^{n-p-1}=0$ when restricted to this hyperplane. By Proposition \ref{Current-Criterion} (in $\R^{n-1}$),   $\Omega$ vanishes on the hyperplane (intersected with $\K$), i. e. 
$$
\Omega\w dx_1\w d\xi_1=0
$$
when $x_1=1$ (note that if $\Omega$ lies in $S$, then its restriction to a hyperplane lies in $S$ there). Applying first $\delta$ and then $\delta^\#$ we get that $\Omega=0$ when $x_1=1$, and therefore in all of $\K$ by homogeneity.
\end{proof}
\begin{prop}\label{weaklypositive}
Let $\Omega$ be a  current of bidegree $(p,p)$ that can be written as a difference between two weakly positive strongly homogeneous currents. Assume that 
$$
\langle f,\Omega\w\Omega'\rangle\geq 0
$$
 for all 1-homogeneous smooth $f\geq 0$ and all $\Omega'=(\ddhash\psi)^{n-p-1}$ where $\psi$ is smooth, convex and 1-homogeneous. Then $\Omega$ is weakly positive.
\end{prop}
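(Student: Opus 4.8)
The plan is to reduce weak positivity to a pointwise statement via Proposition \ref{Current-Criterion}, in the same spirit as the proof of Proposition \ref{mainpoint}. Recall that $\Omega$ being weakly positive means $\Omega\w E\geq 0$ for every elementary form $E$ of complementary bidegree, and by Lemma \ref{Criterion} it is equivalent (after regularization, which preserves the class $S$ and commutes with $\delta,\delta^\#$ on the relevant cone) to check that $\Omega\w(\ddhash\psi)^{n-p}\geq 0$ for all smooth convex $\psi$; by homogeneity it suffices to test this with $1$-homogeneous convex $\psi$, since such functions span the convex functions on a fixed hyperplane. So I would first fix a smooth convex $1$-homogeneous $\psi$, set $\Omega'=(\ddhash\psi)^{n-p-1}$, and study the current $\Omega\w\Omega'$, which is a difference of weakly positive strongly homogeneous currents of bidegree $(n-1,n-1)$ by Theorem \ref{crucial}.

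The next step is to feed the hypothesis into Proposition \ref{firstpart}. The assumption says $\langle f,\Omega\w\Omega'\rangle\geq 0$ for all $1$-homogeneous smooth $f\geq 0$. Since $\Omega\w\Omega'$ is a difference of closed positive strongly homogeneous $(n-1,n-1)$-currents, its trace on $S_\mu$ is a signed measure $d\nu$ with $\int_{S_\mu} f\,d\nu\geq 0$ for all continuous $f\geq 0$, hence $d\nu\geq 0$; by Theorem \ref{secondpart} (uniqueness) this forces $\Omega\w\Omega'$ itself to be a positive current on $\R^n\setminus\{0\}$, i.e. $\Omega\w(\ddhash\psi)^{n-p-1}$ is weakly positive for every smooth convex $1$-homogeneous $\psi$. (One must be slightly careful: "positive current of bidegree $(n-1,n-1)$" is the same as "weakly positive" there since $SP=P=WP$ in bidegrees $(1,1)$ and $(n-1,n-1)$, as noted in Section \ref{positivity}.)

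Now I would localize to a hyperplane exactly as in Proposition \ref{mainpoint}: restrict to $\{x_1=1\}$ and intersect with an open cone $\K$. The restriction of $\Omega$ to the hyperplane lies in $S$ (in $\R^{n-1}$), and the previous paragraph gives that its wedge with $(\ddhash\psi)^{n-p-1}$ restricted to the hyperplane is weakly positive for all convex $\psi$. Applying Proposition \ref{positive} in $\R^{n-1}$ (the converse direction is what is needed: testing against $Q=\ddhash\psi$, $\psi$ convex, with $\chi\geq 0$) yields that $\Omega$ restricted to $\{x_1=1\}$ is weakly positive. Finally, weak positivity on each affine hyperplane chart, together with homogeneity of order $-p$ of the coefficients and the fact that weak positivity is a pointwise convex-cone condition stable under the scaling $F_t$, propagates to weak positivity of $\Omega$ on all of $\R^n\setminus\{0\}$, and hence everywhere since the origin is a null set for measure coefficients of degree $p<n$ (as remarked after Theorem \ref{secondpart}).

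The main obstacle I anticipate is the bookkeeping in passing from "$\Omega\w(\ddhash\psi)^{n-p-1}$ weakly positive for all convex $1$-homogeneous $\psi$" back to "$\Omega$ weakly positive," i.e. making sure the hyperplane-restriction argument of Proposition \ref{mainpoint} still delivers a sign (weak positivity) rather than just vanishing. The key point is that Proposition \ref{Current-Criterion}/\ref{positive} are designed precisely for this: a strong current that pairs nonnegatively with all $(\ddhash\psi)^{n-p}$, $\psi$ convex, is weakly positive, so one only needs $\Omega|_{\{x_1=1\}}$ to be strong — which holds because $\Omega$ is a difference of weakly positive strongly homogeneous currents, hence lies in $S$ by Theorem \ref{strongforms1}, and restriction to a hyperplane preserves membership in $S$. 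A secondary subtlety is justifying the regularization/density reductions while keeping the strongly homogeneous structure; this is handled by convolving with rotation-averaged and scaling-adapted approximate identities, exactly as in the remarks following Theorem \ref{crucial} and in Section \ref{sectionhomform}.
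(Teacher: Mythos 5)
Your proof is correct up through the conclusion that the restriction of $\Omega$ to $\{x_1=1\}$ is weakly positive, i.e.\ that $\Omega\w dx_1\w d\xi_1\w F'\geq 0$ there for every elementary $(n-p-1,n-p-1)$-form $F'$ living in the hyperplane. (Your route via the trace measure on $S_\mu$ and Theorem \ref{secondpart} is a bit more roundabout than what the paper does, but it is fine: both amount to applying Proposition \ref{positive} in $\R^{n-1}$ after observing that the hypothesis forces $\Omega\w(\ddhash\psi)^{n-p-1}$ to be a nonnegative measure-current.)

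The final sentence, however, hides the real content of the proposition and is a genuine gap. Weak positivity of $\Omega$ restricted to $\{x_1=1\}$, even extended by homogeneity to the half-space $\{x_1>0\}$, only gives $\Omega\w dx_1\w d\xi_1\w F'\geq 0$. A general elementary $(n-p,n-p)$-form $F=\alpha_1\w\alpha_1^\#\w\ldots\w\alpha_{n-p}\w\alpha_{n-p}^\#$ does not have this shape, so you cannot conclude $\Omega\w F\geq 0$ by ``propagation'' or stability under the scaling flow $F_t$; the scaling flow does nothing to change the structure of the test form. If instead you mean to run the whole hyperplane argument over \emph{all} directions $a$, then for a given $F$ you only control $\Omega\w F$ away from the $p$-dimensional subspace $V^\perp$ (where $V$ is the span of the $\alpha_i$), and since $\Omega$ has measure coefficients you would still need a separate density-in-$F$ argument to rule out a negative singular part of $\Omega\w F$ sitting on $V^\perp$. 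You do not give either argument.

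The paper closes this gap with a single algebraic identity: since $\delta\Omega=\delta^\#\Omega=0$, one has
$$
x_1^2\,\Omega\w F=\Omega\w dx_1\w d\xi_1\w \delta^\#\delta F,
$$
together with the observation that $\delta^\#\delta F$ is again a (nonnegative multiple of an) elementary form. The latter requires a short decomposition: letting $V$ be the span of the $\alpha_i$ and $N=\ker\delta$, either $V\subseteq N$ (nothing to prove) or $V\cap N$ has codimension one in $V$, and one can rewrite $F=c\,e_1\w e_1^\#\w\ldots$ with $e_1,\ldots,e_{p-1}\in N$, whence $\delta^\#\delta F=(\delta e_p)^2\,c\,e_1\w e_1^\#\w\ldots\w e_{p-1}\w e_{p-1}^\#$, which is elementary. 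This reduces every elementary $F$ to a test form factoring through $dx_1\w d\xi_1$, in one stroke, and is exactly the idea your write-up is missing.
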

\begin{proof}
We can start the proof exactly as in the previous proposition and arrive at the conclusion that $\Omega$ is weakly positive  on the hyperplane $x_1=1$, i.e. that
$$
\Omega\w dx_1\w d\xi_1\w F'\geq 0
$$
for all elementary forms $F'$ of bidegree $(n-p-1,n-p-1)$, using Proposition \ref{positive} instead of Proposition \ref{Current-Criterion}. We need to prove that this implies that $\Omega$ is weakly positive. Let $F$ be an elementary form. We start from the identity
$$
x_1^2\Omega\w F=\Omega\w dx_1\w d\xi_1\w \delta^\#\delta F,
$$
which is easily proved using that $\delta$ and $\delta^\#$ are antiderivations. The proposition will therefore follow if we can prove that $\delta^\#\delta F$ is an elementary form when $F$ is elementary. 

Write $F=\alpha_1\w\alpha_1^\#\w...\alpha_p\w\alpha_p^\#$. Let $V:=[\alpha_1, ...\alpha_p]$ and let $N:=\{\alpha; \delta(\alpha)=0\}$. If $V$ is included in $N$ there is nothing to prove. Otherwise $N\cap V$ has codimension 1 in $V$ and we may choose a basis for $V$ such that $e_1, ...e_{p-1}$ lie in $N$. Then 
$$
F=c e_1\w e_1^\#\w...e_p\w e_p^\#, \, \, c>0,
$$
and
$$
\delta^\#\delta F=(\delta(e_p))^2 c e_1\w e_1^\#\w...e_{p-1}\w e_{p-1}^\#
$$
which is an elementary form.
\end{proof}

%%%%%%%%%%%%%%%%%%%%%%%%%%%%%%

%%%%%%%%%%%%%%%%%%%%%%
\newpage
\section{Charts.}\label{charts}

If $\Omega$ is a superform on $\R^n$ and $V$ is a hyperplane in $\R^n$, by the {\it restriction} of $\Omega$ to $V$ we mean the restriction of the form $\Omega=\sum \Omega_{I J} dx_I\wedge d\xi_J$ to the complexification of $V$, which is a complex hyperplane in $\C^n$. More concretely, if $V=H_1:=\{ x_1=1\}$, the restriction of $\Omega$ is
$$
r(\Omega):=\sum_{1\notin I\cup J}\Omega(1, x') dx_I\wedge d\xi_J.
$$
Observe that if $\Omega$ is strongly homogeneous, then $\Omega$ is uniquely determined on $\R^n_+:=\{x_1>0\}$ by its restriction to $H_1$. To see this, note first that
when $x\in H_1$,
$$
\Omega =\delta^\#\delta (dx_1\wedge d\xi_1\wedge\Omega)=
\delta^\#\delta (dx_1\wedge d\xi_1\wedge r(\Omega)),
$$
since $\Omega$ is annihilated by $\delta$ and $\delta^\#$. But, since the coefficients of $\Omega$ are homogeneous, this determines $\Omega$ on all of $\R^n_+$.

Choosing a suitable collection of hyperplanes (like $H_j:= \{x_j=1\}$), $\Omega$ is therefore determined everywhere on $\R^n$ by its restrictions to these hyperplanes. By homogeneity, $\Omega$ can be thought of as defined on $\R^n/\R_+$, and  we will consider such a family of hyperplanes as a collection of local charts on this space.  The corresponding restrictions then represent $\Omega$ in these local charts, and we are  led to ask: Which forms on $H_1$ appear as restrictions of strongly homogeneous forms?

Let us first change notation and consider instead homogeneous forms on $\R^{n+1}=\{(x_0, ...x_n)\}$ and look at restrictions to $H_0=\{x_0=1\}$ that we may identify with $\R^n$.  A function $\phi$ on $\R^n=H_0$ can always be extended to a (unique) homogeneous function
$$
\Phi(x_0,x)=x_0\phi(x/x_0)
$$
on $\R^{n+1}_+$. It is a well known fact that if $\phi$ is convex, $\Phi$ is also convex. (A simple proof that I learned from Mattias Jonsson starts by observing that if $\phi(x)=c+a\cdot x$ is affine, then $\Phi$ is linear. Writing $\phi$ as the supremum of affine functions one sees that $\Phi$ is the supremum of linear functions, hence convex.) Applying $\ddhash$ we see that any symmetric and closed $(1,1)$ -form, $\omega$, on $\R^n$ extends to a strongly homogeneous form on $\R^{n+1}_+$, which is positive if $\omega$ is positive. 

Our next theorem  shows that the situation for higher bidegrees is a little bit more complicated; for a form to have a strongly homogeneous closed extension it must lie in the kernel of the operator $T$ (remember that this is automatically satisfied for forms of bidegree $(1,1)$). 

\begin{thm}\label{extending} Let $\Omega'$ be a smooth $(p,q)$-form on $H_0=\R^n$. Then there is a unique $(p,q)$-form on $\R^{n+1}_+$ that restricts to $\Omega'$ on $H_0$ and satisfies
\be\label{homogeneousforms}
L_E(\Omega)=0, \quad \delta\Omega=0, \, \text{and}\,\,\delta^\#\Omega=0.
\ee

Moreover, $d\Omega=0$ if and only if 
$$
d\Omega'=0
$$
on $H_0$, and 
$$
 T'\Omega':=\sum_1^n dx_i\wedge \delta_{\partial_{\xi_i}}\Omega'=0.
$$
More generally, if $T'(\Omega')=0$, the extension operator $\Omega'\to \Omega:=\E(\Omega')$ commutes with the exterior derivative $d$, so that $d\E(\Omega')=\E(d\Omega')$.

If $p=q$ and $\Omega'$ is weakly (strongly) positive, then $\Omega$ is weakly (strongly) positive.
\end{thm}
\begin{proof}
That the Lie derivative $L_E(\Omega)$ vanishes  means that all coefficients of $\Omega$ are homogeneous of order $-p$. Assume first that $\Omega$ exists. Then, for $x$ on $H_0$, 
$$
\Omega=\delta^\#\delta(\frac{dx_0\wedge d\xi_0}{x_0^2}\wedge\Omega)=
\delta^\#\delta(\frac{dx_0\wedge d\xi_0}{x_0^2}\wedge\Omega').
$$
Since the coefficients of $\Omega$ are homogeneous, this means that $\Omega$ is uniquely determined everywhere. Here we have just followed the same argument as in the beginning of this section.

To prove existence, we first let $\Omega''$ be an arbitrary extension of $\Omega'$ with coefficients homogeneous of order $-p$ and define
$$
\delta^\#\delta(\frac{dx_0\wedge d\xi_0}{x_0^2}\wedge\Omega'').
$$
Then the coefficients of $\Omega$ have the same homogeneity, so $L_E(\Omega)$ vanishes. Finally,
$$
\Omega=\Omega''+ dx_0\wedge\alpha+d\xi_0\wedge\beta,
$$
for some $\alpha,\beta$, so $\Omega$ restricts to $\Omega'$. 

We next investigate when $\Omega$ is closed. Assume first that this holds. Then, clearly $d\Omega'=0$ and since, by Cartan's  formula, 
$$
L_{E^\#}=\delta^\# d+d\delta^\#=T (=\sum_0^n dx_i\wedge \delta_{\xi_i}),
$$
$T\Omega=0$. But it is easy to check that $T\Omega=T'\Omega'$ modulo terms that contain a factor $dx_0$, so restricting to $H_0$ this gives that $T'\Omega'=0$. 
Assume now that, conversely, $d\Omega'=0$ and $T'\Omega'=0$. 
Define a map  $F$ from $\R^{n+1}_+$ to $H_0$ by
$$
F(x_0,x):=(1,x/x_0),
$$
and this time make the particular choice of  $\Omega''$ as $\Omega''=F^*(\Omega')$. Then, as before, the coefficients of $\Omega''$ are homogeneous of order $-p$, so $L_E(\Omega'')=0$,  and, moreover, $d\Omega''=0$. Put
$$
A:= \frac{dx_0\wedge d\xi_0}{x_0^2}\wedge\Omega''.
$$
Then $dA=0$ and
$$
\Omega=\delta^\#\delta A=-\delta\delta^\# A.
$$
Hence, using Cartan's formula again, 
$$
-d\Omega=(d\delta)\delta^\#A= L_E(\delta^\#A) -\delta d\delta^\#(A).
$$
But $\delta^\#A$ is of bidegree $(p+1,q)$ and its coefficients are homogeneous of order $-(p+1)$, so the first term vanishes. As for the second term
$$
\delta d\delta^\#(A)=\delta T(A) - \delta\delta^\# dA.
$$
The second term vanishes since $A$ is closed, and the first term vanishes since when $x_0=1$
$$
TA=T(\frac{dx_0\wedge d\xi_0}{x_0^2}\wedge\Omega')=\frac{dx_0\wedge d\xi_0}{x_0^2}\wedge T'\Omega'=0.
$$
Hence $d\Omega=0$, as claimed. If $\Omega'$ is not closed (but still satisfies $T'\Omega'=0$), the same argument shows that 
$$
d\Omega=d\E(\Omega')= \delta\delta^\# dA=\E(d\Omega'),
$$
since
$$
dA= \frac{dx_0\wedge d\xi_0}{x_0^2}\wedge d\Omega''=\frac{dx_0\wedge d\xi_0}{x_0^2}\wedge G^*_{x_0}(d\omega')
$$
( where $G_{x_0}(x)= x/x_0$).
For the last part, assume that $\Omega'$ is weakly positive, and let $\alpha$ be an elementary form of bidegree $(n+1-p, n+1-p)$. We need to show that $\Omega\wedge\alpha$ is non-negative. But, when $x_0=1$, 
$$
\Omega\wedge\alpha=(\delta^\#\delta A)\wedge\alpha=A\wedge\delta^\#\delta\alpha=\Omega'\wedge dx_0\wedge d\xi_0
\wedge\delta^\#\delta\alpha.
$$
Since $\delta^\#\delta\alpha$ is an elementary form, the right hand side is non-negative. Hence $\Omega$ is weakly positive when $x\in H_0$ and therefore everywhere by homogeneity. The case of strong positivity follows since $\Omega$ is elementary if $\Omega'$ is elementary (see the proof of Proposition \ref{weaklypositive}). 
\end{proof}

We shall call forms satisfying conditions (\ref{homogeneousforms}) homogeneous in the sequel. Thus a strongly homogeneous form is a homogeneous form that is also closed.

In conclusion, there is a one-to-one correspondence between (weakly positive) strongly homogeneous forms on $\R^{n+1}_+$ and (weakly positive) closed forms on $\R^n=H_0$ that are `strong', i.e. lie in the kernel of $T'$. 
\begin{df} A $(p,p)$-form $\Omega'$ on $\R^n=H_0$ is {\it regular} if its homogeneous extension, $\Omega$,  from Theorem \ref{extending} extends smoothly to the closure of $\R^{n+1}_+$ outside the origin.
\end{df}
Notice that when $p=0$ so that we are dealing with a function $\phi$ on $H_0$, the restriction of its homogeneous extension $\Phi$ to $\{(0,x); x\neq 0\}$ is just
$$
\lim_{x_0\to 0} x_0\phi(x/x_0)=\phi^\circ(0),
$$
its {\it indicator function} defined in section \ref{Bedford-Taylor}.

In the same way, one sees that, when $p>0$, the restriction of $\Omega$ to $\R^n\setminus\{0\}= \{(0,x); x\neq 0\}$ is 
$$
\Omega'^\circ:=\lim_{x_0\to 0}G^*_{x_0}(\Omega'),
$$
where $G_{x_0}(x)=x/x_0$, provided $\Omega'$ is regular so that we know the limit exists. 

%%%%%%%%%%%%%%%%%%%%%

\section{ A Poincaré type inequality}

Let $\mu$ be a gauge function on $\R^{n+1}$ and let
$$
S_\mu^+:=\{(x_0, x); x_0>0, \mu(x_0,x)=1\},
$$
be its associated positive half-sphere.  If $D$ is a domain in $H_0$, we let 
$$
S(D)=\{(x_0,x)\in \R^n_+; \mu(x_0,x)=1, (1,x/x_0)\in D\}
$$
be the corresponding domain in $S_\mu^+$. Note that if $D=\{\mu(1,x)<t\}$, then 
$$
S(D)=S_\mu^+\cap\{x_0>1/t\}
$$
 is a `spherical cap'. If $D=H_0$, $S(D)=S_\mu^+$. 

\begin{prop}\label{bdry} Let $\Omega$ be strongly homogeneous in  $\R^{n+1}_+$ of bidegree $(n,n)$ and let $D$ be any domain in $H_0$. Then, 
$$
\int_{D} f\Omega\wedge \dhash x_0=\int_{S(D)} f\Omega\wedge \dhash\mu,
$$ 
if $f$ is homogeneous of order 1,  and 
 $$
\int_{D} g\Omega\wedge \dhash x_0=\int_{S(D)} g\Omega\wedge \dhash x_0,
$$
if $g$ is homogeneous of order zero. 
\end{prop}
\begin{proof} We follow the proof of Proposition \ref{independent}: 
If $f$ is smooth we have by Lemma \ref{fundamentallemma} and Stokes' theorem
$$
\int_{H_0} f\Omega\wedge \dhash x_0=\int_{H_0} \Omega\wedge \dhash f=
\int_{S_\mu^+} \Omega\wedge \dhash f= \int_{S_\mu^+}f \Omega\wedge \dhash \mu,
$$
since $\Omega\wedge \dhash f$ is closed. By approximation, the same formula holds if $f$ is just, say, bounded and Borel measurable. If $g$ is homogeneous of order zero, we can apply this to $f=g x_0$ and get 
$$
\int_{H_0} g\Omega\wedge \dhash x_0=\int_{S_\mu^+} g x_0\Omega\wedge \dhash\mu = \int_{S_\mu^+} g \Omega\wedge \dhash x_0,
$$
since
$$
x_0\Omega\wedge\dhash\mu/=\mu\Omega\wedge \dhash x_0
$$
by Lemma \ref{fundamentallemma}.

We then  replace $f$ by $f\chi$ and  $g$ by $g\chi$ respectively,
where $\chi$ is the characteristic function of the cone $\{(x_0,x); x/x_0\in D\}$.

\end{proof}
We shall now see that this proposition gives a family of  Poincaré-type inequalities on sets like $S(D)$, where $D$ is a domain  in $H_0=\R^n$. ( Recall that this class of domains includes spherical caps.) We start by considering the case when $D$ is relatively compact in $H_0$. The inequality (\ref{curious}) is stated for 1-homogeneous functions, but since both sides only depend on the restriction of the function $v$ to $S(D)$, it holds for any function on $S(D)$. 

\begin{thm}\label{Wirtinger}
 Let $v$ be a smooth 1-homogeneous function on $\R^{n+1}_+$, and let $\Omega$ be strongly homogeneous on $\R^{n+1}_+$ of bidegree $(n-1,n-1)$. Assume $v$ vanishes on the boundary of $D$ where $D$ is a relatively compact domain in  $H_0$. Then
$$
\int_{D} dv\wedge \dhash v\wedge\Omega\wedge \dhash x_0=
\int_{S(D)} dv\wedge \dhash v\wedge \Omega\wedge \dhash\mu-
\int_{S(D)}v^2\Omega\wedge\ddhash\mu\wedge\dhash\mu.
$$
In particular, if $\Omega$ is positive on $H_0$,
\be\label{curious}
\int_{S(D)}v^2\Omega\wedge\ddhash\mu\wedge\dhash\mu\leq
\int_{S(D)}dv\wedge \dhash v\wedge \Omega\wedge \dhash\mu,
\ee
with equality only if $v=0$ if $\Omega$ is strictly positive on $H_0$. (Recall that, on $H_0=\R^n$ all notions of positivity agree for forms of bidegree  $(n-1,n-1)$. Strict positivity means that the coefficient matrix of $\Omega$ is positive definite.)
\end{thm}
\begin{proof}By Stokes' theorem
$$
\int_{D} dv\wedge \dhash v\wedge\Omega\wedge \dhash x_0=
-\int_{D} v \ddhash v\wedge\Omega\wedge \dhash x_0,
$$ 
when $v$ vanishes on the boundary of $D$. By the previous theorem, the RHS equals
$$
-\int_{S(D)}v\ddhash v\wedge\Omega\wedge\dhash\mu
$$
(since $\ddhash v\wedge\Omega$ is strongly homogeneous) and, applying Stokes' again, this equals
$$
\int_{S(D)} dv\wedge\dhash v\wedge \Omega\wedge \dhash\mu -
\int_{S(D)}v\dhash v\wedge\Omega\wedge \ddhash\mu.
$$
By the Lemma \ref{fundamentallemma} again, when $\mu=1$, 
$$
\dhash v\wedge \Omega\wedge \ddhash\mu=v\dhash\mu\wedge\Omega\wedge\ddhash\mu
$$
which completes the proof of (\ref{curious}). If equality holds, we must have
$$
\int_{D} dv\wedge \dhash v\wedge\Omega\wedge \dhash x_0=0,
$$
which implies that $f$ is constant if $\Omega$  is strictly positive on $H_0$. Since $f$ vanishes on the boundary of $D$, $f=0$.
\end{proof}
We next consider the case when $D=H_0$ is the entire space. 

\begin{thm}\label{2Wirtinger} 
 Let $v$ be a smooth 1-homogeneous function on $\R^{n+1}_+$, and let $\Omega$ be strongly homogeneous on $\R^{n+1}_+$ of bidegree $(n-1,n-1)$.  Assume that $v$ and $\Omega$ extend smoothly to the closed half space 
 $$
 \{(x_0,x); x_0\geq 0\},
 $$
 outside the origin, and that $v(0,x)$ vanishes. Then
$$
\int_{H_0} dv\wedge \dhash v\wedge\Omega\wedge \dhash x_0=
\int_{S_\mu^+} dv\wedge \dhash v\wedge \Omega\wedge \dhash\mu-
\int_{S_\mu^+}v^2\Omega\wedge\ddhash\mu\wedge\dhash\mu.
$$
In particular, if $\Omega$ is positive on $H_0$,
\be\label{2curious}
\int_{S_\mu^+}v^2\Omega\wedge\ddhash\mu\wedge\dhash\mu\leq
\int_{S_\mu^+}dv\wedge \dhash v\wedge \Omega\wedge \dhash\mu,
\ee
with equality only if $v=c x_0$, for some constant $c$,  if $\Omega$ is strictly positive on $H_0$.
\end{thm}
\begin{proof}
We follow the same proof as for the previous theorem, exhausting $H_0$ by  relatively compact domains $D_t:=tD$,  $t>0$, where $D$ is a fixed, relatively compact subdomain of $H_0$ that contains the origin. The only difference is that $v$ no longer vanishes on the boundary so there appear two extra terms 
 $$
 \int_{\partial D_t} v \dhash v\wedge \Omega\wedge \dhash x_0=: b_1(t),
 $$
 and
 $$
  \int_{ \partial S(D_t)} v \dhash v\wedge \Omega\wedge \dhash \mu=: b_2(t).
  $$
  We claim that both these terms vanish when $t\to\infty$. This is evident for $b_2$ since $v$ tends smoothly to zero when $x_0\to 0$. As for $b_1$, we use the scaling
 of $H_0$, $x\to F_t(x):=tx$ (or rather $F_t(x,\xi) =(tx, \xi)$ on the complexification of $H_0$). Then
 $$
 b_1(t)=\int_{\partial D} F_t^*(v\dhash v\wedge \Omega)\wedge \dhash x_0.
 $$
 Since $v$ is smooth and 1-homogeneous on $\{(x_0,x); x_0\geq 0\}$ (outside the origin) and vanishes for  $x_0=0$, we have, when $x\in \partial D$,  $|v(x_0, x)|\leq Cx_0$, and $|v(1, x/x_0)|\leq C$.  Hence $F_t^*(v)$ is bounded when $t$ is large. Moreover, for $j=1, ...n$, the partial derivatives $v_j$ are homogeneous of order zero and vanish for $x_0=0$. This implies that
 $$
 v_j(1, x/x_0)=v_j(x_0,x)= O(x_0),
 $$
 so $F_t^*(\dhash v)=O(1/t)$ when $t\to \infty$. Finally, since $\Omega$ is regular, $F_t^*(\Omega)$ stays bounded as $t\to\infty$. Altogether this shows that $b_1(t)$ goes to zero and completes the proof.

\end{proof}

The last part of Theorem \ref{Wirtinger} says that the quadratic form
$$
\int_{S(D)}dv\wedge \dhash v\wedge \Omega\wedge \dhash\mu
$$
with Dirichlet boundary conditions has all eigenvalues with respect to the $L^2$-norm
$$
\int_{S(D)}v^2\Omega\wedge\ddhash\mu\wedge\dhash\mu
$$
strictly greater than 1, if $D$ is relatively compact in $H_0$. Theorem \ref{2Wirtinger} shows that we pick up a new eigenvalue, $\lambda=1$, when we take $D=H_0$, and that this eigenvalue is simple with eigenfunction $v=x_0$. The final theorem of this section reformulates this in terms of an associated `Laplace operator'. This aspect will be developed more in section \ref{Asdiffoperator}.

\begin{thm}\label{Dirichlet} Let $v$ be a smooth 1-homogeneous function on $\R^{n+1}_+$ that solves the differential equation
$$
\ddhash v\wedge\Omega=0,
$$
where $\Omega$ is regular, strongly homogeneous  of bidegree $(n-1,n-1)$, and that $\Omega$ is strictly positive on $H_0$. Assume $v$ extends smoothly to the boundary outside the origin, and that $v(0,x)=0$. Then $v=cx_0$ for some constant $c$.
\end{thm}
\begin{proof} As in the proof of the previous theorem,
$$
\int_{H_0} dv\wedge\dhash v\wedge\Omega\wedge\dhash x_0=
\int_{H_0} v \ddhash v\wedge\Omega\wedge \dhash x_0=0.
$$
Hence $dv=0$, so $v=c$ is constant on $H_0$. By homogeneity, $v=cx_0$.
\end{proof}

\newpage
\section{Operations on strongly homogeneous forms}\label{operations}
It is clear that if $\Omega_1$ and $\Omega_2$ are strongly homogeneous then their wedge product
$$
\Omega=\Omega_1\wedge\Omega_2
$$
is also strongly homogeneous, simply because $d$ and $\delta^\#$ are antiderivations. This means that in particular, all forms (or currents) of type
$$
\ddhash\phi_1\wedge ...\ddhash\phi_p,
$$
where $\phi_j$ are 1-homogeneous are strongly homogeneous. In this section we shall list several other operations that also preserve strongly homogeneous forms. 

\subsection{Restriction.}

\begin{prop}\label{restriction} The restriction of a strongly homogeneous form to a linear subspace, $V$,  is strongly homogeneous on $V$.
\end{prop}
\begin{proof} It is enough to prove this when $V$ is a hyperplane, that we may take to be $\{x_1=0\}$. Clearly the restriction is $d$-closed, so all we need to prove is that it lies in the kernel of $(\delta')^\#$, contraction with $\sum_2^n x_j\partial_{\xi_j}$. Write
$$
\Omega=\Omega' + d\xi_1\wedge \Omega'',
$$
where $\Omega'$ and $\Omega''$ do not contain $d\xi_1$. Then,
$$
0=\delta^\#\Omega = (\delta')^\#\Omega' + x_1\Omega''+ d\xi_1\wedge\delta^\#\Omega''.
$$
Restricting to (the complexification of)  $\{x_1=0\}$, we see that $(\delta')^\#\Omega'=0$ as claimed.
\end{proof}
\subsection{Extension}

\begin{prop}\label{extension}
Let $V$ be a hyperplane in $\R^n$, and let $[V]_s$ be its associated supercurrent of integration. Let $\Omega$ be a weakly positive strongly homogeneous $(p,p)$-current on $V$. Then
$$
\tilde\Omega:=\Omega\w[V]_s
$$
is a weakly positive and strongly homogeneous current of bidegree $(p+1,p+1)$ in $\R^n$.
\end{prop}
\begin{proof}
We may assume $V=\{x_1=0\}$. Then
$$
[V]_s=[V]\w \dhash x_1=\ddhash\max(x_1,0),
$$
where $[V]$ its the current of integration on $V$ in $\R^n$.
The existence of $\tilde\Omega$ follows directly from Proposition \ref{hyper} (or Theorem \ref{crucial}). Clearly, $d\tilde\Omega=0$ and that
$$
\delta^\#\tilde\Omega=0
$$
follows from $\delta^\#[V]_s=-x_1 [V]=0$.
\end{proof}

\subsection{Push-forward.}

Let $A$ be a linear map from $\R^n$ to $\R^m$. We denote by the same letter $A$ its extension to a linear map from $\C^n$ to $\C^m$
$$
A(x+iy)=Ax+iAy.
$$
If $\Omega$ is a (super)current on $\R^n$ we define its push-forward under $A$ by
$$
\int_{\R^m} A_*(\Omega)\wedge \alpha=\int_{\R^n} \Omega\wedge A^*(\alpha),
$$
if $\alpha$ is a compactly supported (super)form on $\R^m$. 

The push-forward is in general not well defined, since it is not guaranteed that the integral in the right hand side is convergent. We will show that the push-forward {\it is} well defined when $\Omega$ is strongly homogeneous, and that the push-forward of $\Omega$ is then also strongly homogeneous. We start with  the following lemma. 
\begin{lma}\label{finite} Let $\Omega$ be a strongly homogeneous current on $\R^n$ of bidegree $(p,p)$. Let $W$ be an affine subspace of $\R^n$ of dimension $p$ and let $f$ be a function of at most linear growth. Then 
$$
\int_Wf\Omega
$$
is absolutely convergent.
\end{lma}
\begin{proof} Assume first that $W$ does not contain the origin, so that $W$ is not a linear subspace. Let $V$ be the smallest linear subspace containing $W$; its dimension equals the dimension of $W$ plus 1. Since, by the previous proposition, the restriction of $\Omega$ to $V$ is again strongly homogeneous, we can work on $V$ instead of $\R^n$.  In other words, we may assume that the codimension of $W$ equals 1. 

Choose orthogonal coordinates so that $V=\R^m$, and $W=\{x_1=a\}$, $a>0$. We need to estimate
$$
\int_{\{x_1=a\}} f\Omega\wedge\dhash x_1
$$
(cf. Theorem \ref{currentformula}).
We will use Proposition \ref{bdry} and we extend $f$ from $W$ to $V=\R^m$ as a 1-homogeneous function. The content of the proposition is that integrals of the form
$$
\int_{\mu=1, x_1>0} f\Omega\wedge\dhash\mu
$$
do not depend on the gauge function $\mu$. Take e.g. $\mu=|x|$. Then
\be\label{what}
\int_{\{x_1=a\}} f\Omega\wedge\dhash x_1/a=
\int_{S^{n-1}_+} f\Omega\wedge\dhash\mu.
\ee
If $f$ is of at most linear growth its 1-homogeneous extension is bounded on 
$\{|x|=1\}$, so the RHS is finite, since $\Omega\wedge\dhash\mu$ is a finite measure on $S^{n-1}$ by Proposition \ref{sumup}. To see that the integral in the left hand side is absolutely convergent we may replace $f$ by $|f| \text{sign}(\Omega\wedge\dhash x_1)$. 
Finally, when $0\in W$, $W$ is already a linear subspace and $\Omega$ is a strongly homogeneous form of maximal degree on $W$. $\Omega$ then defines a measure on $W$ which vanishes outside the origin, so it is a point mass at the origin and the integral is trivially convergent. 

\end{proof}

Notice that formula (\ref{what}) implies that 
$$
\int_{x_1=a} \Omega\wedge\dhash x_1
$$
does not depend on $a$ (take $f=x_1$ in (\ref{what})) when $a\neq 0$. The continuity result in Proposition \ref{sumup} shows that $a=0$ also gives the same value.
\begin{thm}
Let $\Omega$ be a strongly homogeneous form of bidegree $(p,p)$ on $\R^n$. Let $A$ be a surjective linear map from $\R^n$ to $\R^m$, where $p\geq n-m$. Then the push-forward $A_*(\Omega)$ is a well defined strongly homogeneous current on $\R^m$ of bidegree $(q,q)$, where $q=p+m-n$.

\end{thm}
\begin{proof}
It is enough to prove this for the map
$$
(x_1, ...x_n)\to (x_1, ...x_m)=:x'.
$$
To show that $A_*(\Omega)$ is well defined, it suffices to show  that the integral
$$
\int_{\R^n} \Omega\wedge  A^*(\alpha)
$$
is absolutely convergent, where $\alpha$ is a smooth compactly supported form of bidegree $(m-q, m-q)$. We consider first the case $p+m=n$, so that $\alpha$ is a form of bidegree $(m,m)$ and $A_*(\Omega)$ is a function. Then $A_*(\Omega)$ is the  integral of $\Omega$ over the fibres $x'=a$, and it follows from Lemma \ref{finite} that  the integral is absolutely convergent. 

When $p+m>n$ we use that $\Omega$ is homogeneous so that its coefficients are homogeneous of order $-p$. Since $p$ is larger than the dimension of the fibres of $A$ the integral is absolutely convergent. Hence, in both cases, the push-forward is well defined. 

To show that $\delta^\# A_*(\Omega)=0$ we use the following fact:

\noindent {\it If $\alpha$ and $\gamma$ are superforms on $\R^N$ whose degrees in $d\xi$ sum up to $N+1$, then 
$$
\delta^\#(\alpha)\wedge\gamma=\pm \alpha\wedge\delta^\#\gamma.
$$}
(This follows from $0=\delta^\#(\alpha\wedge\gamma)$.)
To prove that $\delta^\# A_*(\Omega)=0$ it is therefore enough to
prove that
$$
\int_{\R^m} A_*(\Omega)\wedge\delta^\#\alpha=\int_{\R^n} \Omega\wedge \delta^\# A^*(\alpha)=0,
$$
when $\alpha$ is of bidegree $(m-q,m-q+1)$.
But this follows from the same fact since $\delta^\#\Omega=0$. 

All that remains is to prove that $dA_*(\Omega)=0$, or equivalently
$$
0=\int_{\R^m} A_*(\Omega)\wedge d\alpha=\int_{\R^n} \Omega\wedge d\alpha,
$$
when $\alpha$ is of bidegree $(m-q-1,m-q)$. But since the integral in the right hand side is absolutely convergent it equals the limit as $R\to\infty$ of 
$$
\int_{\R^n}\chi_R \Omega\wedge d\alpha,
$$
where $\chi_R(x)=\chi(|x|/R)$, and $\chi$  is a suitable cut-off function. That this limit is zero follows directly from Stokes' theorem since $\Omega$ is $d$-closed.

\end{proof}

\subsection{Convolution}

We are now ready to define the convolution of two strongly homogeneous forms 
$\Omega_1$ and $\Omega_2$ of bidegree $(p_1,p_1)$ and $(p_2,p_2)$, respectively. 

\begin{df}\label{convolution} Let 
$\Omega_1$ and $\Omega_2$ be strongly homogeneous on $\R^n$, of bidegree $(p_1,p_1)$ and $(p_2,p_2)$, respectively. Assume $p_1+p_2\geq n$. Then the convolution of $\Omega_1$ and $\Omega_2$ is defined as 
$$
\Omega_1\star\Omega_2:= \pi_*(\Omega_1(x)\wedge\Omega_2(y)),
$$
where $\pi:\R^n_x\times \R^n_y\to \R^n$ is the linear map $\pi(x,y)=x+y$. 
\end{df}
By the previous subsection $\Omega_1\star\Omega_2$ is a strongly homogeneous current of bidegree $(q,q)$, $q=p_1+p_2-n$. 
\subsection{Fourier transform.}
The Fourier transform of differential forms on $\R^n$ was introduced by Scarfiello, \cite{Scarfiello}, see also \cite{Schwartz}. The definition is that, if 
$f=\sum_{|I|=p} f_I dx_I$ is a form of degree $p$,
$$
\hat f=*\sum_I \hat f_I dx_I,
$$
where $\hat f_I$ is the classical Fourier transform of the function $f_I$, and $*$ is the Hodge $*$-operator. Thus, Scarfiello's Fourier transform maps $p$-forms to forms of degree $n-p$. (In particular, Scarfiello's Fourier transform maps functions to $n$-forms and conversely, so we are abusing language by using the same letter for the two Fourier transforms.)

Scarfiello's transform was rewritten more elegantly by Andersson, \cite{Andersson}. To describe this, let
$$
\beta(x,y):= \sum_i dx_i\wedge dy_i,
$$
be the standard  symplectic form on $\R^n_x\times \R^n_y$, viewed as the cotangent bundle of $\R^n_x$. Then Andersson's formula is
\be\label{Andersson}
\hat f(y):=c_{n,p}\int_{\R^n} f(x)\wedge e^{-i x\cdot y+ \beta(x,y)},
\ee
where $c_{n,p}$ equals plus or minus 1. To see that the two definitions coincide one uses the formula
$$
e^{\beta(x,y)}\wedge dx_I= c_{n,p}dx_1\wedge ...dx_n\wedge *dx_I,
$$
where $*dx_I$ is written in the coordinates $y$, so that e.g. $*dx_1\wedge ...dx_p=dy_{p+1}\wedge ...dy_n$. 
The exponential in (\ref{Andersson}) should be read as 
$$
e^{-i x\cdot y+ \beta(x,y)}= e^{-ix\cdot y} \sum_0^n\beta^k/k!,
$$
with the understanding that only the term $k=n-p$ gives a form of full degree in $dx$, so the other terms do not contribute to the integral, and the integral is a form of degree $n-p$ in $dy$.  Alternatively, and perhaps more correctly, (\ref{Andersson}) can be read as the push-forward of $f(x)\wedge e^{-i x\cdot y+ \beta(x,y)}$ on 
$\R^n_x\times \R^n_y$  under the map $(x,y)\to y$.  Andersson's version of the definition has the advantage that it uses no extra structure on $\R^n$, like Lebesgue measure or scalar product, but only the canonical symplectic form on a cotangent bundle. 

We copy almost verbatim (\ref{Andersson}) to the setting of superforms, and put, for a superform $\Omega=\Omega(x,\xi)$
\be\label{superandersson}
\hat \Omega(y,\eta)=C_{n,p}\int \Omega(x,\xi)\wedge 
e^{-i x\cdot y+ \beta(x,y)+ \beta(\xi, \eta)},
\ee
where again the constant is either plus or minus 1; the sign chosen so that
\be\label{concretedefinition}
\hat\Omega=\sum \widehat{\Omega_{I J}}*dx_I\wedge *d\xi_J.
\ee
The integral is here a superintegral in $(x,\xi)$, and the integral returns a superform in $(y,\eta)$ (of bidegree $(n-p,n-q)$ if $\Omega$ is of bidegree $(p,q)$. The integral is of course  not always convergent, but has a meaning as soon as the coefficients of $\Omega$ are tempered distributions in the sense of Schwartz. This is the case when $\Omega$ is smooth and strongly homogeneous. The next proposition shows that the Fourier transform of a convolution   behaves like the classical Fourier transform on functions. 
\begin{prop} If $\Omega_1$ and $\Omega_2$ are  smooth and  strongly homogeneous, 
$$
\widehat{\Omega_1\star\Omega_2} =\hat\Omega_1\wedge\hat\Omega_2.
$$
\end{prop}
\begin{proof} Formally, this is a direct consequence of the definitions. To see this, let for fixed $y$ and $\eta$, 
$$
\Phi(x,\xi)=e^{i x\cdot y+ \beta(x,y)+ \beta(\xi, \eta)}.
$$
Then, if $\pi(u,\mu, v, \nu):= (x,\xi)=(u+v, \mu+\nu)$, $\pi^*\Phi(x,\xi)=\Phi(u+v,\mu+\nu))=\Phi(u,\mu)\wedge \Phi(v,\nu)$, from which the claim follows. 

We can, however, not apply this argument directly since the coefficients of $\Omega_i$ are not integrable if $\Omega_i$ are strongly homogeneous. We therefore replace $\Omega_1(x)$ by $\Omega_1^j:=\chi_j(x)\Omega_1(x)$ and $\Omega_2(y)$ by $\Omega_2^j:=\chi_j(y)\Omega_2(y)$., where $\chi_j$ is a sequence of cut-off functions tending to 1. Then the previous, `formal', argument applies and gives that
$$
\widehat{\Omega_1^j\star\Omega_2^j}=\widehat{\Omega_1^j}\wedge\widehat{
\Omega_2^j}.
$$
We may then let $j\to \infty$ to conclude, using Schwartz' theory of the Fourier transform of temperate distributions.

\end{proof}
\begin{thm} If $\Omega$ is a strongly homogeneous current of bidegree $(p,p)$, $\hat\Omega$ is strongly homogeneous  of bidegree $(n-p,n-p)$
\end{thm}
\begin{proof}
We use the standard formulas
$$
i\partial\hat f/\partial y_j=\widehat{x_j f}, \quad iy_j\hat f(y)=\widehat{\partial f/\partial x_j},
$$
valid when $f$ is a temperate distribution on $\R^n$, together with 
$$
*(dx_j\wedge dx_I)= (-1)^p  \partial x_{j}\rfloor*dx_I,
$$
and
$$
*(\partial x_j\rfloor dx_I)=(-1)^{p-1} dx_j\wedge *dx_I,
$$
if $|I|=p$.
Combining these formulas with the definition (\ref{concretedefinition})  we get
\be\label{d-delta}
d\hat\Omega=A_{n,p,q}\widehat{\delta\Omega}, \quad \delta\hat\Omega=
B_{n,p,q}\widehat{d\Omega},
\ee
if $\Omega$ is of bidegree $(p,q)$. Now let $\Omega$ be strongly homogeneous of bidegree $(p,p)$. By Proposition \ref{symmetric}, $\Omega$ is symmetric, so $\delta^\#\Omega=0$ implies that $\delta \Omega=0$. By (\ref{d-delta}),  $d\hat\Omega=0$. In the same way, $d\Omega=0$ implies that $\delta\hat\Omega=0$, which gives $\delta^\#\hat\Omega=0$, since $\hat\Omega$ is symmetric too. This completes the proof.

\end{proof}

%%%%%%%%%%%%%%%%%%%%%%%%%%
\newpage

\section{Alexandrov's differential operator.}\label{Asdiffoperator}

Alexandrov's second proof of the Alexandrov-Fenchel inequalities (see \cite{2Alexandrov})  is a generalization of Hilbert's proof of the Brunn-Minkowski inequality in three dimensions, \cite{Hilbert}. It is based on a study of the eigenvalues of a certain elliptic second order differential operator on the sphere, $u\to A(u)$, that we shall now discuss, using superforms.

Our first incarnation of Alexandrov's operator maps 1-homogeneous functions on $\R^n$ to (strongly homogeneous) $(n-1,n-1)$-currents. In the sequel we will sometimes write that a 1-homogeneous function is strictly convex. This is of course never literally true since the function is linear on any line through the origin. What we mean is that the function is strictly convex along any other line, which in case of smooth functions means that the Hessian has $(n-1)$ positive eigenvalues. 

Let $\phi_3, ...\phi_n$ be smooth strictly convex 1-homogeneous functions and put
$$
\Omega_{n-2}=\ddhash\phi_3\w ...\ddhash\phi_n;
$$
it is a strongly homogeneous and positive $(n-2,n-2)$-form ( if $n=2$ we put $\Omega=1$). We could also define it for non-smooth $\phi_i$ but for the proof of the Alexandrov-Fenchel inequalities, it is enough to consider smooth functions. (The question of when equality holds in the inequalities is a different matter; there it might be useful to consider general convex functions (cf. \cite{Shenfeld-vanHandel} and the references there for recent results on this problem).) 

We now define, for $v$ a smooth 1-homogeneous function
\be\label{incarnation1}
\A(v):=\ddhash v\w\Omega_{n-2}.
\ee
We thus get one operator for each choice of  $\phi_3, ...\phi_n$ and we will fix one such choice from now.

\begin{remark}\label{scalar} Notice that the forms $\A(v)$ are annihilated be $\delta$ and $\delta^\#$. Any such form of bidegree $(n-1,n-1)$ is a (positive)  multiple of one fixed form
$$
\delta^\#\delta dV
$$
(recall that $dV=dx_1\w d\xi_1\w ...dx_n\w d\xi_n$). Indeed, $d|x|\w \dhash|x|\w\A(v)=\lambda(v) dV/|x|^2$, and applying $\delta^\#\delta $ we get
$$
\A(v)=\lambda(v) \delta^\#\delta  dV/|x|^2.
$$
Hence we could identify the operator $\A$ with the scalar-valued operator $v\to \lambda(v)$, but it will be more convenient to work with $\A$.
\end{remark}

 We next define a bilinear form on the space of smooth 1-homogeneous functions through the pairing described in Section \ref{sectionhomform},  by
\be\label{pairing}
Q(u,v):=\langle u, \A(v)\rangle =\int_{\mu=1} u \A(v)\w\dhash\mu,
\ee
where $\mu$ is any gauge function, cf. Proposition \ref{independent}. The standard choice, corresponding to the original setting of Alexandrov,  is $\mu(x)=|x|$, but it will be  useful to allow general gauges. Here $u$ and $v$ are 1-homogeneous functions on $\R^n$, but we could also consider them as functions on $S_\mu=\{\mu=1\}$ since any function there has a unique homogeneous extension. 

We now proceed to relate our definition of the Alexandrov operator $\A(v)$ to the more standard one as a differential operator on the `sphere' $S_\mu$.
The differential form $\A(v)\wedge\dhash\mu$ is of degree $(n-1)$ in $dx$ and of full degree $n$ in $d\xi$. If $v$ is smooth as a 1-homogeneous function on $\R^n$ it defines a measure on $S_\mu$ which is absolutely continuous with respect to surface measure $dS$. Note that $S_\mu$ does not need to be smooth, so there is no a priori definition of being smooth `on $S_\mu$'. As an example, the function 1 on $S_\mu$ is here interpreted as $\mu$, so it is not smooth in our sense , if $\mu$ is not smooth.

Let $dm$ be any choice of measure on $S_\mu$ satsifying 
$$
cdS\leq dm\leq C dS.
$$
We will call such measures non-degenerate.
Then we can write
\be\label{incarnation}
\A(v)\wedge\dhash\mu|_{\mu=1}= A_m(v)dm,
\ee
where $A_m(v)$ is a function on $S_\mu$. This is our second incarnation of the Alexander operator, and we have
$$
Q(u,v)=\int_{\mu=1} uA_m(v) dm.
$$
When $\mu=|x|$ and $dm$ is surface measure on the sphere $A_m$ is a rewrite of Alexandrov's original operator and we will denote it simply by $A$. The next proposition says that any $A_m$ uniquely determines $\A$, so we can view all these operators as different incarnations of the same object.
\begin{prop}\label{sameobject}
If $u$ is 1-homogeneous and $A_m(u)=fdm$ for some choice of gauge $\mu$ and non-degenerate  reference measure $dm$ on $S_\mu$, then $\A(u)=\Omega^0$, where $\Omega^0$ is the uniquely determined strongly homogeneous $(n-1,n-1)$-current such that $\Omega^0\wedge d^\#\mu= fdm$ on $S_mu$ (cf. Theorem   \ref{secondpart} ). In particular, $A_m(u)=0$ if and only if $\A(u)=0$. 
\end{prop}
\begin{proof} That $A_m(u)=f$ means that 
$$
\A(u)\w\dhash\mu=fdm=\Omega^0\w \dhash\mu
$$
on $S_\mu$.  Contracting with $\delta^\#$ we get (since $\delta^\#\A(u)=0$) that $\A(u)=\Omega^0$ when $\mu=1$, and therefore everywhere by homogeneity. 
\end{proof}
Notice that
\be\label{QisV}
Q(u,v)=\int_{\mu=1} u \A(v)\w\dhash\mu=V(u,v, \phi_3, ...\phi_n)n!,
\ee
so it is symmetric in $u$ and $v$.

We are now ready to state a  first version of the Alexandrov-Fenchel theorem:
\begin{thm}\label{Lorentzian} The quadratic form $Q(u,u)$ has Lorentzian signature in the sense that it is strictly positive somewhere and negative semidefinite on some subspace of $C^2$ of codimension 1. More precisely, if $Q(\phi,\phi)>0$, $Q$ is negative semidefinite on the space of all functions $u$ such that $Q(u,\phi)=0$. 
\end{thm}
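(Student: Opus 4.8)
The plan is to carry out Alexandrov's second proof in the super-language. Since $\phi_3,\dots,\phi_n$ are already assumed smooth and strictly convex, I would fix the gauge $\mu(x)=|x|$ and take the reference measure to be surface measure $dS$ on $S^{n-1}$, so that by (\ref{incarnation}) and (\ref{QisV}),
$$
Q(u,v)=\int_{S^{n-1}} u\,A(v)\,dS=V(u,v,\phi_3,\dots,\phi_n)\,n!,
$$
where $A=A_{dS}$ is the operator on $S^{n-1}$ determined by $\A(v)\w\dhash\mu|_{\mu=1}=A(v)\,dS$. The map $v\mapsto\A(v)=\ddhash v\w\Omega$ is linear of second order, and $A$ is elliptic by the strict convexity of the $\phi_i$ (the mixed-discriminant coefficient matrix of the second derivatives of $v$ is positive definite); moreover $A$ is self-adjoint on $L^2(S^{n-1},dS)$ because $Q$ is symmetric by (\ref{QisV}). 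Hence $A$ has discrete real spectrum with an $L^2$-orthonormal basis of eigenfunctions, which are smooth by elliptic regularity, so the signature of $Q$ — the numbers of strictly positive, zero and strictly negative eigenvalues of $A$ — is well defined, and the theorem reduces to: $A$ has exactly one strictly positive eigenvalue.

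The argument for that has two ingredients. First I would show that $\ker A$ is exactly the $n$-dimensional space of restrictions of linear functions $\ell(x)=a\cdot x$: one inclusion is immediate, since $\ddhash\ell=0$ gives $\A(\ell)=0$, and the reverse inclusion — $\A(u)=0\Rightarrow\ddhash u\w\Omega=0\Rightarrow u$ linear — is the key lemma and the only genuinely nontrivial step. Second I would show that the number of strictly positive eigenvalues of $A$ is a deformation invariant: the admissible tuples $(\phi_3,\dots,\phi_n)$ of smooth strictly convex $1$-homogeneous functions form a convex (hence path-connected) set, the eigenvalues of $A$ vary continuously along a path, and by the key lemma $\dim\ker A=n$ throughout; since the $n$ linear functions stay in $\ker A$, no eigenvalue can ever touch $0$ (that would force $\dim\ker A\ge n+1$), so the count of positive eigenvalues is locally constant, hence constant. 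It then suffices to evaluate it at $\phi_3=\dots=\phi_n=|x|$: there $\Omega=(\ddhash|x|)^{n-2}$, and a direct computation identifies $A$ with a positive multiple of $\Delta_{S^{n-1}}+(n-1)$, whose eigenvalue on spherical harmonics of degree $k$ is proportional to $n-1-k(k+n-2)$ — strictly positive (and simple) for $k=0$, zero with multiplicity $n$ for $k=1$, strictly negative for $k\ge 2$. So $A$ has exactly one positive eigenvalue, as needed.

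Granting this spectral picture, the precise statement is elementary linear algebra of a quadratic form with a single positive eigenvalue. Let $\lambda_1>0$ be the top eigenvalue and $e_1$ its eigenfunction; decomposing $u=ce_1+u'$ with $u'\perp_{L^2}e_1$ gives $Q(u,u)=c^2\lambda_1+Q(u',u')$ with $Q(u',u')\le 0$, and in particular $Q(e_1,e_1)=\lambda_1>0$, so $Q$ is strictly positive somewhere. If $\phi$ satisfies $Q(\phi,\phi)>0$, then $u\mapsto Q(u,\phi)$ is a nonzero linear functional — if it vanished at $e_1$, then $\phi\perp e_1$ and $Q(\phi,\phi)\le 0$ — so $\{u\in C^2:Q(u,\phi)=0\}$ has codimension one; and $Q$ cannot be positive on the plane spanned by such a $u$ together with $\phi$ (that would give $Q$ two positive directions), so $Q(\phi,\phi)>0$ forces $Q(u,u)\le 0$. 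Hence $Q$ is negative semidefinite on $\{u\in C^2:Q(u,\phi)=0\}$, which is the claim.

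The main obstacle is the key lemma: that $\ddhash u\w\Omega=0$ forces $u$ to be linear when $\Omega=\ddhash\phi_3\w\dots\w\ddhash\phi_n$ with the $\phi_i$ smooth and strictly convex (equivalently, $\dim\ker A=n$). This is the analytic heart of Alexandrov's proof; I would prove it by a maximum-principle argument on the sphere — a Hopf-type lemma — using the uniform ellipticity coming precisely from the strict convexity of the $\phi_i$. It is the one place where the super-formalism has to be supplemented by genuine PDE input rather than the purely multilinear-algebraic reasoning that suffices elsewhere in the paper.
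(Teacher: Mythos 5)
Your overall architecture is the same as the paper's (Alexandrov's) proof: reduce to the self-adjoint elliptic operator $A$ on $S^{n-1}$, show $\ker A$ consists exactly of the linear functions, deform continuously to the model case $\phi_3=\dots=\phi_n=|x|$ where the positive eigenvalue count is $1$, and invoke the stability of the kernel dimension to conclude. The deformation and model-case steps are stated correctly and match the paper.

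The gap is in the step you yourself flag as ``the analytic heart'': proving that $\ddhash u\w\Omega_{n-2}=0$ forces $u$ linear. You propose a maximum-principle / Hopf-type argument. That route does not work here, for two reasons. First, the paper points out (immediately before Alexandrov's argument) that the equation $\ddhash u\w\Omega_{n-2}=0$ has \emph{many non-linear solutions locally}, so no local argument at a point can identify the global kernel; the identification is genuinely a global phenomenon on the closed sphere. Second, the zero-order coefficient $c$ of $A$ is \emph{strictly positive} (the paper computes $A(|x|)\,dS=\Omega\w\ddhash|x|\w\dhash|x|>0$), and for an elliptic operator with $c>0$ on a closed manifold, the maximum principle does not force solutions of $Au=0$ to be constant, nor does any Hopf-type boundary argument apply (there is no boundary). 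Indeed the kernel really is $n$-dimensional, not $1$-dimensional, so ``constancy'' is the wrong target.

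What actually proves the key lemma in the paper is a two-step integral argument resting on Alexandrov's pointwise linear-algebra lemma (Theorem~\ref{Aslemma}). From $\ddhash u\w\Omega_{n-2}\w d\mu\w\dhash\mu=0$ one applies that lemma fiberwise on the tangent space to $S_\mu$ (where the $\ddhash\phi_j$ \emph{are} strictly positive) to get the pointwise inequality $(\ddhash u)^2\w\Omega_{n-3}\w\dhash\mu\le 0$ on $S_\mu$. Integrating this against the strictly positive weight $\phi_3$ and using the symmetry of the mixed volume $V$, one finds the integral equals $V(u,u,\phi_3,\dots,\phi_n)$, which is also zero because $\int u\,\A(u)\w\dhash\mu=0$. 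Strict positivity of $\phi_3$ then forces the pointwise inequality to be an equality everywhere, and the equality case of the linear-algebra lemma gives $\ddhash u\w d\mu\w\dhash\mu=0$, hence (contracting with $\delta$ and $\delta^\#$) $\ddhash u=0$, so $u$ is linear. That is the ingredient your proposal is missing; without it, the deformation argument has nothing to bootstrap from.
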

The first part of the statement is of course evident since $Q(\phi,\phi)>0$ is $\phi$ if strictly convex. The next section will be devoted to two different proofs of the last part. Another, more common, formulation of the Alexandrov-Fenchel theorem is given in the next corollary.

\begin{cor}\label{AF} If $\phi$ and $\psi$ are convex, then
$$
Q(\phi,\psi)^2\geq Q(\phi,\phi)Q(\psi,\psi).
$$
In other words
$$
V(\phi,\psi, \phi_3, ...\phi_n)^2\geq V(\phi,\phi, \phi_3, ...\phi_n) V(\psi,\psi, \phi_3, ...\phi_n).
$$
\end{cor}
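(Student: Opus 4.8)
The plan is to obtain Corollary~\ref{AF} from Theorem~\ref{Lorentzian} by the standard reverse Cauchy--Schwarz argument valid for any symmetric bilinear form of Lorentzian signature. By (\ref{QisV}) we have $Q(u,v)=V(u,v,\phi_3,\dots,\phi_n)\,n!$, so $Q$ is symmetric, and by Theorem~\ref{crucial} the mixed volume $V(u,v,\phi_3,\dots,\phi_n)$ depends continuously on $u$ and $v$ for uniform convergence on $S^{n-1}$ of the (convex, $1$-homogeneous) functions. Since support functions of smoothly bounded strictly convex bodies are dense among support functions of convex bodies in this topology, it suffices to prove the inequality when $\phi$ and $\psi$ are support functions of such bodies; in particular we may then assume $\phi$ is a smooth, strictly convex $1$-homogeneous function and $\psi\in C^2$, with $Q(\phi,\phi)=V(\phi,\phi,\phi_3,\dots,\phi_n)\,n!>0$. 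Once the inequality is known in this case, the general case follows by letting $\phi$ and $\psi$ run through such approximating sequences and using the continuity of $Q$ on both sides of the asserted inequality.

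So assume $\phi$ smooth and strictly convex with $Q(\phi,\phi)>0$, and $\psi\in C^2$. Put
$$
\lambda:=\frac{Q(\phi,\psi)}{Q(\phi,\phi)},\qquad u:=\psi-\lambda\phi ,
$$
which is again a $1$-homogeneous $C^2$ function. By bilinearity and symmetry of $Q$,
$$
Q(u,\phi)=Q(\psi,\phi)-\lambda\,Q(\phi,\phi)=Q(\phi,\psi)-Q(\phi,\psi)=0 ,
$$
so Theorem~\ref{Lorentzian} applies and gives $Q(u,u)\le 0$. Expanding,
$$
0\ge Q(u,u)=Q(\psi,\psi)-2\lambda\,Q(\phi,\psi)+\lambda^2 Q(\phi,\phi)=Q(\psi,\psi)-\frac{Q(\phi,\psi)^2}{Q(\phi,\phi)} .
$$
Multiplying through by $Q(\phi,\phi)>0$ yields $Q(\phi,\psi)^2\ge Q(\phi,\phi)\,Q(\psi,\psi)$, which is the first displayed inequality of the corollary; the second is the same statement rewritten via (\ref{QisV}).

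The algebraic core is immediate; the only point requiring care is the reduction step. The main obstacle is to justify the approximation by smooth strictly convex $\phi$ --- so that Theorem~\ref{Lorentzian} genuinely applies to the approximants with $Q(\phi,\phi)>0$ --- and to check that both $Q(\phi,\psi)^2$ and $Q(\phi,\phi)\,Q(\psi,\psi)$ pass to the limit, which is guaranteed by the continuity of mixed volumes from Theorem~\ref{crucial}. Note that in this way one never has to treat the degenerate case $Q(\phi,\phi)=0$ directly: it can only occur in the limit, where the inequality is inherited from the strictly positive approximants.
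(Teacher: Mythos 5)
Your proposal is correct and its algebraic core is exactly the one the paper uses: the paper abstracts it into Theorem~\ref{Cauchy} (the reverse Cauchy--Schwarz inequality in Lorentz signature), whose proof minimizes $p(t)=Q(u+tv,u+tv)$, and the minimizer $t=-Q(u,v)/Q(v,v)$ is precisely your projection $u=\psi-\lambda\phi$; you have simply unpacked that lemma in-line. Two small differences are worth flagging. First, the degenerate case $Q(\phi,\phi)=0$ does not actually need the approximation detour: since $\psi$ is convex, $Q(\psi,\psi)\geq 0$, so $Q(\phi,\phi)Q(\psi,\psi)=0\leq Q(\phi,\psi)^2$ is immediate (this is how the paper's Theorem~\ref{Cauchy} disposes of it). Second, the approximation argument you give \emph{is} genuinely needed for a different reason which the paper's proof leaves implicit: Theorem~\ref{Lorentzian} is formulated for $C^2$ $1$-homogeneous functions, while the corollary is stated for arbitrary convex $\phi,\psi$, so one does have to pass from smooth strictly convex approximants to the general case via the continuity of $V$ (Theorem~\ref{crucial} or Theorem~\ref{MAestimate}). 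So your proof is slightly more explicit about the smoothing step and slightly less economical about the degenerate case; both are acceptable.
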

This follows from the theorem by the Cauchy inequality for time-like vectors in Lorentz space. We include the statement and its proof for completeness, and also to check the precise assumptions needed.
\begin{thm}\label{Cauchy} Let $Q(u,u)$ be a quadratic form on a vector space such that for some vector $u_0$, $Q(u_0,u_0)>0$ and $Q$ is negative semidefinite on some subspace of codimension 1. Then, if $Q(v,v)\geq 0$
\be
Q(u,v)^2\geq Q(u,u)Q(v,v).
\ee
Moreover, if $Q(v,v)>0$,  $Q$ is negative semidefinite on the subspace $\{u; Q(u,v)=0\}$.
\end{thm}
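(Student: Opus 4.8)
The plan is to reduce the whole statement to two-dimensional subspaces together with elementary facts about $2\times 2$ symmetric matrices. The one structural input I would isolate at the outset is that $Q$ is \emph{not} positive definite on any two-dimensional subspace $P$ of the ambient space $W$. To see this, let $H\subseteq W$ be the codimension-one subspace on which $Q$ is negative semidefinite; then $P\cap H$ is the kernel of the (at most rank one) linear map $P\to W/H$, hence nonzero, so $P$ contains some $w\neq 0$ with $Q(w,w)\leq 0$, which is impossible if $Q|_P$ is positive definite. Note that the hypothesis $Q(u_0,u_0)>0$ is not needed for this observation; in the argument it serves only to pin down the signature in the intended application.

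For the first inequality, fix $u$ and $v$ with $Q(v,v)\geq 0$. If $u$ and $v$ are linearly dependent the inequality is trivial (both sides vanish or share a factor), so assume they span a plane $P$ and look at the quantity $Q(u,u)Q(v,v)-Q(u,v)^2$, the determinant of the Gram matrix of $Q|_P$ in the basis $\{u,v\}$. A $2\times 2$ real symmetric matrix that is not positive definite has non-positive determinant unless it is negative definite; but $Q|_P$ negative definite would force $Q(v,v)<0$ since $v\in P\setminus\{0\}$, contradicting $Q(v,v)\geq 0$. Hence $Q(u,u)Q(v,v)-Q(u,v)^2\leq 0$, which is exactly $Q(u,v)^2\geq Q(u,u)Q(v,v)$. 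In particular the degenerate case where $Q|_P$ is negative semidefinite but not definite is absorbed into this dichotomy (there $Q(v,v)=0$ and the inequality reads $Q(u,v)^2\geq 0$).

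For the last assertion, suppose $Q(v,v)>0$. Then $u\mapsto Q(u,v)$ is a nonzero linear functional, so $N:=\{u:Q(u,v)=0\}$ is a hyperplane, and I must show $Q(u,u)\leq 0$ for every $u\in N$. If $u$ is proportional to $v$ then $Q(u,v)=0$ forces $u=0$ and there is nothing to prove, so assume $u$ and $v$ span a plane $P$. In the basis $\{u,v\}$ the form $Q|_P$ is represented by the diagonal matrix with entries $Q(u,u)$ and $Q(v,v)$, and $Q(v,v)>0$; since $Q|_P$ is not positive definite by the structural input, the other diagonal entry must satisfy $Q(u,u)\leq 0$. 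As $u\in N$ was arbitrary, $Q$ is negative semidefinite on $N$.

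The argument is routine once the ``no positive definite $2$-plane'' observation is in place; the only points requiring care are the degenerate subcases (linear dependence of $u$ and $v$, and the negative semidefinite but non-definite restriction) and phrasing the dimension count so that it remains valid when $W$ is infinite-dimensional. I do not anticipate a genuine obstacle.
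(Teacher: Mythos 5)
Your proposal is correct and is essentially the paper's argument in a different packaging: the paper phrases it via the quadratic polynomial $p(t)=Q(u+tv,u+tv)$ having a non-positive minimum, while you phrase it via the Gram determinant of $Q$ restricted to the plane spanned by $u$ and $v$ being non-positive, and these are the same fact (the minimum of $p$ and the discriminant/determinant are related by a sign and a positive factor). The structural input you isolate up front (every $2$-plane meets the codimension-one negative-semidefinite subspace nontrivially, hence cannot carry a positive definite restriction) is exactly the observation the paper invokes parenthetically, and your remark that $Q(u_0,u_0)>0$ is not actually used in the proof is also consistent with the paper's argument.
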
 
\begin{proof} Consider the polynomial in $t$
$$
p(t)=Q(u+tv,u+tv).
$$
Assume $Q(v,v)$ is positive (if $Q(v,v)=0$ there is nothing to prove). Then $p$ is a second degree polynomial with positive leading coefficient. We may  assume  that $u$ and $v$ are linearily independent since the inequality is trivial if they are proportional. Then the minimum of $p$ must be less than or equal to zero, since if it were positive, $Q$ would be positive definite on a subspace of dimension 2. (This subspace would have to intersect the codimension 1 subspace where $Q$ is negative semidefinite, leading to a contradiction.) Computing derivatives we see that the minimum is attained where 
$$
t=-Q(u,v)/Q(v,v),
$$
so the minimum value is $Q(u,u)-Q(u,v)^2/Q(v,v)$. Hence the Cauchy inequality follows. This in turn implies  that if $Q(u,v)=0$, then $Q(u,u)\leq 0$, which proves the last claim. 
\end{proof}

The corollary follows since $Q(\phi,\phi)\geq 0$ if $\phi$ is convex; the last part follows from (\ref{QisV}). Note also that $Q(\phi,\psi)\geq 0$ if both $\phi$ and $\psi$ are convex so we also have that
$$
Q(\phi,\psi)\geq (Q(\phi,\phi)Q(\psi,\psi))^{1/2}.
$$

We next record an alternative description of the bilinear form $Q$, that gives another interpretation of the Alexandrov-Fenchel theorem. 
\begin{prop}\label{Poincareineq} If $u$ is a  sufficiently regular 1-homogeneous function, and $\mu$ is any gauge function, then
$$
Q(u,u)=\int_{\mu=1}u^2\dhash\mu\wedge\ddhash\mu\wedge\Omega_{n-2}-\int_{\mu=1} du\wedge\dhash u\wedge \dhash\mu\wedge\Omega_{n-2}.
$$
\end{prop}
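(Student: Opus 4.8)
The plan is to reduce the identity to an integration by parts on the hypersurface $S_\mu=\{\mu=1\}$ together with a single application of Lemma \ref{fundamentallemma}. Write $\Omega=\Omega_{n-2}=\ddhash\phi_3\w\cdots\w\ddhash\phi_n$, so that by the definition (\ref{pairing}) of $Q$ and of $\A$ we have $Q(u,u)=\int_{\mu=1}u\,\ddhash u\w\Omega\w\dhash\mu$. Since $\dhash$ is a derivation on functions, $\dhash(u^2/2)=u\,\dhash u$, and applying $d$ gives $\ddhash(u^2/2)=du\w\dhash u+u\,\ddhash u$, i.e. $u\,\ddhash u=\ddhash(u^2/2)-du\w\dhash u$. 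Substituting this splits $Q(u,u)$ into
$$
\int_{\mu=1}\ddhash(u^2/2)\w\Omega\w\dhash\mu-\int_{\mu=1}du\w\dhash u\w\Omega\w\dhash\mu .
$$
The forms $\Omega$, $\ddhash\mu$ and $du\w\dhash u$ all have even total degree, so any wedge factor may be permuted past them without a sign; in particular the second integral already equals $-\int_{\mu=1}du\w\dhash u\w\dhash\mu\w\Omega$, the second term of the claimed formula.

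It then remains to show $\int_{\mu=1}\ddhash(u^2/2)\w\Omega\w\dhash\mu=\int_{\mu=1}u^2\,\dhash\mu\w\ddhash\mu\w\Omega$. First I would integrate by parts on the closed manifold $S_\mu$. Setting $\gamma:=\dhash(u^2/2)\w\Omega\w\dhash\mu$, a superform of bidegree $(n-2,n)$, Stokes' theorem on the boundaryless submanifold $S_\mu$ (the submanifold analogue of (\ref{Stokes})) gives $\int_{S_\mu}d\gamma=0$. Since $d\Omega=0$ and $d(\dhash\mu)=\ddhash\mu$, expanding $d\gamma$ yields
$$
\int_{S_\mu}\ddhash(u^2/2)\w\Omega\w\dhash\mu=\int_{S_\mu}\dhash(u^2/2)\w\Omega\w\ddhash\mu=\int_{S_\mu}u\,\dhash u\w\ddhash\mu\w\Omega .
$$
If $\mu$ is not smooth one first establishes the formula for smooth gauges and passes to the limit using Proposition \ref{sumup}; in any case $S_\mu$ avoids the origin, so every form appearing is as regular as $u$ and the fixed smooth functions $\phi_i$.

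Next I would apply Lemma \ref{fundamentallemma} with the current $\Theta:=\ddhash\mu\w\Omega$. This $\Theta$ is a wedge product of operators $\ddhash$ applied to $1$-homogeneous functions, hence strongly homogeneous, so in particular $\delta^\#\Theta=0$ (alternatively, $\delta^\#$ is an antiderivation and $\delta^\#\ddhash\mu=\delta^\#\Omega=0$). With $f=u$ and $g=\mu$, both $1$-homogeneous, the lemma gives $u\,\dhash\mu\w\Theta=\mu\,\dhash u\w\Theta$; restricting to $S_\mu$, where $\mu\equiv 1$, this reads $\dhash u\w\ddhash\mu\w\Omega=u\,\dhash\mu\w\ddhash\mu\w\Omega$ as forms on $S_\mu$. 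Multiplying by $u$ and integrating turns the last integral above into $\int_{\mu=1}u^2\,\dhash\mu\w\ddhash\mu\w\Omega$, which is exactly what is needed; combining the three identities proves the proposition.

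I expect the only point requiring genuine care to be the integration-by-parts step: checking that Stokes applies to the superform $\gamma$ on the boundaryless submanifold $S_\mu$, and handling a non-smooth gauge $\mu$ through the continuity statement of Proposition \ref{sumup}. The rest — the degree parities that make all wedge permutations sign-free, and the verification that $\delta^\#\Theta=0$ so that Lemma \ref{fundamentallemma} applies — is routine.
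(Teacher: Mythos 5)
Your argument is correct and is essentially the paper's own proof, just bookkept in a slightly different order: the paper applies Stokes once to $u\,\dhash u\w\dhash\mu\w\Omega_{n-2}$ and immediately recognizes $-Q(u,u)$ plus the extra term, whereas you first split $u\,\ddhash u=\ddhash(u^2/2)-du\w\dhash u$ and then apply Stokes to $\dhash(u^2/2)\w\Omega_{n-2}\w\dhash\mu$; both routes arrive at the same residual integral $\int_{\mu=1}u\,\dhash u\w\ddhash\mu\w\Omega_{n-2}$, which is then handled by Lemma \ref{fundamentallemma} exactly as in the paper.
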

\begin{proof} 
By Stokes' theorem on the closed manifold $\mu=1$ we have
$$
 \int_{\mu=1} du\wedge\dhash u\wedge \dhash\mu\wedge\Omega_{n-2}=-\int_{\mu=1}u\ddhash u\wedge\Omega_{n-2}\wedge\dhash \mu+\int_{\mu=1}u\dhash u\wedge\ddhash\mu\wedge\Omega_{n-2}.
 $$
 The first term on the right hand side is $-Q(u,u)$. By Lemma \ref{fundamentallemma}, the second equals
 $$
 \int_{\mu=1}u^2\dhash\mu\wedge\ddhash\mu\wedge\Omega_{n-2},
 $$
 and the proposition follows.
 \end{proof}
 
In the light of the proposition we see that Theorem \ref{Lorentzian} can be seen as a Poincar\'e inequality:
\be\label{one}
\int_{\mu=1}u^2\dhash\mu\wedge\ddhash\mu\wedge\Omega_{n-2}\leq \int_{\mu=1} du\wedge\dhash u\wedge \dhash\mu\wedge\Omega_{n-2},
\ee
if $u$ satisfies any orthogonality condition
$$
Q(u,\phi)=0,
$$
with $Q(\phi,\phi)>0$. 

The first term on the right hand side here is the squared $L^2$-norm of the function $u$ with respect to a certain measure on $\{\mu=1\}$ and the second term  is an $L^2$-norm of the gradient of $u$. Explicitly, the orthogonality condition means that
\be\label{two}
Q(u,\phi)=\int_{\mu=1}u\ddhash\phi\wedge\Omega_{n-2}\wedge\dhash\mu=0.
\ee
We want to stress  the interesting fact that all such Poincar\'e inequalities -- corresponding to different choices of $\mu$ and $\phi$ -- are equivalent. In other words, if (\ref{two}) implies (\ref{one}) for a certain pair $\mu$ and $\phi$, the quadratic form $Q$ (whch does not depend on $\mu$ or $\phi$) has Lorentz signature, so the same thing must hold for any other choice of $\mu$ and $\phi$.

Now assume that $\mu$ is smooth.  Clearly $A_m$ is a second order operator and it follows from Proposition \ref{Poincareineq} that it is elliptic, since the leading order symbol can be read off from the integral
$$
\int_{\mu=1} du\wedge\dhash u\wedge \dhash\mu\wedge\Omega_{n-2}.
$$
We also have that
$$
\int u A_m(v) dm=\langle u,\A(v)\rangle =\int v A_m(u) dm,
$$
so $A_m$ is a symmetric operator. It follows that $A_m$ extends to a closed and densely defined  self adjoint operator on $L^2(dm)$ and we get the orthogonal decomposition
\be\label{decomposition}
L^2(dm)= R(A)\oplus N(A),
\ee
where $R(A)$ is the range of $A_m$ and $N(A)$ is its null space (see e.g. \cite{Warner}, chapter 6). The centerpiece of the proof of the Alexandrov-Fenchel theorem is the fact that 
$$
N(A)=\{u=x\cdot a; a\in \R^n\}.,
$$
which thus implies that the equation $A_m(u)=f$ can be solved on the sphere $S_\mu$  if and only if
$$
\int_{\mu=1} x_i fdm=0, \,\, i=1,...n.
$$

We will also have use for the corresponding results for the Dirichlet problem for $A_m$ on  the half-spheres
$$
S_\mu^+=\{x\in S_\mu; x_1>0\}.
$$
We first claim that $A_m$ is formally self adjoint on the space of smooth functions on the closure of $S_\mu^+$ that vanish on the boundary. This follows from
$$
\int_{S_\mu^+} u A_m(v)=\int_{S_\mu^+} u \ddhash v\w\Omega \w\dhash\mu=-\int_{S_\mu^+} du\w\dhash v\w\Omega\w\dhash\mu+\int_{S_\mu^+} u\dhash v\w\Omega\w\ddhash\mu.
$$
The first term on the right hand side is symmetric since $\Omega$ is symmetric. The second term equals, by Lemma \ref{fundamentallemma}, 
$$
\int uv\dhash \mu\w\ddhash\mu\w\Omega,
$$
so it is also symmetric. 

It follows from the theory of second order elliptic equations (see e.g. \cite{Evans}, Chapter 6, Theorem 4) that the Dirichlet problem
$$
A_mu=f, \quad u(x)=0\,\, \text{on}\,\, \partial S_\mu^+
$$
can be solved if and only if $f$ is orthogonal to the null space of $A_m$, $N(A)=\{u; A_mu=0, u(x)=0\,\, \text{on}\,\, \partial S_\mu^+\}$.
But,  by Theorem \ref{Dirichlet}, $N(A)$ is spanned by the function $u=x_1$. 
It follows that the Dirichlet problem on $S_\mu^+$ can be solved precisely when the right hand side satisfies
$$
\int_{S_\mu^+} x_1f dm=0.
$$

%%%%%%%%%%%%%%%%%%%%%%%%

\newpage

\section{Alexandrov's (second) proof  of the Alexandrov-Fenchel inequalities.}

We start by stating Alexandrov's linear algebra lemma, which can be seen as the pointwise counterpart of the Alexandrov-Fenchel theorem.
\begin{thm}\label{Aslemma} Let $\omega_2, ...\omega_n$ be positive $(1,1)$ (super)forms on $\R^n$.
$$
\omega_k=\sum a^k_{i j}dx_i\wedge d\xi_j,
$$
where the matrices $M^k=(a^k_{i j})$ are symmetric, positive definite and constant. Let 
$$
B=\sum b_{i j}dx_i\wedge d\xi_j
$$ 
be an arbitrary $(1,1)$ (super)form with symmetric constant coefficients. Assume that
\be\label{trace2}
B\wedge\Omega_{n-1}=0,
\ee
where $\Omega_{n-1}=\omega_2\wedge ...\omega_n$.
Then 
\be\label{determinant}
B\wedge B\wedge\Omega_{n-2}\leq 0
\ee
where $\Omega_{n-2}=\omega_3\wedge ...\omega_n$. Equality holds only when $B=0$.
\end{thm}
Before we give the proof we look at the case $n=2$ for motivation. We then only have one positive form, $\omega_2$, and we may assume it equals $\sum dx_i\wedge d\xi_i$ after a linear change of coordinates. The assumption (\ref{trace2}) says that the trace of the matrix $(b_{i j})$ is zero, so its eigenvalues are $\lambda$ and $-\lambda$. The conclusion is that its determinant is negative, which is clear since it equals $-\lambda^2$. Moreover, the determinant can only be zero if $\lambda$ and hence $B$ is zero.

\begin{proof} We argue by induction, assuming $n>2$ and that the theorem holds in dimension $n-1$. The first, and most important, step is to prove that the map
\be\label{map}
B\to B\wedge\Omega_{n-2}
\ee
is injective. Assume $B\wedge\Omega_{n-2}=0$. Then 
$$
B\wedge\Omega_{n-2}\wedge dx_1\wedge d\xi_1=0.
$$
This means that the restriction of all the forms $B$, $\omega_3, ...\omega_n$ to the complex hyperplane $x_1=0=\xi_1$ satisfy the assumption (\ref{trace2}) in $n-1$ variables. By the inductive assumption
$$
B\wedge B\wedge\omega_4\wedge ...\omega_n\leq 0
$$
on this hyperplane, or in other words 
$$
B\wedge B\wedge\omega_4\wedge ...\omega_n\wedge dx_1\wedge d\xi_1\leq 0.
$$
(If $n=3$ the wedge product $\omega_4\wedge ...\omega_n$ should be interpreted as 1.)
We may assume that $\omega_3=\sum  dx_i\wedge d\xi_i$ and then argue in the same way for the other coordinates.
Summing up we get from the inductive assumption that
$$
B\wedge B\wedge \Omega_{n-2}\leq 0,
$$
 but in fact equality must hold since $B\wedge\Omega_{n-2}=0$. Hence
 $$
 B\wedge B\wedge\omega_3\wedge ...\omega_n\wedge dx_i\wedge d\xi_i= 0,
$$
for all $i$, since the sum of these forms is zero and each term is non positive. Hence, the inductive assumption implies that the restriction of $B$ to any complex coordinate hyperplane vanishes. But then $B$ must be zero, so the map in (\ref{map}) is indeed injective. 

The rest of the proof is a deformation argument. Just like in the previous section the theorem says that
the quadratic form
$$
Q(B,B)=B\wedge B\wedge\Omega_{n-2}
$$
has Lorentz signature and what we have just proved says that zero is not an eigenvalue. It is easy to check that the theorem does hold when $\omega_2= ...\omega_n=\sum dx_i\wedge d\xi_i$. Any other choice of $\omega_i$:s can be continuously deformed to this case and the eigenvalues change continuously under the deformation. Since zero is never an eigenvalue, we must always have one positive eigenvalue and the others strictly negative. Hence $Q$ has Lorentz signature, which implies the conclusion of the theorem.
\end{proof}

We are now ready to describe Alexandrov's proof of his theorem. We assume that $\phi_2, ...\phi_n$ are 1-homogeneous and positive outside the origin. In other words, they are support functions of convex bodies $K_2, ... K_n$ that contain the origin as an interior point, and we  also assume that these bodies are smoothly bounded  and strictly convex. This means that $\phi_j$ are smooth and their Hessians are strictly positive, apart from the zero eigenvalue in the radial direction. Here we also take the gauge function $\mu$ to be smooth. Notice that this implies that the `sphere' $S_\mu$ is smooth, since the gradient of $\mu$ cannot vanish by Euler's formulae (see the Appendix, Lemma \ref{smoothgauge}).  The following simple consequence of Alexandrov's lemma will be crucial in the proof. As before we write
$$
\Omega_{n-k}=\ddhash\phi_{k+1}\w...\ddhash\phi_n.
$$
\begin{cor}\label{2Aslemma}
Let $v$ be a smooth 1-homogeneous function such that
$$
\ddhash v\w\Omega_{n-2}=0.
$$
Then 
$$
(\ddhash v)^2\w\Omega_{n-3}\leq 0
$$
as an $(n-1,n-1)$-form on $\R^\setminus\{0\}$, i.e. for any $\alpha$ of bidegree $(1,0)$ 
\be\label{eq}
(\ddhash v)^2\w\Omega_{n-3}\w\alpha\w\alpha^\#\leq 0.
\ee
If equality holds at some point $x$ and some $\alpha$ such that $\sum a_ix_i\neq 0$, then $\ddhash v=0$ at $x$. 

\end{cor}
\begin{proof}
Let $\alpha=\sum a_i dx_i$, and let 
$$
V_\alpha=\{\sum a_i x_i=1\}.
$$
Then the forms $\ddhash\phi_i$ are strictly positive on $V_\alpha$, so Alexandrov's lemma implies that $(\ddhash v)^2\w\Omega_{n-3}\leq 0$ there. This means that
\be\label{phew}
(\ddhash v)^2\w\Omega_{n-3}\w\alpha\w\alpha^\#\leq 0
\ee
on $V_\alpha$.  By remark \ref{scalar}, $(\ddhash v)^2\w\Omega_{n-3}$ is a multiple of the fixed reference form $\delta^\#\delta dV$. Inequality (\ref{phew}) says that this multiple must be negative, which completes the proof of the first part. For the statement about equality we note first that by Alexandrov's lemma, 
$\ddhash v\w\alpha\w\alpha^\#=0$ if equality holds in (\ref{eq}). Applying $\delta$ and $\delta^\#$ we find that $\ddhash v=0$.

\end{proof}

The first, and main, step in Alexandrov's proof is the following result.
\begin{thm}\label{Alexandrov}{\it (Alexandrov)}Let $\mu$ be a gauge function and let $dm$ be a non-degenerate reference measure on $S_\mu$.  The only (1-homogeneous) functions $u$ such that $\A(u)=0$ , or equivalently 
$$
A_{m}(u)=0
$$
on $S_\mu$, 
 are the linear functions. Consequently, if $f\in L^2(dm)$, the equation 
 $$
 A_m(u)=f
 $$
 on $S_\mu$ can be solved if and only if  $f$ is orthogonal to all linear functions. More generally, if $d\nu$ is a measure on the sphere, the equation 
 $$
 A(u)=d\nu
 $$
 is solvable if and only if $d\nu$ has barycenter zero.
\end{thm}
By Theorem \ref{secondpart} and Proposition \ref{sameobject},  Theorem \ref{Alexandrov}  can be reformulated in terms of the operator $\A$;
$$
\A(u)=\ddhash u\w \Omega_{n-2}.
$$
\begin{cor}\label{cor1} Let $\Omega^0$ be a strongly homogeneous current of bidegree $(n-1,n-1)$. Then there is a 1-homogeneous function $u$ such that 
$$
\ddhash u\w\Omega_{n-2}=\Omega^0
$$
if and only if 
$$
\langle x_j,\Omega^0\rangle =0, j=1, ...n,
$$
or, equivalently, the trivial extension of $\Omega^0$ across the origin is closed.
\end{cor}

\begin{proof}(of Theorem \ref{Alexandrov}) We first note that by Proposition \ref{sameobject}, the equation $A_m(u)=0$ is equivalent to saying that the 1-homogeneous extension of $u$ satisfies 
\be\label{A(u)=0}
\ddhash u\wedge\Omega_{n-2}=0.
\ee
By our second version of Alexandrov's lemma (Corollary \ref{2Aslemma}) this gives 

\be\label{negative}
(\ddhash u)^2\wedge\Omega_{n-3}\wedge\dhash\mu\leq 0
\ee
as a measure on the sphere (cf. Theorem \ref{currentformula} in the appendix).

We now integrate (\ref{negative}) over $S_\mu$ and apply Stokes' theorem. This gives 
\be\label{integrated}
 \int_{S_\mu} (\ddhash u)^2\wedge \Omega_{n-3}\w\dhash\mu= 
 -\int_{S_\mu}\dhash u\w\ddhash u\w\Omega_{n-3}\w\ddhash\mu\leq 0.
 \ee
 Here we are free to choose $\mu$, since (\ref{A(u)=0}) does not depend on $\mu$. Taking $\mu=\phi_3$, we get
 \be
  \int_{S_\mu}\dhash u\w\ddhash u\w\Omega_{n-3}\w\ddhash\mu=\int_{S_\mu}\dhash u\w\ddhash u\w\Omega_{n-2},
  \ee
  which vanishes by (\ref{A(u)=0}). Hence, equality holds in (\ref{integrated}), and therefore in (\ref{negative}). By the condition for equality in Corollary \ref{2Aslemma}  $\ddhash u=0$, so $u$ is linear. 

The last claim on solvability of the inhomogeneous equation $A_m(u)=f$ follows from (\ref{decomposition}). For the very last part, one can approximate $d\nu$ in the weak* topology by measures $f_j dS$ where $f_j$ are smooth and orthogonal to $N(A)$. Then solve $A(u_j)=f_j$ with $u_j \in N(A)^\perp$. If $f$ is any continuous  function in $N(A)^\perp$ we solve $A(u)=f$ and get, since $A$ is formally selfadjoint,  that
$$
\int_{|x|=1} u_j f dS= \int_{|x|=1} u_j A(u) dS=\int_{|x|=1} f_j u dS.
$$
Hence $u_j$ converge weak* to  a weak solution of $A(u)=d\nu$.

\end{proof}

The remaining part of the proof is, just like in the proof of Alexandrov's lemma, a deformation argument, that, again, we only sketch. The theorem we have just proved says that the only eigenfunctions of the elliptic operator $A_m$ with eigenvalue zero are the linear functions. One verifies that when $\phi_3=...\phi_n=|x|$ the operator $A_m$ has only one positive eigenvalue. The case of general $\phi_j$ can be deformed continuously to this case, and the eigenvalues change in a continuous way. Finally, Theorem \ref{Alexandrov} says that the negative eigenvalues cannot pass the barrier $\lambda=0$, so $A_m$ has only one positive eigenvalue in the general case as well. 

%{\bf Remark:} Actually, the proofs of Theorems \ref{Aslemma}  and \ref{Alexandrov} are so similar that they could be merged into one. Notice that in the proof of Theorem \ref{Alexandrov} in $n$ dimensions we only used Alexandrov's lemma in dimension $(n-1$). Hence, Alexandrov's lemma in dimension $(n-1)$ implies the Alexandrov-Fenchel theorem in dimension $n$, which in turn gives Alexandrov's lemma in dimension $n$ rather easily, so we could prove both theorems at the same time by  induction.

\newpage

\section{A real variable variant of Gromov's proof of the Alexandrov-Fenchel theorem.}\label{Gromov}
In \cite{Gromov}, Gromov gave an alternative proof of the Alexandrov-Fenchel theorem, by giving a simplified proof of the Khovanskii-Teissier theorem (\cite{Khovanskii}, \cite{Teissier}), and applying that result to toric varieties. He then sketched how the Khovanski-Teissier theorem for toric varieties implies the Alexandrov-Fenchel theorem. Here we will avoid the use of toric varieties, and complex manifolds in general, and instead give a direct proof of the  Alexandrov-Fenchel theorem, using a real variable version of Gromov's main idea.

In this proof we assume again that the convex bodies $K_j$ are strictly convex and smoothly bounded, and  write 
$$
V(K_1, ...K_n)=V(\phi_1, ...\phi_n)=\int_{\R^n} \ddhash\phi_1\w...\ddhash\phi_n/n!,
$$
where $\phi_j$ are smooth, strictly convex functions of linear growth, such that their indicator functions 
$$
\phi_j^\circ=h_{K_j},
$$
equal the support functions of the  convex bodies (cf. Theorem \ref{MAestimate}).
The next proposition shows that we may take $\phi_j$ regular.
\begin{prop}
For any smoothly bounded and strictly convex body, $K$, there is a smooth strictly convex function, $\phi$, with $\phi^\circ=h_K$, such that $\phi$'s 1-homogeneous extension to $\R^{n+1}_+$, $\Phi(x_0, x)=x_0\phi(x/x_0)$, extends smoothly to the closure of $\R^{n+1}_+$ outside the origin.
\end{prop}
\begin{proof}
Let $\tilde K$ be a smoothly bounded and strictly convex body in $\R^{n+1}$, such that its projection $\pi(\tilde K)=K$, where $\pi(y_0,y)=y$. Then
$$
h_{\tilde K}(0, x)=\sup_{(y_0,y)\in\tilde K}  y_0\cdot 0+y\cdot x=h_{\pi(\tilde K)}(x)= h_K(x),
$$
and $h_{\tilde K}$ is smooth, so we can take $\phi(x)=h_{\tilde K}(1,x)$. Then the 1-homogeneous extension of $\phi$ is $h_{\tilde K}$, which is smooth on all of 
$\R^{n+1}\setminus\{0\}$, and
$$
\lim_{t\to \infty} \phi(tx)/t=\lim_{t\to\infty }h_{\tilde K}(1/t,x)=h_K(x).
$$
\end{proof}

Assume now that $u$ is a regular function (of linear growth, in practice the difference of two regular convex functions) such that
\be\label{hypothesis}
V(u,\phi_2, ...\phi_n)=0.
\ee
We need to prove that 
$$V(u,u, \phi_3, ...\phi_n)\leq 0.
$$
For this, we identify $\R^n$ with $H_0=\{(x_0,x)\in\R^{n+1}; x_0=1\}$ and write
$$
0=V(u,\phi_2, ...\phi_n)=\int_{H_0} \ddhash u\w\ddhash\phi_2\w...\ddhash \phi_n\w\dhash x_0/n!.
$$
Let $\mu$ be any smooth gauge function. Applying  Proposition \ref{bdry} with $f=x_0$ we get
$$
\int_{S_\mu^+} x_0 \ddhash u\w\ddhash\phi_2\w...\ddhash \phi_n\w\dhash \mu=0.
$$
This means that the measure, $fdm$,  defined by the form 
$$
\A(u)\w\dhash\mu=\ddhash u\w\ddhash\phi_2\w...\ddhash \phi_n\w\dhash\mu 
$$
 on $S_\mu^+$ annihilates 
the function $x_0$. By the discussion at the end of Section \ref{Asdiffoperator}, this means that we can solve the equation $A_m(v)= f$ on $S_\mu^+$ with boundary values zero. By elliptic regularity, $v$ is smooth up to the boundary, so the 1-homogeneous extension of $v$ is regular, vanishes for $x_0=0$,  and solves
$$
\A(v)=\ddhash u\w\ddhash\phi_2\w...\ddhash \phi_n.
$$
In other words,
$$
\ddhash(u-v)\w \ddhash\phi_2\w...\ddhash \phi_n=0.
$$
By Alexandrov's lemma,
$$
(\ddhash(u-v))^2\w \ddhash\phi_3\w...\ddhash \phi_n\wedge dx_0\w\dhash x_0\leq 0,
$$
on $H_0$, since the functions $\phi_j$ are strictly convex there. Hence
$$
V(u-v,u-v, \phi_3, ...\phi_n)\leq 0.
$$
The proof is then completed by the following proposition, which implies that 
$$
V(u-v,u-v, \phi_3, ...\phi_n)=V(u,u, \phi_3, ...\phi_n).
$$ 
\begin{prop}
Assume $v$ and $w$ are regular and that the 1-homogeneous extension of $v$ vanishes for $x_0=0$. Then 
$$
V(v,w, \phi_3, ...\phi_n)=0.
$$
\end{prop}
\begin{proof}
This is in principle a particular case of Theorem \ref{MAestimate}, since the condition that the homogeneous extension of $v$ vanishes on $x_0=0$ means that its indicator function vanishes. The only difficulty is that $v$ and $w$ are not convex. This can, however, easily be overcome by adding a sufficiently convex function to $v$ and $w$. One then gets
$$
V(v+\phi,w+\phi, \phi_3, ...\phi_n)=V(\phi, w+\phi, \phi_3, ...\phi_n),
$$
which implies the claim.

An alternative proof follows from writing
$$
n!V(v,w, \phi_3, ...\phi_n)=
\int_{H_0} \ddhash v\w\ddhash w\w \ddhash\phi_3\w...\ddhash \phi_n\w\dhash x_0=
$$
$$
\int_{S_\mu^+}\ddhash v\w\ddhash w\w \ddhash\phi_3\w...\ddhash \phi_n\w\dhash x_0,
$$
by Proposition \ref{bdry}, and applying Stokes' theorem. 
\end{proof}

 \begin{remark}\label{comparison}
 Let us  compare our proof to Gromov's proof of the Khovanski-Teissier theorem. 
 The Khovanski-Teissier theorem concerns K\"ahler forms, $\omega_i, i=1, ...n$ (or K\"ahler cohomology classes) on a compact $n$-dimensional  complex manifold $X$. This is the analog of our convex bodies, or rather of our superforms $\ddhash\phi$ where $\phi$ is convex on $\R^n$. 
 In the Khovanski-Teissier theorem one  studies bilinear forms defined by 
 $$
 \Q(\omega, \omega') =\int_X\omega\w\omega'\w \omega_3, ...\omega_n
 $$
 where $\omega, \omega'$ are closed $(1,1)$-forms;  the theorem says that $\Q$ has Lorentz signature. This is the analog of our $Q(u,v)= V(u,v, \phi_3, ...\phi_n)$, where the $\phi_i$ are assumed convex but $u$ and $v$ are not.
It is evident that $\Q$ depends only on the cohomology classes of $\omega, \omega', \omega_i$ in $H^{1,1}(X)$, since the manifold is compact without boundary. This is the analogue of the fact that $Q$ depends only on the indicator of the functions in our setting. Gromov defines an elliptic operator, $\L$ by
$$
\L(u) \omega_1\w...\omega_n=dd^c u\w\omega_2\w...\omega_n
$$
which corresponds to our operators $A_m$ . He then uses a standard solvability result for compact manifolds, that corresponds to our condition for solvability of the equation $A_m(u)=f$ with boundary values zero.

Thus, in our setting the closure of $S_\mu^+$ appears as a compactification of $\R^{n}$, and cohomology classes on $X$ correspond to
$$
[\psi]:= \{ \ddhash\phi; \phi^\circ=\psi^\circ\}.
$$
 This is  what replaces the use of toric compactifications in Gromov's proof.

\end{remark}

\newpage

%%%%%%%%%%%%%%%%%%%%%%%%%%%%%%%
\section{Valuations on convex bodies.}\label{valuations}
In this section we will apply the formalism  developed  in the previous sections to the theory of valuations on convex bodies. The main result is a representation formula for a large class of valuations  in terms of strongly homogeneous currents; see Theorems \ref{smoothvaluations} and \ref{monchar}.  Our basic background reference is the concise \cite{1Schneider} , see also \cite{2Schneider} and \cite{2Alesker}. In Section \ref{normalcycle}, we will compare our approach to representations of valuations as integrals over  the so called normal cycle (see e.g. \cite{Wannereretal}).

Let $\Gamma$ be an additive commutative semigroup. We will be mainly interested in the cases $\Gamma=\R$ or $\Gamma=\R_+$, but it will also be convenient to let $\Gamma$ be the space of 1-homogeneous convex functions or positive currents occasionally. Let $\mathcal{C}$ be the space of convex bodies in $\R^n$.  A {\it valuation} on $\mathcal{C}$ is map, $\theta$,  from $\mathcal{C}$ to $\Gamma$ such that  
$$
\theta(K\cup L)+\theta(K\cap L)= \theta(K)+\theta(L),
$$
{\it provided that $K\cup L$ is convex}. In case $\Gamma$ has a topology, we say that $\theta$ is continuous if it is continuous for the Hausdorff metric on $\mathcal{C}$. We will focus on valuations that are translation invariant, i.e. satisfy $\theta(K+x)=\theta(K)$ for any $x$ in $\R^n$. The following theorem of Salee, \cite{Salee} is simple but fundamental for the rest of this section.
\begin{thm}
Assume that $K$ and $L$ are convex bodies such that $K\cup L$ is convex. Then (+ denotes Minkowski addition)
$$
K\cup L+ K\cap L=K+L.
$$
\end{thm}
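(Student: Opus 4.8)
The plan is to prove the two inclusions separately, with only the second one requiring the convexity hypothesis. For $K\cup L + K\cap L\subseteq K+L$ I would argue directly: given $a\in K\cup L$ and $b\in K\cap L$, in the case $a\in K$ we have $b\in L$, so $a+b\in K+L$; in the case $a\in L$ we have $b\in K$, so $a+b\in L+K=K+L$. This step uses nothing about convexity.

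For the reverse inclusion $K+L\subseteq K\cup L + K\cap L$ I would fix $a\in K$, $b\in L$ and analyze the segment $p(t)=(1-t)a+tb$, $t\in[0,1]$. Since $a,b\in K\cup L$ and $K\cup L$ is convex, the whole segment lies in $K\cup L$. The sets $A=\{t\in[0,1]:p(t)\in K\}$ and $B=\{t\in[0,1]:p(t)\in L\}$ are closed, being preimages of closed sets under the continuous map $p$, and are subintervals of $[0,1]$, being preimages of convex sets under the affine map $p$. Moreover $0\in A$, $1\in B$, and $A\cup B=[0,1]$; hence $A=[0,s]$ and $B=[r,1]$ with necessarily $r\le s$, so $A\cap B=[r,s]\neq\emptyset$. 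The crucial elementary identity is that $p(t_0)+p(1-t_0)=a+b$ for every $t_0$ (a one-line check: $p$ is affine and the coefficients of $a$ and of $b$ on the left add up to $1$). Choosing any $t_0\in A\cap B$ makes $p(t_0)\in K\cap L$, while $p(1-t_0)\in K\cup L$ automatically; therefore $a+b=p(1-t_0)+p(t_0)\in K\cup L + K\cap L$, which finishes the proof.

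The only point requiring a little care — and really the heart of the matter — is the covering argument: that $A$ and $B$ are genuine closed subintervals and that the fact they cover $[0,1]$ forces them to overlap (equivalently, that the segment $[a,b]$ meets $K\cap L$). This rests on the connectedness of $[0,1]$: two disjoint closed subsets cannot cover it. Everything else is bookkeeping, so I expect no serious obstacle beyond getting this observation stated cleanly.
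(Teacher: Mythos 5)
Your proof is correct and is essentially the same argument as the paper's (Sallee's): parametrize the segment from a point of $K$ to a point of $L$, observe that the two closed preimages under the affine parametrization cover $[0,1]$ and therefore intersect, and use the reflection identity $p(t_0)+p(1-t_0)=a+b$ to land in $K\cup L+K\cap L$. The only cosmetic difference is that you note explicitly that the preimages are closed intervals, whereas the paper invokes connectedness of $[0,1]$ for two closed covering sets; both are valid, and the trivial reverse inclusion is handled the same way.
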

\begin{proof} Assume $K\cup L$ is convex and take $k\in K$, $l\in L$. Then 
$$
x_t:=tk+(1-t)l
$$
lies in $K\cup L$ for any $t\in [0,1]$. Let
$$
I_0=\{ t\in[0,1]; x_t\in K\}, \quad I_1=\{ t\in[0,1]; x_t\in L\}.
$$
$I_0$ and $I_1$ are closed and non empty with union equal to the whole interval $[0,1]$. Therefore they have at least one point $t_0$ in common. Then
$$
k+l=x_{1-t_0} +x_{t_0}\in K\cup L+K\cap L.
$$
Since the opposite inclusion is trivial, this proves the theorem.
\end{proof}
\begin{cor} Assume $K\cup L$ is convex. Then 
$$
h_{K\cup L}+h_{K\cap L}= h_K+h_L.
$$
Thus the map $K\to h_K$ defines a valuation with values in the space of convex functions.

As a consequence, if we set $\omega_K=\ddhash h_K$, then
\be\label{currentvaluation}
\omega_{K\cup L}+\omega_{K\cap L}=\omega_K +\omega_L,
\ee
so the  map $K\to \omega_K$ is a translation invariant  valuation, taking values in the space of $(1,1)$-currents.
\end{cor}
\begin{proof}
The first part of the corollary is an immediate consequence of Salee's theorem, and  (\ref{currentvaluation}) follows by applying the $\ddhash$-operator. Since 
$$
h_{K+a}(x)=h_K(x) + x\cdot a,
$$
$\omega_{K+a}=\omega_K$, which gives  the last statement.
\end{proof}
By the comments immediately after Proposition \ref{modtranslation} any translation invariant valuation must be representable as 
$$
\theta(K)=F_\theta(\omega_K)
$$
for some function $F_\theta$. 
Our main concern in the rest of this section is to write down this function $F_\theta$  explicitly, at least for sufficiently nice valuations. By the corollary, any linear function will give rise to a valuation. We shall now see that linear functions of $\omega_K^p$ also work, and that will turn out later to give almost all valuations.

The next theorem, which is a consequence of a more general result of Alesker (\cite{3Alesker}), generalizes (\ref{currentvaluation}).
\begin{thm}\label{powers}
Assume $K\cup L$ is convex. Then
$$
\omega_{K\cup L}^p+\omega_{K\cap L}^p=\omega_K^p +\omega_L^p
$$
for $p=1, 2 ...n$. Hence all powers
$$
K\to \omega_K^p
$$
define translation invariant current-valued valuations.
\end{thm}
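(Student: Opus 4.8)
The plan is to reduce the statement to a single algebraic identity among closed positive $(1,1)$-currents together with the vanishing of one auxiliary wedge product. First I would identify the currents. Since $h_{K\cup L}=\max(h_K,h_L)$ (support functions see only suprema over the sets) and, by the Corollary above (Salee's theorem), $h_{K\cap L}=h_K+h_L-h_{K\cup L}=\min(h_K,h_L)$, one has $\omega_{K\cup L}=\ddhash\max(h_K,h_L)$ and $\omega_{K\cap L}=\ddhash\min(h_K,h_L)$. All four currents $\omega_K,\omega_L,\omega_{K\cup L},\omega_{K\cap L}$ are closed positive $(1,1)$-currents, so by Theorem~\ref{crucial} every iterated wedge product of them, and of their differences, is a well-defined current with measure coefficients; moreover such products are bilinear, commutative and associative and depend only on the currents involved (since $\ddhash\psi=\ddhash\psi'$ forces $\psi-\psi'$ affine), so the formal manipulations below are legitimate.

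Next I would introduce $R:=\omega_{K\cup L}-\omega_K=\omega_L-\omega_{K\cap L}=\ddhash\max(0,h_L-h_K)$ and $R':=\omega_{K\cup L}-\omega_L=\omega_K-\omega_{K\cap L}=\ddhash\max(0,h_K-h_L)$; the alternative expressions agree because $\max(h_K,h_L)-h_K=\max(0,h_L-h_K)=h_L-\min(h_K,h_L)$, and similarly for $R'$, and it is exactly here that the hypothesis that $K\cup L$ be convex enters, through $h_{K\cap L}=\min(h_K,h_L)$. Both $R$ and $R'$ are $d$- and $\ddhash$-closed and are differences of closed positive currents. The key claim is that $R\w R'=0$. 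Since $\max(0,h_L-h_K)\equiv0$ on the open set $\{h_K>h_L\}$, we have $R=0$ there, so $\supp R\subseteq\{h_K\le h_L\}$, a closed set on which the continuous function $\max(0,h_K-h_L)$ vanishes identically. Hence for a smooth compactly supported test form $\chi$ of bidegree $(n-2,n-2)$, integrating by parts (valid by the Bedford--Taylor-type results of Section~\ref{Bedford-Taylor}, as $R$ is closed),
$$\int\chi\w R\w R'=\int\chi\w(\omega_{K\cup L}-\omega_L)\w R=\int(h_{K\cup L}-h_L)\,\ddhash\chi\w R=\int\max(0,h_K-h_L)\,\ddhash\chi\w R=0,$$
because $\ddhash\chi\w R$ is a measure-coefficient current supported in $\supp R$. (In particular this already gives the product identity $\omega_{K\cup L}\w\omega_{K\cap L}=\omega_K\w\omega_L$ upon expanding $(\omega_{K\cup L}-\omega_K)\w(\omega_{K\cup L}-\omega_L)$ and using $\omega_{K\cup L}+\omega_{K\cap L}=\omega_K+\omega_L$, which is the case $p=2$.)

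Finally I would pass to general $p$ directly from $R\w R'=0$. From $\omega_{K\cup L}=\omega_L+R'$ and $R\w R'=0$ one gets $\omega_{K\cup L}^{\,j}\w R=\omega_L^{\,j}\w R$ for every $j$ by binomial expansion, and similarly $\omega_{K\cap L}^{\,j}\w R=\omega_K^{\,j}\w R$ from $\omega_{K\cap L}=\omega_K-R'$. Applying the telescoping identity $A^p-B^p=\sum_{j=0}^{p-1}A^j\w B^{p-1-j}\w(A-B)$ with $(A,B)=(\omega_{K\cup L},\omega_K)$ and with $(A,B)=(\omega_L,\omega_{K\cap L})$ — both having $A-B=R$ — and then replacing $\omega_{K\cup L}^{\,j}\w R$ by $\omega_L^{\,j}\w R$ and $\omega_K^{\,p-1-j}\w R$ by $\omega_{K\cap L}^{\,p-1-j}\w R$ term by term, one obtains
$$\omega_{K\cup L}^{\,p}-\omega_K^{\,p}=\sum_{j=0}^{p-1}\omega_{K\cup L}^{\,j}\w\omega_K^{\,p-1-j}\w R=\sum_{j=0}^{p-1}\omega_L^{\,j}\w\omega_{K\cap L}^{\,p-1-j}\w R=\omega_L^{\,p}-\omega_{K\cap L}^{\,p},$$
which is the asserted identity for $p=1,\dots,n$. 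The last assertion of the theorem follows since translating $K$ changes $h_K$ only by an affine function, so $\omega_K$ and all $\omega_K^{\,p}$ are translation invariant.

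I expect the only real difficulty to be the vanishing $R\w R'=0$: this is the place where the convexity of $K\cup L$ is genuinely exploited, and where one must check that the integration by parts and the support argument are valid for the non-smooth and not-a-priori-positive current $R$ — which they are, because the relevant products have measure coefficients and depend only on the currents themselves. Everything after that step is bookkeeping with well-defined wedge products.
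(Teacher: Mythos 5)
Your proof is correct but takes a genuinely different route from the paper. The paper works top-down from the fact that Lebesgue measure is a valuation: since $K\mapsto\omega_K^n/n!=|K|\delta_0$ is a current-valued valuation, so is each translate $K\mapsto(\omega_K+\omega_A)^n$; polarizing in $A$ (first replacing $A$ by $tA$, then by $tB+C$) gives that $K\mapsto\omega_K^p\w\omega_B\w\omega_C^{n-p-1}$ is a valuation for all $p,B,C$, and Proposition~\ref{mainpoint} applied to $\Omega=\omega_{K\cup L}^p+\omega_{K\cap L}^p-\omega_K^p-\omega_L^p$ then forces $\Omega=0$. Your argument is bottom-up: you isolate the single algebraic obstruction $R\w R'$ with $R=\omega_{K\cup L}-\omega_K$ and $R'=\omega_{K\cup L}-\omega_L$, show it vanishes by integration by parts together with the support observation that $\supp R\subseteq\{h_K\le h_L\}$ while $\max(0,h_K-h_L)$ vanishes on that set, and deduce the general-$p$ identity by binomial expansion and telescoping. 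Both proofs rest on Salee's formula $h_{K\cap L}=\min(h_K,h_L)$ for $K\cup L$ convex. Your version makes the mechanism explicit and needs no input from the valuation machinery developed later; the paper's version is shorter once that machinery is available, and it is this route the paper takes precisely because it already has Proposition~\ref{mainpoint} in hand.

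One point worth tightening in a polished write-up: the telescoping step actually uses not just $R\w R'=0$ but $\Omega\w R\w R'=0$ for $\Omega$ any monomial in $\omega_K,\omega_L,\omega_{K\cup L},\omega_{K\cap L}$, since the binomial expansion of $\omega_{K\cup L}^j\w R=(\omega_L+R')^j\w R$ produces terms $\omega_L^{j-k}\w (R')^{k-1}\w(R'\w R)$ with $k\ge1$. This does follow from $R\w R'=0$ using the multilinearity and commutativity of the Bedford--Taylor product in its current argument (wedging a difference of positives with the zero current gives zero), but it is equally direct, and arguably cleaner, to rerun your support argument with the extra factor already in place: $\Omega\w R$ is still a $d$- and $\dhash$-closed difference of strong weakly positive currents supported in $\{h_K\le h_L\}$, so the same integration by parts yields $\int\max(0,h_K-h_L)\,\ddhash\chi\w\Omega\w R=0$ directly.
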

This is a bit more subtle than the case $p=1$; it is certainly not a direct consequence of the corollary. When all the currents involved are smooth (outside the origin) it is not too hard to verify using that $h_{K\cup L}=\max(h_K, h_L)$ and, as it turns out, that $h_{K\cap L}=\min(h_K, h_L)$, but it  is not immediate   to reduce the general case to this.  Alesker proves a more general result on mixed Monge-Amp\`ere measures of plurisubharmonic functions, based on earlier work of Blocki, \cite{Blocki}. Here we will give a proof in our case which uses the results in the previous sections.

Partly as a preparation for this we will discuss briefly the translate of a valuation by a convex body, $A$, defined as
$$
\theta_A(K)=\theta(K+A).
$$
We first verify that the translate is a valuation. The main step is the following lemma.
\begin{lma} Let $K$ and $L$ be convex bodies such that $K\cup L$ is convex, and let $A$ be an arbitrary convex body. Then
$$
(K+A)\cup (L+A)=K\cup L+ A,
$$
and
$$
(K+A)\cap (L+A)=K\cap L+ A.
$$
\end{lma}
\begin{proof} The first claim is just an exercise in logic. It implies that $(K+A)\cup (L+A)$ is convex and so
$$
h_{(K+A)\cup (L+A)}+ h_{(K+A)\cap (L+A)}= h_{(K+A)}+h_{(L+A)}.
$$
Hence, invoking the first claim, 
$$
h_{K\cup L}+h_A+ h_{(K+A)\cap (L+A)}=h_K+h_L+2h_A.
$$
Thus
$$
h_{(K+A)\cap (L+A)}= h_K+h_L -h_{K\cup L}+h_A=h_{K\cap L}+h_A,
$$
which proves the second claim, since the right hand side is the support function of $K\cap L +A$ and the support function determines the convex body.
\end{proof}
It follows from the lemma that
$$
\theta_A(K\cup L)+\theta_A(K\cap L)=\theta(K\cup L+A)+\theta(K\cap L+A)=
$$
$$
=\theta((K+A)\cup(L+A)) +\theta((K+A)\cap(L+A)) =
$$
$$
\theta(K+A)+\theta(L+A)=\theta_A(K) +\theta_A(L).
$$
Hence $\theta_A$ is indeed a valuation. Notice that $\theta_A$ is continuous if $\theta$ is continuous and translation invariant if $\theta$ is translation invariant.

\bigskip

With this, we are ready to prove Theorem \ref{powers}. The starting point is that we know the theorem holds when $p=n$, simply because 
$$
\int \omega_K^n/n!=|K|,
$$
and Lebesgue measure is a valuation.  Since the integrand is a Dirac mass at the origin, 
$$
K\to \omega_K^n/n!
$$
is a current- (or measure-) valued valuations. This implies that its translate by any convex body $A$,
$$
K\to (\omega_K+\omega_A)^n/n!
$$
is a current valued valuation. Expanding and identifying terms we conclude that
$$
K\to \omega_K^p\w\omega_A^{n-p}
$$ 
is a current valued valuation for any $p$ and any convex body $A$.   This means that, with
$$
\Omega =\omega_{K\cup L}^p+\omega_{K\cap L}^p-\omega_K^p -\omega_L^p,
$$
$$
\Omega\w\omega_A^{n-p}=0
$$
for any convex  body $A$. 
Therefore, by Stokes' theorem and Proposition \ref{altdef}
$$
 \int_{|x|=1}h_A \Omega\w(\ddhash  h_A)^{n-p-1}\w\dhash |x|=0,
$$
which implies that $\Omega=0$ by Proposition \ref{mainpoint}. 
\bigskip

A translation invariant valuation is said to be (homogeneous ) of degree $p$ if it scales like
$$
\theta(tK)=t^p\theta(K)
$$
for $t>0$. A fundamental theorem of McMullen (for which we refer to \cite{1McMullen}) says that any translation invariant continuous valuation can be decomposed
$$
\theta=\sum_0^n \theta_p
$$
where $\theta_p$ is of degree $p$. Intuitively, this follows from Taylor expansion of $\theta(tK)$, together with the fact that there are no valuations of homogeneity larger than $n$. 

\begin{prop} Any continuous and translation invariant valuation that is homogeneous of order 1 is Minkowski additive, i.e., if $L+K$ denotes the Minkowski sum of two convex bodies, 
\be\label{orderonevaluation}
\theta(L+K) =\theta(L)+\theta(K).
\ee
Conversely, if $\theta$ is defined on the space of convex bodies and satisfies (\ref{orderonevaluation}), then $\theta$ is a valuation. If $\theta$ is continuous it is homogeneous of order 1.
\end{prop}
\begin{proof} By what we have seen, the translate of $\theta$ by $L$,
$\theta_L(K)=\theta(L+K)$, 
is a valuation. McMullen's theorem implies that
$$
g(t):=\theta_L(tK)=\theta(L+tK)
$$
is a polynomial of degree at most $n$. If $\theta$ is homogeneous of order 1 and continuous, the limit
$$
\lim_{t\to\infty} g(t)/t=\lim_{t\to\infty}\theta(L/t+K)=\theta(K),
$$
so $g$ is of degree 1, and since $g(0)=\theta(L)$, 
$$
\theta(L+tK)=\theta(L)+t\theta(K).
$$
Hence $\theta$ is Minkowski additive. 

Conversely,  Salee's theorem implies that any $\theta$ that satisfies (\ref{orderonevaluation}) is a valuation. Moreover,  (\ref{orderonevaluation}) implies that 
$\theta((p/q)K)=(p/q)\theta(K)$ for integers $p$ and $q$. Hence $\theta $ is of order 1 if it is continuous.
\end{proof}

 Any continuous and translation invariant valuation of order zero, is a multiple of  the {\it trivial valuation}
$$
\theta_0(K)=1,
$$
since, by homogeneity, $\theta(K)=\theta(\{0\})$.
On the other side, all  translation invariant continuous valuations of degree $n$ are multiples of  Lebesgue measure. This result is due to Hadwiger, see \cite{1McMullen}. As for valuations of intermediate order, any mixed volume
$$
V(K[p], A_1, ...A_{n-p}):=V(K,K...K,A_1, ...A_{n-p})=\int \omega_K^p\w\omega_{A_1}\w...\omega_{A_{n-p-1}}/n!
$$
defines a translation invariant continuous valuation \cite{1Schneider}. This  also follows immediately from Theorem \ref{powers}, which moreover implies the following generalization of this fact:

\begin{thm}\label{somevaluations}
Let $\Omega$ be a strongly homogeneous current of bidegree $(n-p,n-p)$ which is weakly positive. Then
$$
\F(\Omega)(K):=\int \omega_K^p\w \Omega/p!,
$$
defines a continuous and translation invariant valuation of degree $p$ with values in $\R_+$. If $\Omega$ is not weakly positive but can be written as a difference of weakly positive currents, we get a $\R$-valued valuation.
\end{thm}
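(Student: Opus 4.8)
The plan is to check all five requirements — well-definedness with values in $\R_+$, translation invariance, $p$-homogeneity, continuity, and the valuation identity — directly from the formula $\F(\Omega)(K)=\frac1{p!}\int\omega_K^p\w\Omega$ with $\omega_K=\ddhash h_K$, using Theorem \ref{crucial} for the analytic input and Theorem \ref{powers} for the valuation identity. First I would settle well-definedness and finiteness. Since $h_K$ is convex of linear growth and $\Omega$ — extended trivially across the origin, which is legitimate as its bidegree $(n-p,n-p)$ satisfies $n-p<n$ — is closed, weakly positive, and strong (strong because strongly homogeneous, by Theorem \ref{strongforms1}), Theorem \ref{crucial} shows $\omega_K^p\w\Omega=M(h_K,\dots,h_K,\Omega)$ is a well-defined current, closed, weakly positive and strong; moreover, since $\delta$ and $\delta^\#$ are antiderivations vanishing on $\omega_K$ and on $\Omega$, it is strongly homogeneous. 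A weakly positive $(n,n)$-current is a positive measure; being strongly homogeneous of full bidegree it vanishes away from the origin, and its mass near the origin is finite by iterating the estimate of Lemma \ref{basic} (after regularizing $h_K$). Hence $\int\omega_K^p\w\Omega\in[0,\infty)$ and $\F(\Omega)(K)\in\R_+$. (As an alternative one may observe, via Lemma \ref{fundamentallemma} and Stokes exactly as in Proposition \ref{altdef}, that $\int\omega_K^p\w\Omega=\langle h_K,\omega_K^{p-1}\w\Omega\rangle$, which is visibly finite.)

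Translation invariance and $p$-homogeneity are immediate, since $\omega_{K+v}=\ddhash(h_K+v\cdot x)=\omega_K$ and $\omega_{tK}^p=t^p\omega_K^p$. For continuity I would use that $K_i\to K$ in the Hausdorff metric is equivalent to $h_{K_i}\to h_K$ locally uniformly, so by Theorem \ref{crucial} (with the current slot held fixed equal to $\Omega$) $\omega_{K_i}^p\w\Omega\to\omega_K^p\w\Omega$ weak*; as these are the measures $p!\,\F(\Omega)(K_i)\,\delta_0\,dV$ and $p!\,\F(\Omega)(K)\,\delta_0\,dV$, pairing with a cutoff function equal to $1$ near the origin gives $\F(\Omega)(K_i)\to\F(\Omega)(K)$.

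The valuation identity is the one substantial step, and here Theorem \ref{powers} does the work. Suppose $K\cup L$ is convex; Theorem \ref{powers} gives the identity of $(p,p)$-currents $\omega_{K\cup L}^p+\omega_{K\cap L}^p=\omega_K^p+\omega_L^p$. I would then regularize $\Omega$ by convolution with a smooth positive approximate identity, obtaining smooth, closed, weakly positive, strong forms $\Omega^i\to\Omega$ weak*. For a smooth form $\Omega^i$ the map $\sigma\mapsto\sigma\w\Omega^i$ on currents is linear, so wedging the previous identity with $\Omega^i$ gives $\omega_{K\cup L}^p\w\Omega^i+\omega_{K\cap L}^p\w\Omega^i=\omega_K^p\w\Omega^i+\omega_L^p\w\Omega^i$; letting $i\to\infty$, each term $\omega_\bullet^p\w\Omega^i=M(h_\bullet,\dots,h_\bullet,\Omega^i)$ converges weak* to $\omega_\bullet^p\w\Omega$ by Theorem \ref{crucial}, whence $\omega_{K\cup L}^p\w\Omega+\omega_{K\cap L}^p\w\Omega=\omega_K^p\w\Omega+\omega_L^p\w\Omega$ as currents. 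By the first paragraph each of these four is a finite positive measure supported at the origin, so the identity may be integrated to give $\F(\Omega)(K\cup L)+\F(\Omega)(K\cap L)=\F(\Omega)(K)+\F(\Omega)(L)$.

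Finally, $M(h_K,\dots,h_K,\cdot)$ is linear in its last argument (clear from the construction in Theorem \ref{crucial}), so $\Omega\mapsto\F(\Omega)$ is linear, and for $\Omega=\Omega_1-\Omega_2$ with $\Omega_j$ weakly positive strongly homogeneous we may set $\F(\Omega):=\F(\Omega_1)-\F(\Omega_2)$; this is independent of the decomposition and, being a difference of two continuous translation-invariant $\R_+$-valued valuations of degree $p$, is a continuous translation-invariant $\R$-valued valuation of degree $p$. The main obstacle is really just the bookkeeping in the third paragraph: the entire content of the valuation identity is Theorem \ref{powers}, and the only thing to be careful about is the passage from the current identity to the numerical one, which goes through because $\omega_K^p\w\Omega$ is a finite measure concentrated at the origin — and regularizing $\Omega$ is what lets us move the wedge product past the weak* limit, since for a smooth form the wedge is plainly linear.
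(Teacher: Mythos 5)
Your proof is correct and is essentially the argument the paper intends: the paper states Theorem \ref{somevaluations} as an immediate consequence of Theorem \ref{powers} without spelling out the details, and you supply the routine verifications (homogeneity, translation invariance, continuity via Theorem \ref{crucial}) together with the one nontrivial step, namely that the current identity from Theorem \ref{powers} can be wedged with $\Omega$ and integrated — which you handle correctly by first regularizing $\Omega$, so that the wedge is plainly linear, and then passing to the weak* limit. One point to tighten in your opening paragraph: for $p=1$, i.e.\ $\Omega$ of bidegree $(n-1,n-1)$, the trivial extension $\tilde\Omega$ across the origin need not be $d$-closed — by Theorem \ref{secondpart} it is closed exactly when the trace measure of $\Omega$ on the sphere has barycenter zero — so the direct application of Theorem \ref{crucial} to $\tilde\Omega$ is not quite justified in that case. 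Your parenthetical alternative, $\int\omega_K^p\w\Omega=\langle h_K,\omega_K^{p-1}\w\Omega\rangle$ (via Lemma \ref{fundamentallemma} and Stokes as in Proposition \ref{altdef}), is in fact how the paper itself reads the symbol $\int\omega_K^p\w\Omega$ (cf.\ the displayed identity used to prove injectivity of $\F$ just after the theorem), and it handles all $p\geq 1$ uniformly; it would be cleaner to take that as the primary definition.
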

\noindent Note that mixed volumes corresponds to $\Omega=\omega_{A_1}\w...\omega_{A_{n-p}}$; the wedge product of $(n-p)$ currents of bidegree $(1,1)$.

By Proposition \ref{mainpoint}, the map $\F$ is injective, since
$$
\int \omega_K^p\w \Omega=\int_{|x|=1} h_K (\omega_K)^{p-1}\w\Omega\w \dhash |x|.
$$

\medskip

One should note that for the integrand in Theorem \ref{somevaluations} to be well defined for general convex bodies $K$ we need, in order to apply Theorem \ref{crucial},  that $\Omega$ be strong and weakly positive, or can be written as a difference  of such currents. This is the reason why we have assumed to $\Omega$ is (the difference of) strongly homogeneous currents. We shall see shortly that this corresponds to valuations that can be written as  (differences of) {\it monotone} valuations.

\bigskip

The definition of $\F$ can be rewritten in a more suggestive way. 
For convex bodies $K$ let
\be\label{eupphojdtillomega}
e^{\omega_K}=\sum_0^n \omega_K^k/k!
\ee
(higher power than $n$ vanish for degree reasons). With this notation we get
\be\label{valuationasfouriertransform}
 \F(\Omega)(K)=\int e^{\omega_K}\w\Omega,
\ee
since, if $\Omega$ is of bidegree $(n-p,n-p)$, only the term  with $k=p$ in (\ref{eupphojdtillomega}) gives a non-zero contribution to the integral. One advantage of this formulation is that we can treat valuations of different homogeneity simultaneously and allow $\Omega$ to be a sum of terms of bidegree $(p,p)$ for different $p$. 
Formally,  $\F$ behaves like a Fourier transform, taking currents to (translation invariant) valuations. As a first example,
$$
\F(1)(K)=\int \omega_K^n/n!=|K|,
$$
so $\F(1)$ is Lebesgue measure. Furthermore, the trivial valuation is obtained as $\F(\delta_0)$, the `Fourier transform' of the Dirac delta function, viewed as having full bidegree (so that it acts only on the constant term in the series expansion of $e^{\omega_K}$). Note also that
\be\label{character}
e^{\omega_{K+L}}=e^{\omega_K}\w e^{\omega_L},
\ee
so we can think of the exponential of $\omega_K$ as a sort of character on the space of convex bodies with Minkowski addition, taking values in a space of currents. This implies immediately that
$$
\F(e^{\omega_A}\w\Omega)=\F(\Omega)_A,
$$
the translate of $\F(\Omega)$ by $A$. Thus (wedge-)multiplication with a character corresponds to translates on the transform side. In particular, the translate of Lebesgue measure
\be\label{mu_A}
K\to |K+A| =:\mu_A(K)
\ee
by a convex body, $A$, corresponds to the current $\Omega=e^{\omega_A}$, i.e. $\F(e^{\omega_A})=\mu_A$. By a theorem of Alesker, \cite{1Alesker}, the linear span of such valuations is dense in the space of continuous translation invariant valuations.
Viewing $A\to e^{\omega_A}$ as a current valued character on the space of convex bodies with Minkowski addition, this can be viewed as an analogue of the fact that trigonometric polynomials are dense in the space of continuous functions.

We next discuss which valuations that lie in the image of $\F$, i. e. are defined by strongly homogeneous currents. We will use  a fundamental theorem by Alesker, see \cite{2Alesker}, and start with the following definition..

\begin{df}({\it Alesker}) A translation invariant continuous valuation, $\theta$ is {\it smooth} if the map
$$
g\to \theta(gK),
$$
defined for $g\in GL(n,\R)$, is smooth for each fixed convex body $K$. 
\end{df}
Regularizing by taking convolution (with respect to Haar measure) on $GL(n,\R)$ with an approximate identity one sees that smooth translation invariant valuations are dense among continuous translation invariant valuations. Alesker proved a conjecture of McMullen, that linear combinations of mixed volumes are dense in the translation invariant continuous valuations,  by showing that any smooth translation invariant valuation can be represented as a convergent superposition of mixed volumes. This result was later sharpened by Knoerr (\cite{Knoerr}) and van Handel (unpublished) who showed that such a  valuation is even a finite linear combination of mixed volumes. Moreover, these mixed volumes can be taken of the form
$$
V(K[p], A_1, ...A_{n-p})
$$
where $A_i$ are smoothly bounded and strictly convex (they can actually be specified even further). This, together with Theorem \ref{mainpoint},  gives directly a characterization of smooth valuations in terms of currents.
\begin{thm}\label{smoothvaluations} Let $\theta$ be a smooth and translation invariant valuation of order $p$. Then there is a unique smooth and  strongly homogeneous form $\Omega$ of bidegree $(p,p)$ such that
$$
\theta(K)=\F(\Omega)=\int e^{\omega_K}\w\Omega.
$$
Conversely, any such $\Omega$ defines a smooth translation invariant valuation of order $p$.
\end{thm}
The uniqueness of $\Omega$ follows again by Proposition \ref{mainpoint}. Notice also that $\Omega$ is automatically strongly homogeneous, since all forms $\omega_{A_1}\w...\omega_{A_{n-p}}$ defining mixed volumes are strongly homogeneous.
The main point of the theorem is that the class of strongly homogeneous forms is precisely wide enough to furnish a unique representation of any smooth valuation.

We shall now extend the correspondence in theorem \ref{smoothvaluations} to one class of non-smooth valuations. Recall the following definition:
\begin{df} A continuous valuation $\theta$ is monotone  if $K\subseteq L$ implies that $\theta(K)\leq \theta(L)$.
\end{df}
By a theorem of McMullen, a monotone translation invariant valuation is automatically continuous. 
We shall show that there is a one-to-one correspondence between monotone translation invariant  valuations and weakly positive, strongly homogeneous currents:
\begin{thm}\label{monchar}
Let $\theta$ be a monotone  translation invariant valuation of order $p$. Then there is a unique weakly positive, strongly homogeneous current $\Omega$ of bidegree $(n-p,n-p)$ such that
$$
\theta(K)=\F(\Omega)=\int e^{\omega_K}\w\Omega.
$$
Conversely, if $\Omega$ is weakly positive and strongly homogeneous, then $\F(\Omega)$ is a monotone  and translation invariant valuation.
\end{thm}
\begin{proof}
Let the valuation  $\theta$ be monotone and translation invariant. Regularizing by convolution with an approximate identity on $GL(n,\R)$ we get a sequence of smooth monotone valuations $\theta_j$ converging to $\theta$. By the previous theorem we get smooth strongly homogeneous $(n-p,n-p)$-forms $\Omega_j$ such that
$$
\theta_j(K)=\int \omega_K^p\w\Omega_j.
$$
Take $f\geq 0$ smooth and 1-homogeneous. Let  $K$ be smoothly bounded and strictly convex so that $h_K$ is smooth and strictly convex.  Then, for $t>0$ small enough $h_K+tf$ is still convex so there is a convex body, $K_t$ such that 
$$
h_{K_t}=h_K+tf.
$$
Since $h_K+tf\geq h_K$, $K\subseteq K_t$ for $t>0$. It follows that
$$
0\leq (d/dt)|_{t=0} \theta_j(K_t)=\int \ddhash f\w\omega_K^{p-1}\w\Omega_j.
$$
By Stokes' theorem and Lemma \ref{fundamentallemma} this equals
$$
\int_{|x|=1} f\omega_K^{p-1}\w\Omega_j\w\dhash\mu/(p-1)!.
$$
By Proposition \ref{weaklypositive} this implies that $\Omega_j$ is weakly positive. Taking $K$ equal to the unit ball one gets a bound on the trace measure of $\Omega_j$ independent of $j$. 
By Lemma \ref{traceestimate} this gives a bound for all the coefficients of $\Omega_j$. Hence we can take a weak* limit of a subsequence of $\Omega_j$  as $j$ tends to infinity. The limit of any subsequence gives the same valuation so, by uniqueness,  there is actually one limit of the whole sequence, $\Omega$. It is clear that $\Omega$ is weakly positive and strongly homogeneous, since both these properties are preserved under weak limits. 

For the converse direction we take $K\subseteq L$ and define
$$
K_t=tL +(1-t)K.
$$
As in the previous argument we see that $(d/dt)\theta(K_t)\geq 0$ if $\theta=\F(\Omega)$ where $\Omega$ is strictly positive (it is enough to consider $t=0$, since $K_{t+\Delta t}$ is a convex combination of $K_t$ and $L$). Hence $\theta$ is monotone.

\end{proof}

The smooth case of this theorem is closely related to Theorem 2.7 in \cite{Bernig-Fu}, where Bernig and Fu give a characterization of smooth monotone translation invariant valuations in terms of positivity of certain `curvature measures'. 
When $p=n-1$ in the  theorem we get a strongly homogeneous current $\Omega$ of bidegree $(1,1)$. By Proposition \ref{deltaomega},  $\Omega=\ddhash\psi$ for some 1-homogeneous and convex function $\psi$, which is necessarily the support function of some convex body, $L$ (see the beginning of section \ref{volumes}). Then Theorem \ref{monchar} says that
$$
\theta(K)/n=\int \omega_K^{n-1}\w\omega_L/n!= V( K[n-1], L),
$$
the mixed volume of $(n-1)$ copies of $K$ with $L$. This is a theorem of McMullen, \cite{2McMullen}.

At the other extreme, when $p=1$, $\Omega$ is of bidegree $(n-1,n-1)$ and 
$$
\theta(K)=\int \omega_K\w\Omega=\int_{|x|=1} h_K\Omega\w\dhash|x|,
$$
by Lemma \ref{fundamentallemma} and Stokes' theorem. The (super)integral in the right hand side is well defined by Proposition \ref{sumup}, which also shows that 
$$
\int_{|x|=1} h_K\Omega\w\dhash|x|=\int_{|x|=1} h_K d\nu,
$$
for a certain positive measure $d\nu$ on the sphere. If $K$ is a point, then $\theta(K)=0$ by homogeneity, so $d\nu$ has barycenter zero. It then follows from the solution to Minkowski's surface area problem (see Theorem \ref{MAonsphere}) that there is a homogeneous convex function $\phi$,  with 
$$
d\nu =(\ddhash \phi)^{n-1}\w \dhash|x|.
$$
The function $\phi$ is the support function of some convex body, say $L$. 
Applying Lemma \ref{fundamentallemma} and Stokes' theorem again we find that
$$
\theta(K)=\int \ddhash h_K\w (\ddhash h_L)^{n-1}/(n-1)!= n V(K, L[n-1])
$$
is again a mixed volume. This leads to a result of Firey, \cite{Firey}, that any monotone  $\theta$  that is homogeneous of order 1 can be represented as a mixed volume. In the survey \cite{1McMullen}, McMullen states the conjecture that a monotone translation invariant valuation, which is homogeneous of any order $p$, is given by a mixed volume (but adds that evidence in this direction is `meager'). This does indeed seem highly unlikely, since it would mean that the current $\Omega$ given by Theorem \ref{monchar} could be written
$$
\Omega= \ddhash h_{L_1}\w ...\ddhash h_{L_{n-p}},
$$
for some convex bodies $L_i$, but we will not pursue this question now. 

\subsection{An open problem}
Theorems \ref{somevaluations} and \ref{monchar} give representation formulas for valuations in terms of strongly homogeneous currents. But, it is clear from Theorem \ref{powers} that 
$$
K\to\int \omega_K^p\w \Omega,
$$
defines a valuation for other choices of $\Omega$ as well, provided the integrals are well defined. This is the case e.g. if $\Omega$ is strong, and regular, i.e.  its strongly homogeneous extension, provided by Theorem \ref{extending}, is smooth on$\{x_0=0\}$ outside the origin. Indeed, it follows from Proposition \ref{bdry} that
$$
\int_{\R^n} \omega_K^p\w\Omega =\int_{H_0} \omega_K^p\w\Omega\w\dhash x_0=
\int_{S^n_+}  \omega_K^p\w\Omega\w\dhash x_0,
$$
which is finite. (We are using the same letter to denote $\Omega$ and its strongly homogeneous extension. $\omega_K$ is already strongly homogeneous on $\R^n$, and therefore also on $\R^{n+1}$.) Therefore there may be other natural classes of currents -- apart from the strongly homogeneous ones --  that also give a unique representative of, say, smooth valuations. This leads to the following definition.

\begin{df} Let $\theta$ be a smooth, translation invariant, valuation of order $p$. Then
$$
[\theta] =\{\Omega; \text{current of bidegree}\, (n-p,n-p); d\Omega=0, T(\Omega)=0,  \, \text{and}\, \forall K, 
\theta(K)=\int\omega_K^p\w\Omega\}.
$$
\end{df}

By Theorem \ref{somevaluations} the class $[\theta]$ is not empty, since it contains a (unique) strongly homogeneous smooth form, $\Omega_0$. One may verify that the condition that $\Omega\in[\theta]$ means that 
$$
\lim_{t\to\infty} F_t^*(\Omega)=\Omega_0, 
$$
so we may write $[\theta]=[\Omega_0]$ instead, and think of the class as consisting of all $\Omega$ with a given boundary value at infinity. We think of $[\theta]=[\Omega_0]$ as a cohomology class (cf. Remark \ref{comparison} for the case $p=1$).

\medskip

Now let $\omega=\ddhash\phi=dd^c\phi$, where $\phi$ is strictly convex of linear growth on $\R^n$. Then $\omega$ defines a Kähler metric on $\C^n$, and therefore a notion of harmonic form. Assume also that $\phi$ is regular, so that the Kähler metric $\omega$ also extends smoothly `to infinity'. It seems natural to expect that in analogy with Hodge's theorem (cf. \cite{Griffiths-Harris}), in any `cohomology class' $[\theta]$ there should be one unique harmonic representative. If so, this would give an approach Hodge-Riemann bilinear relations for valuations (cf. \cite{AleskerLefschetz} and \cite{Wannereretal}). 

\medskip

This conjecture does hold in case $p=n-1$, by the arguments in section \ref{Gromov}, applied to the case when all $\phi_2, ...\phi_n=\phi$ are the same. Write
$$
\int\Omega\w\omega^{n-1}=c\int\omega^n,
$$
and decompose $\Omega=\ddhash u+ c\omega$. By the arguments in Section \ref{Gromov}, we can solve $\ddhash u=\ddhash v$, with $v$ regular and such that its 1-homogeneous extension vanishes for $x_0=0$. Then $u-v$ is harmonic for the Laplacian defined by the Kähler metric $\omega$, so $\ddhash u -\ddhash v$ is a harmonic form. Since $\omega$ is also harmonic, we see that $\Omega':=\Omega-\ddhash v$ is a harmonic form in the same class as $\Omega$.

\newpage 

%%%%%%%%%%%%%%%

\section{Operations on valuations}\label{operationsval}

In this section we will show how various important operations on the space of smooth valuations can be obtained from the corresponding operations on strongly homogeneous forms in section \ref{operations}. These results are due to Alesker, Bernig-Fu and Faifman-Wannerer; our basic reference is \cite{Bernig-Fu2}. 

First we recall that for any convex body, $A$, the translate of Lebesgue measure by $A$, 
$$
K\to |K+A|=: \mu_A(K)
$$
is a valuation, corresponding to the strongly homogeneous current $e^{\omega_A}$, see (\ref{mu_A}) and the remarks immediately after. By a fundamental theorem of Alesker, \cite{1Alesker}, the linear span of these $\mu_A$ is dense in the space of all  continuous translation invariant valuations.

Alesker defined a product structure on the space of  valuations, by first defining the product of $\mu_{A_1}$ and $\mu_{A_2}$, and then extending by continuity. Alesker's work covers general valuations, and even valuations on manifolds, but here, as in \cite{Bernig-Fu2},  we will restrict ourselves to translation invariant valuations. In that case, Alesker's definition is
$$
\mu_{A_1}\cdot \mu_{A_2}(K):= |\Delta(K) +A_1\times A_2|,
$$
where 
$$
\Delta(K)=\{(x,x)\in \R^{2n}; x\in K\}
$$
is the diagonal embedding of $K$ into $\R^{2n}$, and $|\cdot|$ is Lebesgue volume in
$\R^{2n}$.   

We claim that Alesker's product of two general smooth valuations, $\theta_1$ and 
$\theta_2$ is the valuation associated to the convolution of the corresponding two strongly homogeneous forms, $\Omega_1$ and $\Omega_2$, where 
$$
\theta_i=\F(\Omega_i).
$$
To verify this, it suffices to consider the case when $\theta_i=\mu_{A_i}$, so that 
$\Omega_i=e^{\omega_{A_i}}$. Then, by the definition of convolution (see Section \ref{operations}),
$$
\F(e^{\omega_{A_1}}\star e^{\omega_{A_2}})(K)=\int_{\R^{2n}} e^{\omega_{A_1}(x)}
\w e^{\omega_{A_2}(y)}\w \pi^*(e^{\omega_K}),
$$
where $\pi(x,y)=x+y$. 

Now, the support function of $\Delta:=\Delta(K)$ is 
$$
h_\Delta(x,y)=\sup_{u\in K} x\cdot u+y\cdot u=h_K(x+y),
$$
so $h_\Delta=\pi^*h_K$. Since $\pi$ is extended to a complex  linear map between the complexifications of $\R^{2n}$ and $\R^n$, this implies that
$$
\pi^*(\omega_K)=\omega_\Delta, \quad \pi^*(e^{\omega_K})=e^{\omega_\Delta}.
$$
Moreover,
$$
e^{\omega_{A_1}(x)}\w e^{\omega_{A_2}(y)}=e^{\omega_{A_1}(x)+\omega_{A_2}(y)}=e^{\omega_{A_1\times A_2}(x,y)},
$$
since
$$
h_{A_1\times A_2}(x,y)=\sup_{u_i\in A_i} x\cdot u_1+y\cdot u_2=h_{A_1}(x)+h_{A_2}(y).
$$
Hence,
$$
\F(e^{\omega_{A_1}}\star e^{\omega_{A_2}})(K)=
\int_{\R^{2n}} e^{\omega_{A_1}(x)}
\w e^{\omega_{A_2}(y)}\w \pi^*(e^{\omega_K})=\int_{\R^{2n}} e^{\omega_{\Delta +A_1\times A_2}}=|\Delta+A_1\times A_2|,
$$
which proves our claim. 

\medskip

We next turn to the convolution of two valuations, which was introduced by Bernig and Fu, following Alesker's work. This was defined in \cite{Bernig-Fu} by putting
$$
\mu_{A_1}\star\mu_{A_2}=\mu_{A_1+A_2},
$$
when $A_i$ are smoothly bounded convex bodies, and then showing that this extends to a translation invariant smooth valuation in a unique way. We claim that this corresponds to wedge product on the level of strongly homogeneous forms. Indeed, 
$$
\mu_{A_i}=\F(e^{\omega_{A_i}}),
$$
and 
$$
e^{\omega_{A_1}}\w e^{\omega_{A_2}}=e^{\omega_{A_1}+\omega_{A_2}}=e^{\omega_{A_1+A_2}}.
$$

Thus, the convolution of two translation invariant smooth valuations is obtained by taking the wedge product of the corresponding smooth strongly homogeneous forms, whereas the product corresponds to convolution of the two strongly homogeneous forms. The fact that convolution and multiplication are intertwined by the operator $\F$ gives further support to the analogy between $\F$ and Fourier transformation. 

Note finally, that, by the results in Section \ref{operations}, convolution of strongly homogeneous forms is converted to wedge multiplication by the Fourier transform of strongly homogeneous forms. In the same way, Bernig and Fu showed that the product and convolution of valuations are related via Alesker's Fourier transform of valuation, introduced in \cite{AleskerLefschetz} and \cite{AleskerFourier}. Later, it was shown by Faifman and Wannerer, \cite{Faifman-Wannerer}, that Alesker's Fourier transform on valuations can be obtained from Scarfiello's Fourier-Transform of differential forms (cf. Section \ref{operations}), that they rediscovered, via the representation of valuations in terms of the `normal cycle', see Section \ref{normalcycle}. Most likely, this means that the action on valuations defined by the Fourier transform on strongly homogeneous forms also coincides with Alesker's Fourier transform, but I have not checked that.

%%%%%%%%%%%%%%%%%%%%%%%%%%%%%%%%%%%%%%%%
\newpage

\section{The normal cycle}\label{normalcycle}

A classical approach to  smooth  valuations is to write a valuation $K\to \theta(K)$ as an integral of a certain differential form -- determined by $\theta$-- over the `normal cycle' of $K$. In this section we will translate this formalism to our setting of supercurrents, and show that the two approaches are equivalent, as far as smooth valuations are concerned. Consider  a convex body, $K$,  which is smoothly bounded and strictly convex.

Let us  also assume that the origin lies in $K$ and write
$$
K=\{y; \mu(y)\leq 1\},
$$
where $\mu$ is a gauge function (the Minkowski gauge of $K$).
 Let
$$
S\R^n=\R^n_y\times S^{n-1}_x
$$
be the trivial unit circle bundle over $\R^n$  and put
$$
nc(K)=\{(y,x)\in S\R^n; y\in\partial K, \quad x=n(y)\},
$$
where $n(y)=\partial\mu(y)/|\partial\mu(y)|$ is the outward normal to $\partial K$ at $y$. This is the normal cycle of $K$; it is an $n-1$-dimensional submanifold of the circle bundle. This construction actually extends to general convex bodies, and the normal cycle is then a Lipschitz manifold, but we will limit ourselves to convex bodies that are smoothly bounded and strictly convex here.

Let  $\omega$ be   a smooth differential form on $S\R^n$ of bidegree $(p,k)$ in $(x,y)$;
$$
\omega=\sum_{I J}\omega_{I J}(x)dx_I\wedge dy_J
$$
whose coefficients depend only on $x$. Such forms are called translation invariant, since they are invariant under the natural translations $(y,x)\to (y+a,x)$ on the circle bundle.
By e.g. Theorem 2.2 in \cite{Wannereretal}  any translation invariant  smooth valuation, which has no component that is homogeneous of degree $n$,  can be written as
$$
\theta(K)=\int_{nc(K)} \omega
$$
for some translation invariant $(n-1)$-form $\omega$. Here we will restrict ourselves to such valuations, since the component of degree $n$ is just a multiple of Lebesgue measure and can be treated separately .
We will  now  translate this representation to super-language.

First, there is a map from the space of differential forms  $\omega=\sum \omega_{ I J} dx_I\w dy_J$ of bidegree $(p,k)$ in $(x,y)$ to superforms $\tilde\omega$ in $(x,\xi)$ of bidegree $(p, n-k)$, defined in the following way. 
Let $\beta(\xi, y)=\sum d\xi_i\wedge dy_i$. Then $\tilde\omega$ is defined by
\be\label{definitionofPhi}
\omega\wedge\beta(\xi,y)^{n-k}/(n-k)!=:\tilde\omega\wedge dy.
\ee
( This means that 
$$
\omega\to\tilde\omega=\sum \pm \omega_{I J}(x)dx_J\wedge d\xi_{I^c}.)
$$
Notice that the map $\omega\to \tilde\omega$ commutes with the exterior derivative, so $\widetilde{d\omega}=d\tilde\omega$ and closed forms are mapped to closed forms. Moreover, $\tilde\omega$ is of bidegree $(p,n-k)$ in $(x,\xi)$, so $\tilde\omega$ is of bidegree $(p,p)$ for some $p$ if and only if $\omega$ is of total degree $n$.

\begin{lma}\label{inverse} The inverse of this map is given by
$$
\tilde\omega\wedge \beta(y,\xi)^k/k!=\omega\wedge d\xi(-1)^{n(n+1)/2}
$$
\end{lma}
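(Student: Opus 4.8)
The plan is to show that the two maps $\omega\mapsto\tilde\omega$ and $\tilde\omega\mapsto\omega$ described in the statement are mutually inverse by a direct computation on basis monomials, using the algebraic identity satisfied by the form $\beta(y,\xi)=\sum dy_i\wedge d\xi_i$ (equivalently $\beta(\xi,y)=\sum d\xi_i\wedge dy_i=-\beta(y,\xi)$). The essential point is that for a fixed multi-index $J$ of size $k$ one has $\beta(y,\xi)^k/k!=\sum_{|L|=k}\pm\, dy_L\wedge d\xi_L$, and wedging this against a monomial $dx_I\wedge dy_{J}$ or $dx_I\wedge d\xi_{J^c}$ forces the complementary choice of indices in the remaining powers of $\beta$, so that both composite maps act on each monomial as a scalar depending only on signs and on $n,k$.

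First I would record the expansions of the relevant powers of $\beta$: writing $\beta(\xi,y)=\sum d\xi_i\wedge dy_i$, one has $\beta(\xi,y)^{m}/m!=\sum_{|S|=m}\varepsilon(S)\,d\xi_S\wedge dy_S$ for an explicit sign $\varepsilon(S)$ coming from reordering the factors, and similarly for $\beta(y,\xi)$. Then I would take $\omega=\omega_{IJ}(x)\,dx_I\wedge dy_J$ with $|I|=p$, $|J|=k$, compute $\tilde\omega$ from (\ref{definitionofPhi}) by wedging with $\beta(\xi,y)^{n-k}/(n-k)!$: only the term $d\xi_{J^c}\wedge dy_{J^c}$ survives after wedging with $dy_J$ (since $dy_J\wedge dy_L=0$ unless $L\subseteq J^c$, and $|L|=n-k$ forces $L=J^c$), yielding $\tilde\omega=\sum\pm\,\omega_{IJ}(x)\,dx_I\wedge d\xi_{J^c}$ with a definite sign $\sigma_1(I,J,n)$. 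Next I would feed this $\tilde\omega$ into the proposed inverse formula: wedging $dx_I\wedge d\xi_{J^c}$ against $\beta(y,\xi)^k/k!=\sum_{|L|=k}\pm\,dy_L\wedge d\xi_L$, the only surviving term has $L=J$ (since $d\xi_{J^c}\wedge d\xi_L=0$ unless $L\subseteq J$, forcing $L=J$), producing $\sigma_2(I,J,n)\,\omega_{IJ}(x)\,dx_I\wedge dy_J\wedge d\xi$. It then remains to check that $\sigma_1\sigma_2=(-1)^{n(n+1)/2}$, which is exactly the normalization built into the statement, so that the round trip returns $\omega\wedge d\xi(-1)^{n(n+1)/2}$ as claimed; running the same computation in the other order ($\tilde\omega\mapsto\omega\mapsto\widetilde{(\,\cdot\,)}$) gives the reverse identity.

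The routine but only mildly delicate part is the sign bookkeeping: keeping track of the reordering signs $\varepsilon(S)$ in the powers of $\beta$, the sign from moving $dx_I$ past the $dy$'s and $d\xi$'s, and the sign from splitting $d\xi_{J^c}\wedge d\xi_J$ into $\pm\,d\xi$. The clean way to handle this is to fix once and for all the reference ordering $dx_1\wedge d\xi_1\wedge\cdots\wedge dx_n\wedge d\xi_n$ (respectively the ordering on $S\R^n$ used to normalize $\int d\xi$) and to express every monomial relative to it, so that all signs are reduced to a single permutation sign that telescopes. I expect this sign-matching — verifying that the two composite scalars multiply to the universal constant $(-1)^{n(n+1)/2}$ independently of $I$ and $J$ — to be the main (and really the only) obstacle; once the conventions from Section \ref{recap} on the Berezin integral and on $dV$ are pinned down, it is a finite check that can be done on a single generic monomial and then invoked for all by linearity.
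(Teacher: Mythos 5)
Your plan is correct and would work, but it takes a genuinely different route from the paper's. You propose a direct, monomial-by-monomial verification: expand the powers of $\beta$ into sums $\sum_{|L|}\pm\,dy_L\w d\xi_L$, observe that wedging against $dx_I\w dy_J$ or $dx_I\w d\xi_{J^c}$ kills all but the complementary index set, and then check that the two accumulated signs $\sigma_1,\sigma_2$ multiply to $(-1)^{n(n+1)/2}$ uniformly in $I,J$. This is exactly the ``brute computation'' that the paper explicitly acknowledges but then declines to carry out. Instead, the paper reformulates the map as a Berezin integral $\Phi(\omega)=BI_y(\omega\w e^{\beta(\xi,y)})$ and computes $\Phi^2$ by combining the two exponentials, $e^{\beta(\xi,y)}\w e^{\beta(z,\xi)}=e^{\beta(\xi,y-z)}$, then using the translation $y\mapsto y-z+z$ to pull $\omega$ outside the integral, leaving $BI_\xi BI_y(\beta(\xi,y-z)^n/n!)=(-1)^{n(n+1)/2}$ as a single, once-and-for-all constant. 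The payoff of the paper's route is that all the sign bookkeeping you flag as the ``main obstacle'' collapses into one evaluation of a Berezin delta; the payoff of your route is that it needs no new formalism and makes transparent which monomials survive. Both are sound; the paper's is shorter once the Berezin machinery is set up, while yours is more elementary but requires you to actually verify that $\sigma_1\sigma_2$ is independent of $I$ and $J$, a step you correctly identify but leave to be done.
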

This lemma can of course be proved by brute computation, but it seems worth while to elaborate it somewhat. Recall the definition of {\it Berezin integration}. If $F$ is a differential form, depending on any number of variables, its Berezin integral with respect to one of them, say $y\in \R^n$, is obtained as follows: Write
$$
F=f\w dy_1\w...dy_n+ ...=f\w dy + ...
$$
where the dots stand for terms that are not of full degree in $y$. 
Then the Berenzin integral of $F$ with respect to $y$ is defined as
$$
BI_y(F):=f.
$$
With this definition, we see that $\tilde\omega$ is the Berezin integral with respect to $y$ of 
$$
\omega(x,y)\wedge\beta(\xi,y)^{n-k}/(n-k)!.
$$
It is convenient to rewrite this in a way that handles all bidegrees at the same time,  as
\be\label{Berezintransform}
\tilde\omega:= BI_y(\omega\w e^{\beta(\xi,y)}):=\Phi(\omega).
\ee
Let us now apply the operator $\Phi$ to $\tilde\omega$, taking the Berezin intgeral with respect to $\xi$. This gives
$$
\Phi^2(\omega)=BI_\xi(\tilde\omega(x,\xi)\w e^{\beta(z,\xi)})=BI_\xi BI_y (\omega(x,y)\w e^{\beta(\xi, y)}\w e^{\beta(z,\xi)})=
$$
$$
BI_\xi BI_y (\omega(x,y)\w e^{\beta(\xi, y-z)}) =BI_\xi BI_y (\omega(x,y)\w \beta(\xi, y-z)^n/n!) =
$$
$$
BI_\xi BI_y (\omega(x,y-z+z)\w \beta(\xi, y-z)^n/n!)=\omega(x,z)BI_\xi BI_y(\beta(\xi, y-z)^n/n!)=
$$
$$
\omega(x,z)(-1)^{n(n+1)/2}.
$$
Thus, up to a sign, $\Phi$ is its own inverse, which gives Lemma \ref{inverse}.

Next we will parametrize  the normal cycle by the unit sphere. For this we use Theorem \ref{polar} from the appendix. The map
$$
y\to \mu(y)=x
$$
is a bijection between the boundary of $K$ and the boundary of its polar, $K^\circ$. Its inverse is 
$$
x\to \partial h_K(x),
$$
where $h_K$ is the support function of $K$. The map
$$
\gamma(y):=\partial \mu/|\partial\mu|=n(y)
$$
is the Gauss map to the unit sphere. The inverse of the Gauss map is also given by 
$$
x\to \partial h_K(x),
$$
since $\partial h_K$ is homogeneous of order zero:
$$
\partial h_K(\partial \mu/|\partial\mu|)=\partial h_K\circ\partial\mu=id.
$$
Now we can parametrize the normal cycle by $S^{n-1}$ by
$$
x\in S^{n-1}\to F(x)=(\partial h_K(x),x)\in nc(K).
$$
In the next computations we will use that
\be\label{formulaone}
F^*(\omega)\wedge d\xi =(-1)^{n(n+1)/2}\tilde \omega\wedge (\ddhash h_K)^k/k!,
\ee
if $\omega$ is of bidegree $(n-k-1, k)$ in $(x,y)$, which follows from
\be
\omega\wedge d\xi=(-1)^{n(n+1)/2}\tilde\omega\wedge \beta^k/k!, \quad  F^*(\beta)=\ddhash h_K.
\ee
Putting this together we get first that 
$$
\theta(K)=\int_{nc(K)}\omega=\int_{|x|=1} F^*(\omega)=(-1)^{n(n-1)/2}\int_{|x|=1} F^*(\omega)\wedge d\xi,
$$
where the last equality is the definition of superintegrals. Finally, using (\ref{formulaone}) we get
$$
(-1)^n\theta(K)=\int_{|x|=1} \tilde\omega\wedge (\ddhash h_K)^k/k!.
$$

This representation of $\theta(K)$ is, however,   not unique. Following \cite{Wannereretal} we say that a  form $\omega$ on the circle bundle is {\it vertical} if $\alpha\w\omega=0$, where $\alpha$ is the contact form 
$$
\alpha=\sum x_idy_i
$$
on $S\R^n$. This translates as follows to the superformalism (recall that $\delta^\#$ is contraction with the vector field $E^\#=\sum x_i\partial/\partial \xi_i$) : 
\begin{lma}\label{vertical} A form $\omega=\sum\omega_{I J} dx_I\w dy_J$  is vertical if and only if $\delta^\#\tilde\omega=0$.
\end{lma}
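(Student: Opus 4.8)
The plan is to reduce the statement to an algebraic identity relating wedging with the contact form $\alpha=\sum x_i\,dy_i$ on one side of the transform $\Phi$ to contraction with $E^\#=\sum x_i\,\partial/\partial\xi_i$ on the other side. Since $\Phi$ (equivalently, the map $\omega\mapsto\tilde\omega$) is a linear bijection by Lemma \ref{inverse}, it suffices to show that, for every translation-invariant form $\omega$ on $S\R^n$,
$$
\widetilde{\alpha\w\omega}=\pm\,\delta^\#\tilde\omega
$$
up to a nonzero constant depending only on $n$ (and possibly the degree). Once this is established, $\alpha\w\omega=0$ holds if and only if $\delta^\#\tilde\omega=0$, which is exactly the assertion of the lemma.

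The first step is to recall from (\ref{Berezintransform}) that $\tilde\omega=BI_y(\omega\w e^{\beta(\xi,y)})$, where $\beta(\xi,y)=\sum d\xi_i\w dy_i$, and to compute how $\delta^\#$ interacts with this Berezin integral. Since the coefficients of $\omega$ depend only on $x$, and $\delta^\#$ is contraction with $\sum x_i\,\partial/\partial\xi_i$, it is an algebraic (zeroth-order) operator; moreover it is an antiderivation, and it commutes with $BI_y$ (which only integrates the $y$-variables). Therefore
$$
\delta^\#\tilde\omega = BI_y\big(\delta^\#(\omega\w e^{\beta(\xi,y)})\big)=BI_y\big(\omega\w \delta^\#(e^{\beta(\xi,y)})\big),
$$
using $\delta^\#\omega=0$ (no $d\xi$'s in $\omega$). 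Now $\delta^\#\beta(\xi,y)=\sum x_i\,dy_i=\alpha$, so $\delta^\#(e^{\beta(\xi,y)})=\alpha\w e^{\beta(\xi,y)}$ (since $\delta^\#$ kills $\alpha$ again and is an antiderivation). Hence
$$
\delta^\#\tilde\omega = BI_y\big(\omega\w\alpha\w e^{\beta(\xi,y)}\big)=\pm\,BI_y\big((\alpha\w\omega)\w e^{\beta(\xi,y)}\big)=\pm\,\widetilde{\alpha\w\omega},
$$
the sign coming only from commuting $\alpha$ (a $1$-form in $dy$) past $\omega$. This is precisely the identity wanted.

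The main obstacle is purely bookkeeping: keeping track of the various signs produced by moving the odd-degree operator $\delta^\#$ through wedge factors and by commuting $dy$'s past the coefficients of $\omega$, and checking that $BI_y$ genuinely commutes with $\delta^\#$ rather than picking up a sign from the reordering of $d\xi$'s and $dy$'s. One should verify the identity $\delta^\#(e^{\beta(\xi,y)})=\alpha\w e^{\beta(\xi,y)}$ carefully — expanding $e^{\beta}=\sum_k \beta^k/k!$ and using $\delta^\#\beta=\alpha$, $\delta^\#\alpha=0$, together with the antiderivation property gives $\delta^\#(\beta^k/k!)=\alpha\w\beta^{k-1}/(k-1)!$, so summing over $k$ yields $\alpha\w e^{\beta}$ as claimed. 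Since for the statement of the lemma only the vanishing matters, the precise sign is irrelevant, which is why I phrase the key identity up to sign; this makes the argument robust against any sign-convention slip.
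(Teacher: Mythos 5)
Your proof is correct, and it takes essentially the same algebraic ideas as the paper's (the identity $\alpha=\delta^\#\beta$, the antiderivation property of $\delta^\#$, and the bijectivity from Lemma~\ref{inverse}), but packages them a bit more cleanly. The paper's version of the argument fixes the bidegree $(p,k)$, rewrites $\alpha\w\omega\w d\xi=0$ via the inverse formula as $\delta^\#(\beta^{k+1})\w\tilde\omega=0$, then uses the antiderivation rule plus the degree-reason vanishing $\beta^{k+1}\w\tilde\omega=0$ to deduce $\beta^{k+1}\w\delta^\#\tilde\omega=0$, and finally invokes the injectivity of wedging with $\beta^{k+1}$ on $(n-k-1)$-degree $d\xi$-parts. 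You instead prove directly the operator identity $\delta^\#\circ\Phi=\pm\,\Phi\circ(\alpha\w\cdot)$ by pushing $\delta^\#$ through the Berezin integral and using $\delta^\#(e^{\beta(\xi,y)})=\alpha\w e^{\beta(\xi,y)}$; bijectivity of $\Phi$ then finishes immediately. This is slightly more informative (it gives the intertwining identity rather than just the equivalence of vanishing) and avoids the last injectivity step, but the underlying computation is the same. One small point worth making explicit, as you flag: $\delta^\#$ genuinely commutes with $BI_y$ because contraction by $\sum x_i\,\partial/\partial\xi_i$ does not change $dy$-degree, so it neither destroys nor creates full-$dy$-degree terms and picks up no sign from the Berezin integral. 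With that checked, the argument is complete.
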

\begin{proof}
Say $\omega$ is of bidegree $(p,k)$ in $(x,y)$.
Note that $\alpha=\delta^\#\beta$, where $\beta=\sum d\xi_i\w dy_i$.
Since $\alpha\w\omega=0$ if and only if $\alpha\w \omega\w d\xi=0$, Lemma \ref{inverse}, shows that $\omega$ is vertical if and only if  
$$
\delta^\#(\beta)\w\beta^k\w\tilde\omega=\delta^\#(\beta^{k+1})\w\tilde\omega/(k+1)=0.
$$
Since $\delta^\#$ is an antiderivation and $\beta^{k+1}\w\tilde\omega=0$ for degree reasons, this means that
$$
\delta^\#(\tilde\omega)\w\beta^{k+1}=0,
$$
which is equivalent to $\delta^\#\tilde\omega=0$ since $\delta^\#\tilde\omega$ is of degree $(n-k-1)$ in $d\xi_i$.
\end{proof}

If $\eta$ is  a vertical form of bidegree $(p,k)$ in $(x,y)$, then 
$$
f:=\tilde\eta\wedge (\ddhash h_K)^k=0,
$$
since $\delta^\# f=0$ and $f$ is of full degree in $d\xi$. Therefore we can replace $\omega$ by $\omega+\eta$, where $\eta$ is vertical in the computations. By \cite{Wannereretal}, page 9, there is a unique choice of $\eta$ such that $d(\omega+\eta)$ is vertical. Hence we can assume that we have already chosen $\omega$ so that $d\omega=:\tau$ is vertical. By Stokes' theorem,
$$
(-1)^n\theta(K)=\int_{|x|=1} \tilde\omega\w(\ddhash h_K)^k /k!        = \int_{|x|=1} \tilde\tau\wedge (\ddhash h_K)^{k-1}\wedge \dhash h_K/k!,
$$
since $d\tilde\omega=\tilde\tau$. 
Here $\tilde\tau$ is closed  of bidegree $(n-k,n-k)$ and it satisfies $\delta^\#\tilde\tau=0$ since $\tau$ is vertical.  Let $\Omega$ be the pullback of $\tilde\tau$ to $\R^n\setminus\{0\}$ under the map $x\to x/|x|$. Then
$$
d\Omega=0,  \quad \delta^\#\Omega =0,
$$
and, by Stokes', 
$$
\theta(K)= \int_{|x|=1} \tilde\tau\wedge (\ddhash h_K)^{k-1}\wedge \dhash h_K/k!=\int \Omega\wedge(\ddhash h_K)^k/k!
$$
The right hand side here is precisely what appears in Theorem \ref{smoothvaluations}.

\medskip

\newpage

\section{Characterization of strong forms}\label{charstrongform}

Here we will give the proof of Theorem \ref{strongforms1} from section \ref{positivity}. Let 
$$
\Omega=\sum\Omega_{I J} dx_I\w d\xi_J
$$
be a $(p,p)$-form with constant coefficients. Recall that $\Omega$ is `strong' if it can be written as a linear combinations of elementary forms
$$
\alpha_1\w...\alpha_p\w\alpha_1^\#\w...\alpha_p^\#.
$$
Remember also that the operator 
$$
\chi\to T(\chi)=\sum dx_i\w\delta_{\xi_i}(\chi)
$$
annihilates all elementary, and therefore all strong, forms. The next theorem is the converse of this fact.
\begin{thm}\label{theoremstrongform} Let $\Omega$ be a $(p,p)$-form with constant coefficients satisfying $T(\Omega)=0$. Then $\Omega$ can be written as a  linear combination of elementary forms.
\end{thm}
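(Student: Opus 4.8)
The plan is to derive Theorem~\ref{theoremstrongform} from the structure theory of smooth translation-invariant valuations, via the chart correspondence of Section~\ref{charts}. A constant-coefficient form is not itself strongly homogeneous, but after adding one dimension it becomes the restriction to a coordinate chart of a strongly homogeneous form; and a \emph{smooth} strongly homogeneous form is, by the theorem of Alesker as sharpened by Kn\"orr and van Handel, a finite linear combination of products $\omega_{A_1}\w\cdots\w\omega_{A_k}$ of the positive $(1,1)$-forms $\omega_{A}=\ddhash h_A$ --- and such products are manifestly strong. It then remains to check that strongness is inherited by the restriction to the chart.

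So let $\Omega$ be a $(p,p)$-form on $\R^n$ with constant coefficients and $T(\Omega)=0$. First one observes that $\Omega$ is symmetric (equivalently $J\Omega=\Omega$). Next, realize $\R^n$ as the chart $H=\{x_0=1\}$ inside $\R^{n+1}$ and, using the extension statement of Section~\ref{charts}, produce a smooth strongly homogeneous form $\widehat\Omega$ on (a conical neighbourhood of $H$ in) $\R^{n+1}\setminus\{0\}$ --- concretely $\widehat\Omega=\delta^\#\delta(dx_0\w d\xi_0\w\widetilde\Omega)$, where $\widetilde\Omega$ is the degree-zero homogeneous extension of $\Omega$ --- which is closed, satisfies $\delta\widehat\Omega=0$, and restricts to $\Omega$ on $H$.

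Now feed $\widehat\Omega$ into the valuation dictionary. It defines a smooth translation-invariant valuation $\theta$ on convex bodies in $\R^{n+1}$, represented in the normal-cycle formalism of Section~\ref{normalcycle} by a smooth translation-invariant form on the sphere bundle, so that only classical integration --- no Bedford--Taylor machinery --- is involved here. By the Alesker--Kn\"orr--van Handel theorem (\cite{Knoerr}), $\theta$ is a \emph{finite} combination of mixed volumes $V(K[q],A_1,\dots,A_k)$ with the $A_i$ smoothly bounded and strictly convex, i.e.\ $\theta=\sum_s c_s\,\F(\omega_{A_{s,1}}\w\cdots\w\omega_{A_{s,k}})$. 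Since $\F$ is injective on strongly homogeneous forms --- and here only the elementary, smooth case of Proposition~\ref{mainpoint} is needed --- it follows that $\widehat\Omega=\sum_s c_s\,\omega_{A_{s,1}}\w\cdots\w\omega_{A_{s,k}}$. Each $\omega_{A_{s,i}}$ is a smooth positive $(1,1)$-form, hence pointwise a nonnegative combination of elementary $(1,1)$-forms by the spectral theorem, so $\widehat\Omega$ is, at every point, a linear combination of elementary forms. Finally, restriction to $H$ amounts to taking the component of a form on $\R^{n+1}$ that involves neither $dx_0$ nor $d\xi_0$ and setting $x_0=1$; applied to an elementary form $\alpha_1\w\alpha_1^\#\w\cdots\w\alpha_k\w\alpha_k^\#$, writing $\alpha_j=a_{j0}\,dx_0+\alpha_j'$ with $\alpha_j'$ tangential, this component equals $\alpha_1'\w(\alpha_1')^\#\w\cdots\w\alpha_k'\w(\alpha_k')^\#$, again an elementary form on $\R^n$. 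Hence $\Omega=\widehat\Omega|_H$ is at each point a linear combination of elementary forms; being constant, $\Omega$ is therefore a linear combination of elementary forms, as claimed.

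The substance of the argument is the Alesker--Kn\"orr--van Handel finiteness theorem: it is what supplies the decomposition of a smooth strongly homogeneous form into products of $\omega_A$'s, from which strongness is immediate, and everything else is bookkeeping. The points requiring care are: (i) keeping the argument non-circular --- only the facts about \emph{smooth} valuations and the \emph{smooth} case of injectivity may be used, since ``strongly homogeneous $\Rightarrow$ strong'' is the very statement under proof, and the general, non-smooth correspondence (Theorem~\ref{monchar}, Theorem~\ref{crucial}) rests on it; (ii) checking that the degree-zero extension $\widehat\Omega$ is honestly smooth and strongly homogeneous where needed (it is smooth on the half-space $x_0>0$, which suffices, possibly after localizing the correspondence to a cone); and (iii) the preliminary remark that $T(\Omega)=0$ forces $\Omega$ to be symmetric, together with the identification of the chart-restriction with ``drop $dx_0,d\xi_0$; set $x_0=1$''.
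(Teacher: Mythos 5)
Your route is genuinely different from the paper's: the paper proves Theorem~\ref{theoremstrongform} by pure linear algebra, transporting the statement via the Berezin‑type isomorphism $\Phi$ of Section~\ref{normalcycle} to a statement about \emph{primitive} forms of total degree $n$ on $\C^n_{x+iy}$ and then proving by induction (Theorem~\ref{theoremprimitiveform}) that such forms are spanned by decomposable forms. No valuation theory enters, and that is by design, since this is the foundational fact underpinning the later valuation dictionary. Your proposal instead tries to \emph{import} the Alesker--Kn\"orr--van Handel finiteness theorem from the valuation side. That is an attractive idea, but as carried out it has two genuine gaps.

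\textbf{Circularity in the injectivity step.} You want to conclude $\widehat\Omega=\sum_s c_s\,\omega_{A_{s,1}}\w\cdots$ from the equality of the two valuations, and you invoke Proposition~\ref{mainpoint} for this, flagging that ``only the smooth case'' is used. But the obstruction is not smooth versus non-smooth; it is that the proof of Proposition~\ref{mainpoint} culminates in Proposition~\ref{Current-Criterion}, which applies only to currents \emph{in $S$} (strong). Concretely, setting $\Delta=\widehat\Omega-\sum_s c_s\,\omega_{A_{s,1}}\w\cdots$, you know $\F(\Delta)=0$, i.e.\ $\Delta$ pairs to zero against all strongly positive test forms $\chi\,(\ddhash\psi)^k$; but a weakly null form ($\Delta\in W_0$) does exactly that without being zero, so the vanishing of $\F(\Delta)$ only detects the \emph{strong component} of $\Delta$. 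Knowing that $\Delta$ has no weakly null component is equivalent to knowing $\widehat\Omega$ is strong --- which is precisely the statement you are trying to prove. The paper avoids this by establishing Theorem~\ref{theoremstrongform} first, by linear algebra, and \emph{then} using it to justify that strongly homogeneous currents are strong, so that Proposition~\ref{mainpoint} becomes available.

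\textbf{The extension lives on a half-space only.} With $\Omega$ constant of bidegree $(p,p)$, the homogeneous extension $\widetilde\Omega$ has coefficients $\Omega_{IJ}/x_0^p$, so $\widehat\Omega=\delta^\#\delta(dx_0\w d\xi_0\w\widetilde\Omega)$ is smooth on $\{x_0>0\}$ but blows up at $\{x_0=0\}$ and is not defined on all of $\R^{n+1}\setminus\{0\}$. The valuation $\theta(K)=\int e^{\omega_K}\w\widehat\Omega$ integrates over the entire sphere bundle, so it requires a form defined on the whole sphere; the Alesker--Kn\"orr--van Handel theorem is a global statement about translation-invariant valuations and does not localize to an open cone. ``Localizing the correspondence to a cone'' is not an available move here, so even granting injectivity the input to the finiteness theorem is not in the right form. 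The paper never confronts this because its argument for Theorem~\ref{theoremstrongform} is local/algebraic and never leaves the fibre of the (super)form bundle.

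The preliminary symmetry observation is fine (for constant coefficients $\L_{E^\#}\Omega=T\Omega$, so the proof of Proposition~\ref{symmetric} applies), and the observation that elementary forms restrict to elementary forms on a chart is correct. But without an independent, non-circular injectivity statement and a global extension, the argument does not close, and the linear-algebraic route the paper takes (primitivity via $\Phi$) looks unavoidable for this particular theorem.
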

We will deduce this theorem from a  more general result on forms 
$$
\tau=\sum_{|I|+|K|=N} \tau_{I K}dx_I\w dy_K
$$
on $\R^n_x\times\R^n_y=\C^n_{x+iy}$, using the translation between the two pictures from section \ref{normalcycle}. As it turns out, $(p,p)$-forms in the kernel of $T$ correspond precisely to {\it primitive} forms in the sense of Lefschetz (cf. \cite{Griffiths-Harris}), of total degree $n$ on the K\"ahler manifold $\C^n_{x+iy}$ and we can then use a  characterization of primitive forms that we give in Theorem \ref{theoremprimitiveform}. 

Let
$$
L\tau=\beta\w\tau,
$$
where $\beta=\beta(x,y)=\sum dx_i\w dy_i$; this is  the $L$-operator from K\"ahler geometry on $\C^n$ with the K\"ahler form $\beta$. Obviously it increases the total degree by two units. Its adjoint is defined by 
$$
(\Lambda\tau,\tau')=(\tau, L\tau'),
$$
where $(\cdot,\cdot)$ is the standard Euclidean scalar product on forms on $\R^n_x\times\R^n_y$. Explicitly
$$
\Lambda =\sum dy_i\rfloor dx_i\rfloor\,\,\text{or}\,\,  \sum \delta_{x_i}\delta_{y_i}.
$$
Following the terminology of K\"ahler geometry we define the space of primitive forms of degree $N$ as 
$$
P(N)=\{\tau=\sum_{|I|+|K|=N} \tau_{I K}dx_I\w dy_K; \Lambda\tau=0\}.
$$
We also define a form of degree $N$ to be decomposable if it can be written as
$$
\alpha_1\w...\alpha_q\w\alpha_{q+1}^\#\w...\alpha_N^\#,
$$
where $\alpha_i$ are 1-forms on $\R^n_x$, $\alpha^\#=\sum \alpha^jdy_j$ if $\alpha=\sum \alpha^j dx_j$ and all $\alpha_i$ with $i\leq q$ are othogonal to all $\alpha_i$ with $i\geq q+1$. 
Finally we let $D(N)$ be the linear span of the space of all decomposable forms. Note that there are no non-zero decomposable forms in degree $N>n$. 
\begin{thm}\label{theoremprimitiveform} For any degree $N$, 
$$
P(N)=D(N).
$$
In particular, if $N>n$, there are no non-trivial primitive forms.
\end{thm}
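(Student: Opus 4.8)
The plan is to prove the two inclusions $D(N)\subseteq P(N)$ and $P(N)\subseteq D(N)$ separately, after first disposing of the range $N>n$. \textbf{The easy inclusion.} If $E=\alpha_1\w\cdots\w\alpha_q\w\alpha_{q+1}^\#\w\cdots\w\alpha_N^\#$ is decomposable, then orthonormalizing inside each of the two mutually orthogonal groups turns $\alpha_1,\dots,\alpha_N$ into an orthonormal family of $1$-forms on $\R^n$; completing it to an orthonormal basis and renaming coordinates one may assume $E=\pm\,dx_I\w dy_K$ with $I\cap K=\emptyset$. A summand $\delta_{x_i}\delta_{y_i}$ of $\Lambda$ acts nontrivially only on monomials containing both $dx_i$ and $dy_i$, so $\Lambda E=0$; since $\Lambda$ is coordinate independent this gives $D(N)\subseteq P(N)$, and the same normal form forces $D(N)=0$ when $N>n$. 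That $P(N)=0$ for $N>n$ as well is the classical fact that an $n$-dimensional Kähler (here Euclidean, with Kähler form $\beta$) space has no primitive forms in degree $>n$: under the $\mathfrak{sl}_2$-triple $(L,\Lambda,H)$ with $H=(k-n)\mathrm{id}$ on $k$-forms, a primitive $N$-form is a lowest weight vector of weight $N-n\le 0$. From now on $N\le n$.

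\textbf{Reduction to a spanning statement.} For $N\le n$ the map $L:\Lambda^{N-2}(\R^{2n})\to\Lambda^N(\R^{2n})$ is injective and, since $\Lambda=L^*$, its image is the orthogonal complement of $P(N)=\ker(\Lambda|_{\Lambda^N})$; this is the Lefschetz decomposition $\Lambda^N(\R^{2n})=P(N)\oplus L\bigl(\Lambda^{N-2}(\R^{2n})\bigr)$. As $D(N)\subseteq P(N)$, the desired equality $P(N)=D(N)$ is therefore equivalent to the spanning statement
\[
\Lambda^N(\R^{2n})=D(N)+L\bigl(\Lambda^{N-2}(\R^{2n})\bigr)
\]
(dually: no nonzero primitive form is orthogonal to every decomposable form).

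\textbf{Proving the spanning statement by slicing off a coordinate pair.} I would induct on $n$, with the trivial cases $n\le 1$ and $N\le 1$ as base. Write $\tau\in\Lambda^N(\R^{2n})$ as $\tau=\tau'+dx_n\w\alpha+dy_n\w\beta'+dx_n\w dy_n\w\gamma$ with $\tau',\alpha,\beta',\gamma$ forms on $\R^{2(n-1)}$, and $L=L'+(dx_n\w dy_n)\w\,{\cdot}$, where $L'$ is the Lefschetz operator of $\R^{2(n-1)}$. Applying the inductive hypothesis to $\tau',\alpha,\beta',\gamma$ writes each as a decomposable form of $\R^{2(n-1)}$ plus an $L'$-multiple. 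A decomposable form of $\R^{2(n-1)}$ wedged with $dx_n$ (resp.\ $dy_n$) is decomposable in $\R^{2n}$ — adjoin $dx_n$, resp.\ $dy_n$, to the appropriate orthogonal group — and $L'\sigma=\beta\w\sigma-(dx_n\w dy_n)\w\sigma$, so, using also $(dx_n\w dy_n)^2=0$, every term is accounted for modulo $L(\Lambda^{N-2})$ except those of the shape $(dx_n\w dy_n)\w e$ with $e$ decomposable of degree $N-2$ in $\R^{2(n-1)}$.

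\textbf{The key sublemma, and where the difficulty lies.} It remains to show $(dx_n\w dy_n)\w e\in D(N)+L(\Lambda^{N-2})$ for such $e$. Putting $e=\pm\,dx_I\w dy_K$ in adapted coordinates of $\R^{n-1}$ (so $I\cap K=\emptyset$, $|I\cup K|=N-2$), the hypothesis $N\le n$ guarantees there are at least two indices $i\in\{1,\dots,n\}$ outside $I\cup K$ (one of them $i=n$). For any two such indices $k\ne\ell$ the $2$-forms $(dx_k+dx_\ell)\w(dy_k-dy_\ell)$ and $(dx_k-dx_\ell)\w(dy_k+dy_\ell)$ are decomposable, and remain decomposable after wedging with $dx_I\w dy_K$ (the orthogonality of the two groups is preserved because $k,\ell\notin I\cup K$ and $I\cap K=\emptyset$); their sum is $2(dx_k\w dy_k-dx_\ell\w dy_\ell)\w dx_I\w dy_K$, so $(dx_k\w dy_k)\w e\equiv(dx_\ell\w dy_\ell)\w e$ modulo $D(N)$. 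Summing over all $n-(N-2)\ge 2$ indices outside $I\cup K$ and using $\sum_i(dx_i\w dy_i)\w e=\beta\w e=Le\in L(\Lambda^{N-2})$ pins down $(dx_n\w dy_n)\w e$ modulo $D(N)+L(\Lambda^{N-2})$, closing the induction. I expect the main obstacle to be exactly this last part: setting up the slicing so that every cross term really does reduce to one of the form $(dx_n\w dy_n)\w e$, and carrying the signs through the $2$-form identities — a purely bookkeeping difficulty, after which the $\mathfrak{sl}_2$/Lefschetz structure does the rest.
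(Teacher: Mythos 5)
Your proof is correct, but it takes a genuinely different route from the one in the paper. The paper argues \emph{dually}: it takes a primitive $\tau$ orthogonal to every decomposable form, contracts with a $(1,0)$-form $a$, shows the orthogonal projection of $a\rfloor\tau$ onto $\bigwedge(a^{\perp_{\C}})$ is again primitive and orthogonal to decomposables, invokes the inductive hypothesis in one lower dimension to kill that projection, and finally finishes by expanding $\tau$ in a basis $dx_I\w dy_J\w dV_K$ and showing first that $I$ and $J$ are forced to be empty and then that all the remaining coefficients are equal, whence $\tau\in L(\Lambda^{N-2})\cap P(N)=0$. You instead pass to the equivalent \emph{spanning} statement $\Lambda^N = D(N)+L(\Lambda^{N-2})$, slice off the coordinate pair $(x_n,y_n)$, and reduce everything to the single explicit algebraic fact that
$$
(dx_k+dx_\ell)\w(dy_k-dy_\ell)+(dx_k-dx_\ell)\w(dy_k+dy_\ell)=2\,(dx_k\w dy_k-dx_\ell\w dy_\ell)
$$
shows $(dx_k\w dy_k)\w e\equiv(dx_\ell\w dy_\ell)\w e\pmod{D(N)}$ whenever $k,\ell\notin I\cup K$, after which summing against $\beta=\sum dx_i\w dy_i$ pins down $(dx_n\w dy_n)\w e$ modulo $D(N)+L(\Lambda^{N-2})$. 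Your argument is more constructive in that it exhibits how to write a given primitive form as a combination of decomposables rather than deriving a contradiction from orthogonality; the paper's is shorter at the expense of a somewhat delicate analysis of which basis monomials can appear. One small bookkeeping point to make fully explicit: in the slicing step, after writing $\tau'=e_0+L'\sigma_0$ and using $L'=L-(dx_n\w dy_n)\w\cdot$, you are left with the cross-term $-(dx_n\w dy_n)\w\sigma_0$ where $\sigma_0\in\Lambda^{N-2}(\R^{2(n-1)})$ is a \emph{general} form, not a decomposable one; you need to apply the inductive hypothesis once more to $\sigma_0$ (writing $\sigma_0=e_0'+L'\sigma_0'$ and noting $(dx_n\w dy_n)\w L'\sigma_0'=L\bigl((dx_n\w dy_n)\w\sigma_0'\bigr)$) before only terms $(dx_n\w dy_n)\w(\text{decomposable})$ remain. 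This is easily filled since your induction is on $n$ and applies to all degrees $\le N$, but it should be said. Also worth noting: the hypothesis $N\le n$ enters precisely in guaranteeing $m=n-(N-2)\ge 2$ indices outside $I\cup K$, which is needed for the two-form identity to have something to pair $n$ with.
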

\begin{proof}
It is easy to see that any decomposable form is primitive, so $D(N)\subseteq P(N)$. Indeed, we may assume that all the $\alpha_i=e_i$ form a part of an orthonormal basis of the space of 1-forms on $\R^n$ and write
$$
\Lambda=\sum e_i^\#\rfloor e_i\rfloor.
$$
For the converse we use induction and assume the theorem has been proved in all smaller dimensions. Let  $\tau$ be  of degree $N$ on $\R^n_x\times\R^n_y$ and assume $\Lambda\tau=0$. 
It is enough to prove that if $\tau\perp D(N)$ then $\tau=0$, and we may assume that $\tau$ is of pure bidegree $(p,q)$, with $p\geq q$. Let $a$ be a form of bidegree $(1,0)$ and put
$$
E:=a^{\perp_\C}:=\{b_0+ b_1^\#; b_i \, \, (1,0)-forms\, , (a,b_i)=0\},
$$  
the orthogonal complement of $a$ in the space of 1-forms on $\C^n$. We claim that the orthogonal projection of $a\rfloor\tau$ on $\bigwedge^{N-1}(E)$ is primitive on $E$ and orthogonal to all decomposable forms on $E$. It then follows from the inductive assumption that the orthogonal projection of $a\rfloor\tau$ vanishes, which is the main step of the argument.

For the first part, we may assume that $a=e_n$ is the last element in an orthonormal basis of the space of $(1,0)$-forms. Write $\beta=\sum_1^n e_1\w e_i^\#$ and $\beta'=\sum_1^{n-1}e_1\w e_i^\#$. That $e_n\rfloor\tau$ is primitive on $E$ means that
\be\label{primitiveonE}
(e_n\rfloor\tau,\beta'\w \chi)=0
\ee
for any $\chi\in \bigwedge(E)$, and we know that
$$
(e_n\rfloor\tau,\beta\w \chi)= (\tau, e_n\w \beta\w\chi)=0
$$
since $\tau$ is primitive. (\ref{primitiveonE}) therefore follows since 
$$
(e_n\rfloor\tau, e_n\w e_n^\#\w \chi)=(\tau, e_n\w e_n\w e_n^\#\w \chi)=0.
$$
Let next $\chi$ be a decomposable $(N-1)$-form on $E$ of bidegree $(p-1,q)$. This means that 
$$
\chi= \alpha_1\w...\alpha_{p-1}\w\alpha_{p+1}^\#\w...\alpha_N^\#,
$$
where all the $\alpha_i$ are orthogonal to $a$ and $\alpha_i\perp\alpha_j$ if $i<p$ and $j\geq p+1$. For the second part of the claim we need to verify that
$$
(a\rfloor\tau,\chi)=0.
$$
 But this is clear since
$$
(a\rfloor\tau,\chi)=(\tau, a\w\alpha_1\w...\alpha_{p-1}\w\alpha_{p+1}^\#\w...\alpha_N^\#)
$$
and $a\w\alpha_1\w...\alpha_{p-1}\w\alpha_{p+1}^\#\w...\alpha_N^\#$ lies in $D(N)$.

 By the induction hypothesis, this means that the orthogonal projection of  $a\rfloor\tau$ on $\bigwedge^{N-1}(E)$
vanishes, so  as soon as all $\alpha_i$ are orthogonal to $a$,  
\be\label{ortogonal}
(\tau, \alpha_1\w...\alpha_{p-1}\w a\w\alpha_{p+1}^\#\w...\alpha_N^\#)=0.
\ee

Choose any orthonormal basis, $e_i$,  for the space of $(0,1)$-forms, and consider the  basis for $(p,q)$-forms
$$
\alpha_{I J K}:= e_I\w e^\#_J\w dV_K,
$$
where the multiindices are disjoint and 
$$
dV_K=e_{k_1}\w e^\#_{k_1}\w...e_{k_s}\w e^\#_{k_s}, \,s=|K|.
$$
Write
$$
\tau=\sum c_{I J K}\alpha_{I J K}
$$
in this basis.  Then (\ref{ortogonal}) implies that  $\tau$ is orthogonal to all 
$\alpha_{I J K}$ with $I$ non-empty, since we can choose $a$ to be equal to $e_i$ if $i\in I$. Hence  the expansion of $\tau$ contains only terms with $I$ empty. Since $q\leq p$,  all $J$ are also empty, so 
$$
\tau=\sum\lambda_K dV_K.
$$
This must hold for any choice of orthonormal basis. 
Therefore, if $L$ is a multiindex not containing $j$ and $k$, we get that 
$\tau$ is orthogonal, not only to $e_j\w e^\#_k\w dV_L$, but also 
to $(e_j-e_k)\w(e^\#_j+e^\#_k)\w dV_L$. Therefore
$$
\lambda_{\{j\}\cup L}=\lambda_{\{k\}\cup L}.
$$
This implies that all $\lambda_K$ are equal, which immediately implies that $\tau=0$ if $\Lambda\tau=0$.

\end{proof}

The only thing that remains is now to show that Theorem \ref{theoremprimitiveform} implies Theorem \ref{theoremstrongform}. For this we will use the map $\Phi$ from the previous section that takes translation invariant forms $\omega$ on $\C^n_{x+iy}$ to (super)forms on $C^n_{x+i\xi}$, defined by (cf. (\ref{definitionofPhi})), 
$$
\Phi(\omega)\w dy=\omega(x,y)\w e^{\beta(\xi,y)}.
$$
It is immediate that $T(\beta(\xi,y))=\beta(x,y)$, from which follows that
$$
T(e^{\beta(\xi,y)})=\beta(x,y)\w e^{\beta(\xi,y)}
$$
since $T$ is a derivation.
Hence 
$$
T(\Phi(\omega))=\Phi(\beta\w\omega),
$$
so, under the map $\Phi$, elements in the kernel of $L=\beta\w$ on the $\C^n_{x+iy}$-side correspond to elements in the kernel of $T$. This holds in any bidegree. Now note that forms of total degree $n$ on the $\C^n_{x+iy}$-side correspond  to forms of bidegree $(p,p)$ on the super-current side ($\Phi$ maps $(p,k)$-forms to $(p, n-k)$-forms). Thus $(p,p)$-forms in the kernel of $T$ correspond to $n$-forms in the kernel of $L$. But, it is well known (cf. \cite{Griffiths-Harris}) that in degree $n$, the kernel of $L$ equals the kernel of $\Lambda$. The proof is then completed by the easily checked fact that in degree $n$, decomposable forms are mapped under $\Phi$ to elementary forms. 

\begin{remark}

It is a bit curious that K\"ahler theory on the `normal cycle side' of the map $\Phi$ plays a role in this argument, since the idea of superforms is to use K\"ahler methods on the other side, of `superforms'. On the formal level, the isomorphism $\Phi$ behaves somewhat like a mirror symmetry between the K\"ahler manifolds $\C^n_{x+i\xi}$ and $\C^n_{x+i y}$, mapping the vertical diagonal of the Hodge diamond ($(p,p)$-forms) in the first space to the horizontal diagonal ($n$-forms) in the second. 

\end{remark}
\begin{remark} 
The counterpart of Theorem \ref{theoremprimitiveform} also holds in the  setting of forms with the standard complex bigrading, with a similar proof. This can serve as a convenient starting point to prove the Kähler identities, e. g. relating the Hodge $*$-operator to the Lefschetz map (see \cite{Berndtssonnotes}).
\end{remark}

%%%%%%%%%%%%%%%%%%%%%%%%%%%%%%

\newpage

%%%%%%%%%%%%%%%%%%%%%%%%%%%%%
\section{Minkowski's surface area measure.}\label{Minkowski}

Let $K$ be a smoothly bounded  strictly convex body, containing the origin in its interior, defined by
$$
K=\{y;\psi(y)<1\}
$$
where $\psi$ is a smooth 1-homogeneous function. Let $dS$ be surface area on the boundary of $K$. Minkowski's surface area measure, $dS^K$,  associated to $K$ is the push-forward of $dS$ under the Gauss map
$$
y\to \gamma(y),
$$
where $\gamma$ is the exterior unit normal  to $\partial K$. 
The surface area measure is thus the measure on the unit sphere satisfying
$$
\int_{|x|=1} f dS^K=\int_{\partial K} f(\partial\psi/|\partial\psi|) dS.
$$

There is a well known formula for $dS^K$ (see \cite{2Schneider}) that in our formalism becomes
\begin{prop}\label{surfaceareaprop}
The surface area measure is given by
\be\label{surfacearea}
\int_{|x|=1} f dS^K= \int_{|x|=1} f (\ddhash h_K)^{n-1}\w\dhash |x|/n!
\ee
where as before $h_K$ is the support function of $K$. 
\end{prop}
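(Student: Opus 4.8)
The plan is to reduce the statement to a change-of-variables computation via the Gauss map, exactly as in the proof of Theorem~\ref{volumeformula}, and to identify both sides as the same integral over $\partial K$. First I would use the fact that, since $K$ is smoothly bounded and strictly convex, the Gauss map $\gamma(y)=\partial\psi/|\partial\psi|$ is a diffeomorphism from $\partial K$ onto $S^{n-1}$, whose inverse is $x\mapsto \partial h_K(x)$ (this is the same bijection between $\partial K$ and $\partial K^\circ$ composed with the radial projection, as recalled in subsection~\ref{normalcycle}). By the definition of $dS^K$ as the push-forward of $dS$, the left-hand side of (\ref{surfacearea}) is $\int_{\partial K} f(\gamma(y))\,dS(y)$, so it suffices to show that the right-hand side equals the same thing after substituting $x=\gamma(y)$, i.e.\ $y=\partial h_K(x)$.

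Next I would compute the pullback of the superform $(\ddhash h_K)^{n-1}\w\dhash|x|$ under the map $x\mapsto y=\partial h_K(x)$ restricted to $|x|=1$. Since $h_K$ is $1$-homogeneous, $\ddhash h_K$ is strongly homogeneous, so $(\ddhash h_K)^{n}=0$ away from the origin (Proposition~\ref{deltaomega} and the remark after it); hence on the cone over $S^{n-1}$ the form $(\ddhash h_K)^{n-1}\w\dhash|x|$ is essentially $(\ddhash h_K)^{n-1}$ contracted appropriately, and the superintegral over $|x|=1$ picks out exactly the Monge--Amp\`ere-type density. Concretely, writing $h_K$ in coordinates, the coefficient of $dx_I\w d\xi$ in $(\ddhash h_K)^{n-1}\w\dhash|x|$ restricted to the sphere is the appropriate $(n-1)\times(n-1)$ minor of the Hessian $(\partial^2 h_K/\partial x_i\partial x_j)$, and the substitution $y=\partial h_K(x)$ has Jacobian precisely $\det(\partial^2 h_K)$ on the relevant tangent spaces. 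This is the standard identity $dS = \mathrm{MA}_{S^{n-1}}(h_K)$ in disguise, and it is exactly the content that makes the right-hand side of (\ref{surfacearea}) equal to $\int_{\partial K} f(\gamma(y))\,dS(y)$.

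The cleanest way to organize the computation is to avoid minors altogether: parametrize $\partial K$ by the unit normal $x$, so that $y=\partial h_K(x)$, and observe that the map $F(x)=(\partial h_K(x),x)$ is exactly the parametrization of the normal cycle $nc(K)$ used in subsection~\ref{normalcycle}, with $F^*(\beta)=\ddhash h_K$. Then surface measure on $\partial K$ pulls back to $(\ddhash h_K)^{n-1}\w\dhash|x|/(n-1)!$ restricted to $|x|=1$ by the same argument used there to derive (\ref{formulaone}) and the volume identities; alternatively one checks it for $K$ a ball (where everything is rotation-invariant and the constant $1/n!$ is forced) and then notes both sides transform the same way, or one simply differentiates the volume formula $|K+tB|$ in $t$ using Theorem~\ref{volumeformula}, since the coefficient of $t$ in $|K+tB|$ is on the one hand $\tfrac1{n}\int_{\partial K} h_B(\gamma)\,dS = \tfrac1n\int_{|x|=1} h_B\,dS^K$ and on the other $\tfrac1{(n-1)!}\int \ddhash h_B\w(\ddhash h_K)^{n-1} = \tfrac1{n!}\int_{|x|=1} h_B (\ddhash h_K)^{n-1}\w\dhash|x|$ by Stokes and Lemma~\ref{fundamentallemma}; taking $h_B$ to range over $1$-homogeneous functions identifies the measures.

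The main obstacle is bookkeeping: getting the numerical constant $1/n!$ and the orientation/sign conventions of the Berezin superintegral right, and justifying that the superform $(\ddhash h_K)^{n-1}\w\dhash|x|$ restricted to the (non-smooth-in-general, but here smooth) hypersurface $|x|=1$ really does compute the pushforward of surface measure rather than some constant multiple of it. I expect this to be handled most safely by the "differentiate the volume and polarize" route in the last paragraph, which sidesteps explicit minors and uses only Theorem~\ref{volumeformula}, Stokes' formula, and Lemma~\ref{fundamentallemma}, all already available.
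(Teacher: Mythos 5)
Your proposal is correct in spirit but takes a genuinely different route from the paper. The paper does a direct change of variables: it writes surface measure on $\partial K$ explicitly as the form $\sum\psi_i\widehat{dy_i}/|\partial\psi|$, pulls back under the inverse Gauss map $x\mapsto\partial h_K(x)$, and then verifies by direct algebra that $\sum x_i\widehat{d(h_K)_i}$ wedged with $d\xi$ equals $(\ddhash h_K)^{n-1}\w\dhash|x|/(n-1)!$. Your preferred ``differentiate $|K+tB|$ and polarize'' route sidesteps that local computation and identifies the two measures by testing them against support functions $h_B$, using Theorem~\ref{volumeformula}, Stokes, and Lemma~\ref{fundamentallemma}. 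This is a valid alternative: it trades the explicit pullback for the classical geometric fact that the coefficient of $t$ in $|K+tB|$ is $\int_{\partial K}h_B\circ\nu\,dS$ (the thin-shell computation), plus a density step you gloss over --- you only identify the measures on test functions of the form $h_B$, so you should add that differences of support functions are dense among continuous $1$-homogeneous functions before concluding the measures agree.

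Two bookkeeping slips, which happen to cancel: the coefficient of $t$ in $|K+tB|$ is $nV(K[n-1],B)=\int_{S^{n-1}}h_B\,dS^K$, not $\tfrac1n\int_{S^{n-1}}h_B\,dS^K$; and by Proposition~\ref{altdef} one has $\int\ddhash h_B\w(\ddhash h_K)^{n-1}=\int_{|x|=1}h_B(\ddhash h_K)^{n-1}\w\dhash|x|$ with no change of factorial, so your last display should carry $1/(n-1)!$ on both sides, not $1/(n-1)!$ on the left and $1/n!$ on the right. After correcting both, your argument yields $\int f\,dS^K=\int_{|x|=1}f(\ddhash h_K)^{n-1}\w\dhash|x|/(n-1)!$, which matches the final line of the paper's own proof --- and, incidentally, disagrees with the $1/n!$ in the displayed statement of the proposition, which appears to be a typo for $1/(n-1)!$.
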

\begin{proof}
 By Theorem \ref{polar} in the appendix, the polar of $K$, $K^\circ$, is also smooth and the map
$$
y\to \partial\psi(y)=x
$$
is a diffeomorphism from the boundary of $K$ to the boundary of $K^\circ$, with inverse
$$
x\to \partial h_K(x)=y.
$$
Therefore the inverse of the Gauss map equals
$$
x\to \partial h_K,
$$
for $x$ on the unit sphere, since $\partial h_K$ is homogeneous of order zero. 
By Subsection \ref{subsec2}, surface measure on $\partial K$ is given by the differential form
$$
\sum \frac{\psi_i\w \widehat{ dy_i}}{|\partial\psi|},
$$
where $\widehat{ dy_i}$ is the wedge product of all $dy_j$ except $dy_i$, with a sign so that $ dy_i\w\widehat{ dy_i}= dy_1\w ...dy_n $ .  Hence
$$
\int_{\partial K} f(\partial\psi/|\partial\psi|) dS=\int_{\partial K} f(\partial\psi/|\partial\psi|) \sum \frac{\psi_i\w \widehat{ dy_i}}{|\partial\psi|}= \int_{|x|=1} f(x)\sum x_i\widehat {d(h_K)_i}.
$$
The formula (\ref{surfacearea}) then follows from 
$$
\sum x_i\widehat {d(h_K)_i}\w d\xi_1\w...d\xi_n (-1)^{n(n-1)/2}= (\ddhash h_K)^{n-1}\w\dhash|x|/(n-1)!,
$$
which in turn follows from $\ddhash h_K =\sum d(h_K)_i \w d\xi_i$.

\end{proof}
By the results of section \ref{Bedford-Taylor}, the right hand side of (\ref{surfacearea}) makes sense even if $K$ is not smoothly bounded and strictly convex and furnishes the unique definition of surface area measure of convex bodies with non-empty interior, which is continuous under uniform convergence in $\psi$ and equals (\ref{surfacearea}) when $K$ is smoothly bounded and strictly convex.

We will think of the right hand side of (\ref{surfacearea}) as the Monge- Amp\`ere measure of $h_K$ on the boundary of the ball, and in analogy we define for a  1-homogeneous convex function $\phi$ (which is necessarily the support function of some convex body)  the Monge- Amp\`ere measure of $\phi$ on $S^{n-1}$ as
\be\label{MAdef}
MA_{|x|}(\phi)=(\ddhash\phi)^{n-1}\w\dhash x|_{|x|=1}/n!.
\ee
As in the case of Alexandrov's differential operator on the sphere, we can view the 
Monge- Amp\`ere operator as defined on functions on the sphere, since any function on the sphere has a unique 1-homogeneous extension. At the end of this section we shall write the Monge- Amp\`ere operator explicitly, in terms of only the restriction of $\phi$ to the sphere.

Thus we see that, while Alexandrov's proof of the Alexandrov-Fenchel theorem used Alexandrov's differential operator -- a linear operator of second order --
$$
\A(\phi)=\ddhash\phi\w\Omega_{n-2}
$$
(see Section \ref{Asdiffoperator}), Minkowski's surface area problem concerns the fully non-linear Monge- Amp\`ere operator.

The following theorem has a long history, see \cite{Cheng-Yau}.
\begin{thm}\label{Cheng-Yau} Let $d\nu$ be a positive measure on the unit sphere. 
Then $d\nu$ is the surface area measure of some convex body, $K$, with non-empty interior,
if and only if $d\nu$ has barycenter zero and  is not supported on any hyperplane. If $d\nu$ is absolutely continuous with respect to surface measure on the sphere with a smooth density, then $K$ is smoothly bounded . $K$ is uniquely
determined up to translation.
\end{thm}

We shall now reformulate Theorem \ref{Cheng-Yau} in terms of solvability of the
Monge- Amp\`ere equation on the sphere.
\begin{thm}\label{MAonsphere}
Let $d\nu$ be a positive measure on the sphere. Then the equation
$$
MA_{|x|}(\phi)=d\nu
$$
has a 1-homogeneous convex solution if and only if the barycenter of $d\nu$ is zero,
$$
\int_{S^{n-1}} xd\nu=0.
$$
The solution is uniquely determined up to a linear function, and it is smooth if $d\nu$ has a smooth density.
\end{thm}
\begin{proof}
If the support of $d\nu$ is not contained in any proper linear subspace of $\R^n$, the theorem follows from Theorem \ref{Cheng-Yau} and Proposition \ref{surfaceareaprop}. 

If $d\nu$ is supported in a linear subspace, we may,  by induction, assume the equation can be solved there. Choose coordinates so that the subspace is $V=\{x_n=0\}$. Then there is a convex 1-homogeneous function $\phi(x_1, ...x_{n-1})$ such that
$$
(\ddhash\phi)^{n-1}/(n-1)!\w\dhash|x|=d\nu,
$$
where we consider $d\nu$ as a measure on $S^{n-2}$. Let
$$
\tilde\phi(x_1, ...x_n)=\phi(x_1, ...x_{n-1})+\max(x_n, 0).
$$
Then
$$
\ddhash\tilde\phi=\ddhash\phi+[V]\w\dhash x_n.
$$
Hence
$$
(\ddhash\tilde\phi)^n/n!\w\dhash |x|=(\ddhash\phi)^{n-1}\w [V]\w\dhash x_n\w \dhash|x|/(n-1)!=d\nu,
$$
which completes the proof. 
\end{proof}
\begin{remark}
Note that the condition  in Theorem \ref{Cheng-Yau}, that $d\nu$ is not supported in any proper linear subspace of $\R^n$ is absent in Theorem \ref{MAonsphere}. The reason for this is that Theorem \ref{Cheng-Yau} concerns Minkowski's surface area measure as the push-forward of surface measure on the boundary of $K$, which only exists if $K$ has interior points. Measures that are supported in a proper subspace, $V$,  correspond to convex bodies without interior. If $V$ is the minimal subspace that contains $K$, $K$ has non-empty interior relative to $V$, and $d\nu$ is Minkowski's surface area measure of a convex body in lower dimension.  

\end{remark}

Now recall that, by Theorem \ref{secondpart}, any positive measure, $d\nu$,  on the sphere can be represented by a uniquely determined positive and strongly homogeneous $(n-1,n-1)$-current, $\Omega$, 
\be\label{trace}
d\nu= \Omega\w \dhash|x|/n!,
\ee
and that,  conversely, any such $\Omega$ determines a positive measure.  Moreover, the barycenter of $d\nu$ is zero if and only if the trivial extension of $\Omega$ across the origin is closed. Thus, Theorem \ref{Cheng-Yau} can be reformulated in the following way (cf. to Corollary \ref{cor1}). 
\begin{thm}\label{MAcurrent} For a positive and strongly homogeneous current, $\Omega$,  of bidegree $(n-1,n-1)$ on $\R^n\setminus\{0\}$ there is a 1-homogeneous convex function $\phi$ on $\R^n$, such that
\be\label{MAB}
(\ddhash\phi)^{n-1}=\Omega
\ee
if and only if the trivial extension of $\Omega$ over the origin is $d$-closed. The solution $\phi$ is then uniquely determined up to addition of a linear function, and $\phi$ is smooth if $\Omega$ is smooth.
\end{thm}
\begin{proof}
If  
$$
(\ddhash\phi)^{n-1}\w\dhash |x|/n!=MA_{|x|}(\phi)=d\nu=\Omega\w\dhash|x|/n!,
$$
we get
$$
(\ddhash\phi)^{n-1}=\Omega
$$
by applying $\delta^\#$ to both sides. 
\end{proof}

From the comparison with Alexandrov's operator we also get an alternative way of defining Minkowski's surface area measure.
\begin{prop} Let $L$ be a convex body and $h_L$ its support function. Then, for any convex body, $K$, 
$$
\int_{S^{n-1}} h_L dS^K= V(L,K[n-1]),
$$
the mixed volume of $L$ with $(n-1)$ copies of $K$. 
\end{prop}
\begin{proof}
$$
\int_{S^{n-1}} h_L dS^K=\int_{S^{n-1}} h_L (\ddhash h_K)^{n-1}\w\dhash|x|/n!=
$$
$$
=V(h_L, h_K, ...h_K)=V(L,K[n-1])
$$
by Proposition \ref{altdef}.
\end{proof}

In analogy with definition (\ref{MAdef}) we can also define the Monge- Amp\`ere operator on the boundary of any convex body with non-empty interior. If $L$ is given as
$$
L=\{x; \mu(x)\leq 1\},
$$
where $\mu$ is a gauge function, we put
\be\label{MAdef2}
MA_\mu(\phi):=(\ddhash\phi)^{n-1}\wedge\dhash\mu|_{S_\mu}/n!.
\ee
Again, by Theorem \ref{secondpart}, any measure $d\nu$ on $S_\mu$ can be written
\be\label{trace}
d\nu= \Omega\w \dhash\mu/n!,
\ee
where $\Omega $ is a strongly homogeneous current of bidegree $(n-1,n-1)$, and, as in Theorem \ref{MAcurrent}, the equation
$$
MA_\mu(\phi)=d\nu
$$
is equivalent to
$$
(\ddhash\phi)^n/n!=\Omega.
$$
This means that all Monge- Amp\`ere equations on the `spheres' $S_\mu$ are equivalent. By Proposition \ref{pushforward} we have that if 
$$
MA_{|x|}(\phi)= d\nu
$$
on $S^{n-1}$, and 
$$
MA_\mu(\phi)=d\nu'
$$
on $S_\mu$, then 
$$
\pi_*(d\nu)=|x| d\nu',
$$
where $\pi$ is the radial projection, $\pi(x)=x/\mu(x)$ to $S_\mu$. This means that $|x|d\nu'$ is a sort of generalized surface area measure, defined as the push-forward of the surface measure on $\partial K$,$dS$, under a generalized Gauss map
$$
\gamma_\mu(y)= \partial\psi/\mu(\partial\psi),
$$
that maps $\partial K$ to $S_\mu$.

We will now  write down the Monge- Amp\`ere operator explicitly, without referring to  the 1-homogeneous extension of a function, with special emphasis on the case when $S_\mu$ is a polytope. 

 First we need to take a step back and discuss the real Monge- Amp\`ere measure on a (real) vector space, $V$, of dimension $d$.

 Let $t_1, ...t_d$ be linear coordinates on $V$ and let $\eta=dt_1\w...dt_d$ be the associated volume form. Then, if $u$ is a smooth function $V$,
$$
\det(\frac{\partial^2 u}{\partial t_i\partial t_k}) \eta^2,
$$
is a well defined function with values in $\Lambda^n(V^*)\otimes\Lambda^n(V^*)$, independent of the choice of coordinates. In order to define the Monge- Amp\`ere-operator we want instead an $n$-form on $V$. We therefore choose one fixed (reference) $n$-form, $\eta_0\in \Lambda^n(V^*)$, and define
$$
MA^{\eta_0}(u)=  \det(\frac{\partial^2 u}{\partial t_i\partial t_k}) \eta^2/\eta_0.
$$
We thus get one Monge- Amp\`ere-operator for each choice of $\eta_0$, and different Monge- Amp\`ere measures differ by a multiplicative constant. Equivalently, we can define
$$
MA^{\eta_0}(u)=  \det(\frac{\partial^2 u}{\partial t_i\partial t_k}) \eta,
$$
for any choice of coordinates on $V$ such that $\eta=\eta_0$.

In section \ref{Bedford-Taylor} we defined the Monge- Amp\`ere measure on $\R^n$ as the measure defined by the current
$$
(\ddhash u)^n/n!= \det(\frac{\partial^2 u}{\partial x_i\partial x_k}) dx_1\w ...dx_n\w d\xi_1\w...d\xi_n (-1)^{n(n-1)/2}.
$$
Thus, here, $\eta_0=dx_1\w...dx_n$. 

Let us now consider an affine  hyperplane, $V$, in $\R^n$, defined by an equation $\mu(x)=1$, where $\mu(x)$ is a linear function. We then define the Monge- Amp\`ere measure of a (smooth) function $u$ on $V$ as the measure, $MA_\mu(u)$, defined by the form
$$
(\ddhash u)^{n-1}\w\dhash\mu/(n-1)!|_V.
$$
Unwinding the definition, we see that this means that our reference form,  $\eta_0$, is an $(n-1)$-form on $V$ such that $\eta_0\w d\mu= dx_1\w ...dx_n$. 
In other words
\be\label{2MAdef}
MA_\mu(u)=\det(\frac{\partial^2 u}{\partial t_i\partial t_k})dt_2\w...dt_n,
\ee
where $t_i, i=2, ...n$ is any choice of coordinates on $V$ such that $d\mu\w dt_2\w...dt_n= dx_1\w ...dx_n$. Note that this definition depends on the defining equation.

Let now $P$ be a polytope in $\R^n$; the convex hull of its vertices, $v^1, ...v^N$. We assume $P$ contains the origin in its interior, so that its polar body
$$
P^\circ=\{y; h_P(y)\leq 1\}
$$
is also a compact convex body. Since
$$
h_P(y)=\sup_{x\in P} y\cdot x=\max_i y\cdot v^i,
$$
$P^\circ$ is also a polytope with vertices, say, $u^1, ...u^M$. Taking the polar again we get back $P$, so
$$
P=\{x; h_{P^\circ}(x)\leq 1\}=\{x; \max_j u^j\cdot x\leq 1\},
$$
so $P$ is defined by the gauge function $\mu(x)=\max_j u^j\cdot x =  h_{P^\circ}(x)$.

\bigskip

It follows that on the interior of each facet (face of maximal dimension), $\{x\in P;  u^j\cdot x=1\}$, of $P$, the Monge- Amp\`ere measure defined in (\ref{MAdef}) has the explicit form (\ref{2MAdef}) where $\mu(x)=u^j\cdot x$. We shall now see  that there is also another contribution to the Monge- Amp\`ere measure which is concentrated on the lower dimensional faces.

 \begin{prop}. Let $\mu$ be a gauge function and let $f$ be a smooth function defined in a neighbourhood of  $S_\mu$. Let $F(x)=\mu(x)f(x/\mu(x))$ be the 1-homogeneous extension of $u$ from $S_\mu$. . Then, for $k=1, ...n-1$, 
 \be\label{MAexplicit}
 (\ddhash F)^k\wedge [S_\mu]_s=(\ddhash f +(f-E(f))\ddhash\mu)^k\wedge [S_\mu]_s,
 \ee
 where $E(f)=\sum x_i f_i$, the Euler vector field acting on $f$. In particular, the  right hand side depends only on the restriction of $f$ to $S_\mu$. 
 \end{prop}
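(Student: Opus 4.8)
The plan is to reduce first to the case where $\mu$ (hence $S_\mu$ and $F$) is smooth — the general case then follows by approximating $\mu$ uniformly by smooth gauge functions, writing $f$ locally as a difference of two convex functions, and invoking the weak* continuity statements of Proposition \ref{sumup} and Theorem \ref{crucial} — and for smooth $\mu$ to prove (\ref{MAexplicit}) by an elementary computation near $S_\mu$, comparing $\ddhash F$ with $\ddhash f+(f-E(f))\ddhash\mu$.

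First I would note that $F(x)=\mu(x)f(x/\mu(x))$ is $1$-homogeneous and that $F=f$ on $S_\mu=\{\mu=1\}$, so $F-f$ vanishes there. Since $\delta(d\mu)=E(\mu)=\mu=1$ on $S_\mu$ we have $d\mu\neq 0$ along $S_\mu$, so the division lemma gives a smooth function $\psi$ near $S_\mu$ with $F-f=(\mu-1)\psi$. Differentiating this identity and restricting to $S_\mu$ gives $d(F-f)=\psi\,d\mu$ on $S_\mu$; on the other hand, differentiating $F=\mu\cdot(f\circ\pi)$ with $\pi(x)=x/\mu(x)$ and using $\partial_j(x_i/\mu)|_{S_\mu}=\delta_{ij}-x_i\mu_j$ yields $\partial_j(F-f)|_{S_\mu}=(f-E(f))\mu_j$, that is $d(F-f)=(f-E(f))\,d\mu$ on $S_\mu$. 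Comparing, $\psi=f-E(f)=:G$ on $S_\mu$. Expanding $\ddhash\big((\mu-1)\psi\big)$ by the Leibniz rule for $\ddhash$ then gives, near $S_\mu$,
$$
\ddhash F=\ddhash f+d\mu\w\dhash\psi+(\mu-1)\ddhash\psi+d\psi\w\dhash\mu+\psi\,\ddhash\mu .
$$

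Next I would substitute this into $(\ddhash F)^k\w[S_\mu]_s$, which pairs with a test form $\chi$ to give $\int_{S_\mu}\dhash\mu\w(\ddhash F)^k\w\chi$, expand the $k$-th power multinomially, and discard the error terms: any term carrying a factor $(\mu-1)\ddhash\psi$ vanishes on $S_\mu$; any term carrying a factor $d\psi\w\dhash\mu$, once wedged with the leading $\dhash\mu$, contains $\dhash\mu\w\dhash\mu=0$; and any term carrying the factor $d\mu\w\dhash\psi$ has the $(1,0)$-factor $d\mu$, whose pullback under the inclusion $S_\mu\hookrightarrow\R^n_x$ is $d(\mu|_{S_\mu})=0$, so it contributes nothing to the superintegral over $S_\mu$. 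What survives are precisely the terms built from $\ddhash f$ and $\psi\,\ddhash\mu$, and since only the values of the scalar $\psi$ on $S_\mu$ enter the surface integral, these reproduce $\dhash\mu\w(\ddhash f+G\,\ddhash\mu)^k\w\chi$; this is (\ref{MAexplicit}). The final assertion is then immediate, since $(\ddhash F)^k\w[S_\mu]_s$ depends only on $F$, and $F(x)=\mu(x)f(x/\mu(x))$ depends only on the restriction of $f$ to $S_\mu$.

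I do not expect a genuine obstacle. The point requiring care is the bookkeeping in the multinomial expansion — one must check that the properties ``carries a factor $d\mu$'' and ``carries two factors $\dhash\mu$'' are stable under the wedge operations, and that the three families of error terms together with the $\{\ddhash f,\psi\ddhash\mu\}$-terms really exhaust the expansion. The other (routine but slightly tedious) point is the justification of the reduction to smooth $\mu$, where one must verify that both sides of (\ref{MAexplicit}) are weak* continuous under uniform approximation of $\mu$ by smooth gauges.
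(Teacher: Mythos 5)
Your smooth-case argument is correct and is a clean variant of the paper's. The paper just says ``direct computation of $\ddhash F$, discard all terms containing $d\mu$ or $\dhash\mu$''; you instead factor $F-f=(\mu-1)\psi$, identify $\psi|_{S_\mu}=f-E(f)$, and expand $\ddhash\big((\mu-1)\psi\big)$ by Leibniz. This is essentially the same computation organized more transparently: the terms you discard ($(\mu-1)\ddhash\psi$, $d\psi\w\dhash\mu$, $d\mu\w\dhash\psi$) are exactly the ones the paper discards, and what survives is $\ddhash f+\psi\ddhash\mu$ with $\psi=f-E(f)$ on $S_\mu$. Your bookkeeping of why each family of terms dies (the $(\mu-1)$ factor vanishes on $S_\mu$, $\dhash\mu\w\dhash\mu=0$ against the leading $\dhash\mu$, and $d\mu$ pulls back to $0$ on $S_\mu$) is correct.

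Where your write-up is weaker than the paper is in the reduction to smooth $\mu$. You correctly identify that one needs to approximate $\mu$ uniformly by smooth gauges $\mu^i$ and invoke the continuity statements of Theorem \ref{crucial} and Proposition \ref{sumup}, but your phrase ``writing $f$ locally as a difference of two convex functions'' does not engage the actual obstacle. The obstacle is not $f$: it is that $F_\mu$ is $1$-homogeneous but generally not convex, and so $(\ddhash F_{\mu^i})^k$ is not directly covered by Proposition \ref{sumup}, which applies to closed, weakly positive, strong currents. The paper's device is to observe that $\ddhash F_{\mu}+C(\ddhash\mu+\ddhash|x|)\ge 0$ for a suitable constant $C$ (which, as the paper notes, is itself a consequence of the $k=1$ case), and then to apply Proposition \ref{sumup} to $\Omega^i=\big(\ddhash(F_{\mu^i}+C(\mu^i+|x|))\big)^k$; since $F_{\mu^i}+C(\mu^i+|x|)\to F_\mu+C(\mu+|x|)$ uniformly, these converge, and one recovers the convergence of $(\ddhash F_{\mu^i})^k\w[S_{\mu^i}]_s$ by multilinearity (e.g., by the polynomial dependence on $C$). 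A parallel argument handles the right-hand side. So: same two-step structure as the paper, a perfectly good and arguably cleaner smooth-case computation, but the convexification trick that makes the approximation step go through needs to be stated explicitly rather than dismissed as routine.
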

 \begin{proof} 
 When $\mu$ is smooth, this follows from a direct computation of $\ddhash F$, using that $F(x)=f(x/\mu(x))\mu(x)$. (Since we are wedging with the supercurrent of integration on $S_\mu$, we may discard all terms containing $d\mu$ or $\dhash\mu$.) We now argue that when $\mu$ is not smooth, we can approximate by smooth functions and pass to the limit. Write $F=F_\mu$ to emphasize the dependence on $\mu$. Note first that the case $k=1$ implies that for some positive constant
 $$
 \ddhash F_\mu + C(\ddhash\mu +\ddhash|x|) \geq 0.
 $$
 Indeed, this holds when wedged with $[S_\mu]_s$ (since $f$ is smooth) and we can then apply $\delta$ and $\delta^\#$. The inequality therefore holds when $\mu=1$, and hence everywhere by homogeneity.  Write $\mu=\lim \mu^i$ in the uniform norm, where $\mu^i$ are smooth. Then $F_{\mu^i}$ tend to $F_\mu$ uniformly. Hence, Proposition \ref{sumup} implies that
 $$
 (\ddhash F_{\mu^i})^k\wedge [S_\mu^i]_s \to(\ddhash F_\mu)^k\wedge [S_\mu]_s.
 $$
 (Since $F_{\mu^i}$ may not be convex, we apply the proposition to $\Omega^i=(\ddhash(F_{\mu^i} +C(\mu^i+|x|)))^k$.) 
 On the other hand, we also have 
 $$
 (\ddhash f +(f-E(f))\ddhash\mu^i)^k\wedge [S_{\mu^i}]_s \to (\ddhash f +(f-E(f))\ddhash\mu)^k\wedge [S_\mu]_s
 $$
 by the same proposition.
 
  \end{proof}

\begin{remark}
 From  the proposition, we see that, when $\mu$ defines a polytope,  the singular contribution to the Monge- Amp\`ere measure  comes from powers of $\ddhash\mu$, multiplied by $f-E(f)$. Thus the singular part vanishes precisely when $f$ is smooth and 1-homogeneous. Note also that the right hand side of (\ref{MAexplicit}) gives an explicit formula for the Monge- Amp\`ere operator applied to a smooth function, that does not involve its homogeneous extension. The case $k=1$ and $\mu=|x|$  also gives an explicit formula for Alexandrov's differential operator if we wedge with $\Omega=\ddhash\psi_3\w...\ddhash\psi_n$. 
 \end{remark}
 
 \newpage
 
 %%%%%%%%%%%%%%%%%%%%%%%%
 \section{Appendix}
 
 \subsection{Smoothly bounded convex bodies}
 
 Let $K$ be a convex body containing the origin in its interior. The Minkowski gauge of $K$ is defined as 
 \be
 \mu(x)=\mu_K(x):= \inf\{s\in\R_+; x/s\in K\}.
 \ee
 Clearly, $\mu$ is homogeneous of order 1, and  it is also subadditive, since 
 $$
 \frac{x+y}{\mu(x)+\mu(y)}= \lambda\frac{x}{\mu(x)}+(1-\lambda)\frac{y}{\mu(y)},
 $$
 if 
 $$
 \lambda=\mu(x)/(\mu(x)+\mu(y)).
 $$
 Hence $\mu$ is convex, so it is indeed a gauge function in our previous terminology, and $K=\{\mu\leq 1\}$. 
 
 \begin{lma}\label{smoothgauge}   $K$ is smoothly bounded if and only if $\mu$ is smooth outside the origin.
 \end{lma}
 \begin{proof}
 First assume that $\mu$ is smooth. By Euler's formula $\mu=\sum x_i\partial\mu/\partial x_i $, so the gradient of $\mu$ does not vanish outside the origin. Therefore 
 $$
 \partial K=\{\mu=1\}
 $$
 is smooth. 
 
 Conversely, assume that the boundary of $K$ is smooth. Then $\partial K$ can be defined by an equation $\rho(x)=0$, where $d\rho$ does not vanish where $\rho=0$. By the implicit function theorem, the equation 
 $$
 \rho(x/s)=0
 $$
 defines a smooth function $s=\mu(x)$, since 
 $$
 \frac{d}{ds}\rho(x/s) =-\sum x_i\rho_i(x/s)
 $$
 does not vanish when $s=\mu(x)$ (a ray through the origin cannot be tangent to the boundary).
 
\end{proof}
The polar of $K$, $K^\circ$,  is the set where $h_K\leq 1$. In other words, the Minkowski gauge of the polar  of $K$ is the support function of $K$. It follows from the next theorem (or the Hahn-Banach theorem) that  the support function of $K^\circ$ is $\mu$. 
\begin{thm} \label{polar}
If $K$ is smoothly bounded and strictly convex, $K^\circ$ is also smoothly bounded and strictly convex. The map
\be\label{1}
y\to \partial \mu(y)
\ee
is then a diffeomorphism from $\partial K$ to $\partial K^\circ$, with inverse
\be\label{2}
x\to \partial h_K(x).
\ee
\end{thm}
\begin{proof}
By Euler's formulas
$$
y\cdot\partial\mu(y)=1
$$
on $\partial K$. Since $a=\partial\mu(y)$ is a normal to $\partial K$ at $y$,
$$
a\cdot y=\sup_{y'\in K} a\cdot y'=h_K(a).
$$
Hence, $h_K(\partial\mu(y))=1$, so (\ref{1}) maps the boundary of $K$ to the boundary of its polar. It is clear that if $K$ is strictly convex, the Gauss map $y\to \vec{n}(y)$ is injective, so (\ref{1}) is also injective. Strict convexity means that this also holds on the infinitesimal level, so (\ref{1}) is in fact an embedding. 

Thus, if $y\in \partial K$ and $x\in\partial K^\circ$, then 
\be\label{equality}
y\cdot x\leq 1
\ee
with equality if  $x=\partial\mu(y)$. Conversely, for any $x\in\partial K^\circ$, the supremum over $y\in \partial K$ equals one and is attained somewhere. At that $y$, $x$ is a normal to the boundary of $K$, so $x$ is a multiple of $\partial\mu( y)$. In fact, 
$x=\partial\mu(y)$, since $h_K(x)=1=h_K(\partial\mu(y))$. Thus $y\to \partial\mu(y)$ is also surjective, so it is a diffeomorphism to the boundary of $K^\circ$, which is therefore smooth, and equality in (\ref{equality}) holds if and only if 
$x=\partial\mu(y)$.

Therefore we can apply the same argument to $K^\circ$ and conclude that (\ref{2}) maps into $\partial K$. Since we have equality when 
$y= \partial h_K(x)$, it follows that $\partial\mu\circ  \partial h_K(x)=x$, so the two maps are inverses of each other. Finally, since (\ref{2}) is a diffeomorphism, $\partial K^\circ$ is strictly convex.
\end{proof}
 
 \subsection{Currents defined by hypersurfaces}\label{subsec2}
We start with a simple but useful criterion.
 \begin{lma} Let $V=\{\rho=0\}$ be a hypersurface in $\R^n$, where $d\rho\neq 0$ on $V$. Then a differential form, $\alpha$,  of degree $n-1$ on $\R^n$ defines surface measure on $V$ if and only if 
 \be\label{thumb}
 \frac{d\rho}{|d\rho|}\w\alpha=dx:=dx_1\w...dx_n,
 \ee
 if $V$ is oriented so that $\alpha$ is positive on $V$.
 \end{lma}
 \begin{proof}
 Fix a point $p$ in $V$ and choose orthonormal coordinates so that the tangent plane of $V$ at $p$ is $\{x_1=a\}$. Then the unit conormal  of $V$ at $p$ is 
 $$
  \frac{d\rho}{|d\rho|}=dx_1.
  $$
 By definition, the surface element of $V$ at $p$ is the volume form on the tangent plane, i.e. $dx_2\w...dx_n$. 
 Write
 $$
 \alpha= g dx_2\w...dx_n+ dx_1\w\alpha'.
 $$
 Then the restriction of $\alpha$ to $V$ at $p$, $ g dx_2\w...dx_n$, equals the surface element if and only if $g=1$, which means that $dx_1\w\alpha=dx$, i.e. that (\ref{thumb}) holds.
 \end{proof}
 As an example,
 $$
 \alpha= \sum n_i \widehat{d x_i},
 $$
 represents surface measure on $V$,
 where $n_i$ is the $i$:th component of the unit normal, 
 $$
 n_i=\rho_i/|d\rho|,
 $$
  and 
 $\widehat{d x_i}$ is the wedge product of all $dx_j$ except $dx_i$, with sign so that 
 $dx_i\w \widehat{dx_i}=dx$.  When $V$ is the graph of a function $f$, we can also choose 
 $$
 \rho(x)= x_1 -f(x_2, ...x_{n}),
 $$
 and 
 $$
 \alpha ={\sqrt{1+|df|^2}}\,{dx_2\w... dx_n},
 $$
 which gives another classical formula. 
 
 A measure on $\R^n$, $d\nu$, acts on functions so it defines a current of degree $n$.  We denote by 
 $$
 *d\nu=d\nu/ dx,
 $$
 the measure regarded as a current of degree zero.  With this notation we can state the next theorem. 
 
 \begin{thm}
 Let $V$ be a hypersurface in $\R^n$ defined by an equation $V=\{\rho=0\}$, where $d\rho\neq 0$ on $V$. Then the current of integration on $V$, $[V]$, equals
 \be\label{currentformula}
 [V]=\frac{d\rho}{|d\rho|} *dS,
 \ee
 where $*dS$ is surface measure on $V$ regarded as a current of degree zero.
 
 If 
 $$
 V=\{x_1=a\}
 $$
 is an affine hyperplane, then the supercurrent of integration on $V$ is
 $$
 [V]_s=[V]\w \dhash x_1.
 $$
 The last statement means that, if $\Omega$ is a smooth superform of bidegree $(n-1,n-1)$, then 
 $$
 \int_Vr(\Omega)=\int_V \Omega\w \dhash x_1,
 $$
 where $r(\Omega)$ is the restriction of $\Omega$ to (the complexification of) $V$.
 \end{thm}
 \begin{proof} Let first $\alpha$ be a smooth  form of degree $n-1$. Then
 $$
  \frac{d\rho}{|d\rho|}\w\alpha=fdx,
  $$
  for some function $f$. Then, by the lemma, 
  $$
  \int_{V\cap\{f\neq 0\}}\alpha= \int_{V\cap\{f\neq 0\}}f(\alpha/f)= \int_{V\cap\{f\neq 0\}}fdS= \int_{V\cap\{f\neq 0\}}(fdx)*dS.
  $$
  
  The same formula clearly holds where $f=0$,
  so
  $$
  \int_V\alpha=\int fdx*dS=\int [V]\w\alpha,
  $$
  if $[V]$ is defined by (\ref{currentformula}). This proves the first part of our claim.  The last part is a direct consequence of the definitions. 
  
  \end{proof}

\end{document}